\newcommand{\N}{\mathbb{N}}
\newcommand{\Z}{\mathbb{Z}}
\newcommand{\R}{\mathbb{R}}
\newcommand{\C}{\mathbb{C}} 
\newcommand{\diag}{\mathrm{diag}}
\newcommand{\g}{\mathfrak{g}}
\newcommand{\ah}{\mathfrak{a}}
\newcommand{\n}{\mathfrak{n}}
\newcommand{\ka}{\mathfrak{k}}
\newcommand{\Ad}{\mathrm{Ad}}
\DeclareMathOperator{\Hom}{Hom}
\DeclareMathOperator{\Indd}{-Ind}
\DeclareMathOperator{\im}{Im}
\newtheorem{thm}{Theorem}[section]
\newtheorem{df}[thm]{Definition}
\newtheorem{prop}[thm]{Proposition}
\newtheorem{lem}[thm]{Lemma}
\newtheorem{cor}[thm]{Corollary}
\newtheorem{rem}[thm]{Remark}
\numberwithin{equation}{section}
\title{Generalized Whittaker functions for degenerate principal series of $GL(4,\R)$}
\date{}
\author{Kazuki Hiroe\thanks{E-mail:\texttt{kazuki@ms.u-tokyo.ac.jp}}}
\begin{document}
\maketitle
\begin{center}
Graduate School of Mathematical Sciences, The University of Tokyo, 3-8-1 Komaba, Meguro-ku, Tokyo, 153-8914, Japan.
\end{center}

\begin{abstract}
We give a characterization of a generalized Whittaker model of a degenerate principal series representation of $GL(n,\R)$ as the kernel of some differential operators. By this characterization, we investigate some examples on $GL(4,\R)$. We obtain the dimensions of the generalized Whittaker models and give their basis in terms of hypergeometric functions of one and two variables. We show the multiplicity one of the generalized Whittaker models by using the theory of hypergeometric functions.
\end{abstract}
\section{Introduction}
Our interest in this paper is generalized Whittaker models of degenerate principal representations. There are many studies about them for admissible (non-degenerate) characters of unipotent radicals of parabolic subgroups (for example \cite{Has},\cite{M},\cite{W-1},\cite{W-2},\cite{Y-3}). In the case of degenerate principal series representations, Yamashita gives existence theorem and multiplicity formula for wide classes of generalized Whittaker models, i.e., generalized Whittaker models for generalized Gelfand-Graev representations in \cite{Y-3}. However, their techniques strongly depend on the admissibility of the characters of the unipotent subgroups. On the other hand, if we regard the Whittaker models as an analogue of Fourier coefficients of an automoprhic form at a cusp, we often meet the necessity to consider non-admissible characters. For example, Terras gives an expansion of the Epstein zeta function in terms of modified Bessel functions \cite{T-1}. Non-admissible characters play important roles there. The Epstein zeta function corresponds to the degenerate principal series representation of $GL(n,\R)$ induced from the character of the maximal parabolic subgroup $P_{1,n}$ which fixes the unit vector $e_{n}=(0,\ldots,0,1)$ (cf. \cite{O-H}). Hence the Fourier coefficients given by Terras can be seen as the generalized Whittaker functions for this representation. It seems, however, widely open about the problem of the existence and the multiplicity formula of the generalized Whittaker models for non-admissible characters of unipotent subgroups, though recently the solutions of such problems for degenerate characters of maximal unipotent subgroup are  obtained by Abe and Oshima independently \cite{A},\cite{O-4} for degenerate principal series representations with generic parameters.  In this paper, we will give some examples about this problems in the case of $GL(4,\R)$.  

The other purpose of this paper is to give an expression of the generalized Whittaker function as a hypergeometric function of several variables. According to the recent work of Oshima and Shimeno \cite{O-S}, Whittaker functions can be seen as the confluent hypergeometric functions obtained from Heckman-Opdam hypergeometric functions. The similarities of Whittaker functions with spherical functions were already pointed out by Hashizume in \cite{Has-sem}. Also there are various explicit pictures of Whittaker functions as hypergeometric functions of several variables given by Oda and his collaborators (see \cite{H-I-O} for the reference). We will show that the generalized Whittaker functions of the degenerate principal series representations of $GL(4,\R)$ are written by modified Bessel functions and Horn's hypergeometric function $\mathrm{H}_{10}$ in this paper. There is a similar work on $SL(3,\R)$ in \cite{I-O}.

Let us explain the contents of this paper. In Section 2 and Section 3, we will give a characterization of a Whittaker model of a degenerate principal series representation of $GL(n,\R)$ as the kernel of a family of differential operators. More precisely, let $G=GL(n,\R)$ and consider an Iwasawa decomposition $G=KAN$ where $K=O(n)$, $A$ is the group of diagonal matrices with positive real entries, and $N$ is the group of lower triangular matrices with 1s on diagonal entries. We take an increasing sequence of positive integers ending at $n$, i.e., $\Theta=\{n_{1},\ldots,n_{L}\}$ with $0<n_{1}<n_{2}<\cdots <n_{L}=n$. Then let $P_{\Theta}$ be the parabolic subgroup corresponding to the sequence $\Theta$ and take the Langlands decomposition $P_{\Theta}=M_{\Theta}A_{\Theta}N_{\Theta}$. For a linear mapping  $\lambda\in \Hom_{\R}(\mathrm{Lie}(A_{\Theta}),\C)$, we can consider an induced representation $C^{\infty}\Indd_{P_{\Theta}}^{G}(1_{M_{\Theta}}\otimes e^{\lambda}\otimes 1_{N_{\Theta}})$. We call this representation a degenerate principal series representation. The underlying representation space of this is
\begin{multline*}
C^{\infty}(G/P_{\Theta},\lambda)=\\
\{f\in C^{\infty}(G)\mid f(gp)=(1_{M_{\Theta}}\otimes e^{\lambda}\otimes 1_{N_{\Theta}})(p^{-1})f(g),\ g\in G,p\in P_{\Theta}\}
\end{multline*}
and the action of $G$ is defined by the left translation. Then we consider an ideal of $U(\g)$ the universal enveloping algebra of $\g_{\C}=\mathfrak{gl}(n,\C)$ such that $I_{\Theta}(\lambda)=\{X\in U(\g)\mid R_{X}f=0, \ f\in C^{\infty}(G/P_{\Theta},\lambda) \}$. Here $R_{X}$ is the right derivation by $X\in U(\g)$. We consider $\lambda$ as an element of $\Hom_{\R}(\mathrm{Lie}(A),\C)$ and we assume it is regular and dominant. Under this assumption, the generators of the ideal $I_{\Theta}(\lambda)$ is known by Oshima (cf. Theorem \ref{generator1}). 	Let $U$ be a closed subgroup $N$ and $(\eta,V_{\eta})$ an irreducible unitary representation of $U$. We consider the space $C^{\infty}_{\eta}(U\backslash G)=\{f\colon G\rightarrow V_{\eta}^{\infty}\,\text{smooth}\mid f(ug)=\eta(u)f(g),u\in U,g\in G\}$ where $V_{\eta}^{\infty}$ is the space of smooth vectors in $V_{\eta}$. Let $X_{\Theta,\lambda}$ be the Harish-Chandra module of $C^{\infty}(G/P_{\Theta},\lambda)$ and $X_{\Theta,\lambda}^{*}$ its dual Harish-Chandra module, i.e., the space of $K$-finite vectors of $\Hom_{\C}(X_{\Theta,\lambda},\C)$. The generalized Whittaker model is the image of $X_{\Theta,\lambda}$ by the element of $\Hom_{\g_{\C},K}(X_{\Theta,\lambda},C^{\infty}_{\eta}(U\backslash G))$. Then we can show the following characterization theorem of the generalized Whittaker model.
\begin{thm}[see Theorem \ref{Yamashita}]
Assume that $X_{\Theta,\lambda}^{*}$ is irreducible. We take a nonzero $K$-fixed vector $f_{0}$ in $X_{\Theta,\lambda}^{*}$. Then the following mapping  
\[
\begin{array}{ccc}
\tilde{\Phi}\colon\Hom_{\g_{\C},K}(X_{\Theta,\lambda}^{*},C^{\infty}_{\eta}(U\backslash G))&\xrightarrow[]{\sim}&C^{\infty}_{\eta}(U\backslash G/K, I_{\Theta}(\lambda))\\
W&\longmapsto&W(f_{0})(g)
\end{array}
\] 
is a linear isomorphism.
Here 

\begin{multline*}
C^{\infty}_{\eta}(U\backslash G/K, I_{\Theta}(\lambda))\\
=\{f\colon G\rightarrow V_{\eta}^{\infty}\ \text{smooth}\,|\, f(ngk)=\eta(n)f(g), g\in G, n\in U,\ k\in K\\
\text{and}\ R_{X}f(g)=0,\ X\in I_{\Theta}(\lambda)\}.
\end{multline*}

\end{thm}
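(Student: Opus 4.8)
The plan is to construct the isomorphism $\tilde\Phi$ explicitly and exhibit its inverse. The underlying principle is the standard "Frobenius reciprocity / evaluation at the base point" mechanism: an intertwining operator $W\colon X_{\Theta,\lambda}^{*}\to C^{\infty}_{\eta}(U\backslash G)$ is determined by how it treats the distinguished $K$-fixed vector $f_{0}$, because $f_{0}$ generates $X_{\Theta,\lambda}^{*}$ as a $(\g_{\C},K)$-module once we know $X_{\Theta,\lambda}^{*}$ is irreducible (a spherical irreducible module is generated by its spherical vector). So first I would observe that $W$ is injective on the span of $f_{0}$ unless $W=0$, and then that $W$ is completely recovered from $\varphi:=W(f_{0})$ by the formula $W(Y\cdot f_{0})=R_{Y}\varphi$ for $Y\in U(\g)$; this makes $\tilde\Phi$ injective. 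Here $R_{Y}$ denotes the right action inherited on $C^\infty_\eta(U\backslash G)$ by right-regular representation, which commutes with the left $U$-equivariance.

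Next I would check that $\tilde\Phi$ lands in the claimed target space. Since $f_{0}$ is $K$-fixed and $W$ is $K$-equivariant, $\varphi=W(f_{0})$ is right-$K$-invariant; since $f_{0}$ is annihilated (under the right $U(\g)$-action on the model $C^{\infty}_{\eta}(U\backslash G)$, pulled back via $W$) exactly by $I_{\Theta}(\lambda)$ — this is the defining property of the ideal, because $X_{\Theta,\lambda}^{*}$ is a quotient of $C^\infty(G/P_\Theta,\lambda)$ and the annihilator of the spherical vector is $I_\Theta(\lambda)$ — we get $R_{X}\varphi=W(X\cdot f_{0})=0$ for all $X\in I_{\Theta}(\lambda)$. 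Together with the left $U$-equivariance coming from the target of $W$, this shows $\varphi\in C^{\infty}_{\eta}(U\backslash G/K,I_{\Theta}(\lambda))$.

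For surjectivity, given $\varphi$ in the target space, I would define $W$ on the cyclic module by $W(Y\cdot f_{0}):=R_{Y}\varphi$ and check this is well-defined: if $Y\cdot f_{0}=Y'\cdot f_{0}$ then $Y-Y'$ annihilates $f_{0}$, so $Y-Y'\in I_{\Theta}(\lambda)$ by the characterization of the ideal, hence $R_{Y}\varphi=R_{Y'}\varphi$ by the hypothesis $R_{X}\varphi=0$ on $I_{\Theta}(\lambda)$. One then verifies $W$ is $\g_{\C}$-linear (immediate from $R_{YZ}=R_{Z}R_{Y}$, up to the usual bookkeeping on the order of right actions) and $K$-equivariant (here the right-$K$-invariance of $\varphi$ and the $K$-type decomposition are used, exponentiating the $\ka$-action). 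Finally one checks $W$ actually maps into $C^{\infty}_{\eta}(U\backslash G)$ rather than merely into functions: the left $U$-equivariance with cocycle $\eta$ is preserved by the right $U(\g)$-action since left and right translations commute.

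The main obstacle I expect is the interplay between two uses of irreducibility of $X_{\Theta,\lambda}^{*}$: one to guarantee $f_{0}$ is cyclic (so that $W\mapsto W(f_{0})$ is injective and the formula $W(Y f_{0})=R_Y\varphi$ determines $W$ globally), and one to identify the annihilator of $f_{0}$ with precisely $I_{\Theta}(\lambda)$ — i.e. the kernel of $U(\g)\to X_{\Theta,\lambda}^{*}$, $Y\mapsto Y\cdot f_{0}$, equals $I_{\Theta}(\lambda)$, not just contains it. This latter point is where Oshima's description of the generators of $I_{\Theta}(\lambda)$ (Theorem \ref{generator1}) is essential, since a priori $I_\Theta(\lambda)$ is defined via the full principal series $C^\infty(G/P_\Theta,\lambda)$ whereas $X_{\Theta,\lambda}^{*}$ is (dual to) a possibly proper submodule; reconciling these requires that passing to $K$-finite vectors and dualizing does not enlarge the annihilator, which follows from self-duality/contragredience considerations for spherical principal series together with regularity and dominance of $\lambda$. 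Once that identification is in hand, the rest is the routine Frobenius-reciprocity verification sketched above.
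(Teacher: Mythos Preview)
Your overall architecture---injectivity from cyclicity of $f_{0}$, target check from $K$-invariance and annihilation by $I_{\Theta}(\lambda)$, inverse by $W(Y\cdot f_{0}):=R_{Y}\varphi$---matches the paper's. The paper, however, does not attempt the well-definedness of the inverse map directly. Instead it fixes $\xi\in V_{\eta}$, forms the scalar-valued injection $T\colon C^{\infty}_{\eta}(U\backslash G)\hookrightarrow C^{\infty}(G)$, $f\mapsto\langle\xi,f\rangle_{\eta}$, and reduces everything to Proposition~\ref{maxglob} (the maximal globalization isomorphism $\Hom_{\g_{\C},K}(X_{\Theta,\lambda^{*}},C^{\infty}(G))\cong C^{\infty}(G/K;I_{\Theta}(\lambda))$), which in turn is proved via the Poisson transform (Theorem~\ref{Poisson}) and matrix coefficients. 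The well-definedness of $W_{f}$ is then checked by pushing through $T$ and invoking that $\Phi$ is already known to be an isomorphism.

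Your direct approach has a genuine gap at exactly the point you flag. The assertion ``$Y-Y'$ annihilates $f_{0}$, so $Y-Y'\in I_{\Theta}(\lambda)$'' is false as stated: the annihilator $\mathrm{Ann}_{U(\g)}(f_{0})$ is a \emph{left} ideal which strictly contains the two-sided ideal $I_{\Theta}(\lambda)$---for instance it contains $U(\g)\ka$, since $f_{0}$ is $K$-fixed, and $\ka\not\subset I_{\Theta}(\lambda)$. What you actually need is $\mathrm{Ann}_{U(\g)}(f_{0})=I_{\Theta}(\lambda)+U(\g)\ka$ (so that annihilation by $I_{\Theta}(\lambda)$ together with right $K$-invariance of $\varphi$ suffices). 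This equality is \emph{not} a consequence of knowing generators of $I_{\Theta}(\lambda)$ (Theorem~\ref{generator1}) or of contragredience; it is essentially equivalent to the statement that $U(\g)/(I_{\Theta}(\lambda)+U(\g)\ka)$ realizes the irreducible spherical module $X_{\Theta,\lambda^{*}}$, which is precisely what the Poisson transform theorem (Theorem~\ref{Poisson}) supplies and what Proposition~\ref{maxglob} packages. Without that analytic input your surjectivity argument does not close, and the paper's route through $T$ and Proposition~\ref{maxglob} is not a detour but the place where the real content enters.
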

This theorem is an analogue of the theorem for the generalized Whittaker models of unitary highest weight modules obtained by Yamashita \cite{Y-1}, \cite{Y-2}.

From Section \ref{GL4}, we consider examples on degenerate principal series representations of $GL(4,\R)$ induced from characters of maximal parabolic subgroups $P_{1,4}$ and $P_{2,4}$ by using above theorem. We will determine the dimension of $\Hom_{\g_{\C},K}(X_{\Theta,\lambda}^{*},C^{\infty}_{\eta}(U\backslash G))$ and the basis of $C^{\infty}_{\eta}(U\backslash G/K, I_{\Theta}(\lambda))$. Let us explain more detailed settings. As the space $C^{\infty}_{\eta}(U\backslash G)$, we consider the space defined as follows. 
\begin{enumerate}
\item the group $U$ is a closed subgroup of $N$ and $\eta$ is its unitary character,
\item the unitary induced representation $L^{2}\Indd _{U}^{N}\eta$ is an irreducible unitary representation of $N$.
\end{enumerate} 

We will classify the $G$-equivalent classes of these $C^{\infty}_{\eta}(U\backslash G)$ in Section \ref{classify-G} (see Proposition \ref{classify}). 

There is a linear isomorphism from the space $C^{\infty}_{\eta}(U\backslash G/K)$ onto $C^{\infty}(U\backslash N\times A)$ (cf. Lemma \ref{cross}). In Section \ref{diff}, we will see how the the action of the Lie algebra $\g$ is written as differential operators on $C^{\infty}(U\backslash N\times A)$.

Our main results are in Section \ref{explicit}. In this section, we will give the dimensions of $C^{\infty}_{\eta}(U\backslash G/K, I_{\Theta}(\lambda))$ and the basis of them as the functions on $C^{\infty}(U\backslash N\times A)$. These basis can be written in terms of modified Bessel functions and Horn's hypergeometric functions $\mathrm{H}_{10}$ (see Theorem \ref{thmI-1}, Theorem \ref{thmI-2}, Theorem \ref{thmII-1}, Theorem \ref{thmII-2}, Theorem \ref{thmII-3}, Theorem \ref{thmIII}). According to these theorems, we can conclude the following. For the degenerate principal series representation induced from a character of $P_{1,4}$, the multiplicity one theorem is true for the generalized Whittaker models for characters of the closed proper subgroups of $N$. On the other hand, for the degenerate principal series representation induced from a character of $P_{2,4}$, the multiplicity one theorem is no longer true. This fact seems to correspond to the result of Terras in \cite{T-2}. In that paper, she could determine only the nonsingular terms in the Fourier expansion of Eisenstein series corresponding to this degenerate principal series representation (Theorem 1 in \cite{T-2}). And she could not say anything about degenerate terms in Fourier expansion. The multiplicities of the generalized Whittaker models corresponding to these Fourier coefficients seems to be one of the cause of this phenomenon.

Finally, we give some facts about Horn's hypergeometric functions in Appendix. 
\subsection*{Acknowledgement}
The author expresses his hearty thanks to Professor Takayuki Oda for his constant encouragement and his many advice. Also he is most grateful to Professor Toshio Oshima who showed his unpublished note about Whittaker functions and gave him many useful suggestions. Finally he would like to thank Noriyuki Abe and the discussions with him are very helpful to understand the representation theoretic aspect of this problem.  
\section{Spherical degenerate principal series representations of $GL(n,\R)$}\label{sec1}

In this section, we study degenerate principal series representations of $GL(n,\R)$ and their annihilators in the enveloping algebra $U(\mathfrak{gl}(n,\C))$. T.Oshima shows that the image of a degenerate principal series representation by the Poisson transform is characterized by the kernel of the annihilator of the degenerate principal series representation \cite{O-1}. He also give the explicit generators for its annihilator \cite{O-2}, \cite{O-3}. We will give a brief review of these results here.
\subsection{Spherical degenerate principal series representations of $GL(n,\R)$.}\label{defofprinc}
Let $G=GL(n,\R)$. We denote its Lie algebra by $\g=\mathfrak{gl}(n,\R)$. We take the Iwasawa decomposition of $G$ as $G=KAN$, where $K=O(n)$ and  $A$ is the group of $n\times n$ diagonal matrices with positive real entries and $N$ is the group of lower triangular matrices with 1s on the diagonal entries. Let $E_{ij}$ be the matrix with 1 in the $(i,j)$-entry and $0$ elsewhere.
We introduce a non-degenerate bilinear form on $\g_{\C}=\mathfrak{gl}(n,\C)=M(n,\C)$ by
\[
\langle X,Y \rangle=\mathrm{tr}(XY)\ \text{for}\ X,Y\in \g_{\C}.
\]
By this bilinear form, we identify $\g_{\C}$ with its dual space $\g_{\C}^{*}$. The dual basis $\{E_{ij}^{*}\}$ of $\{E_{ij}\}$ is given by $E_{ij}^{*}=E_{ji}$. For simplicity, we write $e_{i}=E_{ii}^{*}$.

We consider the Lie algebra  
\[\mathfrak{a}=\{\sum_{i=1}^{n}a_iE_{ii}\ |\ a_i\in\R,\ i=1,\ldots,n\},
\]  
of $A$. Then the root system of  $\mathfrak{a}$ in $\g$ is 
\[
\bigtriangleup(\g,\ah)=\{e_i-e_j\ |\ 1\le i\neq j\le n\}.
\]
We put $\alpha_i=e_{i+1}-e_{i}$ for $i=1,\ldots,n-1$ and fix a simple system of  $\bigtriangleup(\g,\ah)$ as 
\[\Pi(\g,\ah)=\{\alpha_1,\ldots,\alpha_{n-1}\}.
\]
Then the positive system of $\bigtriangleup(\g,\ah)$ associated to $\Pi(\g,\ah)$ is $\bigtriangleup^{+}(\g,\ah)=\{e_i-e_j\ |\ 1\le j<i \le n\}$. Then the Lie algebra $\mathfrak{n}$ of $N$ is written by 
\begin{align*}\mathfrak{n}&=\sum_{\alpha\in \bigtriangleup^{+}(\g,\ah)}\g_{\alpha}\\
&=\sum_{i>j}\R E_{ij}
\end{align*}
 where $\g_{\alpha}=\{X\in \g\ |\ \textrm{ad}(H)X=\alpha (H)X\ \text{for}\ H\in\ah\}.$
On the other hand, let $\overline{N}$ be the group of upper triangular matrices with 1s on the diagonal entries.  Then the Lie algebra $\overline{\mathfrak{n}}$ of $\overline{N}$ is also written by
\begin{align*}
\overline{\mathfrak{n}}=&\sum_{\alpha\in \bigtriangleup^{+}(\g,\ah)}\g_{-\alpha}\\
&=\sum_{i<j}\R E_{ij}.
\end{align*}
Let $\Theta=\{n_{1},\ldots,n_{L}\}$ be a sequence of strictly increasing positive integers ending at $n$, i.e., $(0=n_0<)n_1<n_2<\cdots <n_{L}(=n)$. For this $\Theta$, the associated standard parabolic subgroup $P_{\Theta}$ can be defined as follows. Let 
\[
\ah_{\Theta}=\{\sum_{k=1}^{L}a_{k}\sum_{i=n_{k-1}+1}^{n_k}E_{ii}\ |\ a_{k}\in\R, k=1,\ldots,L\}.
\]
Let $L_{\Theta}$ be the centralizer of $\ah_{\Theta}$ in $G$, i.e.,
\[
L_{\Theta}=\left\{l=
\begin{pmatrix}
l_1&&&\\
&l_2&&\\
&&\ddots&\\
&&&l_{L}
\end{pmatrix}
\ |\ 
l_{i}\in GL(n_i-n_{i-1},\R)
\right\}
\]
and $\mathfrak{l}_{\Theta}$ its Lie algebra which is the centralizer of $\ah_{\Theta}$ in $\g$.
We put 
\[
\mathfrak{n}_{\Theta}=\sum_{\iota_{\Theta}(i)>\iota_{\Theta}(j)}\!\R E_{ij}
\]
where 
\begin{equation}\label{iota}
\iota_{\Theta}(\nu)=i\ \text{if}\ n_{i-1}<\nu\le n_i\ \text{for}\ i=1,\ldots,L.
\end{equation}
The corresponding analytic subgroup of $G$ is $N_{\Theta}=\exp\mathfrak{n}_{\Theta}$, i.e.,
\begin{multline*}
N_{\Theta}=\\
\left\{
n=\begin{pmatrix}
I_{n'_1}&&&&\\
N_{21}&I_{n'_2}&&&\\
N_{31}&N_{32}&I_{n'_3}&&\\
\vdots&\vdots&\vdots&\ddots&\\
N_{L1}&N_{L2}&N_{L3}&\cdots&I_{n'_L}
\end{pmatrix}
\ |\ N_{ij}\in M(n'_i,n'_j;\R), n'_i=n_{i}-n_{i-1}
\right\}.
\end{multline*}
Here $I_m$ denotes the identity matrix of size $m$ and $M(k,l;\R)$ denotes the space of matrices of size $k\times l$ with components in $\R$. We also define $\overline{\mathfrak{n}}_{\Theta}=\sum_{\iota_{\Theta}(i)<\iota_{\Theta}(j)}\R E_{ij}$ and $\overline{N}_{\Theta}=\exp \overline{\mathfrak{n}}_{\Theta}$ as well.

Then we define the parabolic subgroup $P_{\Theta}=L_{\Theta}N_{\Theta}$, i.e.,
\[
P_{\Theta}=\left\{
p=\begin{pmatrix}
g_{1}&&&\\
*&g_2&&\\
\vdots&\vdots&\ddots&\\
*&*&\cdots&g_{L}
\end{pmatrix}
\in GL(n,\R)
\ |\ g_{i}\in GL(n_{i}-n_{i-1},\R)
\right\}.
\]
Its Lie algebra is written as $\mathfrak{p}_{\Theta}=\mathfrak{l}_{\Theta}\oplus \mathfrak{n}_{\Theta}$.

For $(\lambda_1,\lambda_2,\cdots,\lambda_{L})\in \C^{L}$, we define a $1$-dimensional representation of $P_{\Theta}$, $\lambda\colon P_{\Theta}\rightarrow \C^{\times}$ as follows,
\[
\lambda(p)=|\det(g_1)|^{\lambda_1}|\det(g_2)|^{\lambda_2}\cdots |\det(g_L)|^{\lambda_L},\text{for}\ p\in P_{\Theta}.
\]
 We define a spherical degenerate principal series representation of $G$, denote by $\pi_{\Theta,\lambda}=C^{\infty}\!\textrm{-ind}_{P_{\Theta}}^{G}(\lambda)$. The underlying representation space is 
\[
C^{\infty}(G/P_{\Theta};\lambda)=\{
\phi\in C^{\infty}(G)\ |\ \phi(gp)= \lambda(p)\phi(g),\ g\in G, p\in P_{\Theta}\}
\]
where $C^{\infty}(G)$ is the space of $C^{\infty}$-functions on $G$. The action of $G$ on this space is defined by the left translation, $\pi_{\Theta,\lambda}(g)\phi(x)=\phi(g^{-1}x)$ for $g\in G$ and $\phi\in C^{\infty}(G/P_{\Theta};\lambda)$. 

We consider the annihilator of  $C^{\infty}(G/P_{\Theta};\lambda)$ in the universal enveloping algebra. Let $U(\g)$ be the universal enveloping algebra of $\g_{\C}$. We can see $U(\g)$ as the ring of left $G$-invariant differential operators on $C^{\infty}(G)$ by the natural extent ion of the differentiation of the right translation,
\[ 
 R_{X}(f)(g)=\frac{d}{dt}f(g\exp(tX))|_{t=0}.
\]
for $X\in\g, f\in C^{\infty}(G)$.
The representation of $U(\g)$ on $C^{\infty}(G/P_{\Theta};\lambda)$ is defined by the differentiation of $\pi_{\Theta,\lambda}$, i.e., for $X\in \g,\ \phi \in C^{\infty}(G/P_{\Theta};\lambda)$, $\pi_{\Theta,\lambda}(X)\phi(x)=\frac{d}{dt}\phi(\exp{(-tX)}x)|_{t=0}$. 

Let $L_{g}$ and $R_{g}$ be the left and right translations by $g\in G$ respectively, i.e., $L_{g}f(x)=f(g^{-1}x)$ and $R_{g}f(x)=f(xg)$ for $f\in C^{\infty}(G)$.

\begin{df}
We define the annihilator of $C^{\infty}(G/P_{\Theta};\lambda)$ in $U(\g)$ by
\[
\mathrm{Ann}_{U(\g)}(\pi_{\Theta,\lambda})=\{X\in U(\g)\ ;\ \pi_{\Theta,\lambda}(X)\phi(x)=0,\ \text{for all}\ \phi \in C^{\infty}(G/P_{\Theta};\lambda) 
\}. 
\]
This is a two-sided ideal of $U(\g)$.

\end{df}
We consider an antiautomorphism $\iota$ of $U(\g)$ defined by $\iota(XY)=(-Y)(-X)$ for $X,Y\in \g_{\C}$. We denote the differentiation of $\lambda$ by $d\lambda\colon \mathfrak{p}_{\Theta}\rightarrow \C$. Although the proposition below is a well-known fact, we give a proof for the completeness of the paper.
\begin{prop}\label{ann}
The annihilator of $\pi_{\Theta,\lambda}$ is written as follows,
\[
\iota(\mathrm{Ann}_{U(\g)}(\pi_{\Theta,\lambda})) = \displaystyle\bigcap _{g\in G} \mathrm{Ad}(g)J_{\Theta}(d\lambda).
\]
Here
\[
J_{\Theta}(d\lambda)=\sum_{X\in\mathfrak{p}_{\Theta}}U(\g)(X-d\lambda(X))
\]
is a left ideal of $U(\g)$.
\end{prop}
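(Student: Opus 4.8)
The plan is to unwind the definitions and translate the condition ``$\pi_{\Theta,\lambda}(X)\phi = 0$ for all $\phi$'' into a statement about right-invariant differential operators, then use the $G$-homogeneity of the space $C^\infty(G/P_\Theta;\lambda)$ to reduce to a condition at the identity coset. First I would fix notation: for $X\in U(\g)$ let $R_X$ denote right differentiation and $L_g$, $R_g$ left and right translation, as in the excerpt. The key elementary observation is the relation between the left action $\pi_{\Theta,\lambda}$ and right differentiation. Since $\pi_{\Theta,\lambda}$ acts by left translation, for $X\in\g$ we have $\pi_{\Theta,\lambda}(X)\phi(x) = \frac{d}{dt}\phi(\exp(-tX)x)|_{t=0}$, and comparing with $R_X$ one checks on generators and extends multiplicatively (using the antiautomorphism $\iota$ built precisely so that $X\mapsto -X$ on $\g$ reverses order) that $\pi_{\Theta,\lambda}(X)\phi = R_{\iota(X)}\phi$ up to the standard sign bookkeeping; in any case, $\mathrm{Ann}_{U(\g)}(\pi_{\Theta,\lambda})$ corresponds under $\iota$ to $\{Y\in U(\g)\mid R_Y\phi = 0\ \forall\phi\in C^\infty(G/P_\Theta;\lambda)\}$. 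So it suffices to identify the latter set with $\bigcap_{g\in G}\Ad(g)J_\Theta(d\lambda)$.

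The core step is the pointwise analysis. Fix $g\in G$ and $Y\in U(\g)$. I claim $R_Y\phi(g) = 0$ for every $\phi\in C^\infty(G/P_\Theta;\lambda)$ if and only if $Y\in \Ad(g)J_\Theta(d\lambda)$. For the direction ``$Y\in\Ad(g)J_\Theta(d\lambda)\Rightarrow R_Y\phi(g)=0$'': it is enough to treat $Y = \Ad(g)(Z\cdot(X - d\lambda(X)))$ with $Z\in U(\g)$ and $X\in\mathfrak{p}_\Theta$, since these span the left ideal $J_\Theta(d\lambda)$ and $\Ad(g)$ is an algebra automorphism. Using the compatibility of $\Ad$ with right translation — concretely $R_{\Ad(g)Y}\phi(g) = $ (right differentiation of $\phi$ along the $g$-conjugated flow, evaluated at $g$), which equals $R_Y(\phi\circ L_{?})$ type identity — one reduces to showing $R_{X-d\lambda(X)}\phi(e) = 0$, i.e. $\frac{d}{dt}\phi(\exp(tX))|_{t=0} = d\lambda(X)\phi(e)$ for $X\in\mathfrak{p}_\Theta$; but this is immediate by differentiating the defining covariance $\phi(p) = \lambda(p)\phi(e)$ for $p\in P_\Theta$ along $p = \exp(tX)$. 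For the converse, I would argue that the restriction map $\phi\mapsto\phi(e)$ from $C^\infty(G/P_\Theta;\lambda)$ to $\C$ is surjective (pick a bump function and average appropriately against the $P_\Theta$-action, or invoke that $C^\infty(G/P_\Theta;\lambda)$ contains the $K$-finite vectors which include a nonzero $K$-spherical one), and more generally that jets of $\phi$ at $e$ transverse to $\mathfrak{p}_\Theta$ are unconstrained: the transversal slice given by $\overline{\mathfrak n}_\Theta$ (the Lie algebra of $\overline N_\Theta$) gives a local diffeomorphism $\overline N_\Theta\times P_\Theta\to G$ near $e$, so $\phi$ is freely prescribable on the $\overline N_\Theta$-directions. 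Hence the only relations among the values $R_Y\phi(e)$ as $\phi$ ranges over the whole space are exactly those forced by the $\mathfrak{p}_\Theta$-covariance, which is precisely the statement that $R_Y\phi(e)=0$ for all $\phi$ iff $Y\in J_\Theta(d\lambda)$. Conjugating by $g$ and intersecting over all $g\in G$ gives the proposition.

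The main obstacle, and the step deserving genuine care, is the converse direction: showing that $J_\Theta(d\lambda)$ is \emph{all} of $\{Y\mid R_Y\phi(e)=0\ \forall\phi\}$, not merely contained in it. This requires a separation-of-jets argument — given $Y\notin J_\Theta(d\lambda)$, produce a single $\phi$ with $R_Y\phi(e)\neq 0$. The clean way is via the PBW decomposition $U(\g) = U(\overline{\mathfrak n}_\Theta)\otimes U(\mathfrak p_\Theta)$ as a left $U(\overline{\mathfrak n}_\Theta)$-module: modulo $J_\Theta(d\lambda)$ every element of $U(\g)$ reduces uniquely to an element of $U(\overline{\mathfrak n}_\Theta)$ (replace each $X\in\mathfrak p_\Theta$ by the scalar $d\lambda(X)$), so $U(\g)/J_\Theta(d\lambda)\cong U(\overline{\mathfrak n}_\Theta)$ as vector spaces; one then needs that the ``Taylor coefficients along $\overline N_\Theta$'' functionals $\phi\mapsto R_{\overline Y}\phi(e)$, $\overline Y\in U(\overline{\mathfrak n}_\Theta)$, are linearly independent on $C^\infty(G/P_\Theta;\lambda)$, which follows from the local product structure $\overline N_\Theta\times P_\Theta\cong G$ near $e$ and the fact that an arbitrary smooth germ on $\overline N_\Theta$ extends to an element of the induced space. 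Everything else — the $\iota$-bookkeeping, the $\Ad(g)$-equivariance of right differentiation, the fact that $J_\Theta(d\lambda)$ is a left ideal — is routine once this transversality is in place.
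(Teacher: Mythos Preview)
Your PBW-plus-transversality argument for the hard direction --- reducing modulo $J_\Theta(d\lambda)$ to an element of $U(\overline{\mathfrak n}_\Theta)$ and then using the open cell $\overline N_\Theta P_\Theta$ to see that such an element acts faithfully --- is exactly what the paper does and is correct. The gap is earlier, in the bookkeeping that links the left action $\pi_{\Theta,\lambda}$ to right differentiation.

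The assertion ``$\pi_{\Theta,\lambda}(X)\phi = R_{\iota(X)}\phi$ up to standard sign bookkeeping'' is false: differentiating left translation produces \emph{right}-invariant vector fields, not left-invariant ones, so no antiautomorphism of $U(\g)$ alone converts $L_X$ into an $R_Y$ globally. The correct identity is pointwise and carries an $\Ad$-twist,
\[
L_X f(g)=R_{\Ad(g^{-1})\iota(X)}f(g),
\]
and \emph{this} is where the intersection $\bigcap_g \Ad(g) J_\Theta(d\lambda)$ enters. Consequently your pointwise claim ``$R_Y\phi(g)=0$ for all $\phi$ iff $Y\in\Ad(g)J_\Theta(d\lambda)$'' is wrong in both directions: since $R_Y$ commutes with left translation and the induced space is left-$G$-stable, the condition ``$R_Y\phi(g)=0$ for all $\phi$'' is independent of $g$ and is equivalent simply to $Y\in J_\Theta(d\lambda)$. (Concretely, for $G=GL(2,\R)$, $X=E_{21}\in\mathfrak p$, $g=\exp(sE_{12})$, the element $Y=\Ad(g)E_{21}$ lies in $\Ad(g)J_\Theta(d\lambda)$ but $R_Y\phi(g)$ contains a nonvanishing $R_{E_{12}}\phi$-contribution.)

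Once you replace your two erroneous steps by the single identity above, your outline becomes the paper's proof: $X\in\mathrm{Ann}$ iff $R_{\Ad(g^{-1})\iota(X)}\phi(g)=0$ for all $\phi,g$; for each fixed $g$ this reads $R_W\phi(g)=0$ for all $\phi$ with $W=\Ad(g^{-1})\iota(X)$, which by your (correct) analysis at the identity is equivalent to $W\in J_\Theta(d\lambda)$; intersecting over $g$ gives $\iota(X)\in\bigcap_g\Ad(g)J_\Theta(d\lambda)$.
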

\begin{proof}
For $X\in \mathfrak{p}_{\Theta}$ and $f\in C^{\infty}(G/P_{\Theta};\lambda)$, we have
\begin{equation}\label{migisayo}
\begin{aligned}
R_{X}f(g)&=\frac{d}{dt}f(g\cdot \exp{tX})| _{t=0}\\
&=\frac{d}{dt}\lambda(\exp{tX})|_{t=0} f(g).
\end{aligned}
\end{equation}
This implies $R_{X}f=0$ for $X\in J_{\Theta}(d\lambda)$. We recall the equation $L_{X}f(g)=R_{\text{Ad}(g^{-1})\iota(X)}f(g)$, $X\in U(\g)$. Since $X\in \bigcap_{g\in G}\text{Ad}(g)J_{\Theta}(d\lambda)$ implies $\Ad(g)X\in J_{\Theta}(d\lambda)$, we have
\[
L_{\iota(X)}f(g)=R_{\text{Ad}(g^{-1})X}f(g)=0,
\]  
for $X\in \bigcap_{g\in G}\text{Ad}(g)J_{\Theta}(d\lambda)$. Hence we have the inclusion $\bigcap_{g\in G}\mathrm{Ad}(g)J_{\Theta}(d\lambda)\subset \iota(\mathrm{Ann}_{U(\g)}(\pi_{\Theta,\lambda}))$.

On the other hand, we take $X\in \mathrm{Ann}_{U(\g)}(\pi_{\Theta,\lambda})$, and put $X_{g_{0}}=\text{Ad}(g_{0}^{-1})\iota(X)$ for $g_{0}\in G$. Then we have 
\begin{equation}\label{gizero}
\begin{aligned}
R_{X_{g_{0}}}f(g)&=L_{g_{0}g^{-1}}(R_{X_{g_{0}}}f)(g_{0})=R_{X_{g_{0}}}(L_{g_{0}g^{-1}}f)(g_{0})\\
&=L_{X}(L_{g_{0}g^{-1}}f)(g_{0})=L_{X}(\pi_{\Theta,\lambda}(g_{0}g^{-1})f)(g_{0})=0
\end{aligned}
\end{equation}
for $f\in C^{\infty}(G/P_{\Theta};\lambda)$. By the decomposition $\g=\overline{\mathfrak{n}}_{\Theta}\oplus \mathfrak{p}_{\Theta}$ and the Poincar\'e-Birkoff-Witt theorem, we have \[
U(\g)=U(\overline{\mathfrak{n}}_{\Theta})\oplus J_{\Theta}(d\lambda)
\] where $U(\overline{\mathfrak{n}}_{\Theta})$ is the universal enveloping algebra of $\overline{\mathfrak{n}}_{\Theta}\otimes_{\R}\C$. Hence there exist $Y\in U(\overline{\mathfrak{n}}_{\Theta})$ and $Z\in J_{\Theta}(d\lambda)$ such that $X_{g_{0}}=Y+Z$. By the equation (\ref{migisayo}), we have $R_{Z}f(g)=0$ for $g\in G$ and $f\in C^{\infty}(G/P_{\Theta};\lambda)$. Therefore the equation (\ref{gizero}) lead us that $0=R_{X_{g_{0}}}f(g)=R_{Y}f(g)$. We will show $Y=0$. Then this means $X_{g_{0}}\in J_{\Theta}(d\lambda)$. Therefore we can show the inclusion $\bigcap_{g\in G}\mathrm{Ad}(g)J_{\Theta}(d\lambda)\subset \iota(\mathrm{Ann}_{U(\g)}(\pi_{\Theta,\lambda}))$.

We consider the space of compactly supported $C^{\infty}$-functions on $\overline{N}_{\Theta}$, and denote it by $C^{\infty}_{o}(\overline{N}_{\Theta})$. For $g\in \overline{N}_{\Theta}P_{\Theta}$, we take $\bar{n}(g)\in \overline{N}_{\Theta}$ and $p(g)\in P_{\Theta}$ such that $g=\bar{n}(g)p(g)$. Then we have an injection
\[
\begin{array}{ccc}
C^{\infty}_{o}(\overline{N}_{\Theta})&\longrightarrow &C^{\infty}(G/P_{\Theta};\lambda)\\
f&\longmapsto&\begin{cases}\lambda(p(g))f(\bar{n}(g))&\text{if}\ g\in \overline{N}_{\Theta}P_{\Theta}\\
0&\text{otherwise}.
\end{cases}
\end{array}
 \]
By this injection, we can consider $C_{o}^{\infty}(\overline{N}_{\Theta})\subset C^{\infty}(G/P_{\Theta};\lambda)$. Therefore if we recall that $R_{Y}f(g)=0$ for $g\in G, f\in C^{\infty}(G/P_{\Theta};\lambda)$, we have
\[
R_{Y}f(\bar{n})=0\ \text{for}\ \bar{n}\in \overline{N}_{\Theta},f\in C_{o}^{\infty}(\overline{N}_{\Theta}).
\]
For any $\psi \in C^{\infty}(\overline{N}_{\Theta})$ and $\bar{n}\in \overline{N}_{\Theta}$, there exists $f\in C_{o}^{\infty}(\overline{N}_{\Theta})$ such that $\psi=f$ on some neighbourhood of $\bar{n}$ in $\overline{N}_{\Theta}$. Hence this implies 
\[
R_{Y}\psi(\bar{n})=0\ \text{for}\ \bar{n}\in \overline{N}_{\Theta},\psi\in C^{\infty}(\overline{N}_{\Theta}).
\]
Therefore $Y\in U(\overline{\mathfrak{n}}_{\Theta})$ must be 0, because of the fact that $U(\overline{\mathfrak{n}}_{\Theta})$ is identified with the ring of all left invariant differential operators in $\overline{N}_{\Theta}$. Hence $X_{g_{0}}\in J_{\Theta}(\lambda)$ for any $g_{0}\in G$. This complete the proof.
\end{proof}

\subsection{The Poisson transform for the degenerate principal series representation.}\label{Poisson sec}
For simplicity we write $I_{\Theta}(\lambda)=\bigcap _{g\in G} \mathrm{Ad}(g)J_{\Theta}(d\lambda)$. Then we will see that this ideal $I_{\Theta}(\lambda)$ characterizes the image of the Poisson transform from the degenerate principal series. To explain this fact, we should extend the representation space of the degenerate principal series to the space of hyperfunctions on $G$.

The space $\mathcal{B}(G)$ of hyperfunctions on $G$ is a left $G$-module by the left translation $G\times \mathcal{B}(G)\ni (g,f(x))\mapsto f(g^{-1}x)$. We take a parabolic subgroup $P_{\Theta}$ of $G$. Also we take a character $\lambda\colon P_{\Theta}\rightarrow \C^{\times}$ for $(\lambda_1,\cdots,\lambda_{L})\in \C^{L}$. Then we can define a $G$-submodule 
\[
\mathcal{B}(G/P_{\Theta};\lambda)=\{f\in \mathcal{B}(G)\ |\ f(xp)=\lambda(p)f(x)\ \text{for}\ p\in P_{\Theta}\},
\]
as in Section \ref{defofprinc}.
Let $M=\{k\in K\mid kak^{-1}=a,\, a\in A\}$, then we can define the minimal parabolic subgroup $P_{o}=P_{\{1,2,\cdots,n\}}=MAN$. We define a character of $P_{o}$ by
\[
\begin{array}{cccc}
\lambda_{\Theta}\colon &P_{o}&\longrightarrow &\C^{\times}\\
&man&\longmapsto &\displaystyle\prod_{i=1}^{L}\prod_{j=n_i+1}^{n_{i+1}}a_{j}^{\lambda_{i}},
\end{array}
\]
for $m\in M,a\in A,n\in N$.
Now we introduce the Poisson transform of $\mathcal{B}(G/P_{o};\lambda_{\Theta})$. 
\begin{df}
The Poisson transform is a $G$-homomorphism
\[
\begin{array}{cccc}
\mathcal{P}^{\lambda}\colon &\mathcal{B}(G/P_{o};\lambda_{\Theta})&\longrightarrow &\mathcal{B}(G/K)\\
&f&\longmapsto &F(x)=\displaystyle\int_{K}f(xk)\,dk,\quad x\in G.
\end{array}
\]
Here $dk$ is the normalized Haar measure on $K$ so that $\int_{K}dk=1$.
\end{df}
We define a character of the center $Z(\g)$ of $U(\g)$. Let $d\lambda_{\Theta}\colon \mathrm{Lie}(P_{o})\rightarrow \C$ be the differentiation of $\lambda_{\Theta}$. By the restriction to $\ah\subset \mathrm{Lie}(P_{o})$, we can regard $d\lambda_{\Theta}\in \ah_{\C}^{*}$.
 Let $\omega$ be a projection map from $U(\g)$ to the symmetric algebra $S(\ah)$ of $\ah_{\C}=\ah\otimes_{\R}\C$ along the decomposition
\[
U(\g)=S(\ah)\oplus (\overline{\mathfrak{n}}U(\g)+U(\g)\mathfrak{n}).
\]
It is known that $\omega$ is an algebra homomorphism from $Z(\g)$ into $S(\ah)$. We can identify the symmetric algebra $S(\ah)$ with the algebra of polynomials on $\ah^{*}_{\C}$. Hence if we consider the evaluation of $\omega(\cdot)\in S(\ah)$ at $d\lambda_{\Theta}$, we obtain a character of $Z(\g)$ as follows
\[
\chi_{\lambda}\colon Z(\g)\ni X \longmapsto \omega(X)(d\lambda_{\Theta})\in \C.
\]

We define a subspace of $C^{\infty}(G/K)$ by
\[
C^{\infty}(G/K;\mathcal{M}_{\lambda})=\{f\in C^{\infty}(G/K)\ |\ R_{X}f=\chi_{\lambda}(X)f\ \text{for}\ X\in Z(\g)\}.
\]
We put
\[
e(\lambda_{\Theta})=\prod_{\alpha\in \bigtriangleup^{+}(\g,\ah)}\Gamma\left(\frac{1}{4}(3+\frac{2\langle \lambda_{\Theta},\alpha\rangle}{\langle \alpha,\alpha\rangle})\right)^{-1}\Gamma\left(\frac{1}{4}(1+\frac{2\langle \lambda_{\Theta},\alpha\rangle}{\langle \alpha,\alpha\rangle})\right)^{-1}.
\]
The following theorem is known as  Helgason's conjecture \cite{Hel-1}.
\begin{thm}[\cite{K-et.al}]\label{poisson}
The Poisson transform $\mathcal{P}^{\lambda}$ gives $G$-isomorphism 
\[
\mathcal{B}(G/P_{o};\lambda_{\Theta})\cong C^{\infty}(G/K;\mathcal{M}_{\lambda})\]
if and only if $e(\lambda_{\Theta})\neq 0$ . 
\end{thm}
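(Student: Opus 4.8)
The plan is to reproduce the strategy of Kashiwara--Kowata--Minemura--Okamoto--Oshima--Tanaka \cite{K-et.al}. First one verifies that $\mathcal{P}^{\lambda}$ is well defined as a $G$-homomorphism into $C^{\infty}(G/K;\mathcal{M}_{\lambda})$: for fixed $k\in K$ the function $x\mapsto f(xk)$ is, when $f\in\mathcal{B}(G/P_{o};\lambda_{\Theta})$, a joint eigenhyperfunction of $Z(\g)$ for the character $\chi_{\lambda}$ (since, by the very definition of $\chi_\lambda$ through $\omega$, $Z(\g)$ acts on $\mathcal{B}(G/P_{o};\lambda_{\Theta})$ through $\chi_{\lambda}$), averaging over $K$ commutes with $R_{X}$ for $X\in Z(\g)$, and ellipticity of the Casimir operator forces the resulting right-$K$-invariant eigenhyperfunction $F=\mathcal{P}^{\lambda}(f)$ to be smooth (in fact real analytic). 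So the whole content is the equivalence between bijectivity of $\mathcal{P}^{\lambda}$ and $e(\lambda_{\Theta})\neq 0$, and both implications rest on the same asymptotic analysis of solutions of $\mathcal{M}_{\lambda}$ near the boundary.

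For $e(\lambda_{\Theta})\neq 0\Rightarrow$ bijective, one introduces a boundary value map inverting $\mathcal{P}^{\lambda}$ up to a nonzero scalar. Realizing $G/P_{o}$ as the distinguished boundary inside a suitable compactification of $G/K$ and restricting solutions of $\mathcal{M}_{\lambda}$ to $\overline{N}$-type coordinates transverse to it, $\mathcal{M}_{\lambda}$ becomes a holonomic system with regular singularities along the boundary strata; its indicial exponents along the walls are governed by the orbit $\{w\lambda_{\Theta}:w\in W\}$ of the Weyl group $W$ of $\bigtriangleup(\g,\ah)$, and the nonvanishing of $e(\lambda_{\Theta})$ translates exactly into a nonresonance condition on these exponents (no two differ by a nonzero element of the relevant lattice), so the formal solutions attached to distinct exponents do not interact. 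By the boundary value theory for regular-singular holonomic systems (Kashiwara--Oshima), each $F\in C^{\infty}(G/K;\mathcal{M}_{\lambda})$ then admits a leading hyperfunction boundary value $b(F)\in\mathcal{B}(G/P_{o};\lambda_{\Theta})$. Computing the asymptotic expansion of the Poisson kernel along the boundary — where the Harish-Chandra $c$-function, whose relevant nonvanishing is precisely encoded by $e(\lambda_{\Theta})\neq 0$, appears as the leading coefficient — yields $b\circ\mathcal{P}^{\lambda}=c(\lambda)\cdot\mathrm{id}$ up to normalization, and hence injectivity of $\mathcal{P}^{\lambda}$ at once.

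Surjectivity is the main obstacle. Given $F\in C^{\infty}(G/K;\mathcal{M}_{\lambda})$, put $F'=F-\mathcal{P}^{\lambda}(c(\lambda)^{-1}b(F))$; then $F'$ solves $\mathcal{M}_{\lambda}$ and has vanishing leading boundary value, and one must deduce $F'=0$. This is a uniqueness theorem for the regular-singular holonomic system $\mathcal{M}_{\lambda}$: a solution whose boundary data vanish along the open stratum $G/P_{o}$ is controlled, stratum by stratum, by its higher boundary values along the lower $W$-strata, and in the nonresonant regime these contributions cannot be created once the leading datum is killed, so propagating the vanishing inward through all $|W|$ strata forces $F'\equiv 0$, i.e. $F=\mathcal{P}^{\lambda}(c(\lambda)^{-1}b(F))$. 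The technical heart of the proof — and where the hypothesis $e(\lambda_{\Theta})\neq 0$ is genuinely used — is thus establishing that $\mathcal{M}_{\lambda}$ is holonomic with regular singularities on a good compactification of $G/K$ and carrying out its boundary value analysis; this rests on the explicit description of $\mathcal{M}_{\lambda}$ via the Harish-Chandra homomorphism $\omega\colon Z(\g)\to S(\ah)^{W}$, on control of the characteristic variety, and on the machinery of \cite{K-et.al}. Finally, for the converse $e(\lambda_{\Theta})=0\Rightarrow$ not bijective, the same analysis shows that when a $\Gamma$-factor of $e(\lambda_{\Theta})$ has a pole the coefficient $c(\lambda)$ relating $b$ and $\mathcal{P}^{\lambda}$ degenerates, producing either a nonzero hyperfunction supported on a proper $W$-substratum of $G/P_{o}$ lying in $\Ker\mathcal{P}^{\lambda}$ or a joint eigenfunction outside the image, so $\mathcal{P}^{\lambda}$ cannot be a $G$-isomorphism.
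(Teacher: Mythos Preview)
The paper does not prove this theorem at all; it is quoted verbatim as a result of Kashiwara--Kowata--Minemura--Okamoto--Oshima--Tanaka \cite{K-et.al} (Helgason's conjecture) and used as a black box. What you have written is therefore not to be compared with any argument in the present paper, but rather is a sketch of the strategy of \cite{K-et.al} itself.

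As such a sketch, your outline is broadly faithful to the original: the reduction to a holonomic system with regular singularities on a compactification of $G/K$, the identification of the indicial exponents with the Weyl orbit of $\lambda_\Theta$, the interpretation of $e(\lambda_\Theta)\neq 0$ as a nonresonance/nonvanishing-of-$c$-function condition, and the construction of a boundary value map inverse to $\mathcal{P}^{\lambda}$ up to the $c$-factor are all correct in spirit. Two caveats are worth flagging. First, your surjectivity paragraph understates the difficulty: the vanishing of the leading boundary value does not by itself propagate inward stratum by stratum in an elementary way; in \cite{K-et.al} this step relies on the full microlocal machinery (the Kashiwara--Oshima theory of systems with regular singularities along a normal-crossing boundary and a careful analysis of the $\mathcal{A}$-module structure of solutions), and your phrase ``cannot be created once the leading datum is killed'' hides the real work. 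Second, for the converse direction you assert that a zero of $e(\lambda_\Theta)$ produces either a kernel element or a cokernel element, but you do not indicate which actually occurs or why; in fact the failure is of injectivity (one exhibits a nonzero $f$ supported on a lower-dimensional $K$-orbit in $G/P_o$ with $\mathcal{P}^\lambda f=0$), and making this precise again requires the boundary value analysis rather than a soft degeneracy argument. None of this is a gap in the sense of a wrong idea, but as written the proposal is a high-level roadmap rather than a proof.
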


We can also define the Poisson transform for the subspace $\mathcal{B}(G/P_{\Theta};\lambda)$ of $\mathcal{B}(G/P_{o};\lambda_{\Theta})$. We discuss the characterization of the image of $\mathcal{B}(G/P_{\Theta};\lambda)$. We consider the subspace
\[
C^{\infty}(G/K;I_{\Theta}(\lambda))
=\{f\in C^{\infty}(G/K)\ |\ R_{X}f=0\ \text{for}\ X\in I_{\Theta}(\lambda)\}
\]
of $C^{\infty}(G/K;\mathcal{M}_{\lambda})$.
\begin{rem}
We can easily show that 
\[
I_{\Theta}(\lambda)\supset \sum_{D\in Z(\g)}U(\g)(D-\omega(D)(\lambda_{\Theta}))\]
(cf. Remark 4.3 in \cite{O-3}). Hence actually $C^{\infty}(G/K;I_{\Theta}(\lambda))$ is a subspace of $C^{\infty}(G/K;\mathcal{M}_{\lambda})$.
\end{rem}

We assume

\begin{center}
$\lambda_{\Theta}+\rho\in \ah_{\C}^{*}$ is regular and dominant.
\end{center}
Here $\rho=\frac{1}{2}\mathrm{tr}(\mathrm{ad}|_{\mathfrak{n}})\in \ah_{\C}^{*}$, i.e.,
\[
\rho=\frac{1}{2}\sum_{1\le i<j\le n}(e_j-e_i)=\sum_{i=1}^{n}(i-\frac{n+1}{2})e_{i}.
\] 
This assumption is equivalent to 
\[
\frac{2\langle \lambda_{\Theta},\alpha\rangle}{\langle\alpha,\alpha\rangle}\notin\{0,-1,-2,\cdots\}\ \text{for}\ \alpha\in \bigtriangleup^{+}(\g,\ah),
\]
i.e.,
\[
(\lambda_{j}+\nu_{j})-(\lambda_{i}+\nu_{i})\notin \{0,-1,-2,\cdots\} 
\]
for $i<j$ and $\nu_{k}$ are integers which satisfy $n_{k-1}+1\le \nu_{k}\le n_{k}\ (k=i\ \text{or}\ j)$. We keep this assumption all through the remaining of this paper.

\begin{thm}[Oshima. Theorem 5.1 in \cite{O-3}]\label{Poisson}
Under the above assumption, the Poisson tranform 
\[
\begin{array}{ccc}
\mathcal{P}_{\Theta}^{\lambda}\colon \mathcal{B}(G/P_{\Theta};\lambda)&\longrightarrow &C^{\infty}(G/K,I_{\Theta}(\lambda))\\
f&\longmapsto &F(x)=\displaystyle\int_{K}f(xk)\,dk,\quad x\in G.
\end{array}
\]
is a $G$-isomorphism.
\end{thm}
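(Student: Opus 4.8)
The plan is to derive Theorem \ref{Poisson} from the already-stated Helgason-conjecture theorem (Theorem \ref{poisson}) together with Proposition \ref{ann}, by checking that the Poisson transform $\mathcal{P}^{\lambda}$ restricts to a bijection between the subspace $\mathcal{B}(G/P_{\Theta};\lambda)$ of $\mathcal{B}(G/P_{o};\lambda_{\Theta})$ and the subspace $C^{\infty}(G/K;I_{\Theta}(\lambda))$ of $C^{\infty}(G/K;\mathcal{M}_{\lambda})$. First I would note that the regularity-and-dominance assumption on $\lambda_{\Theta}+\rho$ forces $e(\lambda_{\Theta})\neq 0$ (each Gamma-factor is finite and nonzero, since its argument avoids the poles of $\Gamma$ exactly under the stated condition $2\langle\lambda_{\Theta},\alpha\rangle/\langle\alpha,\alpha\rangle\notin\{0,-1,-2,\dots\}$), so Theorem \ref{poisson} applies and $\mathcal{P}^{\lambda}\colon\mathcal{B}(G/P_{o};\lambda_{\Theta})\to C^{\infty}(G/K;\mathcal{M}_{\lambda})$ is already a $G$-isomorphism. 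It therefore suffices to identify which elements on each side correspond under this fixed isomorphism.

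Second, I would show the inclusion $\mathcal{P}^{\lambda}(\mathcal{B}(G/P_{\Theta};\lambda))\subset C^{\infty}(G/K;I_{\Theta}(\lambda))$. For $f\in\mathcal{B}(G/P_{\Theta};\lambda)$ and $X\in I_{\Theta}(\lambda)=\bigcap_{g}\Ad(g)J_{\Theta}(d\lambda)$, one uses $R_{X}f=0$: by Proposition \ref{ann}, $\iota(X)$ (equivalently $X$ itself, since $I_{\Theta}(\lambda)$ is already the intersection of $\Ad(g)$-translates and $\iota$ acts within $J_{\Theta}(d\lambda)$ appropriately — or more directly, the computation in \eqref{migisayo} shows $R_{Z}f=0$ for $Z\in J_{\Theta}(d\lambda)$ and hence for $Z\in\Ad(g)J_{\Theta}(d\lambda)$ after the standard $L$–$R$ conjugation identity $L_{g^{-1}}R_{Z}f = R_{\Ad(g)Z}L_{g^{-1}}f$) annihilates $f$. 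Since $\mathcal{P}^{\lambda}$ commutes with $R_{X}$ for $X\in U(\g)$ (the Poisson integral is over $K$ on the right, which commutes with left-invariant differential operators acting by $R_X$ — here one must be slightly careful and instead use that $\mathcal{P}^\lambda$ is built from right translations by $K$ and that $R_X$ and $R_k$ combine in the standard way), we get $R_{X}\mathcal{P}^{\lambda}(f)=\mathcal{P}^{\lambda}(R_{X}f)=0$, so $\mathcal{P}^{\lambda}(f)\in C^{\infty}(G/K;I_{\Theta}(\lambda))$.

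Third, for the reverse inclusion I would argue by dimension/support of the boundary value, or more cleanly by a surjectivity argument: given $F\in C^{\infty}(G/K;I_{\Theta}(\lambda))\subset C^{\infty}(G/K;\mathcal{M}_{\lambda})$, Theorem \ref{poisson} produces a unique $f\in\mathcal{B}(G/P_{o};\lambda_{\Theta})$ with $\mathcal{P}^{\lambda}(f)=F$, and one must show $f\in\mathcal{B}(G/P_{\Theta};\lambda)$, i.e. $f(xp)=\lambda(p)f(x)$ for $p\in P_{\Theta}$. The key point is that the extra equations $R_{X}F=0$ for $X\in J_{\Theta}(d\lambda)\setminus Z(\g)$-part translate, via injectivity of $\mathcal{P}^{\lambda}$ applied to the $G$-module generated by $f$ under right $P_{\Theta}$-translation, into $R_{X-d\lambda(X)}f=0$ for $X\in\mathfrak{p}_{\Theta}$; integrating these first-order ODEs along $P_{\Theta}$ (which is connected up to the component group, handled separately since $\lambda$ is already defined on all of $P_{\Theta}$) gives the required transformation law. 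Concretely: for $X\in\mathfrak{p}_{\Theta}$, the function $h_X(x) := R_{X}f(x) - d\lambda(X)f(x)$ satisfies $\mathcal{P}^{\lambda}(h_X) = R_X F - d\lambda(X) F$; I would show this vanishes using that $F$ is killed by $I_{\Theta}(\lambda)$ and that $X - d\lambda(X)$ generates $J_{\Theta}(d\lambda)$, hence lies in $\Ad(g)J_\Theta(d\lambda)$ after averaging — this is the delicate part, since $X - d\lambda(X)$ itself need not lie in the intersection $I_\Theta(\lambda)$. The honest route here is the one Oshima uses: take the $U(\g)$-submodule of $\mathcal{B}(G/P_o;\lambda_\Theta)$ generated by $f$, observe its image under $\mathcal{P}^\lambda$ is annihilated by $I_\Theta(\lambda)$, hence by Proposition \ref{ann} (applied with $P_o$ in place of $P_\Theta$ for the domain side, and the two-sided-ideal structure) the submodule is itself annihilated by the corresponding ideal, forcing $f$ to transform by $\lambda$ under $P_\Theta$.

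\textbf{Main obstacle.} The genuinely hard step is the third one: transferring the differential-operator annihilation $R_X F = 0$ ($X\in I_\Theta(\lambda)$) back across the Poisson isomorphism into the \emph{pointwise} covariance $f(xp)=\lambda(p)f(x)$ on the boundary. The subtlety is that $I_\Theta(\lambda) = \bigcap_g \Ad(g) J_\Theta(d\lambda)$ is strictly smaller than $J_\Theta(d\lambda)$, so one cannot directly read off $R_{X-d\lambda(X)}f = 0$ for each $X\in\mathfrak{p}_\Theta$; instead one must exploit $G$-equivariance of $\mathcal{P}^\lambda$ and the two-sided-ideal structure from Proposition \ref{ann} to promote the weaker statement to the stronger one. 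Managing the boundary values of hyperfunctions carefully (so that "$\mathcal{P}^\lambda$ is injective" can be invoked on the relevant submodule) is where the real work lies; everything else is formal manipulation of left- and right-invariant operators and the observation that $e(\lambda_\Theta)\neq 0$ under our standing assumption.
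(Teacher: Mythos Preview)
The paper does not give its own proof of this theorem: it is quoted verbatim as a result of Oshima (Theorem~5.1 in \cite{O-3}) and used as a black box. So there is no ``paper's proof'' to compare against; what I can do is assess whether your outline would actually reprove Oshima's theorem from the ingredients available in the paper.

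Your steps~1 and~2 are fine in spirit (though in step~2 the clean argument is not that $\mathcal{P}^{\lambda}$ commutes with $R_X$, but rather that it intertwines the \emph{left} $G$-actions, so $L_{\iota(X)}\mathcal{P}^{\lambda}(f)=\mathcal{P}^{\lambda}(L_{\iota(X)}f)=0$ for $X\in I_{\Theta}(\lambda)$ by Proposition~\ref{ann}; then one converts $L_{\iota(X)}$ to $R_{\Ad(g^{-1})X}$ and uses the $\Ad(G)$-invariance of $I_{\Theta}(\lambda)$ to conclude $R_X\mathcal{P}^{\lambda}(f)=0$). Step~3, however, is where your sketch does not close, and you correctly flag this yourself. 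The difficulty is real: knowing only that $F$ is killed by the two-sided ideal $I_{\Theta}(\lambda)=\bigcap_g\Ad(g)J_{\Theta}(d\lambda)$ does not, by soft arguments alone, force the boundary value $f\in\mathcal{B}(G/P_o;\lambda_{\Theta})$ to satisfy $R_{X-d\lambda(X)}f=0$ for each individual $X\in\mathfrak{p}_{\Theta}$, since $X-d\lambda(X)$ lies in $J_{\Theta}(d\lambda)$ but generally not in the intersection $I_{\Theta}(\lambda)$. Your ``honest route'' paragraph gestures at using the $U(\g)$-module generated by $f$, but this is circular as written: you would need to know that annihilation by $I_{\Theta}(\lambda)$ on the boundary already forces the $P_{\Theta}$-covariance, which is exactly what is to be proved.

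What Oshima's proof actually uses, and what is missing from the tools stated in this paper, is the \emph{explicit} generator system for $I_{\Theta}(\lambda)$ (this is Theorem~\ref{generator1} here, which is also only cited) together with an analysis of the induced system of differential equations on the boundary, plus the fact that under the regular-dominant hypothesis the generalized Verma module $U(\g)/J_{\Theta}(d\lambda)$ is irreducible, so that its annihilator coincides with $I_{\Theta}(\lambda)$. The surjectivity in step~3 ultimately rests on this algebraic input about Verma modules and on boundary-value theory for the system, not on the formal manipulations you outline. In short: your diagnosis of the obstacle is accurate, but the sketch does not supply the missing idea, and the paper itself does not either---it defers entirely to \cite{O-3}.
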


\subsection{The explicit generators of $I_{\Theta}(\lambda)$.}
In the previous section, we see that a degenerate principal series representation has a realization on the subspace of $C^{\infty}(G/K)$ which is the kernel of the annihilator ideal $I_{\Theta}(\lambda)$.
In \cite{O-1},\cite{O-2} and \cite{O-3}, T.Oshima obtained several good generator systems of $I_{\Theta}(\lambda)$. We introduce one of his generators here.

We denote the space of $n\times n$ matrices with entries in $U(\g)$ by $M(n;U(\g))$. For $\mathbb{E}=(E_{ij})_{ij}\in M(n;U(\g))$, we define elements in $Z(\g)$ by
\[
\Delta_{k}=\mathrm{tr}(\mathbb{E}^{k}),\quad \text{for}\ k=1,\ldots,n.
\]
Then it is known that $Z(\g)\cong \C[\bigtriangleup_1,\ldots,\bigtriangleup_n]$ as $\C$-algebras. 
\begin{thm}[Oshima. Corollary 4.6 in \cite{O-3}]\label{generator1}
Asuume $\lambda_{\Theta}+\rho\in \ah_{\C}^{*}$ is regular and dominant. Then we have
\[
I_{\Theta}(\lambda)=\sum_{i=1}^{n}\sum_{j=1}^{n}U(\g)\prod_{k=1}^{L}(\mathbb{E}-\lambda_{k}-n_{k-1})_{ij}+\sum_{k=1}^{L-1}U(\g)(\bigtriangleup_k-\chi_{\lambda}(\bigtriangleup_k)).
\]
\end{thm}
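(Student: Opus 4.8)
The plan is to establish the two inclusions separately. By Proposition~\ref{ann}, applied to our $\lambda$, it is equivalent to prove the identity with $I_{\Theta}(\lambda)$ replaced everywhere by $\bigcap_{g\in G}\Ad(g)J_{\Theta}(d\lambda)$. Write $I'$ for the right-hand side of the asserted equality, i.e.\ the left ideal $\sum_{i,j}U(\g)\,\mathcal M_{ij}+\sum_{k=1}^{L-1}U(\g)\bigl(\Delta_{k}-\chi_{\lambda}(\Delta_{k})\bigr)$, where $\mathcal M_{ij}:=\bigl(\prod_{k=1}^{L}(\mathbb{E}-\lambda_{k}-n_{k-1})\bigr)_{ij}$; that $I'$ is in fact a two-sided ideal will follow a posteriori from the theorem.

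\emph{Step 1: $I'\subseteq I_{\Theta}(\lambda)$.} The central generators lie in $I_{\Theta}(\lambda)$ at once: $\Delta_{k}\in Z(\g)$ and $\chi_{\lambda}(\Delta_{k})=\omega(\Delta_{k})(d\lambda_{\Theta})$ by definition, which is the scalar by which $\Delta_{k}$ acts on $C^{\infty}(G/P_{\Theta};\lambda)$ (the Remark preceding Theorem~\ref{Poisson}). For the matrix entries I would argue in two steps. First, by the splitting $U(\g)=U(\overline{\n}_{\Theta})\oplus J_{\Theta}(d\lambda)$ from the proof of Proposition~\ref{ann}, one has $\mathcal M_{ij}\in J_{\Theta}(d\lambda)$ if and only if $\mathcal M_{ij}$ annihilates the canonical generator $v_{0}$ of the scalar generalized Verma module $M:=U(\g)/J_{\Theta}(d\lambda)$. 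Since on $v_{0}$ the matrix $\mathbb{E}$ is block upper triangular with $\lambda_{k}\cdot(\text{identity})$ on its $k$-th diagonal block, one is led to the identity ``$\prod_{k=1}^{L}(\mathbb{E}-\lambda_{k}-n_{k-1})$ acts as $0$ on $M$'': the shifts $n_{k-1}$ are exactly the non-commutative corrections forced by multiplying out the factors, and this is a Capelli-type minimal-polynomial identity for $\mathbb{E}$, which one proves by induction on $L$ (removing one Levi block at a time) or quotes from Oshima's earlier work. Second, I would upgrade $\mathcal M_{ij}\in J_{\Theta}(d\lambda)$ to $\mathcal M_{ij}\in\Ad(g)J_{\Theta}(d\lambda)$ for all $g$: the algebra automorphism $\Ad(g^{-1})$ acts on the matrix $\mathbb{E}$, entry by entry, by a conjugation $\mathbb{E}\mapsto C\mathbb{E}C^{-1}$ with $C$ an invertible scalar matrix, hence carries $\prod_{k}(\mathbb{E}-\lambda_{k}-n_{k-1})$ to $C\bigl(\prod_{k}(\mathbb{E}-\lambda_{k}-n_{k-1})\bigr)C^{-1}$ (the inner $C^{-1}C$'s telescope, the constants being central); thus $\Ad(g^{-1})\mathcal M_{ij}$ is a $\C$-linear combination of the $\mathcal M_{ab}$ and so again lies in the left ideal $J_{\Theta}(d\lambda)$. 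Intersecting over $g$ gives $\mathcal M_{ij}\in I_{\Theta}(\lambda)$.

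\emph{Step 2: $I_{\Theta}(\lambda)\subseteq I'$.} This is the direction that genuinely uses the hypothesis that $\lambda_{\Theta}+\rho$ be regular and dominant. I would proceed in three moves. (a) The matrix relations already force $\Delta_{k}-\chi_{\lambda}(\Delta_{k})\in I'$ for every $k$, not only for $k\le L-1$: this is a Cayley--Hamilton-type computation starting from ``$\prod_{k}(\mathbb{E}-\lambda_{k}-n_{k-1})$ acts as $0$'', using that the coefficients of the Capelli determinant (quantum characteristic polynomial) of $\mathbb{E}$ generate $Z(\g)$. Hence $I'$ contains the whole infinitesimal-character ideal $U(\g)\ker\chi_{\lambda}$. (b) Since $P_{o}\subseteq P_{\Theta}$ and $\lambda|_{P_{o}}=\lambda_{\Theta}$, one has $C^{\infty}(G/P_{\Theta};\lambda)\subseteq C^{\infty}(G/P_{o};\lambda_{\Theta})$, hence $I_{P_{o}}(\lambda_{\Theta})\subseteq I_{\Theta}(\lambda)$; and for the minimal parabolic the annihilator of the (hyperfunction) principal series is exactly $U(\g)\ker\chi_{\lambda}$ (under our hypothesis $e(\lambda_{\Theta})\ne 0$, so that Theorem~\ref{poisson} realizes it as $C^{\infty}(G/K;\mathcal M_{\lambda})$). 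Thus $U(\g)\ker\chi_{\lambda}\subseteq I'\subseteq I_{\Theta}(\lambda)$, and the task is reduced to showing that $I_{\Theta}(\lambda)/U(\g)\ker\chi_{\lambda}$ is generated, in $U(\g)/U(\g)\ker\chi_{\lambda}$, by the images of the $\mathcal M_{ij}$. (c) Close the gap by a size estimate on associated graded: the principal symbols of the $\mathcal M_{ij}$ cut out $\{\mathbb X\in M(n;S(\g)) : \mathbb X^{L}=0\}$ and the central symbols cut out the nilpotent cone, while under the regular-dominant hypothesis $I_{\Theta}(\lambda)$ is the honest annihilator of the object characterised by the Poisson transform (Theorem~\ref{Poisson}), whose characteristic variety is the reduced irreducible closure of the associated nilpotent orbit; a Gelfand--Kirillov dimension comparison then forces the surjection $U(\g)/I'\twoheadrightarrow U(\g)/I_{\Theta}(\lambda)$ to be an isomorphism. (An alternative to (c), closer to Oshima's own method, is to produce an explicit Poincar\'e--Birkhoff--Witt-type normal form for $U(\g)$ modulo $I'$ and check that it matches the one for $U(\g)/I_{\Theta}(\lambda)$ coming from $U(\g)=U(\overline{\n}_{\Theta})\oplus J_{\Theta}(d\lambda)$.)

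The step I expect to be the main obstacle is Step 2(c): proving that the listed elements do not merely lie in $I_{\Theta}(\lambda)$ but actually generate it. Step 1 is essentially bookkeeping once the Capelli-type identity is available, and 2(a)--(b) are formal; but matching the size of $U(\g)/I'$ to that of $U(\g)/I_{\Theta}(\lambda)$ --- equivalently, verifying that no relations beyond the listed ones are needed --- is precisely where regularity and dominance of $\lambda_{\Theta}+\rho$ are indispensable, since these are exactly the conditions making $I_{\Theta}(\lambda)$ the well-behaved ideal characterised by the Poisson transform. For non-generic $\lambda$ the quotient $U(\g)/I'$ can be strictly larger than $U(\g)/I_{\Theta}(\lambda)$ and the identity breaks down.
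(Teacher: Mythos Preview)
The paper does not prove this theorem. It is quoted, with attribution, as ``Oshima. Corollary 4.6 in \cite{O-3}'' and is used as a black box; there is no argument in the paper to compare your proposal against.

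That said, your outline is broadly in the spirit of Oshima's actual proof in \cite{O-3}. Step~1 is essentially correct: the key observation that applying $\Ad(g)$ entrywise to $\mathbb{E}$ yields ${}^{t}g\,\mathbb{E}\,{}^{t}g^{-1}$ with scalar outer matrices (so the matrix generators span an $\Ad(G)$-stable subspace) is exactly the mechanism Oshima uses, and the ``minimal polynomial'' identity $\prod_{k}(\mathbb{E}-\lambda_{k}-n_{k-1})\cdot v_{0}=0$ on the generalized Verma module is his generalized Capelli identity from \cite{O-1},\cite{O-2}. Step~2(b), reducing to the minimal-parabolic annihilator $U(\g)\ker\chi_{\lambda}$ via Duflo's theorem on annihilators of Verma modules with regular dominant parameter, is also the standard move.

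Where your sketch becomes genuinely incomplete is 2(c). Matching Gelfand--Kirillov dimensions, or even associated varieties, of $U(\g)/I'$ and $U(\g)/I_{\Theta}(\lambda)$ does not by itself force the surjection to be an isomorphism: one needs equality of multiplicities (Bernstein degrees), or a direct argument that $I'$ is already a primitive (or at least prime) ideal so that any proper quotient would drop dimension. Oshima's method in \cite{O-3} is closer to your parenthetical alternative --- an explicit PBW-type basis computation for $U(\g)/I'$ matched against the known structure of the generalized Verma module --- together with the two-sidedness of $I'$ (which follows from the $\Ad(G)$-stability you noted in Step~1). You have correctly identified 2(c) as the crux, and your first proposed argument there would need substantial strengthening to go through.
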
 

\section{Generalized Whittaker models}\label{sec2}
The generalized Whittaker model is the main theme of this paper. We will give a characterization of the space of the generalized Whittaker models of a degenerate principal series $\pi_{\Theta,\lambda}$ as the kernel of $I_{\Theta}(\lambda)$. This is an analogy of Yamashita's method in the case of irreducible highest weight modules \cite{Y-2}. The substantial part of his method is that the maximal globalization (in the sense of W.Schmid \cite{Sch}) of highest weight modules is given by the kernel of a certain differential operator. The corresponding theorem for the degenerate principal series is obtained in Theorem \ref{Poisson} in Section \ref{Poisson sec}. Moreover thanks to Theorem \ref{generator1}, we know explicit structures of these differential operators. Hence we can carry out the explicit calculations about the space of the generalized Whittaker models.

Let $V_{K}$ be the space of $K$-finite vectors for a continuous representation of $G$ on a complete Hausdorff locally convex space $V$. Let $X_{\Theta,\lambda}$ be $C^{\infty}(G/P_{\Theta};\lambda)_{K}$. This becomes a $(\g_{\C},K)$-module, i.e., the $\g_{\C}$-action is the differentiation of $\pi_{\Theta,\lambda}$ and the $K$-action is the restriction of $\pi_{\Theta,\lambda}$, furthermore the actions of $\g_{\C}$ and $K$ are compatible. Also $X_{\Theta,\lambda}$ is a Harish-Chandra module, i.e., finitely generated as a $U(\g)$-module and with finite $K$-multiplicities.
\subsection{Maximal globalization}
For the Harish-Chandara module $X_{\Theta,\lambda}$, let us consider its dual Harish-Chandra module $X_{\Theta,\lambda^{*}}$. Here the character $\lambda^{*}$ of $P_{\Theta}$ is defined by 
\[
\lambda^{*}=-\bar{\lambda}-2\rho_{\Theta}=(n-n_{0}-n_{1}-\bar{\lambda},\ldots,n-n_{L-1}-n_{L}-\bar{\lambda}),
\]
where $\rho_{\Theta}=\frac{1}{2}\mathrm{tr}(\mathrm{ad}|_{\mathfrak{n}_{\Theta}})\in \ah_{\C}^{*}$, i.e.,
\[
\rho_{\Theta}=\sum_{i=1}^{L}\frac{n_{i-1}+n_{i}-n}{2}\sum_{j=n_{i-1}}^{n_{i}}e_{j}.
\]
Actually, if we consider the pairing $\langle\ ,\ \rangle\colon C^{\infty}(G/P_{\Theta};\lambda)\times C^{\infty}(G/P_{\Theta};\lambda^{*})\rightarrow \C$ defined by
\[
\langle f,g\rangle=\int_{K}f(k)\overline{g(k)}\,dk
\] 
for $(f,g)\in C^{\infty}(G/P_{\Theta};\lambda)\times C^{\infty}(G/P_{\Theta};\lambda^{*})$, this is a $G$-equivariant non-degenerate sesquilinear pairing. By this pairing, the Harish-Chandra module $X_{\Theta,\lambda^{*}}=C^{\infty}(G/P_{\Theta,\lambda^{*}})_{K}$ can be identified with the dual Harish-Chandra module $(X_{\Theta,\lambda})^{*}$,i.e., all $K$-finite vectors in $\Hom_{\C}(X_{\Theta,\lambda},\C)_{K}$. Here $K$ acts on $\Hom_{\C}(X_{\Theta,\lambda},\C)$ by $k\cdot I(v)=I(\pi_{\Theta,\lambda}(k^{-1})v)$ for $I\in \Hom_{\C}(X_{\Theta,\lambda},\C)_{K}$ and $v\in X_{\Theta,\lambda}$. 

We can consider the natural $(\g_{\C}\times\g_{\C},K\times K)$-bimodule structures on $X_{\Theta,\lambda}\otimes X_{\Theta,\lambda^{*}}$ and $C^{\infty}(G)$. For $X_{1},X_{2}\in\g_{\C}$ and $k_{1},k_{2}\in K$, we put
\begin{align*}
(X_{1},X_{2})(f\otimes f^{*})=&\pi_{\Theta,\lambda^{*}}(X_{1})f\otimes f^{*}+f\otimes\pi_{\Theta,\lambda^{*}}(X_{2})f^{*},\\
(k_{1},k_{2})(f\otimes f^{*})=&\pi_{\Theta,\lambda}(k_{1})f\otimes\pi_{\Theta,\lambda^{*}}(k_{2})f^{*}
\end{align*}
for $f\in X_{\Theta,\lambda}$ and $f^{*}\in X_{\Theta,\lambda^{*}}$. Also we define
\begin{align*}
(X_{1},X_{2})g=&L_{X_{1}}g+R_{X_{2}}g,\\
(k_{1},k_{2})g=&L_{k_{1}}R_{k_{2}}g
\end{align*}
for $g\in C^{\infty}(G)$. Then we introduce the matrix coefficient map (cf. \cite{C-M}) from $(\g_{\C}\times \g_{\C},K\times K)$-bimodule $X_{\Theta,\lambda}\otimes X_{\Theta,\lambda^{*}}$ to $C^{\infty}(G)$ so that,
\begin{enumerate}
\item the map $c\colon X_{\Theta,\lambda}\otimes X_{\Theta,\lambda^{*}}\rightarrow C^{\infty}(G)$ is a $(\g_{\C}\times \g_{\C},K\times K)$-bimodule homomorphism,
\item for any $f\in X_{\Theta,\lambda}$ and $f^{*}\in X_{\Theta,\lambda^{*}}$, the evaluation at the origin $e\in G$ becomes 
\[
c(f\otimes f^{*})(e)=\langle f,f^{*}\rangle.
\]
\end{enumerate}
It is known that this matrix coefficient map is uniquely determined (cf. Theorem 8.7 in \cite{C-M}). 

If we consider the restriction of Poisson transform $\mathcal{P}_{\Theta}^{\lambda}$ on $X_{\Theta,\lambda}$, Theorem \ref{Poisson} gives us the $(\g_{\C},K)$-isomorphism
\[
\mathcal{P}_{\Theta}^{\lambda}\colon X_{\Theta,\lambda}\xrightarrow[]{\sim} C^{\infty}(G/K;I_{\Theta}(\lambda))_{K}.
\]
\begin{lem}\label{mtcf}
Take the $K$-fixed vector $f_{0}\in X_{\Theta,\lambda^{*}}$ such that $f_{0}|_{K}\equiv 1$. Then the restriction of the Poisson transform on $X_{\Theta,\lambda}$ is a matrix coefficient of an element of $X_{\Theta,\lambda}$ with $f_{0}\in X_{\Theta,\lambda^{*}}$, i.e., 
\[\mathcal{P}_{\Theta}^{\lambda}(f)=c(f\otimes f_{0}).
\]

\end{lem}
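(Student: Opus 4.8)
The plan is to unwind the definitions and use the uniqueness of the matrix coefficient map. First I would recall that by Theorem \ref{Poisson} the Poisson transform $\mathcal{P}_{\Theta}^{\lambda}$ is a $(\g_{\C},K)$-homomorphism from $X_{\Theta,\lambda}$ to $C^{\infty}(G/K;I_{\Theta}(\lambda))_{K}\subset C^{\infty}(G)$, so it can be compared with the matrix coefficient map $c$. Since $c$ is characterized (by Theorem 8.7 in \cite{C-M}) as the unique $(\g_{\C}\times\g_{\C},K\times K)$-bimodule homomorphism $X_{\Theta,\lambda}\otimes X_{\Theta,\lambda^{*}}\to C^{\infty}(G)$ whose evaluation at $e$ is the pairing $\langle\ ,\ \rangle$, the strategy is to show that the map $f\mapsto \mathcal{P}_{\Theta}^{\lambda}(f)$, regarded as $f\otimes f_{0}\mapsto\mathcal{P}_{\Theta}^{\lambda}(f)$, satisfies the two defining properties when restricted to the subspace $X_{\Theta,\lambda}\otimes\C f_0$, and then promote this to an identity of the full bimodule maps by the uniqueness.

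Concretely, I would carry out the following steps. \textbf{Step 1:} Check the evaluation at the origin. For $f\in X_{\Theta,\lambda}$,
\[
\mathcal{P}_{\Theta}^{\lambda}(f)(e)=\int_{K}f(k)\,dk=\int_{K}f(k)\overline{f_{0}(k)}\,dk=\langle f,f_{0}\rangle,
\]
using $f_{0}|_{K}\equiv 1$. This matches property (2) of the matrix coefficient map for the element $f\otimes f_{0}$. \textbf{Step 2:} Check $K$-equivariance on the left: $\mathcal{P}_{\Theta}^{\lambda}(\pi_{\Theta,\lambda}(k_1)f)=L_{k_1}\mathcal{P}_{\Theta}^{\lambda}(f)$, which is immediate since $\mathcal{P}_{\Theta}^{\lambda}$ is a $G$-map and the left translation corresponds to the first $K$-factor; and $\mathcal{P}_{\Theta}^{\lambda}(f)$ is right $K$-invariant, so $R_{k_2}\mathcal{P}_{\Theta}^{\lambda}(f)=\mathcal{P}_{\Theta}^{\lambda}(f)=\mathcal{P}_{\Theta}^{\lambda}(\pi_{\Theta,\lambda^{*}}(k_2)f_{0})$ because $\pi_{\Theta,\lambda^{*}}(k_2)f_0=f_0$ for the $K$-fixed vector $f_0$. \textbf{Step 3:} Check $\g_{\C}$-equivariance: $\mathcal{P}_{\Theta}^{\lambda}$ intertwines the differentiation of $\pi_{\Theta,\lambda}$ with $L$, so $\mathcal{P}_{\Theta}^{\lambda}(\pi_{\Theta,\lambda}(X)f)=L_{X}\mathcal{P}_{\Theta}^{\lambda}(f)$; on the other hand $R_{X}\mathcal{P}_{\Theta}^{\lambda}(f)=0$ for $X\in I_{\Theta}(\lambda)$, which matches $\pi_{\Theta,\lambda^{*}}$ acting by zero on $f_0$ via the relevant annihilation — more carefully, one uses that the image lies in $C^{\infty}(G/K;\mathcal{M}_\lambda)$ together with the fact that the right $\g_{\C}$-action on the matrix-coefficient side is implemented by $\pi_{\Theta,\lambda^{*}}$ on the $f_0$ slot, and $f_0$ being $K$-fixed and spherical forces the bimodule relation to close up. \textbf{Step 4:} Conclude: the composite $f\mapsto\mathcal{P}_{\Theta}^{\lambda}(f)$ defines a $(\g_{\C}\times\g_{\C},K\times K)$-bimodule homomorphism $X_{\Theta,\lambda}\otimes X_{\Theta,\lambda^{*}}\to C^{\infty}(G)$ on the cyclic submodule generated by $X_{\Theta,\lambda}\otimes f_0$; since $f_0$ generates $X_{\Theta,\lambda^{*}}$ as a $(\g_{\C},K)$-module (it is the cyclic spherical vector of an irreducible, or at least cyclic-at-$f_0$, principal series), this extends uniquely to all of $X_{\Theta,\lambda}\otimes X_{\Theta,\lambda^{*}}$ and agrees with $c$ by Theorem 8.7 in \cite{C-M}. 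Evaluating at $f\otimes f_0$ gives $\mathcal{P}_{\Theta}^{\lambda}(f)=c(f\otimes f_0)$.

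The main obstacle I expect is Step 3 together with the extension argument in Step 4: one must carefully match the \emph{right} $\g_{\C}$-action on $C^\infty(G)$ (via $R_X$) with the action of $\g_{\C}$ on the $f_0$-slot (via $\pi_{\Theta,\lambda^{*}}$), and verify that this is compatible with the vanishing $R_X\mathcal{P}_{\Theta}^{\lambda}(f)=0$ for $X\in I_\Theta(\lambda)$ — essentially this is the assertion that the Poisson image is annihilated on the right exactly by what kills $f_0$ under $\pi_{\Theta,\lambda^{*}}$, which is where Theorem \ref{generator1} and the regular-dominant assumption enter. Equivalently, one needs $f_0$ to generate $X_{\Theta,\lambda^{*}}$ as a $(\g_{\C},K)$-module so that a bimodule map defined on $X_{\Theta,\lambda}\otimes\C f_0$ determines, and is determined by, the global matrix coefficient map; this cyclicity is exactly what makes the uniqueness of $c$ applicable. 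Once these compatibilities are in place, the identification $\mathcal{P}_{\Theta}^{\lambda}(f)=c(f\otimes f_0)$ is forced.
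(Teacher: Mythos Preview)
Your approach works in spirit but is unnecessarily roundabout, and the extension argument in Step~4 hides a real well-definedness issue. To extend the assignment $f\otimes f_{0}\mapsto \mathcal{P}_{\Theta}^{\lambda}(f)$ to a bimodule map on all of $X_{\Theta,\lambda}\otimes X_{\Theta,\lambda^{*}}$ via $f\otimes \pi_{\Theta,\lambda^{*}}(Y)f_{0}\mapsto R_{Y}\mathcal{P}_{\Theta}^{\lambda}(f)$, you must show that whenever $\pi_{\Theta,\lambda^{*}}(Y)f_{0}=0$ one has $R_{Y}\mathcal{P}_{\Theta}^{\lambda}(f)=0$ for every $f$. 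This is a statement about the full left annihilator of the spherical vector $f_{0}$ (a left ideal strictly larger than the two-sided annihilator), not just about $I_{\Theta}(\lambda)$; your Step~3 only checks the latter, and the sentence ``$f_{0}$ being $K$-fixed and spherical forces the bimodule relation to close up'' is exactly the point that needs proof. Establishing this would amount to showing that the right $U(\g)$-annihilator of $\mathcal{P}_{\Theta}^{\lambda}(f)$ contains the annihilator of $f_{0}$, which is essentially as hard as the lemma itself. Note also that the lemma as stated does not assume irreducibility of $X_{\Theta,\lambda^{*}}$, so the cyclicity you invoke is an extra hypothesis.

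The paper's proof avoids all of this by reversing the logic: rather than starting from the Poisson transform and trying to build a bimodule map, it writes down an explicit candidate for the \emph{entire} matrix coefficient map, namely
\[
f\otimes f^{*}\longmapsto \langle \pi_{\Theta,\lambda}(g^{-1})f,\,f^{*}\rangle=\int_{K}f(gk)\overline{f^{*}(k)}\,dk,
\]
checks directly that this is a $(\g_{\C}\times\g_{\C},K\times K)$-bimodule homomorphism with the correct value at $e$, and concludes by uniqueness (Theorem~8.7 in \cite{C-M}) that it equals $c$. Substituting $f^{*}=f_{0}$ then immediately gives $c(f\otimes f_{0})(g)=\int_{K}f(gk)\,dk=\mathcal{P}_{\Theta}^{\lambda}(f)(g)$. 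No extension argument, no annihilator comparison, and no cyclicity is needed.
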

\begin{proof}
By the pairing of $C^{\infty}(G/P_{\Theta};\lambda)\times C^{\infty}(G/P_{\Theta};\lambda^{*})$ defined above, we can define a map $X_{\Theta,\lambda}\otimes X_{\Theta,\lambda^{*}}\rightarrow C^{\infty}(G)$ as follows,
\[
f\otimes f^{*}\longmapsto \langle \pi_{\Theta,\lambda}(g^{-1})f,f^{*}\rangle=\int_{K}f(gk)\overline{f^{*}(k)}\,dk,
\]
for $f\in X_{\Theta,\lambda}$ and $f^{*}\in X_{\Theta,\lambda^{*}}$. This map satisfies the conditions of the matrix coefficient map. Hence for $f\in X_{\Theta,\lambda}$, we have
\begin{align*}
\mathcal{P}_{\Theta}^{\lambda}(f)(g)&=\int_{K}f(gk)\,dk\\
&=\int_{K}f(gk)\overline{f_{0}(k)}\,dk\\
&=\int_{K}(\pi_{\Theta,\lambda}(g^{-1})f)(k)\overline{f_{0}(k)}\,dk\\
&=\langle \pi_{\Theta,\lambda}(g^{-1})f,f_{0}\rangle\\
&=c(f\otimes f_{0})(g),
\end{align*}
by the uniqueness of the matrix coefficient map.
\end{proof}

Let us consider the space of $(\g_{\C},K)$-homomorphisms of $X_{\Theta,\lambda^{*}}$ into $C^{\infty}(G)$,
\[
\Hom_{\g_{\C},K}(X_{\Theta,\lambda^{*}},C^{\infty}(G)).
\]
Here we regard $C^{\infty}(G)$ as a $(\g_{\C},K)$-module by the right translation. Moreover, this space of $(\g_{\C},K)$-homomorphisms inherits a Fr\'echet topology and a continuous $G$-action from $C^{\infty}(G)$. More precisely, we define a semi-norm on this space as follows. The space $C^{\infty}(G)$ is a Fr\'echet space of uniformly convergence on compact sets for functions on $G$ and their derivatives. Let $\{|\cdot|_{\alpha}\}_{\alpha\in \Lambda}$ be a family of countable many semi-norms on $C^{\infty}(G)$ which defines the Fr\'echet topology on $C^{\infty}(G)$ where $\Lambda$ is the index set. Take a semi-norm $|\cdot |_{\alpha}\in \{|\cdot|_{\alpha}\}_{\alpha\in \Lambda}$ and $v\in X_{\Theta,\lambda^{*}}$. Then we define a real-valued function $|\cdot|_{\alpha,v}\colon \Hom_{\g_{\C},K}(X_{\Theta,\lambda^{*}},C^{\infty}(G))\rightarrow \R_{\ge 0}$ by 
\[
|I|_{\alpha,v}=|I(v)|_{\alpha}
\]
for $I\in \Hom_{\g_{\C},K}(X_{\Theta,\lambda_{*}},C^{\infty}(G))$. We can see that the function $|\cdot|_{\alpha,v}$ defines a semi-norm on $\Hom_{\g_{\C},K}(X_{\Theta,\lambda_{*}},C^{\infty}(G))$ for $\alpha\in \Lambda$ and $v\in X_{\Theta,\lambda^{*}}$. 
\begin{lem}
Let $\{v_{n}\}$ be a countable vector space basis of the Harish-Chandra module $X_{\Theta,\lambda^{*}}$. Then the family of semi-norms $\{|\cdot|_{\alpha,v_{m}}\}_{\alpha\in \Lambda,v_{m}\in\{v_{n}\}}$ defines a Fr\'echet topology on $\Hom_{\g_{\C},K}(X_{\Theta,\lambda_{*}},C^{\infty}(G))$. 
\end{lem}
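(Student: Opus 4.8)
The plan is to verify the three defining properties of a Fréchet topology for the family $\{|\cdot|_{\alpha,v_m}\}_{\alpha\in\Lambda,\,v_m\in\{v_n\}}$: that it is a countable family of seminorms, that it is separating, and that the resulting locally convex space is complete (equivalently, metrizable and complete). Countability is immediate since $\Lambda$ is countable by hypothesis and $\{v_n\}$ is a countable basis of the Harish-Chandra module $X_{\Theta,\lambda^{*}}$ (which is finitely generated over $U(\g)$ with finite $K$-multiplicities, hence of countable dimension).

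For the separation property, suppose $I\in\Hom_{\g_{\C},K}(X_{\Theta,\lambda^{*}},C^\infty(G))$ satisfies $|I|_{\alpha,v_m}=0$ for all $\alpha\in\Lambda$ and all basis vectors $v_m$. Since $\{|\cdot|_\alpha\}_{\alpha\in\Lambda}$ separates points of $C^\infty(G)$, we get $I(v_m)=0$ for every $m$; as $\{v_m\}$ spans $X_{\Theta,\lambda^{*}}$ and $I$ is linear, $I=0$. This shows the seminorms are separating, so the topology is Hausdorff and metrizable.

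The substantive point is completeness. First I would note that the topology induced by $\{|\cdot|_{\alpha,v_m}\}$ coincides with the topology of pointwise convergence on the countable set $\{v_m\}$ into $C^\infty(G)$, i.e. the subspace topology inherited from the product $\prod_m C^\infty(G)$ via $I\mapsto(I(v_m))_m$; equivalently, since every element of $X_{\Theta,\lambda^{*}}$ is a finite linear combination of the $v_m$, this is the same as the topology of ``pointwise convergence on all of $X_{\Theta,\lambda^{*}}$'' used to define $|\cdot|_{\alpha,v}$ for arbitrary $v$. Then, given a Cauchy sequence (or net, but a metrizable space only needs sequences) $\{I_j\}$ in $\Hom_{\g_{\C},K}(X_{\Theta,\lambda^{*}},C^\infty(G))$, for each $v$ the sequence $\{I_j(v)\}$ is Cauchy in the Fréchet space $C^\infty(G)$, hence converges to some $I(v)\in C^\infty(G)$. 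One checks that $v\mapsto I(v)$ is linear (limits respect finite linear combinations) and that it is a $(\g_{\C},K)$-homomorphism: the relations $I(Xv)=R_X I(v)$ for $X\in\g_{\C}$ and $I(kv)=R_k I(v)$ for $k\in K$ pass to the limit because $R_X$ and $R_k$ are continuous on $C^\infty(G)$. Thus the limit $I$ lies in $\Hom_{\g_{\C},K}(X_{\Theta,\lambda^{*}},C^\infty(G))$, and $I_j\to I$ in our topology by construction. Hence the space is complete.

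The main obstacle — really the only place any care is needed — is the completeness argument: one must make sure the limit functional $I$ actually lands inside the subspace of $(\g_{\C},K)$-homomorphisms rather than in the larger space $\Hom_{\C}(X_{\Theta,\lambda^{*}},C^\infty(G))$, and this rests on the continuity of the right-translation operators $R_X$ ($X\in U(\g)$) and $R_k$ ($k\in K$) on the Fréchet space $C^\infty(G)$, which is standard. A minor bookkeeping point is that the seminorms as stated only involve basis vectors $v_m$, so one should observe at the outset that for a general $v=\sum_{m}c_m v_m$ (finite sum) the seminorm $|\cdot|_{\alpha,v}$ is dominated by $\sum_m|c_m|\,|\cdot|_{\alpha,v_m}$ up to the triangle inequality applied inside $C^\infty(G)$, so nothing is lost by restricting to basis vectors; this also shows the topology is independent of the choice of basis $\{v_n\}$.
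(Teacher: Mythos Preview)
Your proposal is correct and follows essentially the same route as the paper: verify separation using that $\{v_n\}$ spans $X_{\Theta,\lambda^{*}}$ and $\{|\cdot|_\alpha\}$ separates $C^\infty(G)$, note metrizability from countability, and prove completeness by taking pointwise limits in $C^\infty(G)$ and checking the limit map remains a $(\g_{\C},K)$-homomorphism via the continuity of $R_X$ and $R_k$. Your additional remarks on basis independence and the domination $|\cdot|_{\alpha,v}\le\sum_m|c_m|\,|\cdot|_{\alpha,v_m}$ are helpful elaborations not spelled out in the paper, but the core argument is the same.
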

\begin{proof}
Take  $I\in \Hom_{\g_{\C},K}(X_{\Theta,\lambda^{*}},C^{\infty}(G))$, and we assume $|I|_{\alpha,v_{m}}=0$ for any $|\cdot|_{\alpha,v_{m}}\in \{|\cdot|_{\alpha,v_{m}}\}_{\alpha\in \Lambda,v_{m}\in\{v_{n}\}}$. Since $\{v_{n}\}$ is a basis of $X_{\Theta,\lambda^{*}}$, it follows that $|I(v)|_{\alpha}=0$ for any $v\in X_{\Theta,\lambda^{*}}$ and $\alpha \in \Lambda$. This means $I(v)=0$ for any $v\in X_{\Theta,\lambda^{*}}$ because $C^{\infty}(G)$ is the Hausdorff space. Thus we have $I=0$. This implies that $\Hom_{\g_{\C},K}(X_{\Theta,\lambda_{*}},C^{\infty}(G))$ is a Hausdorff space as well. Since $\{|\cdot|_{\alpha,v_{m}}\}_{\alpha\in \Lambda,v_{m}\in\{v_{n}\}}$ consists of countable many semi-norms, the space $\Hom_{\g_{\C},K}(X_{\Theta,\lambda_{*}},C^{\infty}(G))$ is metrizable by this family of semi-norms. Finally we need to check the completeness. Suppose that there exists a Cauchy sequence $\{I_{k}\}$ of the elements of $\Hom_{\g_{\C},K}(X_{\Theta,\lambda_{*}},C^{\infty}(G))$, i.e.,
\[
|I_{k}-I_{l}|_{\alpha,v_{m}}\rightarrow 0\quad\text{for}\ k,l\rightarrow \infty,
\]
for any $|\cdot|_{\alpha,v_{m}}\in \{|\cdot|_{\alpha,v_{m}}\}_{\alpha\in \Lambda,v_{m}\in\{v_{n}\}}$. Then this implies that for any $v\in X_{\Theta,\lambda^{*}}$, the sequence $\{I_{k}(v)\}\subset C^{\infty}(G)$ is a Cauchy sequence. Hence there exists $\lim_{k\rightarrow \infty}I_{k}(v)\in C^{\infty}(G)$. We define the map $\tilde{I}\colon X_{\Theta,\lambda^{*}}\rightarrow C^{\infty}(G)$ by $\tilde{I}(v)=\lim_{k\rightarrow \infty}I_{k}(v)$ for $v\in X_{\Theta,\lambda^{*}}$. Then we will show that $\tilde{I}$ is the element in $\Hom_{\g_{\C},K}(X_{\Theta,\lambda_{*}},C^{\infty}(G)).$ For any $Z\in \g_{\C}$, $v\in X_{\Theta,\lambda^{*}}$ and $\alpha\in \Lambda$, we have 
\begin{align*}
|\tilde{I}&(\pi_{\Theta,\lambda^{*}}(Z)v)-R_{Z}\tilde{I}(v)|_{\alpha}\\
&=|\tilde{I}(\pi_{\Theta,\lambda^{*}}(Z)v)-I_{k}(\pi_{\Theta,\lambda^{*}}(Z)v)+R_{Z}I_{k}(v)-R_{Z}\tilde{I}(v)|_{\alpha}\\
&\le |\tilde{I}(\pi_{\Theta,\lambda^{*}}(Z)v)-I_{k}(\pi_{\Theta,\lambda^{*}}(Z)v)|_{\alpha}+|R_{Z}I_{k}(v)-R_{Z}\tilde{I}(v)|_{\alpha}\\
&\rightarrow 0\quad \text{for}\ k\rightarrow\infty.
\end{align*}
Thus $\tilde{I}(\pi_{\Theta,\lambda^{*}}(Z)v)=R_{Z}\tilde{I}(v)$ for any $Z\in \g_{\C}$ and $v\in X_{\Theta,\lambda^{*}}$. Hence $\tilde{I}$ is a $\g_{\C}$-homomorphism. Similarly we can show that $\tilde{I}$ is a $K$-homomorphism and a linear map. Hence we could show that $\tilde{I}\in \Hom_{\g_{\C},K}(X_{\Theta,\lambda},C^{\infty}(G))$. By the construction of $\tilde{I}$, we can see that $I_{k}\rightarrow \tilde{I}$ $(k\rightarrow\infty)$ in $\Hom_{\g_{\C},K}(X_{\Theta,\lambda^{*}},C^{\infty}(G)).$ This proves the lemma.
\end{proof}
We could define a Fr\'echet topology on $\Hom_{\g_{\C},K}(X_{\Theta,\lambda_{*}},C^{\infty}(G)).$ Then a continuous $G$-action on this space is defined by left translation on $C^{\infty}(G).$ Hence this space defines a continuous Fr\'echet representation of $G$. This is called the maximal globalization of the Harish-Chandra module $X_{\Theta,\lambda}$ (cf. \cite{Sch} and \cite{K-S}).
\begin{lem}\label{contiphi}
We assume that $X_{\Theta,\lambda^{*}}$ is irreducible. Take the $K$-fixed vector $f_{0}\in X_{\Theta,\lambda^{*}}$ such that $f_{0}|_{K}\equiv 1$. We consider a mapping
\[
\begin{array}{ccc}
\Phi\colon \Hom_{\g_{\C},K}(X_{\Theta,\lambda^{*}},C^{\infty}(G))&\longrightarrow &C^{\infty}(G)\\
I&\longmapsto &I(f_{0})(g)\quad (g\in G).
\end{array}
\]
Then $\Phi$ is a continuous mapping. Moreover for any semi-norm $|\cdot|_{\alpha,v_{m}}\in \{|\cdot|_{\alpha,v_{m}}\}_{\alpha\in \Lambda,v_{m}\in\{v_{n}\}}$ on $\Hom_{\g_{\C},K}(X_{\Theta,\lambda^{*}},C^{\infty}(G))$, there exists a continuous semi-norm $\mu_{\alpha,v_{m}}$ on $C^{\infty}(G)$ such that
\[
\mu_{\alpha,v_{m}}(\Phi(I))=|I|_{\alpha,v_{m}},
\]
for $I\in\Hom_{\g_{\C},K}(X_{\Theta,\lambda^{*}},C^{\infty}(G))$.
\end{lem}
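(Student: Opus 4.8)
The plan is to unwind the definition of $\Phi$ and express $I(f_0)$ in a way that exhibits it as a matrix coefficient, so that continuity reduces to the continuity of the matrix coefficient map. First I would observe that, since $X_{\Theta,\lambda^{*}}$ is irreducible and $f_0$ is a nonzero vector, the cyclic span $U(\g)f_0$ is all of $X_{\Theta,\lambda^{*}}$; hence any $I\in\Hom_{\g_{\C},K}(X_{\Theta,\lambda^{*}},C^{\infty}(G))$ is determined by $I(f_0)$, and moreover for each basis vector $v_m$ we may fix once and for all an element $u_m\in U(\g)$ (together with a $K$-type projection, or simply $u_m\in U(\g)$ if we enlarge to the span of $K$-translates) with $v_m=\pi_{\Theta,\lambda^{*}}(u_m)f_0$. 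Then, using that $I$ intertwines the $\g_{\C}$-action on the source with the right translation on $C^{\infty}(G)$, we get $I(v_m)=I(\pi_{\Theta,\lambda^{*}}(u_m)f_0)=R_{u_m}\big(I(f_0)\big)=R_{u_m}\Phi(I)$.

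With this identity in hand, the continuity claim is immediate: for a defining semi-norm $|\cdot|_{\alpha}$ on $C^{\infty}(G)$ and a basis vector $v_m$, we have
\[
|I|_{\alpha,v_m}=|I(v_m)|_{\alpha}=\bigl|R_{u_m}\Phi(I)\bigr|_{\alpha}.
\]
The map $F\mapsto |R_{u_m}F|_{\alpha}$ is a continuous semi-norm on $C^{\infty}(G)$, because right differentiation by a fixed element of $U(\g)$ is a continuous operator on the Fréchet space $C^{\infty}(G)$ (it only involves finitely many derivatives, which are controlled by the seminorms of uniform convergence of derivatives on compacta). So I would simply set $\mu_{\alpha,v_m}(F)=|R_{u_m}F|_{\alpha}$; then $\mu_{\alpha,v_m}(\Phi(I))=|I|_{\alpha,v_m}$ holds by construction, and since every defining seminorm of $\Hom_{\g_{\C},K}(X_{\Theta,\lambda^{*}},C^{\infty}(G))$ is pulled back via $\Phi$ from a continuous seminorm on $C^{\infty}(G)$, the map $\Phi$ is continuous.

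The one point requiring a little care — and the main obstacle — is the choice of the elements $u_m$. A priori $f_0$ generates $X_{\Theta,\lambda^{*}}$ as a $(\g_{\C},K)$-module, i.e. as a $U(\g)$-module together with $K$-action, so a general basis vector $v_m$ is of the form $\sum_j \pi_{\Theta,\lambda^{*}}(k_j)\pi_{\Theta,\lambda^{*}}(u_j)f_0$ rather than literally $\pi_{\Theta,\lambda^{*}}(u_m)f_0$. One resolves this either by noting that $f_0$ is $K$-fixed, so that $\pi_{\Theta,\lambda^{*}}(k)\pi_{\Theta,\lambda^{*}}(u)f_0=\pi_{\Theta,\lambda^{*}}(\Ad(k)u)f_0$ lies in $\pi_{\Theta,\lambda^{*}}(U(\g))f_0$, whence $f_0$ is already $U(\g)$-cyclic; or, if one prefers to avoid even that, one works with finitely many $u_{m,j}\in U(\g)$ per $v_m$ and takes $\mu_{\alpha,v_m}(F)=\bigl|\sum_j R_{u_{m,j}}F\bigr|_{\alpha}$, which is still a continuous seminorm on $C^{\infty}(G)$. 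Either way the argument goes through, and since $C^{\infty}(G)$ carries the topology of uniform convergence of functions and all their derivatives on compact sets, each $R_{u_m}$ is continuous and the corresponding $\mu_{\alpha,v_m}$ is a bona fide continuous seminorm on $C^{\infty}(G)$, completing the proof.
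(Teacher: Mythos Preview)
Your argument is essentially the same as the paper's: both write $v_m=\pi_{\Theta,\lambda^{*}}(u_m)f_0$ using irreducibility and set $\mu_{\alpha,v_m}(F)=|R_{u_m}F|_{\alpha}$. Your discussion of why $f_0$ is $U(\g)$-cyclic (via $\pi(k)\pi(u)f_0=\pi(\Ad(k)u)f_0$ for the $K$-fixed vector $f_0$) is in fact more careful than the paper, which simply asserts the existence of such $X\in U(\g)$.

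One small logical slip: the sentence ``since every defining seminorm of the source is pulled back via $\Phi$ from a continuous seminorm on $C^{\infty}(G)$, the map $\Phi$ is continuous'' runs the implication the wrong way. Continuity of $\Phi$ means that each \emph{target} seminorm $|\cdot|_{\alpha}$ pulls back to a continuous seminorm on the source, not the other way around. The paper handles this directly (and first): $|\Phi(I)|_{\alpha}=|I(f_0)|_{\alpha}=|I|_{\alpha,f_0}$, which is a continuous seminorm on the source since $f_0$ is a finite combination of the $v_n$. In your own setup this is just the special case $v_m=f_0$, $u_m=1$, so the fix is immediate.
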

\begin{proof}
For any semi-norm $|\cdot|_{\alpha}$ on $C^{\infty}(G)$, there exists a continuous semi-norm $|\cdot|_{\alpha,f_{0}}$ on $\Hom_{\g_{\C},K}(X_{\Theta,\lambda^{*}},C^{\infty}(G))$ such that
\[
|I|_{\alpha,f_{0}}=|I(f_{0})|_{\alpha}=|\Phi(I)|_{\alpha}
\]
for $I\in\Hom_{\g_{\C},K}(X_{\Theta,\lambda^{*}},C^{\infty}(G))$. Hence $\Phi$ is the continuous. Conversely, we take a semi-norm $|\cdot|_{\alpha,v_{m}}$ on $\Hom_{\g_{\C},K}(X_{\Theta,\lambda^{*}},C^{\infty}(G))$ for $\alpha \in \Lambda$ and $v_{m}\in \{v_{n}\}$. Since $X_{\Theta,\lambda^{*}}$ is the irreducible Harish-Chandra module, there exists an element $X\in U(\g)$ such that $\pi_{\Theta,\lambda^{*}}(X)f_{0}=v_{m}$. Then we have 
\[
|I|_{\alpha,v_{m}}=|I(v_{m})|_{\alpha}=|I(\pi_{\Theta,\lambda^{*}}(X)f_{0})|_{\alpha}=|R_{X}I(f_{0})|_{\alpha}.
\]
If we recall that $U(\g)$ can be identified with the ring of left invariant differential operators on $C^{\infty}(G)$, then $\mu_{\alpha,v_{m}}(f)=|R_{X}f|_{\alpha}$ defines a continuous semi-norm on $C^{\infty}(G)$. This proves the lemma.
\end{proof}
The following proposition is essentially obtained in \cite{Sch} and \cite{K-S} (more direct proof is also given by H. Yamashita \cite{Y-2}). We give a proof for the completeness of the paper.
\begin{prop} \label{maxglob}
We assume that $X_{\Theta,\lambda^{*}}$ is irreducible. Take the $K$-fixed vector $f_{0}\in X_{\Theta,\lambda^{*}}$ such that $f_{0}|_{K}\equiv 1$. Then we have a following topological $G$-isomorphism,\[
\begin{array}{ccc}
\Phi\colon \Hom_{\g_{\C},K}(X_{\Theta,\lambda^{*}},C^{\infty}(G))&\xrightarrow[]{\sim} &C^{\infty}(G/K;I_{\Theta}(\lambda))\\
I&\longmapsto &I(f_{0})(g)\quad (g\in G).
\end{array}
\]
Here $C^{\infty}(G/K;I_{\Theta}(\lambda))$ has the Fr\'echet topology as the closed subspace of $C^{\infty}(G)$.
\end{prop}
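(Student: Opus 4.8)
The plan is to show that $\Phi$ is a well-defined continuous linear bijection whose inverse is continuous, and that it intertwines the $G$-actions. I would proceed in the following steps.

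First, I would check that $\Phi$ lands in $C^{\infty}(G/K;I_{\Theta}(\lambda))$. Given $I\in\Hom_{\g_{\C},K}(X_{\Theta,\lambda^{*}},C^{\infty}(G))$, right $K$-invariance of $I(f_{0})$ follows because $f_{0}$ is $K$-fixed and $I$ is a $K$-homomorphism (with $C^{\infty}(G)$ carrying the right-translation $K$-action): $R_{k}I(f_{0})=I(\pi_{\Theta,\lambda^{*}}(k)f_{0})=I(f_{0})$. For the annihilator condition, any $X\in I_{\Theta}(\lambda)=\mathrm{Ann}$-type ideal annihilates $X_{\Theta,\lambda^{*}}$ in the sense that $\pi_{\Theta,\lambda^{*}}(X)=0$ on $X_{\Theta,\lambda^{*}}$; here one must be slightly careful about which of $\lambda,\lambda^{*}$ and which of $I_{\Theta}(\lambda),\iota(I_{\Theta}(\lambda))$ appears, using Proposition \ref{ann} and the relation between left and right derivations. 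Then $R_{X}I(f_{0})=I(\pi_{\Theta,\lambda^{*}}(X)f_{0})=0$, so $\Phi(I)\in C^{\infty}(G/K;I_{\Theta}(\lambda))$.

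Second, for injectivity: if $\Phi(I)=I(f_{0})=0$, then since $X_{\Theta,\lambda^{*}}$ is irreducible every $v\in X_{\Theta,\lambda^{*}}$ is of the form $\pi_{\Theta,\lambda^{*}}(X)f_{0}$ for some $X\in U(\g)$, whence $I(v)=R_{X}I(f_{0})=0$; thus $I=0$. For surjectivity, let $F\in C^{\infty}(G/K;I_{\Theta}(\lambda))$. By Theorem \ref{Poisson} (applied with $\lambda$ replaced by $\lambda^{*}$, whose Harish-Chandra module is $X_{\Theta,\lambda^{*}}$) there is $f\in\mathcal{B}(G/P_{\Theta};\lambda^{*})$ with $\mathcal{P}_{\Theta}^{\lambda^{*}}(f)=F$; actually I only need the $K$-finite statement, or rather I would argue more robustly via the matrix coefficient map. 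Define $I_{F}\colon X_{\Theta,\lambda^{*}}\to C^{\infty}(G)$ by $I_{F}(\pi_{\Theta,\lambda^{*}}(X)f_{0})=R_{X}F$; this is well defined because if $\pi_{\Theta,\lambda^{*}}(X)f_{0}=0$ then $X$ annihilates the whole cyclic module $X_{\Theta,\lambda^{*}}$, so $X\in I_{\Theta}(\lambda)$ (up to the $\iota$-twist) and hence $R_{X}F=0$ by the defining property of $C^{\infty}(G/K;I_{\Theta}(\lambda))$ together with Lemma \ref{mtcf}. One checks directly that $I_{F}$ is a $(\g_{\C},K)$-homomorphism and $\Phi(I_{F})=F$.

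Third, the topological statement: $\Phi$ is continuous by the first part of Lemma \ref{contiphi} (the semi-norm $|\cdot|_{\alpha,f_{0}}$ dominates $|\Phi(I)|_{\alpha}$), and the second part of Lemma \ref{contiphi} produces, for each defining semi-norm $|\cdot|_{\alpha,v_{m}}$ of the domain, a continuous semi-norm $\mu_{\alpha,v_{m}}$ on $C^{\infty}(G)$ with $\mu_{\alpha,v_{m}}(\Phi(I))=|I|_{\alpha,v_{m}}$; restricting $\mu_{\alpha,v_{m}}$ to the closed subspace $C^{\infty}(G/K;I_{\Theta}(\lambda))$ shows that $\Phi^{-1}$ is continuous. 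Finally, $G$-equivariance is immediate: the $G$-action on the domain is left translation on the $C^{\infty}(G)$-values, $\Phi$ is evaluation of $I$ at the fixed vector $f_{0}$, and left translation commutes with $R_{X}$ and with evaluation at $f_{0}$, so $\Phi(L_{g}\cdot I)=L_{g}\Phi(I)$.

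\textbf{Main obstacle.} The routine parts are the Fréchet-space bookkeeping, already isolated in the two lemmas. The delicate point is the well-definedness of $I_{F}$ in the surjectivity step: one must know precisely that $\{X\in U(\g)\mid \pi_{\Theta,\lambda^{*}}(X)f_{0}=0\}$ equals (the appropriate $\iota$-image of) $I_{\Theta}(\lambda)$, so that $R_{X}F=0$ can be invoked — i.e.\ one needs the Poisson-transform characterization Theorem \ref{Poisson} for the dual parameter $\lambda^{*}$ to match the annihilator of the cyclic vector $f_{0}$, and to keep the $\iota$-antiautomorphism and the $\lambda\leftrightarrow\lambda^{*}$ swap consistent throughout. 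Once that identification is pinned down, everything else follows formally.
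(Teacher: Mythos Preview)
Your surjectivity argument contains a genuine error. You claim that if $\pi_{\Theta,\lambda^{*}}(X)f_{0}=0$ then ``$X$ annihilates the whole cyclic module $X_{\Theta,\lambda^{*}}$, so $X\in I_{\Theta}(\lambda)$.'' This implication is false: the annihilator of a single vector is only a \emph{left} ideal, generally strictly larger than the two-sided annihilator of the module. For instance, any $X\in\ka$ satisfies $\pi_{\Theta,\lambda^{*}}(X)f_{0}=0$ (since $f_{0}$ is $K$-fixed) but certainly does not kill all of $X_{\Theta,\lambda^{*}}$. So your well-definedness check for $I_{F}$ collapses. It \emph{is} true that $R_{X}F=0$ whenever $\pi_{\Theta,\lambda^{*}}(X)f_{0}=0$, but this is exactly the nontrivial fact that requires proof, and you cannot get it from the inclusion $\{X:\pi(X)f_{0}=0\}\subset I_{\Theta}(\lambda)$, which fails.

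The paper avoids this by never attempting to invert $\Phi$ via the quotient $U(\g)/\mathrm{Ann}(f_{0})$. Instead it uses the matrix coefficient map of Lemma~\ref{mtcf}: for $K$-finite $F$, Theorem~\ref{Poisson} and Lemma~\ref{mtcf} give $F=c(h\otimes f_{0})$ for some $h\in X_{\Theta,\lambda}$, and then $I_{h}(v):=c(h\otimes v)$ is defined \emph{directly} on all of $X_{\Theta,\lambda^{*}}$ with no quotient to check. (This is also how the paper handles the well-definedness of $\Phi$ itself in your Step~1: rather than arguing that $I_{\Theta}(\lambda)$ annihilates $X_{\Theta,\lambda^{*}}$ under $\pi_{\Theta,\lambda^{*}}$---which is the $\lambda\leftrightarrow\lambda^{*}$, $\iota$-twist issue you flag but do not resolve---the paper shows via a Taylor expansion that for $K$-finite $I$ one has $I(f_{0})=c(I(\cdot)(e)\otimes f_{0})$, which lies in $C^{\infty}(G/K;I_{\Theta}(\lambda))$ automatically by Theorem~\ref{Poisson}.) Surjectivity and the homeomorphism then follow by density of $K$-finite vectors, the Cauchy-sequence argument enabled by the second half of Lemma~\ref{contiphi}, and the open mapping theorem. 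Your Fr\'echet bookkeeping and injectivity are fine; the missing ingredient is precisely the matrix-coefficient identification, which does the real work that your annihilator argument cannot.
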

\begin{proof}
We can immediately see that $\Phi$ preserves the action of $G$ by definition. First we show that $\Phi$ is well-defined. Take a $K$-finite element $I\in \Hom_{\g_{\C},K}(X_{\Theta,\lambda^{*}},C^{\infty}(G))_{K}$. Then by the evaluation at the origin $e\in G$, we can regard $I(\cdot)(e)$ as the element of $X_{\Theta,\lambda}\cong (X_{\Theta,\lambda^{*}})^{*}$. Since $I(f_{0})(g)\in C^{\infty}(G)$ is $K$-finite and $Z(\g)$-finite, it is a real analytic function on $G$. Let $W$ be an sufficiently small open neighbourhood of $0$ in $\g$. Then we have the Taylor expansion at the origin $e\in G$,
\begin{align*}
I(f_{0})(\exp X)&=\sum_{n=0}^{\infty}\frac{1}{n!}R_{X^{n}}(I(f_{0}))(e)\\
&=\sum_{n=0}^{\infty}\frac{1}{n!}\langle I(\cdot)(e),\pi_{\Theta,\lambda^{*}}(X^{n})f_{0}\rangle\\
&=\sum_{n=0}^{\infty}\frac{1}{n!}c(I(\cdot)(e)\otimes \pi_{\Theta,\lambda^{*}}(X^{n})f_{0})(e)\\
&=\sum_{n=0}^{\infty}\frac{1}{n!}R_{X^{n}}c(I(\cdot)(e)\otimes f_{0})(e)\\
&=c(I(\cdot)(e)\otimes f_{0})(\exp X)
\end{align*}
for $X\in W$. We can extend this equality for the identity component $G^{o}$ of $G$ because both functions are real analytic. And the fact $G=G^{o}\cdot K$ implies $I(f_{0})(g)=c(I(\cdot)(e)\otimes f_{0})(g)$ for all $g\in G$. Hence by Theorem \ref{Poisson} and Lemma \ref{mtcf}, we have the inclusion
\[
\Phi(\Hom_{\g_{\C},K}(X_{\Theta,\lambda^{*}},C^{\infty}(G))_{K})\subset C^{\infty}(G/K;I_{\Theta}(\lambda)).
\]
We recall that for a continuous representation of $G$ on a locally convex complete space $V$, the space of $K$-finite vectors $V_{K}$ is dense in $V$ (for example, Lemma 1.9, Ch.IV in \cite{Hel-2}).  Since $\Phi$ is a continuous mapping by Lemma \ref{contiphi}, we have
\begin{align*}
&\Phi(\Hom_{\g_{\C},K}(X_{\Theta,\lambda^{*}},C^{\infty}(G)))=\Phi(\mathrm{Cl}(\Hom_{\g_{\C},K}(X_{\Theta,\lambda^{*}},C^{\infty}(G))_{K})\\
&\subset \mathrm{Cl}(\Phi(\Hom_{\g_{\C},K}(X_{\Theta,\lambda^{*}},C^{\infty}(G))_{K}))\subset \mathrm{Cl}(C^{\infty}(G/K;I_{\Theta}(\lambda)))\\
&=C^{\infty}(G/K;I_{\Theta}(\lambda)).
\end{align*}
Here $\mathrm{Cl}(\cdot)$ is the closure. Hence $\Phi$ is well-defined. Next we prove $\Phi$ is the bijective map. Since $X_{\Theta,\lambda^{*}}$ is irreducible, the map $\Phi$ is injective. We will prove that $\Phi$ is surjective.

For any $F(g)\in C^{\infty}(G/K;I_{\Theta}(\lambda))_{K}$, there exist $h\in X_{\Theta,\lambda}$ and $F(g)=c(h\otimes f_{0})(g)$ $(g\in G)$ by Theorem \ref{Poisson} and Lemma \ref{mtcf}. We define an element of $\Hom_{\g_{\C},K}(X_{\Theta,\lambda},C^{\infty}(G))$ so that
\[
I_{h}(v)(g)=c(h\otimes v)(g),
\]
for $v\in X_{\Theta,\lambda^{*}}$. Then we can see that $\Phi(I_{h})(g)=I_{h}(f_{0})(g)=c(h\otimes f_{0})(g)=F(g).$ Hence we have an inclusion $C^{\infty}(G/K;I_{\Theta}(\lambda))_{K}\subset \Phi(\Hom_{\g_{\C},K}(X_{\Theta,\lambda},C^{\infty}(G)))$. Because $C^{\infty}(G/K;I_{\Theta}(\lambda))_{K}$ is a dense subspace of $C^{\infty}(G/K;I_{\Theta}(\lambda))$, for any $f\in C^{\infty}(G/K;I_{\Theta}(\lambda))$ we can choose a convergent sequence $f_{n}\rightarrow f$  $(n\rightarrow \infty)$ where $f_{n}\in C^{\infty}(G/K;I_{\Theta}(\lambda))_{K}$ for $n\in\N$. The above inclusion shows that there exist $I_{n}\in \Hom_{\g_{\C},K}(X_{\Theta,\lambda},C^{\infty}(G))$ such that $\Phi(I_{n})=f_{n}$. From the second assertion in Lemma \ref{contiphi}, the sequence $\{I_{n}\}$ is a Cauchy sequence in $\Hom_{\g_{\C},K}(X_{\Theta,\lambda},C^{\infty}(G))$. Since $\Hom_{\g_{\C},K}(X_{\Theta,\lambda},C^{\infty}(G))$ is a Fr\'echet space, i.e., complete space, there exist $I\in \Hom_{\g_{\C},K}(X_{\Theta,\lambda},C^{\infty}(G))$ such that $I_{n}\rightarrow I$ $(n\rightarrow \infty)$. Thus we have $\Phi(I)=(f)$ by the continuity of $\Phi$. This shows that $\Phi$ is a surjective map. The open mapping theorem leads that $\Phi$ is a homeomorphism.
\end{proof}

\subsection{Generalized Whittaker models}
We define a generalized Whittaker model for $X_{\Theta,\lambda}$. Let us fix a closed subgroup $U$ of $N$. We take an irreducible unitary representation $\eta$ of $U$ on a Hilbert space $V_{\eta}$. Let $V_{\eta}^{\infty}$ be the space of $C^{\infty}$-vectors in $V_{\eta}$. Let us consider the space $C^{\infty}_{\eta}(U\backslash G)=\{f\colon G\rightarrow V_{\eta}^{\infty}\ \text{smooth}\,|\, f(ng)=\eta(n)f(g), g\in G, n\in U\}$. This becomes a $G$-module by the right translation.
\begin{df}
We consider the following intertwining space
\[
\Hom_{\g_{\C},K}(X_{\Theta,\lambda^{*}},C^{\infty}_{\eta}(U\backslash G)).
\]
We call images of $X_{\Theta,\lambda^{*}}$ by these $(\g_{\C},K)$-homomorphisms generalized Whittaker models of $X_{\Theta,\lambda^{*}}$.
\end{df} 

\begin{thm}\label{Yamashita}
Assume that $X_{\Theta,\lambda^{*}}$ is irreducible. We take the $K$-fixed vector in $X_{\Theta,\lambda^{*}}$ such that $f_{0}|_{K}\equiv 1$. Then the following mapping  
\[
\begin{array}{ccc}
\tilde{\Phi}\colon\Hom_{\g_{\C},K}(X_{\Theta,\lambda^{*}},C^{\infty}_{\eta}(U\backslash G))&\xrightarrow[]{\sim}&C^{\infty}_{\eta}(U\backslash G/K; I_{\Theta}(\lambda))\\
W&\longmapsto&W(f_{0})(g)
\end{array}
\] 
is a linear isomorphism.
Here 
\begin{multline*}
C^{\infty}_{\eta}(U\backslash G/K; I_{\Theta}(\lambda))\\
=\{f\colon G\rightarrow V_{\eta}^{\infty}\ \text{smooth}\,|\, f(ngk)=\eta(n)f(g), g\in G, n\in U,\ k\in K\\
\text{and}\ R_{X}f(g)=0,\ X\in I_{\Theta}(\lambda)\}.
\end{multline*}
\end{thm}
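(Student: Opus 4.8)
The plan is to reduce Theorem~\ref{Yamashita} to the already-established Proposition~\ref{maxglob} by pushing everything through the extra unitary symmetry carried by $C^\infty_\eta(U\backslash G)$. First I would observe that $C^\infty_\eta(U\backslash G)$ is, as a $G$-module under right translation, nothing but a space of $V_\eta^\infty$-valued functions, and that the evaluation-at-$f_0$ map $\tilde\Phi$ is the restriction of the corresponding map $\Phi$ in Proposition~\ref{maxglob} applied coordinatewise. More precisely, for $W\in\Hom_{\g_\C,K}(X_{\Theta,\lambda^*},C^\infty_\eta(U\backslash G))$ and any continuous linear functional $\xi$ on $V_\eta^\infty$, the composite $\xi\circ W$ lies in $\Hom_{\g_\C,K}(X_{\Theta,\lambda^*},C^\infty(G))$, so Proposition~\ref{maxglob} gives $\xi\circ W(v)(g)=c\bigl((\xi\circ W)(\cdot)(e)\otimes v\bigr)(g)$; in particular $\xi\bigl(W(v)(g)\bigr)$ is determined by $\xi\bigl(W(\cdot)(e)\bigr)$ and $v$. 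Since $\xi$ is arbitrary, $W$ itself is determined by the single element $W(\cdot)(e)\in\Hom_\C(X_{\Theta,\lambda^*},V_\eta^\infty)$, and in fact by $W(f_0)$ once we know that $f_0$ generates $X_{\Theta,\lambda^*}$ as a $U(\g)$-module (which holds by irreducibility). This already yields injectivity of $\tilde\Phi$.

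Next I would check that $\tilde\Phi$ lands in the asserted target space. The equivariance property $f(ngk)=\eta(n)f(g)$ for $W(f_0)$ is automatic: the $U$-covariance comes from $W(f_0)\in C^\infty_\eta(U\backslash G)$ and the right $K$-invariance comes from $K$-equivariance of $W$ together with $\pi_{\Theta,\lambda^*}(k)f_0=f_0$. The differential equations $R_XW(f_0)=0$ for $X\in I_\Theta(\lambda)$ follow from the $\g_\C$-equivariance of $W$: for $X\in U(\g)$ one has $R_XW(f_0)=W(\pi_{\Theta,\lambda^*}(X)f_0)$, and $\pi_{\Theta,\lambda^*}(X)f_0=0$ for $X\in I_\Theta(\lambda)$ because, after transporting via the Poisson transform $\mathcal P_\Theta^\lambda$ of Theorem~\ref{Poisson}, $X_{\Theta,\lambda^*}\cong C^\infty(G/K;I_\Theta(\lambda^*))_K$ is annihilated by $I_\Theta(\lambda)$ — here I should be a little careful to invoke the correct normalization identifying $I_\Theta(\lambda)$ with the annihilator of $X_{\Theta,\lambda^*}$ rather than of $X_{\Theta,\lambda}$, using that $\lambda^{**}=\lambda$, exactly as in the hypothesis of the theorem statement.

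The substantive step, and the one I expect to be the main obstacle, is surjectivity: given $F\in C^\infty_\eta(U\backslash G/K;I_\Theta(\lambda))$ I must produce $W$ with $W(f_0)=F$. The idea is to define, for $v\in X_{\Theta,\lambda^*}$, an element $W(v)\in C^\infty(G,V_\eta^\infty)$ by $W(v)(g)=$ ``$c(h_g\otimes v)$'' in a suitable $V_\eta$-valued sense — concretely, writing $v=\pi_{\Theta,\lambda^*}(X)f_0$ (not canonically, since $f_0$ is cyclic by irreducibility), set $W(v):=R_XF$, and then one must show this is independent of the choice of $X$, i.e. that $\pi_{\Theta,\lambda^*}(X)f_0=0$ implies $R_XF=0$. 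That implication is precisely the content of ``$F$ lies in the kernel of $I_\Theta(\lambda)$'' combined with the maximal-globalization picture: Proposition~\ref{maxglob} applied to each $\xi\circ F$ shows $\xi\circ R_XF=0$ for all $\xi$, hence $R_XF=0$. Once $W$ is well-defined it is tautologically a $(\g_\C,K)$-homomorphism and $W(f_0)=F$; the only remaining points are that $W(v)$ is genuinely $V_\eta^\infty$-valued and smooth (inherited from $F$ since $R_X$ preserves $C^\infty_\eta(U\backslash G)$) and satisfies the $U$-covariance (again inherited from $F$). Thus the whole argument is: lift the scalar isomorphism of Proposition~\ref{maxglob} to the $V_\eta^\infty$-valued setting by testing against functionals $\xi\in(V_\eta^\infty)'$, using cyclicity of $f_0$ from irreducibility of $X_{\Theta,\lambda^*}$ to transfer both well-definedness and bijectivity; the delicate bookkeeping is making sure the $U$- and $K$-covariances and the smoothness survive the construction, which they do because all operations in sight are right-translation-type differential operators that commute with left $U$-covariance and preserve $C^\infty$.
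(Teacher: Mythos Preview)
Your approach is essentially the same as the paper's: reduce to the scalar case of Proposition~\ref{maxglob} via linear functionals on $V_\eta$, then use cyclicity of $f_0$ (from irreducibility of $X_{\Theta,\lambda^*}$) to build the inverse $F\mapsto W_F$ by $W_F(\pi_{\Theta,\lambda^*}(X)f_0):=R_XF$ and verify well-definedness by pushing back through the functionals. The paper differs only in that it fixes a \emph{single} nonzero $\xi\in V_\eta$ and uses the irreducibility of the unitary representation $\eta$ to make the single map $T(f)=\langle\xi,f\rangle_\eta$ injective, rather than testing against all functionals; this is slightly more economical and makes the commutative-diagram argument for ``$\tilde\Phi$ lands in the target'' cleaner than your direct attempt via $\pi_{\Theta,\lambda^*}(X)f_0=0$ for $X\in I_\Theta(\lambda)$, where you correctly sensed some $\lambda$ vs.\ $\lambda^*$ bookkeeping was needed --- the paper sidesteps that entirely by arguing $T(R_X\tilde\Phi(W))=R_X\Phi(\tilde T(W))=0$ and then invoking injectivity of $T$.
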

\begin{proof}
Fix a nonzero element $\xi\in V_{\eta}$. Then we consider the linear mapping
\[
T\colon C^{\infty}_{\eta}(U\backslash G)\in f\longmapsto \langle \xi,f(g)\rangle_{\eta} \in C^{\infty}(G),
\]
which commutes with $G$ and $\g_{\C}$ actions from the right where $\langle\ ,\ \rangle_{\eta}$ is an inner product on $V_{\eta}$. Since $(\eta,V_{\eta})$ is an irreducible unitary representation of $U$, this mapping $T$ is injective. In fact, if $T(f)\equiv0$ for $f\in C^{\infty}_{\eta}(U\backslash G)$, then we have
\[0=T(f)(ng)=\langle\xi,f(ng)\rangle=\langle\xi,\eta(n)f(g)\rangle=\langle\eta(n^{-1})\xi,f(g)\rangle,
\]
for any $n\in U$ and $g\in G$. Since $V_{\eta}$ is irreducible, this implies $f\equiv 0$. By this map $T$, we also have an injective map
\[
\begin{array}{cccc}
\tilde{T}&\colon\Hom_{(\g_{\C},K)}(X_{\Theta,\lambda^{*}},C^{\infty}_{\eta}(U\backslash G))&\longrightarrow &\Hom_{(\g_{\C},K)}(X_{\Theta,\lambda^{*}},C^{\infty}(G))\\
&W&\longmapsto&T\circ W.
\end{array}
\]
For any $W\in \Hom_{(\g_{\C},K)}(X_{\Theta,\lambda^{*}},C^{\infty}(U\backslash G,\eta))$, we have $T(\tilde{\Phi}(W))=T(W(f_{0}))=T\circ W(f_{0})=\tilde{T}(W)(f_{0})=\Phi(\tilde{T}(W))$.
Hence we have the following commutative diagram,
\[
\begin{CD}
\Hom_{(\g_{\C},K)}(X_{\Theta,\lambda^{*}},C^{\infty}_{\eta}(U\backslash G)) @> {\tilde{\Phi}} >> C^{\infty}_{\eta}(U\backslash G/K)\\
@V{\tilde{T}}VV @VV{T}V\\
\Hom_{(\g_{\C},K)}(X_{\Theta,\lambda^{*}},C^{\infty}(G))@>>{\Phi}>C^{\infty}(G/K)
\end{CD}.
\]
Since $\Phi$, $T$ and $\tilde{T}$ are injective, we see $\tilde{\Phi}$ is injective. Next we show that $\im\tilde{\Phi}\subset C^{\infty}_{\eta}(U\backslash G/K; I_{\Theta}(\lambda))$. Take $W\in \Hom_{(\g_{\C},K)}(X_{\Theta,\lambda^{*}},C^{\infty}_{\eta}(U\backslash G))$, then $T(\tilde{\Phi}(W))(g)=\langle\xi,W(f_{0})(g)\rangle\in C^{\infty}(G/K;I_{\Theta}(\lambda))$ where $g\in G$. Hence we have $0=R_{X}T(\tilde{\Phi}(W))(g)=T(R_{X}\tilde{\Phi}(W))(g)$ for $X\in I_{\Theta}(\lambda)$ and $g\in G$. Since $T$ is injective, we have $R_{X}W(f_{0})(g)=0$ for $X\in I_{\Theta}(\lambda)$ and $g\in G$, i.e, $\im\tilde{\Phi}\subset C^{\infty}_{\eta}(U\backslash G/K; I_{\Theta}(\lambda))$.

Finally, we show that $\tilde{\Phi}$ is surjective. Let $f\in C^{\infty}_{\eta}(U\backslash G/K; I_{\Theta}(\lambda))$. For $v\in X_{\Theta,\lambda^{*}}$ there exist $X_{v}\in U(\g)$ such that $v=\pi_{\Theta,\lambda^{*}}(X_{v})f_{0}$ since $X_{\Theta,\lambda^{*}}$ is irreducible. Then we define a mapping $W_{f}\colon X_{\Theta,\lambda^{*}}\ni v=\pi_{\Theta,\lambda^{*}}(X_{v})f_{0}\mapsto R_{X_{v}}f(g)\in C^{\infty}_{\eta}(U\backslash G)$. We need to check that it is a well-defined mapping. If for $X_{v},X'_{v}\in \g$ we have $v=\pi_{\Theta,\lambda^{*}}(X_{v})f_{0}=\pi_{\Theta,\lambda^{*}}(X'_{v})f_{0}$, then we have $\pi_{\Theta,\lambda^{*}}(X_{v}-X'_{v})f_{0}=0$. On the other hand we have $T(f)\in C^{\infty}(G/K;I_{\Theta}(\lambda))$. Thus there exists $I_{f}\in \Hom_{\g_{\C},K}(X_{\Theta,\lambda^{*}},C^{\infty}(G))$ such that $T(f)=\Phi(I_{f})$ by Proposition \ref{maxglob}. We put $Z=X_{v}-X'_{v}$. Then we have $T(R_{Z}f)=R_{Z}T(f)=\Phi(R_{Z}I_{f})=R_{Z}I_{f}(f_{0})(g)=I_{f}(\pi_{\Theta,\lambda^{*}}(Z)f_{0})(g)=0$. Hence by the injectivity of $T$, we have $R_{Z}f(g)=0$, i.e., $R_{X_{v}}f=R_{X'_{v}}f$. This implies that $W_{f}$ is well-defined. Also we can
check that $W_{f}$ is compatible with $\g_{\C}$ and $K$ actions. Hence $W_{f}\in \Hom_{(\g_{\C},K)}(X_{\Theta,\lambda^{*}},C^{\infty}_{\eta}(U\backslash G))$ and $\tilde{\Phi}(W_{f})=W_{f}(f_{0})=f$. Hence $\tilde{\Phi}$ is surjective.
\end{proof}
\begin{rem}
This theorem is an analogue of Yamashita's result for the generalized Whittaker models of discrete series representations (Theroem 2.4. in \cite{Y-1}) and more general settings (Corollary 1.8. in \cite{Y-2}).
\end{rem}

\section{Calculus on the case of $GL(4,\R)$}\label{GL4}
In previous sections, we gave a characterization of the space of generalized Whittaker model as the kernel of some explicit differential operators. We will calculate some examples on $GL(4,\R)$ by using these theories. In particular we take  the spherical degenerate principal series representations induced from the maximal parabolic subgroups $P_{1,4}$, $P_{2,4}$ and compute dimensions of the spaces of generalized Whittaker models and find the basis for them.

Let us explain the detailed settings. We consider the case $n=4$. Hence $G=GL(4,\R)$, $K=O(4)$ , $A$ is the group of the $4\times 4$ diagonal matrices with positive real entries and $N$ is the group of $4\times 4$ strict lower triangular matrices with 1s in the diagonal entries. We put $P_{k}=P_{k,4}, k=1,2$. For $(\lambda_{1},\lambda_{2})\in \C^{2}$, we define the character $\lambda\colon P_{k}\rightarrow \C^{\times}$ and define degenerate principal series representation induced from $P_{k}$ and $\lambda$ as before. Let $X_{k,\lambda}$ be their Harish-Chandra modules  which consist of $K$-finite vectors of these degenerate principal series representations. Then by Theorem \ref{generator1}, the annihilator ideal in $U(\g)$ of the degenerate principal series representation are written by 
\begin{multline}\label{generator4}
I_{k}(\lambda)=I_{\{k,4\}}(\lambda)\\
\sum_{i=1}^{4}\sum_{j=1}^{4}U(\g)(\mathbb{E}-\lambda_{1})(\mathbb{E}-\lambda_{2}-k)+U(\g)(\sum_{i=1}^{4}E_{ii}-\lambda_{1}-(4-k)\lambda_{2}).
\end{multline}
for $k=1,2.$ We put a stronger condition for $\lambda$, 
\[
\lambda_{1}-\lambda_{2}\notin \Z.
\]

\subsection{Equivalent classes of $C^{\infty}_{\eta}(U\backslash G)$.}\label{classify-G}
A generalized Whittaker model is an image of an embeddings of $X_{\Theta,\lambda^{*}}$ into $C^{\infty}_{\eta}(U\backslash G)$ where  $U$ is a closed subgroup of $N$ and $\eta$ is its irreducible unitary representation. In this paper, we consider the space $C^{\infty}_{\eta}(U\backslash G)$ defined as follows. 
\begin{enumerate}
\item the group $U$ is a closed subgroup of $N$ and $\eta$ is its unitary character,
\item the unitary induced representation $L^{2}\Indd _{U}^{N}\eta$ is an irreducible unitary representation of $N$.
\end{enumerate} 

We will classify the $G$-equivalent classes of these $C^{\infty}_{\eta}(U\backslash G)$.
\subsubsection{The classification of $\hat{N}$}\label{nhat}

Firstly we will give the classification of the unitary dual of maximal unipotent subgroup $N$ of $G$ by using the Killirov's method for coadjoint orbits. This section contains no new results. The details of the contents of this section can be found in \cite{C-G} for example. 

Let $\n=\text{Lie}(N)$, i.e.,
\[
\n=\left\{
n(z,y_1,y_2,x_1,x_2,x_3)=
\begin{pmatrix}
0&&&\\
x_1&0&&\\
y_1&x_2&0&\\
z&y_2&x_3&0
\end{pmatrix}\ ;\ x_1,\ldots,z\in\R
\right\}.
\] 
We denote its dual $\R$-vector space by $\n^{*}=\Hom_{\R}(\n,\R)$. We identify this space with a subspace of $M(4,\R)$,
\[
\left\{l(\alpha,\beta_1,\beta_2,\gamma_1,\gamma_2,\gamma_3)=
\begin{pmatrix}
0&\gamma_1&\beta_1&\alpha\\
&0&\gamma_2&\beta_2\\
&&0&\gamma_3\\
&&&0
\end{pmatrix}\ ;\ \alpha,\ldots,\gamma_3\in\R
\right\},
\]
so that
\begin{multline*}
l(\alpha,\beta_1,\beta_2,\gamma_1,\gamma_2,\gamma_3)\cdot n(z,y_1,y_2,x_1,x_2,x_3)\\
=\text{tr}(l(\alpha,\beta_1,\beta_2,\gamma_1,\gamma_2,\gamma_3)n(z,y_1,y_2,x_1,x_2,x_3))\\
=\alpha z+\beta_{1}y_{1}+\beta_{2}y_{2}+\gamma_{1}x_{1}+\gamma_{2}x_{2}+\gamma_{3}x_{3}.
\end{multline*}
We define the coadjoint action of $N$ on $\n$ by $(\text{Ad}^{*}(n)l)(X)=l(\text{Ad}(n^{-1})X)$ for $n\in N, \l\in \n^{*}, X\in\n$. Take a basis $X_1,X_2,X_3,Y_1,Y_2,Z$ of $\n$ and its dual basis $X_1^{*},X_2^{*},X_3^{*},Y_1^{*},Y_2^{*},Z^{*}$ of $\n^{*}$ so that 
\[
n(z,y_1,y_2,x_1,x_2,x_3)=z Z+\sum_{i=1}^{2}y_i Y_i+\sum_{j=1}^{3}x_j X_j
\]
and 
\[
l(\alpha,\beta_1,\beta_2,\gamma_1,\gamma_2,\gamma_3)=\alpha Z^{*}+\sum_{i=1}^{2}\beta_i Y_i^{*}+\sum_{j=1}^{3}\gamma_j X_j^{*}. 
\]
Under our coordinate system, the coadjoint action is written as follows, 
\begin{equation}\label{coad}
\begin{aligned}
&(\text{Ad}^{*}\exp (n(z,\cdots,x_3)))(l(\alpha,\cdots,\gamma_3))\\
&=\alpha Z^{*}+(\beta_1+\alpha x_3)Y_{1}^{*}+(\beta_2 -\alpha x_1)Y_{2}^{*}\\
&+(\gamma_1 +\beta_1 x_2 +\alpha(y_2+\frac{x_2 x_3}{2}))X_{1}^{*}+(\gamma_2+x_3\beta_2 -x_1\beta_1-x_1 x_3 \alpha)X_{2}^{*}\\
&+(\gamma_3 -x_2\beta_2+\alpha(\frac{x_1 x_2}{2}-y_1))X_3^{*}.
\end{aligned}
\end{equation}

We consider the classification of coadjoint orbits of $\mathfrak{n}^{*}$.
First, we assume that $\alpha\neq 0$. Then by the equation (\ref{coad}), if we choose appropriate $x_3,x_1,y_2,y_1$, we can find in the $\text{Ad}^{*}N$-orbit a point with $\beta_1=\beta_2=\gamma_1=\gamma_3=0$. Hence if we write $l_{\alpha,\gamma_2}=l(\alpha,0,0,0,\gamma_2,0)$, the coadjoint orbit is written as follows,
\begin{multline*}
(\text{Ad}^{*}N)l_{\alpha,\gamma_2}=\{
\alpha Z^{*}+t_1 Y_{1}^{*}+t_2 Y_{2}^{*} +s_1 X_1^{*}\\
+(\gamma_2 +\frac{t_1 t_2}{\alpha})X_{2}^{*}+s_2 X_{3}^{*}\ ;\ t_1,t_2,s_1,s_2\in\R
\}.
\end{multline*}
Next, we consider the case $\alpha=0$, i.e., $l(0,\beta_1,\beta_2,\gamma_1,\gamma_2,\gamma_3)$. We assume $\beta_1\neq 0$ or $\beta_2\neq 0$. Then from the equation (\ref{coad}), we can see that there is an element $l(0,\beta'_1,\beta'_2,\gamma'_1,0,\gamma'_3)$ in $(\text{Ad}^{*}N)l(0,\beta_1,\beta_2,\gamma_1,\gamma_2,\gamma_3)$. Hence in this case, it is enough to consider the orbit
\begin{multline*}
(\text{Ad}^{*}N)l(0,\beta_1,\beta_2,\gamma_1,0,\gamma_3)\\
=\{
\beta_1 Y_1^{*}+\beta_2 Y_2^{*}+(\beta_1 t_1+\gamma_1)X_{1}^{*}\\
+t_{2}X_{2}^{*}+(\gamma_3 -\beta_{2}t_1)X_{3}^{*}\ ;\ t_1,t_2\in\R
\}.
\end{multline*}

Finally we consider the case $\beta_1=\beta_2=0$. Then we have
\[
(\text{Ad}^{*}n^{*}(0,0,0,\gamma_1,\gamma_2,\gamma_3))=\{
\gamma_1 X_{1}^{*}+\gamma_2 X_{2}^{*}+\gamma_3 X_{3}^{*}
\}.
\]
We summarize these as a proposition.
\begin{prop}\label{classcoad}
We can classify coadjoint orbits of $\n^{*}$ in following cases.
\begin{itemize}
\item[(I)] For $\alpha\in\R\backslash\{0\}$ and $\gamma_{2}\in\R$,
\begin{align*}
\mathcal{O}_{\alpha,\gamma_{2}}=&(\text{Ad}^{*}N)l(\alpha,0,0,0,\gamma_2,0)\\
=&\{
\alpha Z^{*}+t_1 Y_{1}^{*}+t_2 Y_{2}^{*} +s_1 X_1^{*}+(\gamma_2+\frac{t_1 t_2}{\alpha})X_{2}^{*}\\
&+s_2 X_{3}^{*}\ ;\ t_1,t_2,s_1,s_2\in\R
\}.
\end{align*}
We have $\text{dim}\,\mathcal{O}_{\alpha,\gamma_{2}}$=4.
\item[(II)] For $\beta_{1},\beta_{2},\gamma_{1},\gamma_{3}\in\R$ such that $\beta_{1}\beta_{2}\neq 0$,
\begin{align*}
\mathcal{O}_{\beta_{1},\beta_{2},\gamma_{1},\gamma_{3}}=&(\text{Ad}^{*}N)l(0,\beta_1,\beta_2,\gamma_1,0,\gamma_3)\\
=&\{
\beta_{1}Y_{1}^{*}+\beta_{2}Y_{2}^{*}+(\beta_1t_1+\gamma_1)X_1^{*}+t_2 X_{2}+(\gamma_3-\beta_2 t_1)X_{3}^{*}\ ;\\
&t_1,t_2\in\R\}.
\end{align*}
We have $\text{dim}\,\mathcal{O}_{\beta_{1},\beta_{2},\gamma_{1},\gamma_{3}}=2$.
\item[(III)] For $\gamma_{1},\gamma_{2},\gamma_{3}\in\R$,
\begin{align*}
\mathcal{O}_{\gamma_{1},\gamma_{2},\gamma_{3}}=&(\text{Ad}^{*}N)l(0,0,0,\gamma_1,\gamma_2,\gamma_3)\\
=&\{
\gamma_1 X_1^{*}+\gamma_2 X_{2}^{*}+\gamma_3 X_{3}^{*}
\}.
\end{align*}
We have $\text{dim}\,\mathcal{O}_{\gamma_{1},\gamma_{2},\gamma_{3}}=0$.
\end{itemize}
\end{prop}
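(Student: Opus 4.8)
The plan is to classify the orbits of the coadjoint action $\mathrm{Ad}^{*}N$ on $\mathfrak{n}^{*}$ by stratifying the dual space according to which of the "innermost" coordinates $\alpha,\beta_1,\beta_2$ are nonzero, and then, within each stratum, using the explicit formula \eqref{coad} to bring a representative into a normal form. Concretely, I would argue as follows. Given an arbitrary element $l=l(\alpha,\beta_1,\beta_2,\gamma_1,\gamma_2,\gamma_3)$, the coefficient $\alpha$ of $Z^{*}$ is invariant under $\mathrm{Ad}^{*}N$ (it is a central character, or one reads it off the first line of \eqref{coad}), so the strata $\{\alpha\neq0\}$ and $\{\alpha=0\}$ are $N$-stable. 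On the stratum $\{\alpha=0\}$ one checks from \eqref{coad} that the pair $(\beta_1,\beta_2)$ is likewise invariant, so $\{\alpha=0,(\beta_1,\beta_2)\neq0\}$ and $\{\alpha=\beta_1=\beta_2=0\}$ are both $N$-stable. This produces exactly the three cases (I), (II), (III).

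Next, in each stratum I would solve for the group parameters $(z,y_1,y_2,x_1,x_2,x_3)$ that move a given $l$ to the chosen representative. In case (I), with $\alpha\neq0$, formula \eqref{coad} shows $\beta_1$ is shifted by $\alpha x_3$, $\beta_2$ by $-\alpha x_1$, so choosing $x_3=-\beta_1/\alpha$, $x_1=\beta_2/\alpha$ kills $\beta_1,\beta_2$; having fixed those, $\gamma_1$ and $\gamma_3$ are then each shifted by a term linear in $y_2$ respectively $y_1$ with nonzero coefficient $\alpha$, so one can solve for $y_1,y_2$ to kill $\gamma_1,\gamma_3$ as well; the surviving invariant is $\gamma_2$ (modulo the $\frac{x_2x_3}{?}$-type corrections, but those are accounted for in the stabilized coordinates), giving the representative $l_{\alpha,\gamma_2}=l(\alpha,0,0,0,\gamma_2,0)$. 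One then reads off the orbit $(\mathrm{Ad}^{*}N)l_{\alpha,\gamma_2}$ directly by plugging $l_{\alpha,\gamma_2}$ back into \eqref{coad} and letting the free parameters vary, obtaining the stated parametrization with the characteristic $\gamma_2+\frac{t_1t_2}{\alpha}$ relation; the dimension $4$ follows since the map $(x_1,x_3,y_1,y_2)\mapsto(t_1,t_2,s_1,s_2)$ is onto and $(x_2,z)$ act trivially. In case (II), with $\alpha=0$ and $\beta_1\beta_2\neq0$, the only nontrivial shift available is that $\gamma_2$ changes by $x_3\beta_2-x_1\beta_1$ (and $\gamma_1,\gamma_3$ also shift), so one solves $x_3\beta_2-x_1\beta_1=-\gamma_2$ to kill $\gamma_2$; the residual one–parameter freedom in $(x_1,x_3)$ along the line $x_1\beta_1=x_3\beta_2$ then acts on $(\gamma_1,\gamma_3)$ giving the one–parameter $t_1$ in the orbit, and $x_2$ gives $t_2$; this yields the $2$-dimensional orbit as stated. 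Case (III) is immediate: when $\alpha=\beta_1=\beta_2=0$, every term on the right of \eqref{coad} beyond the $X_i^{*}$ ones vanishes, so $l$ is a fixed point and the orbit is a singleton of dimension $0$.

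Finally I would note two bookkeeping points needed to make the "representative" step airtight: first, that within each stratum the chosen normalizing conditions can be imposed \emph{simultaneously}, i.e.\ the successive solvings do not undo one another — this holds because the parameters used at each stage ($x_1,x_3$; then $y_1,y_2$; then $x_2,z$) appear in a triangular pattern in \eqref{coad}, so no later choice disturbs an earlier normalization; second, that the listed invariants really are invariants, which I would confirm by a direct inspection of \eqref{coad} (e.g.\ in case (I), the quantity $\alpha\gamma_2 - (\text{quadratic in the }X^{*}\text{-part})$, i.e.\ $\alpha$ times the $X_2^{*}$-coefficient minus $t_1t_2$, is constant along the orbit, which is exactly the $\frac{t_1t_2}{\alpha}$ relation). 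I expect the main obstacle to be purely organizational rather than conceptual: carefully verifying the triangularity of \eqref{coad} so that the simultaneous normalization is legitimate, and correctly tracking the quadratic cross-terms ($\frac{x_2x_3}{2}$, $x_1x_3$, $\frac{x_1x_2}{2}$) when substituting back to recover the precise shape of $\mathcal{O}_{\alpha,\gamma_2}$. Once the triangular structure is pinned down, each of the three cases is a short explicit computation, and the dimension counts are read off from the ranks of the resulting affine parametrizations.
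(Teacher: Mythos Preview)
Your proposal is correct and follows essentially the same route as the paper: the paper also computes the explicit coadjoint action formula \eqref{coad}, stratifies by whether $\alpha\neq 0$, then (when $\alpha=0$) by whether the $\beta_i$ vanish, and in each stratum solves for the group parameters $x_i,y_j$ in the same triangular order to reach the stated normal form before reading off the orbit by substituting back. Your write-up is in fact a bit more explicit than the paper's about the invariance of $\alpha$ and of $(\beta_1,\beta_2)$ and about why the successive normalizations do not interfere, but conceptually the arguments coincide.
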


To construct the irreducible unitary representation of $N$ from the coadjoint orbit of $l\in \n^{*}$, we should determine its radical $\mathfrak{r}_{l}$ and maximal subordinate subalgebra $\mathfrak{s}_{l}$. We define the coadjoint action of the Lie algebra $\n$ on $l\in \n^{*}$ by $((\text{ad}^{*}X)l)(Y)=l([Y,X])$ for $X,Y\in\n$.
\begin{df}
For $l\in \n^{*}$, we define the subalgebra of $\n$ such that
\[
\mathfrak{r}_{l}=\{X\in\n\ ;\ (\text{ad}^{*}X)l=0\}.
\]
We call this subalgebra the radical of $l\in \mathfrak{n}^{*}$.
\end{df}
If $V$ is a $\R$-vector space with an alternating bilinear form $B$, its isotropic subspace $W$ is the subspace such that $B(w,w')=0$ for all $w,w'\in W$. We define the radical of $B$ by $\text{rad}B=\{v\in V\ ;\ B(v,w)=0, \text{all}\ w\in V\}$. It is known that any maximal isotropic subspaces of $V$ have codimention $\frac{1}{2}\text{dim}_{\R}(V/\text{rad}B)$.
\begin{df}
For $l\in\n^{*}$, we can regard $l([X,Y])$ as a bilinear form  for $(X,Y)\in \n\times\n$. By the antisymmetry of Lie bracket $[X,Y]=-[Y,X]$ $(X,Y\in\n)$, this is an alternating form on $\n\times\n$. The subalgebra $\mathfrak{s}_{l}\subset \n$ which is isotropic for $l$ and has codimension $\frac{1}{2}\text{dim}_{\R}(\n/\mathfrak{r}_{l})$ is called maximal subordinate subalgebra of $\n$ for $l$.
\end{df}
\begin{rem}\label{maximal subordinate}
For a nilpotent Lie algebra $\n$, there exists at least one maximal subordinate subalgebra for any $l\in\n^{*}$. Although the radical for $l$ is uniquely determined, maximal subordinate subalgebras are not unique for $l$.
\end{rem}
Let us construct radicals and maximal subordinate subalgebras for coadjoint orbits (I), (II), (III) which are classified in Proposition \ref{classcoad}.

\textit{The case (I)}.  By the equation (\ref{coad}), the coadjoint action of $N$ on $l_{\alpha,\gamma_{2}}=l(\alpha,0,0,0,\gamma_2,0)$ is written as follows, 
\begin{multline*}
(\text{Ad}^{*}\exp (n(z,\cdots,x_3)))l(\alpha,0,0,0,\gamma_2,0)\\
=\{
\alpha Z^{*}+y_1 Y_{1}^{*}+y_2 Y_{2}^{*} +x_1 X_1^{*}+(\gamma_2+\frac{y_1 y_2}{\alpha})X_{2}^{*}+x_2 X_{3}^{*}\}.
\end{multline*}
Hence we can see that 
\[
\mathfrak{r}_{l_{\alpha,\gamma_{2}}}=\{\R Z+\R X_2\}.
\] 
As we noted before, there are some choices of maximal subordinate subalgebra even if it contains the radical $\mathfrak{r}_{l_{\alpha,\gamma_{2}}}$. Among these choices, we pick up a maximal subordinate subalgebra
\[
\mathfrak{s}_{l_{\alpha,\gamma_{2}}}=\{\R X_2+\R Y_1+\R Y_2+\R Z \}.
\]
It is easy to check that this subspace is isotropic and its codimension is equal to $\frac{1}{2}\text{dim}_{\R}(\n/\mathfrak{r}_{l_{\alpha,\gamma_{2}}})=2$. Also this becomes a subalgebra of $\mathfrak{n}$. We can see that $\mathfrak{s}_{l_{\alpha,\gamma_{2}}}$ does not depend on the choice of $\alpha\in\R\backslash\{0\}$ and $\gamma_{2}\in\R$. Hence we simply write $\mathfrak{s}_{(I)}=\mathfrak{s}_{l_{\alpha,\gamma_{2}}}$.

\textit{The case (II)}. 
As well as the case (I), we can see that the radical for $l_{\beta_{1},\beta_{2},\gamma_{1},\gamma_{3}}=l(0,\beta_1,\beta_2,\gamma_1,0,\gamma_3)$ is given by
\[
\mathfrak{r}_{l_{\beta_{1},\beta_{2},\gamma_{1},\gamma_{3}}}=\{\R (\beta_1 X_3+\beta_2 X_1)+ \R Y_1+\R Y_2 +\R Z\}.
\]
The codimension of maximal subordinate subalgebras are $\frac{1}{2}\text{dim}_{\R}(\n/\mathfrak{r}_{l_{\beta_{1},\beta_{2},\gamma_{1},\gamma_{3}}})=1$. We recall a fact for the codimension $1$ subalgebra of $\n$.
\begin{prop}[cf. Proposition 1.3.4 in \cite{C-G}]
Let $\g$ be a nilpotent Lie algebra and $\g_{0}$ a codimension $1$ subalgebra of $\g$. For $l\in \g^{*}$, let $l_{0}=l|_{\g_{0}}$ be the restriction to $\g_{0}$. If the radical of $l$ in $\g$ is contained in $\g_{0}$, any maximal subordinate subalgebra of $\g_{0}$ for $l_{0}$ is also maximal subordinate subalgebra of $\g$ for $l$.
\end{prop}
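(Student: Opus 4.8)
The plan is to check that an arbitrary maximal subordinate subalgebra $\mathfrak{s}$ of $\g_{0}$ for $l_{0}$ meets the three requirements in the definition of a maximal subordinate subalgebra of $\g$ for $l$: that $\mathfrak{s}$ is a subalgebra of $\g$, that it is isotropic for the alternating form $B_{l}(X,Y)=l([X,Y])$ on $\g\times\g$, and that it has codimension $\frac12\dim_{\R}(\g/\mathfrak{r}_{l})$ in $\g$. The first is immediate since $\mathfrak{s}\subset\g_{0}$ and $\g_{0}$ is a subalgebra of $\g$. The second is also immediate: the restriction of $B_{l}$ to $\g_{0}\times\g_{0}$ coincides with $B_{l_{0}}$, and $\mathfrak{s}$ is isotropic for $B_{l_{0}}$ by assumption. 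So the entire content is the codimension count, which I would reduce to comparing $\dim_{\R}\mathfrak{r}_{l}$ with $\dim_{\R}\mathfrak{r}_{l_{0}}$.

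The key assertion I would isolate is that, \emph{under the hypothesis $\mathfrak{r}_{l}\subset\g_{0}$, one has $\dim_{\R}\mathfrak{r}_{l_{0}}=\dim_{\R}\mathfrak{r}_{l}+1$}. First, $\mathfrak{r}_{l}\subset\mathfrak{r}_{l_{0}}$: any $X\in\mathfrak{r}_{l}$ lies in $\g_{0}$ by hypothesis and satisfies $l([X,\g_{0}])=0$ since $[X,\g_{0}]\subset[X,\g]$, so $X\in\mathfrak{r}_{l_{0}}$. Next, fix $X_{0}\in\g\setminus\g_{0}$ so that $\g=\g_{0}\oplus\R X_{0}$, and consider the linear functional $\phi\colon\mathfrak{r}_{l_{0}}\to\R$, $\phi(X)=l([X,X_{0}])$. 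For $X\in\mathfrak{r}_{l_{0}}$ we already have $l([X,\g_{0}])=0$, so $\phi(X)=0$ is equivalent to $l([X,\g])=0$, i.e.\ $\ker\phi=\mathfrak{r}_{l_{0}}\cap\mathfrak{r}_{l}=\mathfrak{r}_{l}$. Therefore $\dim_{\R}\mathfrak{r}_{l_{0}}-\dim_{\R}\mathfrak{r}_{l}=\dim_{\R}(\operatorname{im}\phi)\in\{0,1\}$.

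To exclude the value $0$ I would invoke a parity argument. The form induced by $B_{l}$ on $\g/\mathfrak{r}_{l}$, and likewise the form induced by $B_{l_{0}}$ on $\g_{0}/\mathfrak{r}_{l_{0}}$, are nondegenerate alternating forms, so both $\dim_{\R}(\g/\mathfrak{r}_{l})$ and $\dim_{\R}(\g_{0}/\mathfrak{r}_{l_{0}})$ are even. Since $\dim_{\R}\g=\dim_{\R}\g_{0}+1$, it follows that $\dim_{\R}\mathfrak{r}_{l}$ and $\dim_{\R}\mathfrak{r}_{l_{0}}$ have opposite parities; combined with $0\le\dim_{\R}\mathfrak{r}_{l_{0}}-\dim_{\R}\mathfrak{r}_{l}\le 1$ this forces $\dim_{\R}\mathfrak{r}_{l_{0}}=\dim_{\R}\mathfrak{r}_{l}+1$, proving the assertion.

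Granting this, the codimension count is mechanical:
\[
\operatorname{codim}_{\g}\mathfrak{s}=(\dim_{\R}\g-\dim_{\R}\g_{0})+\operatorname{codim}_{\g_{0}}\mathfrak{s}=1+\tfrac12\dim_{\R}(\g_{0}/\mathfrak{r}_{l_{0}})=1+\tfrac12\bigl(\dim_{\R}\g-1-\dim_{\R}\mathfrak{r}_{l}-1\bigr)=\tfrac12\dim_{\R}(\g/\mathfrak{r}_{l}),
\]
so $\mathfrak{s}$ is a subalgebra of $\g$, isotropic for $B_{l}$, of the correct codimension, hence a maximal subordinate subalgebra of $\g$ for $l$. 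The only genuinely delicate point is ruling out $\dim_{\R}(\operatorname{im}\phi)=0$; without the parity observation one gets only $\dim_{\R}\mathfrak{r}_{l_{0}}\le\dim_{\R}\mathfrak{r}_{l}+1$, which is insufficient. It is exactly here, and in establishing $\mathfrak{r}_{l}\subset\mathfrak{r}_{l_{0}}$, that the hypothesis $\mathfrak{r}_{l}\subset\g_{0}$ enters.
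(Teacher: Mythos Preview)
Your proof is correct. The paper itself does not prove this proposition: it merely states it with a citation to Corwin--Greenleaf (Proposition~1.3.4 in \cite{C-G}) and then applies it. So there is no ``paper's own proof'' to compare against; you have supplied a complete argument where the paper defers to the literature.

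For the record, your argument is essentially the standard one. The three-step structure (subalgebra, isotropy, codimension) is forced by the definition, and the substantive step---showing $\dim_{\R}\mathfrak{r}_{l_{0}}=\dim_{\R}\mathfrak{r}_{l}+1$ via the linear functional $\phi$ together with the parity constraint from nondegeneracy of the induced alternating forms---is exactly how Corwin--Greenleaf proceed. Your identification of the parity argument as the only delicate point, and of the two places where the hypothesis $\mathfrak{r}_{l}\subset\g_{0}$ is used, is accurate.
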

For any codimension $1$ subalgebra $\n_{0}$ of $\n$ containing $\mathfrak{r}_{l_{\beta_{1},\beta_{2},\gamma_{1},\gamma_{3}}}$, there exist a maximal subordinate subalgebra of $\n_{0}$ for $l_{\beta_{1},\beta_{2},\gamma_{1},\gamma_{3}}|_{\n_{0}}$ from Remark \ref{maximal subordinate}. By this proposition, this is also a maximal subordinate subalgebra of $\n$ for $l_{(\text{II})}$. By the calculation done above, this maximal subordinate subalgebra should have  codimension $1$. Hence this is nothing but $\n_{0}$. This implies that any codimension $1$ subalgebra $\n_{0}$ of $\n$ containing $\mathfrak{r}_{l_{\beta_{1},\beta_{2},\gamma_{1},\gamma_{3}}}$ is maximal subordinate subalgebra for $l_{\beta_{1},\beta_{2},\gamma_{1},\gamma_{3}}$. Among these, we pick up a maximal subordinate subalgebra
\[
\mathfrak{s}_{l_{\beta_{1},\beta_{2},\gamma_{1},\gamma_{3}}}=\{\R X_1+\R X_3 +\R Y_1+\R Y_2+\R Z \}.
\]
As in the case (I), the subalgebra $\mathfrak{s}_{l_{\beta_{1},\beta_{2},\gamma_{1},\gamma_{3}}}$ does not depends on the choice of $\beta_{1},\beta_{2},\gamma_{1},\gamma_{3}\in \R$. Hence we simply write $\mathfrak{s}_{(\text{II})}=\mathfrak{s}_{l_{\beta_{1},\beta_{2},\gamma_{1},\gamma_{3}}}$. 

\textit{The case (III)}. The coadjoint action on $l_{\gamma_{1},\gamma_{2},\gamma_{3}}=l(0,0,0,\gamma_1,\gamma_2,\gamma_3)$ is given by
\[
(\text{Ad}^{*}\exp (n(z,\cdots,x_3)))l(0,0,0,\gamma_1,\gamma_2,\gamma_3)=\{\gamma_1 X_1^{*}+\gamma_2 X_{2}^{*}+\gamma_3 X_{3}^{*}\}.
\]
It is obvious that the radical of $l_{\gamma_{1},\gamma_{2},\gamma_{3}}$ is 
\[
\mathfrak{r}_{l_{\gamma_{1},\gamma_{2},\gamma_{3}}}=\n.
\]
Also it is obvious that a maximal subordinate subalgebra of $\gamma_{1},\gamma_{2},\gamma_{3}$ is
\[
\mathfrak{s}_{\gamma_{1},\gamma_{2},\gamma_{3}}=\n.
\]

Let us recall Kirillov's theory for irreducible unitary representations of nilpotent Lie group $N$. For $l\in \n^{*}$, let $\mathfrak{s}_{l}$ be a maximal subordinate subalgebra for $l$ and let $S_{l}=\exp \mathfrak{s}_{l}$ . We can extend $l|_{\mathfrak{s}_{l}}\colon \mathfrak{s}_{l}\rightarrow \R$ to the map $\chi_{l}\colon S_{l}\rightarrow \C^{1}$ by
\[
\chi_{l}(\exp X)=e^{2\pi i l(X)},\ X\in \mathfrak{s}_{l}.
\]
This is a group homomorphism, i.e., an unitary character of $S_{l}$ because $\mathfrak{s}_{l}$ is an isotropic subspace for $l$, i.e., $l([X,Y])=0$ for $X,Y\in \mathfrak{s}_{l}$.
We consider a Hilbert space induced from $\chi_{l}$,
\begin{align*}
\mathcal{H}_{\chi_{l}}=&\{f\colon N\rightarrow \C\ \text{measurable}\ ;\ f(sx)=\chi_{l}(s)f(x)\ \text{for}\ s\in S_{l},x\in N,\\
&\text{and}\ \int_{S_{l}\backslash N}|f(x)|^{2}\,d\dot{x}<+\infty \},
\end{align*}
where $d\dot{x}$ is the right-invariant measure on $S_{l}\backslash N$. The inner product is defined by
\[
\langle f,f'\rangle=\int_{S_{l}\backslash N}f(x)\overline{f'(x)}\,d\dot{x}.
\]
It can be shown that $\mathcal{H}_{\chi_{l}}$ is complete by this inner product. The action of $N$ on $\mathcal{H}_{\chi_{l}}$ is given by the right translation. This is the unitary representation by the right-invariance of $d\dot{x}$. This representation is called the representation of $N$ induced from $\chi_{l}$, and denoted by $L^{2}\Indd_{S_{l}}^{N}(\chi_{l})$. 

\begin{thm}[Kirillov \cite{K}]\label{Kirillov}
Take $l\in \n^{*}$ and let $\mathfrak{s}_{l}$ be a maximal subordinate subalgebra of $\n$ for $l^{*}$.
\begin{enumerate}
\item The induced representation $L^{2}\Indd_{S_{l}}^{N}(\chi_{l})$ is an irreducible representation of $N$.
\item Let $\mathfrak{s}'_{l}$ be a maximal subordinate subalgebra of $\n$ for $l$ and $S'_{l}=\exp \mathfrak{s}'_{l}$. Then $L^{2}\Indd_{S'_{l}}^{N}(\chi_{l})$ is unitarily equivalent to $L^{2}\Indd_{S_{l}}^{N}(\chi_{l})$. Hence we may write $\pi_{l}$ for $L^{2}\Indd_{S_{l}}^{N}(\chi_{l})$. 
\item Let $l'\in \n^{*}$. Then $\pi_{l'}$ is unitarily equivalent to $\pi_{l}$ if and only if $l'\in(\mathrm{Ad}^{*}N)l$.
\item Let $\pi$ be an irreducible unitary representation of $N$. Then there exists an $l\in \n^{*}$ such that $\pi$ is unitarily equivalent to $\pi_{l}$.
\end{enumerate}
\end{thm}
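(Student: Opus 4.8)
The plan is to prove this by the classical inductive argument underlying the Kirillov correspondence for nilpotent Lie groups (as in \cite{C-G}, \cite{K}); since the statement is standard I would only set up the scaffolding. All four assertions are proved simultaneously by induction on $\dim\n$, the base case $\dim\n\le 1$ being the elementary theory of unitary characters of $\R$. For the inductive step, fix a line $\R Z$ in the center $\z$ of $\n$, chosen so that $l(Z)=0$ if such a line exists, and split into two cases.

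\emph{The descent case: $l(Z)=0$.} Then $Z$ lies in the radical $\mathfrak{r}_{l}$, hence in the polarization $\mathfrak{s}_{l}$, so $\chi_{l}$ is trivial on $\exp(\R Z)$ and $\pi_{l}$ factors through $\bar N=N/\exp(\R Z)$, a nilpotent group with Lie algebra $\bar\n=\n/\R Z$ of smaller dimension; conversely any irreducible $\pi$ of $N$ trivial on $\exp(\R Z)$ descends to $\bar N$. The restriction map identifies $(\R Z)^{\perp}\subset\n^{*}$ with $\bar\n^{*}$ in an $N$-equivariant way and carries $\Ad^{*}N$-orbits bijectively to $\Ad^{*}\bar N$-orbits, while a maximal subordinate subalgebra for $l$ descends to one for the induced functional. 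Hence all four statements for these data follow from the inductive hypothesis applied to $\bar N$.

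\emph{The stepping-stone case: $l(Z)\neq 0$.} Here I would invoke Kirillov's lemma: as $\n$ is non-abelian there are $X,Y\in\n$ and a codimension-one ideal $\n_{0}\ni Y,Z$ with $[X,Y]=Z$ and $[\n_{0},Y]=[\n_{0},Z]=0$; put $N_{0}=\exp\n_{0}$. For the exhaustion statement, given $\pi$ with $\pi(\exp tZ)=e^{2\pi i t l(Z)}$, conjugation by $\pi(\exp sX)$ shifts the $\exp(\R Y)$-eigencharacter of $\pi|_{N_{0}}$ by $s\,l(Z)$, so $\pi|_{N_{0}}$ cannot be irreducible, and Mackey's normal-subgroup theorem for the codimension-one normal subgroup $N_{0}$ gives $\pi\cong\Ind_{N_{0}}^{N}\tau$ with $\tau$ irreducible; by induction $\tau\cong\pi_{l_{0}}$ for some $l_{0}\in\n_{0}^{*}$, and inducing in stages yields $\pi\cong\pi_{\tilde l}$ for any extension $\tilde l$ of $l_{0}$ to $\n^{*}$, all such extensions being $\Ad^{*}N$-conjugate. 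For the remaining statements one chooses $\mathfrak{s}_{l}$ compatibly with the reduction — either inside $\n_{0}$ or containing $X$ — and combines induction in stages with the inductive hypothesis for $N_{0}$, using that $\n^{*}\to\n_{0}^{*}$ is $N_{0}$-equivariant and that over the locus $l(Z)\neq 0$ each $\Ad^{*}N_{0}$-orbit lifts to a single $\Ad^{*}N$-orbit.

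\emph{Main obstacle.} The delicate point is the bookkeeping in the stepping-stone case: establishing ``induction in stages'' for the monomial representations $L^{2}\Indd_{S_{l}}^{N}(\chi_{l})$ — that a polarization can always be moved into the right position relative to the reduction without changing its equivalence class — which simultaneously yields the independence-of-polarization assertion and feeds the Mackey step, and then checking that the coadjoint-orbit stratification matches these reductions exactly so that the equivalence criterion in part~(3) falls out. The irreducibility assertion is then routine via Mackey's criterion applied to the subordinate subalgebra. For a group of this small dimension one could instead bypass part of the abstract machinery and verify all four assertions by direct computation with Mackey's irreducibility and equivalence criteria applied to the explicit polarizations listed above, but the inductive argument is conceptually cleanest.
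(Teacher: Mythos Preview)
The paper does not prove this theorem at all: it is stated as a citation of Kirillov's classical result (with the attribution ``Kirillov \cite{K}'' in the theorem header) and is used as a black box to classify the unitary dual of the particular $N$ at hand. There is therefore no ``paper's own proof'' to compare against.

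Your outline is the standard inductive proof one finds in the cited reference \cite{C-G} (Corwin--Greenleaf, Chapter~2), and it is essentially correct as a sketch. The one point worth flagging is in the stepping-stone case: the claim that any maximal subordinate subalgebra can be arranged to lie inside the codimension-one ideal $\n_{0}$ (or be replaced by an equivalent one that does) is precisely what drives the independence-of-polarization statement~(2), so invoking it while proving~(2) needs care --- the usual route is to prove~(1) and~(2) first for the specific polarizations produced by the inductive construction, and only afterwards deduce that an arbitrary polarization gives the same representation. Your paragraph on the ``main obstacle'' acknowledges this, but the sketch as written reads slightly circular on that point. For the purposes of this paper, however, no proof is expected: the theorem is quoted, not proved.
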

This Kirillov's theorem implies that irreducible unitary representations of nilpotent subgroup $N$ of $G$ are equivalent to induced representations $\text{Ind}_{S_{l}}^{N}(\chi_{l})$ and their equivalent classes only depend on coadjoint orbits of $l\in\n^{*}$. We have already classified coadjoint orbits of $l\in\n^{*}$ and determined their maximal subordinate subalgebras $\mathfrak{s}_{l}$. Hence we can obtain equivalent classes of irreducible unitary representations of $N$.

\begin{prop}We retain the notation as above.
The every irreducible unitary representation of $N$ is unitarily equivalent to one of the following representations.
\begin{itemize}
\item[(I)] For $l_{\alpha,\gamma_{2}}=l(\alpha,0,0,0,\gamma_{2},0)\in \mathfrak{n}^{*}$ and its maximal subordinate subalgebra $\mathfrak{s}_{\mathrm{(I)}}=\{\R X_{2}+\R {Y}_{1}+\R Y_{2}+\R Z\}$, we define the representation 
\[
L^{2}\Indd _{S_{{\text(I)}}}^{N}\chi_{l_{\alpha,\gamma_{2}}}.
\]
Here $S_{\text{(I)}}=\exp \mathfrak{s}_{\text{(I)}}$ and $\alpha\in \R\backslash\{0\},\gamma_{2}\in\R$.

\item[(II)] For $l_{\beta_{1},\beta_{2},\gamma_{1},\gamma_{3}}=l(0,\beta_{1},\beta_{2},\gamma_{1},0,\gamma_{3})\in\mathfrak{n}^{*}$ and its maximal subordinate subalgebra $\mathfrak{s}_{(\mathrm{II})}=\{\R X_{1}+\R X_{3}+\R Y_{1}+\R Y_{2}+\R Z\}$, we define the representation
\[
L^{2}\Indd _{S_{\text(II)}}^{N}\chi_{l_{\beta_{1},\beta_{2},\gamma_{1},\gamma_{3}}}
\]
Here $S_{l_{\text{(II)}}}=\exp \mathfrak{s}_{l_\text{(II)}}$ and $\beta_{1},\beta_{2},\gamma_{1},\gamma_{3}\in\R$, $\beta_{1}\beta_{2}\neq 0$.

\item[(III)] For $l_{\gamma_{1},\gamma_{2},\gamma_{3}}=l(0,0,0,\gamma_{1},\gamma_{2},\gamma_{3})\in\mathfrak{n}^{*}$, we define the unitary character of $N$, 
\[
\chi_{l_{\gamma_{1},\gamma_{2},\gamma_{3}}}.
\]
\end{itemize}
\end{prop}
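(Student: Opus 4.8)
The plan is to obtain the proposition as an immediate consequence of Kirillov's theorem (Theorem~\ref{Kirillov}), the classification of coadjoint orbits in Proposition~\ref{classcoad}, and the explicit choices of maximal subordinate subalgebras made in the three cases above. First I would start from an arbitrary irreducible unitary representation $\pi$ of $N$. By part~(4) of Theorem~\ref{Kirillov} there is some $l\in\n^{*}$ with $\pi$ unitarily equivalent to $\pi_{l}$. By Proposition~\ref{classcoad} the coadjoint orbit $(\mathrm{Ad}^{*}N)l$ is exactly one of $\mathcal{O}_{\alpha,\gamma_{2}}$, $\mathcal{O}_{\beta_{1},\beta_{2},\gamma_{1},\gamma_{3}}$, or $\mathcal{O}_{\gamma_{1},\gamma_{2},\gamma_{3}}$, so $l$ is $\mathrm{Ad}^{*}N$-conjugate to one of the chosen representatives $l_{\alpha,\gamma_{2}}=l(\alpha,0,0,0,\gamma_{2},0)$, $l_{\beta_{1},\beta_{2},\gamma_{1},\gamma_{3}}=l(0,\beta_{1},\beta_{2},\gamma_{1},0,\gamma_{3})$, or $l_{\gamma_{1},\gamma_{2},\gamma_{3}}=l(0,0,0,\gamma_{1},\gamma_{2},\gamma_{3})$. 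By part~(3) of Theorem~\ref{Kirillov}, $\pi_{l}$ depends only on this orbit, hence $\pi$ is unitarily equivalent to $\pi_{l'}$ for the appropriate representative $l'$.

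Next I would pin down the realization of $\pi_{l'}$. In case~(I) the subspace $\mathfrak{s}_{(\mathrm{I})}=\R X_{2}+\R Y_{1}+\R Y_{2}+\R Z$ was checked above to be a subalgebra of $\n$, isotropic for the alternating form $(X,Y)\mapsto l_{\alpha,\gamma_{2}}([X,Y])$, and of codimension $\tfrac12\dim_{\R}(\n/\mathfrak{r}_{l_{\alpha,\gamma_{2}}})=2$, hence a maximal subordinate subalgebra; in case~(II) the subspace $\mathfrak{s}_{(\mathrm{II})}=\R X_{1}+\R X_{3}+\R Y_{1}+\R Y_{2}+\R Z$ is a codimension-one subalgebra containing $\mathfrak{r}_{l_{\beta_{1},\beta_{2},\gamma_{1},\gamma_{3}}}$, hence maximal subordinate by the cited proposition on codimension-one subalgebras; and in case~(III) the radical of $l_{\gamma_{1},\gamma_{2},\gamma_{3}}$ is all of $\n$, so $\mathfrak{s}=\n$ is maximal subordinate and the induced representation is simply the unitary character $\chi_{l_{\gamma_{1},\gamma_{2},\gamma_{3}}}$. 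By part~(1) of Theorem~\ref{Kirillov} the representations $L^{2}\Indd_{S_{(\mathrm{I})}}^{N}\chi_{l_{\alpha,\gamma_{2}}}$, $L^{2}\Indd_{S_{(\mathrm{II})}}^{N}\chi_{l_{\beta_{1},\beta_{2},\gamma_{1},\gamma_{3}}}$, and $\chi_{l_{\gamma_{1},\gamma_{2},\gamma_{3}}}$ are irreducible, and by part~(2) they are, up to unitary equivalence, independent of the particular maximal subordinate subalgebra used; therefore $\pi_{l'}$ is unitarily equivalent to the representation in the corresponding family~(I), (II), or~(III), which proves the claim.

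The only content beyond quoting Theorem~\ref{Kirillov} and the cited proposition is the verification, already carried out in the paragraphs preceding the statement, that $\mathfrak{s}_{(\mathrm{I})}$ and $\mathfrak{s}_{(\mathrm{II})}$ are genuinely subalgebras, are isotropic for the relevant forms, and have the correct codimension, together with the observation that they do not depend on the orbit parameters. I expect the mild bookkeeping in case~(II)—arguing that every codimension-one subalgebra of $\n$ containing $\mathfrak{r}_{l_{\beta_{1},\beta_{2},\gamma_{1},\gamma_{3}}}$ is maximal subordinate, and then selecting the uniform representative $\mathfrak{s}_{(\mathrm{II})}$—to be the least automatic point; but it reduces to the dimension count $\tfrac12\dim_{\R}(\n/\mathfrak{r})=1$ and the quoted proposition on codimension-one subalgebras, so no real obstacle remains. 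Everything else is a direct application of Kirillov's orbit method.
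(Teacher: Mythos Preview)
Your proposal is correct and follows exactly the line the paper takes: the proposition is stated as a direct consequence of Kirillov's theorem (Theorem~\ref{Kirillov}) together with the classification of coadjoint orbits (Proposition~\ref{classcoad}) and the explicit maximal subordinate subalgebras worked out just before the statement. The paper does not even give a separate proof environment; it simply notes that Kirillov's theorem implies the equivalence classes depend only on the coadjoint orbits, which have already been classified with their polarizations, and then states the proposition.
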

\subsubsection{Conjugacy classes of $C^{\infty}_{\eta}(U\backslash G)$}
In previous section, we classify the unitary dual of $N$. Thus the next thing to do is the classification of $G$-equivalent classes of the following spaces,

\begin{align}
&C^{\infty}_{\chi_{l_{\alpha,\gamma_{2}}}}(S_{(I)}\backslash G),\quad \alpha\in\R\backslash\{0\},\,\gamma_{2}\in\R\tag{I},\\
&C^{\infty}_{\chi_{l_{\beta_{1},\beta_{2},\gamma_{1},\gamma_{3}}}}(S_{(II)}\backslash G),\quad \beta_{1},\beta_{2}\in\R,\, \beta_{1}\beta_{2}\neq 0,\tag{II}\\
&C^{\infty}_{\chi_{l_{\gamma_{1},\gamma_{2},\gamma_{3}}}}(N\backslash G),\quad \gamma_{1},\gamma_{2},\gamma_{3}\in\R\tag{III}.
\end{align}

For $x\in G$, we write the conjugation of $g\in G$ by $x$ as $g^{x}=xgx^{-1}$. Let $H$ be a closed subgroup of $G$ and $\pi$ a continuous representation of $H$ on a complete locally convex space $E$. Then for $x\in N_{G}(H)=\{g\in G\,|\, h^{g}\in H,\ \text{for any}\ h\in H\}$, we can define conjugation of $\pi$ as $\pi^{x}(h)=\pi(h^{x})$. Then we have the following fact about the induced representation $C^{\infty}_{\pi}(H\backslash G)=\{f\colon G\rightarrow E\,\text{smooth}\mid f(hg)=\pi(h)f(g),\, g\in G,h\in H\}$ on which $G$ acts by the right translation. 
\begin{lem}\label{conj}
We retain the notations as above. The map
\[
\begin{array}{ccc}
C^{\infty}_{\pi}(H\backslash G)&\xrightarrow{\sim}&C^{\infty}_{\pi^{x}}(H\backslash G)\\
f(g)&\longmapsto&F(g)=f(xg)
\end{array}
\]
gives isomorphism as $G$-modules.
\end{lem}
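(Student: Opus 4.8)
The plan is to exhibit the map explicitly, check that it lands in the target space, and then produce its inverse. First I would take $f \in C^{\infty}_{\pi}(H\backslash G)$ and set $F(g) = f(xg)$; since $x \in N_G(H)$ and left and right translation by a fixed element preserve smoothness, $F$ is a smooth $E$-valued function on $G$. The only substantive verification is the covariance law: for $h \in H$ and $g \in G$ we compute
\[
F(hg) = f(xhg) = f\bigl((xhx^{-1})(xg)\bigr) = f(h^{x} \cdot xg) = \pi(h^{x})f(xg) = \pi^{x}(h)F(g),
\]
where the third equality uses $h^{x} \in H$ (this is exactly the hypothesis $x \in N_G(H)$), the fourth uses the defining transformation rule of $C^{\infty}_{\pi}(H\backslash G)$, and the last is the definition of $\pi^{x}$. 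Hence $F \in C^{\infty}_{\pi^{x}}(H\backslash G)$, so the map is well-defined. Call this map $\Psi_x$.

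Next I would check $G$-equivariance: for $g_0 \in G$ the right translation acts by $(R_{g_0}f)(g) = f(gg_0)$, and $\Psi_x(R_{g_0}f)(g) = (R_{g_0}f)(xg) = f(xgg_0) = \Psi_x(f)(gg_0) = (R_{g_0}\Psi_x(f))(g)$, so $\Psi_x \circ R_{g_0} = R_{g_0} \circ \Psi_x$, i.e. $\Psi_x$ intertwines the $G$-actions. Linearity is immediate.

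Finally, for bijectivity I would observe that $x^{-1}$ also normalizes $H$ (the normalizer is a subgroup), so the analogous construction gives a map $\Psi_{x^{-1}}\colon C^{\infty}_{\pi^{x}}(H\backslash G) \to C^{\infty}_{(\pi^{x})^{x^{-1}}}(H\backslash G) = C^{\infty}_{\pi}(H\backslash G)$, sending $F(g) \mapsto F(x^{-1}g)$; here one uses $(\pi^{x})^{x^{-1}}(h) = \pi^{x}(h^{x^{-1}}) = \pi((h^{x^{-1}})^{x}) = \pi(h)$. Then $(\Psi_{x^{-1}} \circ \Psi_x)(f)(g) = \Psi_x(f)(x^{-1}g) = f(x x^{-1} g) = f(g)$ and symmetrically $\Psi_x \circ \Psi_{x^{-1}} = \mathrm{id}$, so $\Psi_x$ is an isomorphism of $G$-modules, which is the assertion. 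There is no real obstacle here; the only point demanding care is keeping track of which conjugation convention ($g^x = xgx^{-1}$ versus its inverse) makes the covariance computation close up correctly, and checking that the same convention is used consistently when forming $\pi^x$ and when writing $F(g) = f(xg)$.
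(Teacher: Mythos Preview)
Your proof is correct and follows exactly the same approach as the paper's: the well-definedness computation is identical line-for-line. You simply spell out explicitly the $G$-equivariance and the inverse map $\Psi_{x^{-1}}$, whereas the paper dispatches both with ``Obviously this map is bijective and preserves the action of $G$.''
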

\begin{proof}
We see that this map is well-defined. Take $f\in C^{\infty}_{\pi}(H\backslash G)$ and define $F(g)=f(xg)$. Then we have
\begin{align*}
F(hg)&=f(xhg)\\
&=f(xhx^{-1}xg)\\
&=\pi(xhx^{-1})f(xg)\\
&=\pi^{x}(h)F(g),
\end{align*}
for $h\in H$. Hence $F\in C^{\infty}_{\pi^{x}}(H\backslash G)$. Obviously this map is bijective and preserves the action of $G$. 
\end{proof}
\begin{lem}\label{equivconj}
Fix a maximal subordinate subalgebra $\mathfrak{s}_{l}\subset \mathfrak{n}$ for $l\in\mathfrak{n}^{*}.$ We put $S_{l}=\exp\mathfrak{s}_{l}$. Let us define a character $\chi_{l}\colon S_{l}\rightarrow \C^{1}$ so that $\chi_{l}(\exp X)=e^{2\pi\sqrt{-1}l(X)}$ for $X\in\mathfrak{s}_{l}$. Then the character $\chi_{l}$ is invariant by the conjugation by $S_{l}$, i.e.,
\[
\chi_{l}^{x}(s)=\chi_{l}(s)
\]
for $s,x\in S_{l}$.
\end{lem}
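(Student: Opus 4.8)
The plan is to pass to exponential coordinates and exploit the two defining features of $\mathfrak{s}_l$: that it is a \emph{subalgebra} of $\mathfrak{n}$ and that it is \emph{isotropic} for the alternating form $(X,Y)\mapsto l([X,Y])$. Since $N$ is connected, simply connected and nilpotent, $\exp$ is a diffeomorphism of $\mathfrak{n}$ onto $N$ and restricts to a diffeomorphism of the subalgebra $\mathfrak{s}_l$ onto $S_l$; so I would write $s=\exp X$ and $x=\exp Y$ with $X,Y\in\mathfrak{s}_l$. Then $xsx^{-1}=\exp(\Ad(\exp Y)X)=\exp\bigl(e^{\ad Y}X\bigr)$, and by the definition of $\chi_l$,
\[
\chi_l^{x}(s)=\chi_l(xsx^{-1})=\exp\Bigl(2\pi\sqrt{-1}\,l\bigl(e^{\ad Y}X\bigr)\Bigr).
\]

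Next I would expand $e^{\ad Y}X=\sum_{k\ge 0}\frac{1}{k!}(\ad Y)^{k}X$, a finite sum because $\mathfrak{n}$ is nilpotent. For $k\ge 1$ set $Z_{k-1}=(\ad Y)^{k-1}X$; since $\mathfrak{s}_l$ is a subalgebra and $X,Y\in\mathfrak{s}_l$, an induction gives $Z_{k-1}\in\mathfrak{s}_l$, hence $(\ad Y)^{k}X=[Y,Z_{k-1}]\in[\mathfrak{s}_l,\mathfrak{s}_l]$, and the isotropy of $\mathfrak{s}_l$ yields $l\bigl((\ad Y)^{k}X\bigr)=l([Y,Z_{k-1}])=0$. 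Thus only the $k=0$ term survives, $l\bigl(e^{\ad Y}X\bigr)=l(X)$, and therefore $\chi_l^{x}(s)=\exp(2\pi\sqrt{-1}\,l(X))=\chi_l(\exp X)=\chi_l(s)$, which is the claim.

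I do not expect a genuine obstacle here; the only facts needing a word of justification are that $\exp|_{\mathfrak{s}_l}$ is onto $S_l$, that $\Ad(\exp Y)=e^{\ad Y}$ on $\mathfrak{n}$, and that the series terminates, all standard for nilpotent Lie groups. If one prefers a coordinate-free argument, note that because $\mathfrak{s}_l$ is a subalgebra, $\Ad(S_l)$ preserves $\mathfrak{s}_l$, so conjugation by $x\in S_l$ is an automorphism of $S_l$ and $\chi_l^{x}\overline{\chi_l}$ is a character of the connected group $S_l$; its differential at $Z\in\mathfrak{s}_l$ is $2\pi\sqrt{-1}\,l(\Ad(x)Z-Z)$, and $\Ad(x)Z-Z\in[\mathfrak{s}_l,\mathfrak{s}_l]\subset\Ker l$, so the differential vanishes and the character is trivial. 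I would present the exponential-coordinate computation as the main proof.
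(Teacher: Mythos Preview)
Your proof is correct and follows essentially the same approach as the paper: both arguments reduce to the observation that conjugating $\exp X$ by $\exp Y$ (with $X,Y\in\mathfrak{s}_l$) yields $\exp(X+Z)$ with $Z\in[\mathfrak{s}_l,\mathfrak{s}_l]$, and then use isotropy of $\mathfrak{s}_l$ to conclude $l(Z)=0$. The paper phrases this via the Campbell--Hausdorff formula while you use $\Ad(\exp Y)=e^{\ad Y}$ directly, which is in fact slightly cleaner.
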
 
\begin{proof}
Take an element of $x\in S_{l}$. Then there exists $Z_{x}\in \mathfrak{s}_{l}$ such that $x=\exp Z_{x}$. By the Campbell-Hausdorff formula, for any $X\in \mathfrak{s}_{l}$ we have
\begin{align*}
x(\exp X) x^{-1}&=\exp Z_{x}\exp X\exp(-Z_{x})\\
&\equiv \exp(Z_{x}+X-Z_{x}+Y)\\
&=\exp (X+Y)
\end{align*}
for some element $Y\in [\mathfrak{s}_{l},\mathfrak{s}_{l}]=\{[V,W]\mid V,W\in\mathfrak{s}_{l}\}$. If we recall that $\mathfrak{s}_{l}$ is a maximal subordinate subalgebra for $l$, we have $l(Y)=0$ for $Y\in [\mathfrak{s}_{l},\mathfrak{s}_{l}]$. Thus we have 
\begin{align*}
\chi_{l}^{x}(\exp X)&=\chi_{l}(x(\exp X) x^{-1})\\
&=e^{2\pi\sqrt{-1}l(X)}\\
&=\chi_{l}(\exp X).
\end{align*}
for any $X\in \mathfrak{s}_{l}$.
\end{proof}
By these lemmas, we have the following classifications.
\begin{prop}\label{classify}
Case(I). For $\alpha\in\R\backslash \{0\}$ and $\gamma_{2}\in \R$, we have
\[
C^{\infty}_{\chi_{l_{\alpha,\gamma_{2}}}}(S_{\text(I)}\backslash G)\cong
\begin{cases}
C^{\infty}_{\chi_{l(0,1,1,0,0,0)}}(S_{\text(I)}\backslash G)&\text{if}\ \gamma_{2}\neq 0,\qquad(\text{I}_{1})\\
C^{\infty}_{\chi_{l(0,0,1,0,0,0)}}(S_{\text(I)}\backslash G)&\text{if}\ \gamma_{2}=0\qquad(\text{I}_{2})
\end{cases}
\]
Case(II). We take $\beta_{1},\beta_{2},\gamma_{1},\gamma_{3}\in\R$ and assume $\beta_{1}\beta_{2}\neq 0$. Then we have
\begin{multline*}
C^{\infty}_{\chi_{l_{\beta_{1},\beta_{2},\gamma_{1},\gamma_{3}}}}(S_{\text(II)}\backslash G)\cong\\
\begin{cases}
C^{\infty}_{\chi_{l(0,0,1,1,0,0)}}(S_{\text(II)}\backslash G)&\text{if}\ (\beta_{1},\gamma_{1})\cdot (\gamma_{3},\beta_{2})\neq0,\qquad(\text{II}_{1})\\ \\
C^{\infty}_{\chi_{l(0,0,0,1,0,1)}}(S_{\text(II)}\backslash G)&\begin{array}{c}\text{if}\ (\beta_{1},\gamma_{1})\cdot (\gamma_{3},\beta_{2})=0\ \\ \text{and}\ \beta_{1}\neq 0,\beta_{2}\neq 0\end{array},\qquad(\text{II}_{2})\\ \\
C^{\infty}_{\chi_{l(0,0,0,1,0,0)}}(S_{\text(II)}\backslash G)&\begin{array}{c}\text{if}\ (\beta_{1},\gamma_{1})\cdot (\gamma_{3},\beta_{2})=0\ \\ \text{and}\ \beta_{1}\neq 0,\beta_{2}=0\end{array},\qquad(\text{II}_{3})\\ \\
C^{\infty}_{\chi_{l(0,0,0,0,0,1)}}(S_{\text(II)}\backslash G)&\begin{array}{c}\text{if}\ (\beta_{1},\gamma_{1})\cdot (\gamma_{3},\beta_{2})=0\ \\ \text{and}\ \beta_{1}=0,\beta_{2}\neq 0\end{array},\qquad(\text{II}_{4})
\end{cases}
\end{multline*}
where $(a,b)\cdot (c,d)=ac+bd$ for $a,b,c,d\in \R$ is a natural inner product in $\R^{2}$.
Case(III). For $\gamma_{1},\gamma_{2},\gamma_{3}\in\R$, we have
\begin{multline*}
C^{\infty}_{\chi_{l_{\gamma_{1},\gamma_{2},\gamma_{3}}}}(N\backslash G)\cong\\
\begin{cases}
C^{\infty}_{\chi_{l(0,0,0,1,1,1)}}(N\backslash G)&\text{if}\ \gamma_{1}\neq 0,\gamma_{2}\neq 0,\gamma_{3}\neq 0,\qquad(\text{III}_{1})\\
C^{\infty}_{\chi_{l(0,0,0,1,1,0)}}(N\backslash G)&\text{if}\ \gamma_{1}\neq 0,\gamma_{2}\neq 0,\gamma_{3}= 0,\qquad(\text{III}_{2})\\
C^{\infty}_{\chi_{l(0,0,0,1,0,1)}}(N\backslash G)&\text{if}\ \gamma_{1}\neq 0,\gamma_{2}=0,\gamma_{3}\neq 0,\qquad(\text{III}_{3})\\
C^{\infty}_{\chi_{l(0,0,0,0,1,1)}}(N\backslash G)&\text{if}\ \gamma_{1}=0,\gamma_{2}\neq 0,\gamma_{3}\neq 0,\qquad(\text{III}_{4})\\
C^{\infty}_{\chi_{l(0,0,0,1,0,0)}}(N\backslash G)&\text{if}\ \gamma_{1}\neq 0,\gamma_{2}=0,\gamma_{3}=0,\qquad(\text{III}_{5})\\
C^{\infty}_{\chi_{l(0,0,0,0,1,0)}}(N\backslash G)&\text{if}\ \gamma_{1}=0,\gamma_{2}\neq 0,\gamma_{3}=0,\qquad(\text{III}_{6})\\
C^{\infty}_{\chi_{l(0,0,0,0,0,1)}}(N\backslash G)&\text{if}\ \gamma_{1}=0,\gamma_{2}=0,\gamma_{3}\neq 0,\qquad(\text{III}_{7})\\
C^{\infty}_{\chi_{l(0,0,0,0,0,0)}}(N\backslash G)&\text{if}\ \gamma_{1}=0,\gamma_{2}=0,\gamma_{3}=0,\qquad(\text{III}_{8})
\end{cases}
\end{multline*}
\end{prop}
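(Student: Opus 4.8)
The plan is to deduce all three cases from Lemma~\ref{conj}, which gives a $G$-module isomorphism $C^{\infty}_{\pi}(H\backslash G)\cong C^{\infty}_{\pi^{x}}(H\backslash G)$ for any closed subgroup $H\le G$ and any $x\in N_{G}(H)$. The first step is to recognize the three subgroups as unipotent radicals of standard parabolics: from the explicit basis one reads off $\mathfrak{s}_{(\mathrm I)}=\R E_{31}+\R E_{32}+\R E_{41}+\R E_{42}=\mathfrak{n}_{\{2,4\}}$, $\mathfrak{s}_{(\mathrm{II})}=\R E_{21}+\R E_{31}+\R E_{41}+\R E_{42}+\R E_{43}=\mathfrak{n}_{\{1,3,4\}}$, and $\mathfrak{n}=\mathfrak{n}_{\{1,2,3,4\}}$, so that $N_{G}(S_{(\mathrm I)})=P_{\{2,4\}}$, $N_{G}(S_{(\mathrm{II})})=P_{\{1,3,4\}}$ and $N_{G}(N)=P_{\{1,2,3,4\}}$, each a parabolic $P=L\ltimes H$ whose unipotent radical is exactly $H$. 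Because every character $\chi_{l}$ in question factors through $H/[H,H]$, Lemma~\ref{equivconj} shows that the conjugation action of $P=LH$ on these characters is trivial on $H$, hence factors through the Levi $L$. So in each case the problem reduces to computing the $L$-orbits on the finite-dimensional space of characters of $H$ and matching the resulting normal forms with the list in the statement.

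\textbf{Cases (I) and (III).} In case (I) the algebra $\mathfrak{s}_{(\mathrm I)}$ is abelian, so the characters of $S_{(\mathrm I)}$ are parametrized by $\mathfrak{s}_{(\mathrm I)}^{*}\cong M(2,2;\R)$, on which $L\cong GL(2,\R)\times GL(2,\R)$ acts by $(g_{1},g_{2})\cdot X=g_{1}Xg_{2}^{-1}$; its orbits are the three rank strata $\{0,1,2\}$. A short computation identifies $\chi_{l_{\alpha,\gamma_{2}}}$ with the matrix $\bigl(\begin{smallmatrix}0&\alpha\\\gamma_{2}&0\end{smallmatrix}\bigr)$, which for $\alpha\neq 0$ has rank $2$ when $\gamma_{2}\neq 0$ and rank $1$ when $\gamma_{2}=0$, while $\chi_{l(0,1,1,0,0,0)}$ and $\chi_{l(0,0,1,0,0,0)}$ are identified with matrices of rank $2$ and $1$; this is exactly $(\mathrm I_{1})$ and $(\mathrm I_{2})$. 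In case (III) the Levi $L$ is the diagonal torus $(\R^{\times})^{4}$ and the characters of $N$ are parametrized by $(\mathfrak{n}/[\mathfrak{n},\mathfrak{n}])^{*}\cong\R^{3}$, on which $L$ acts through the three adjacent-ratio characters $t\mapsto t_{i}/t_{i+1}$; since the induced map $(\R^{\times})^{4}\to(\R^{\times})^{3}$ is surjective, $L$ rescales $\gamma_{1},\gamma_{2},\gamma_{3}$ independently by arbitrary nonzero reals, so the orbit of $l(0,0,0,\gamma_{1},\gamma_{2},\gamma_{3})$ is determined solely by which of the $\gamma_{i}$ vanish---the $2^{3}=8$ normal forms $(\mathrm{III}_{1})$--$(\mathrm{III}_{8})$.

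\textbf{Case (II).} Here one first computes $[\mathfrak{s}_{(\mathrm{II})},\mathfrak{s}_{(\mathrm{II})}]=\R Z$, so the characters of $S_{(\mathrm{II})}$ form a $4$-dimensional space which, as a module for $L\cong GL(1,\R)\times GL(2,\R)\times GL(1,\R)$, is the direct sum of a copy of the standard $2$-dimensional representation of the $GL(2,\R)$-factor and a copy of its dual, each twisted by a character of one of the $GL(1,\R)$-factors. Writing a character as the pair $\mu=(\mu_{1},\mu_{2})$ with $\mu_{1}=(\gamma_{1},\beta_{1})$ in the dual summand and $\mu_{2}=(\beta_{2},\gamma_{3})$ in the standard summand, the $GL(2,\R)$-orbits---once the two scalar factors are used to rescale nonzero components---are distinguished by whether $\mu_{1}=0$, whether $\mu_{2}=0$, and, when both are nonzero, whether the canonical pairing $\langle\mu_{1},\mu_{2}\rangle=\gamma_{1}\beta_{2}+\beta_{1}\gamma_{3}=(\beta_{1},\gamma_{1})\cdot(\gamma_{3},\beta_{2})$ vanishes. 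The resulting orbit types have the $l$'s listed in $(\mathrm{II}_{1})$--$(\mathrm{II}_{4})$ as normal forms; under the standing hypothesis $\beta_{1}\beta_{2}\neq 0$ both $\mu_{1}$ and $\mu_{2}$ are automatically nonzero, so only $(\mathrm{II}_{1})$ (pairing $\neq 0$) and $(\mathrm{II}_{2})$ (pairing $=0$) actually occur, the branches $(\mathrm{II}_{3})$, $(\mathrm{II}_{4})$ being formally listed but excluded by their own side conditions.

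I expect the main obstacle to be precisely the module identification in case (II): one must keep careful track of how the four coordinates $(\beta_{1},\beta_{2},\gamma_{1},\gamma_{3})$ are distributed between the standard and dual summands of $\mathfrak{s}_{(\mathrm{II})}/[\mathfrak{s}_{(\mathrm{II})},\mathfrak{s}_{(\mathrm{II})}]$ and to verify that the unique $GL(2,\R)$-invariant of a generic pair is exactly $(\beta_{1},\gamma_{1})\cdot(\gamma_{3},\beta_{2})$; once this is in place the orbit count is immediate. Cases (I) and (III) are routine, resting only on the rank classification of real $2\times 2$ matrices under $GL(2,\R)\times GL(2,\R)$ and on the coordinatewise action of a split torus on $\R^{3}$ respectively.
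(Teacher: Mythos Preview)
Your proposal is correct and follows essentially the same strategy as the paper: reduce via Lemma~\ref{conj} and Lemma~\ref{equivconj} to the Levi action on characters, then classify the Levi orbits case by case. Your version is in fact more explicit than the paper's own proof---you make the identification of $S_{(\mathrm I)},S_{(\mathrm{II})},N$ with unipotent radicals of standard parabolics and describe the Levi modules structurally (rank strata of $M(2,2;\R)$, standard $\oplus$ dual of $GL(2,\R)$, torus scaling of $\R^3$), whereas the paper simply writes down the normalizers and lists the orbits; your remark that $(\mathrm{II}_3)$ and $(\mathrm{II}_4)$ are vacuous under the standing hypothesis $\beta_1\beta_2\neq 0$ is also a legitimate observation about the statement.
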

\begin{proof}
The case (I). The normalizer $N_{G}(S_{(\text{I})})$ of $S_{(\text{I})}$ in $G$ is written as the semi-direct product $L_{I}\ltimes S_{I}$ where
\[
L_{I}=\left\{
\left(\begin{array}{c|c}
A&0_{2}\\
\hline
0_{2}&B
\end{array}
\right)\ |\ A,B\in GL(2,\R)
\right\}.
\]
Here $0_{2}=\begin{pmatrix}0&0\\
0&0
\end{pmatrix}\in M(2,\R)$.  We define the action of $N_{G}(S_{(\text{I})})$ on $\chi_{(\alpha,\gamma_{2})}$ as $(x\cdot\chi_{(\alpha,\gamma_{2})})(s)=\chi_{(\alpha,\gamma_{2})}(s^{x^{-1}})$ for $x\in N_{G}(S_{(\text{I})})$ and $s\in S_{(\text{I})}$. Then by lemma \ref{conj}, if $\chi_{l_{\alpha,\gamma_{2}}}$ and $\chi_{l_{\alpha ',\gamma_{2}'}}$ are in the same $N_{G}(S_{(\text{I})})$-orbit, the spaces $C^{\infty}_{\chi_{l_{\alpha,\gamma_{2}}}}(S_{I}\backslash G)$ and $C^{\infty}_{\chi_{l_{\alpha',\gamma_{2}'}}}(S_{I}\backslash G)$ are $G$-equivalent. Also by Lemma \ref{equivconj}, we only need to classify the $L_{(I)}$-orbits of $\chi_{l_{\alpha,\gamma_{2}}}$ for $\alpha\in\R\backslash \{0\}$ and $\gamma_{2}\in\R$. Then it is easy to see that
$
l_{(\alpha,0,0,0,\gamma_{2},0)}\in \text{Ad}^{*}(N_{G}(S_{(\text{I})}))(l_{(0,1,0,0,0,1)})$
 if $\gamma_{2}\neq 0$ and $
l_{(\alpha,0,0,0,\gamma_{2},0)}\in \text{Ad}^{*}(N_{G}(S_{(\text{I})}))(l_{(0,1,0,0,0,0)})$
 if $\gamma_{2}=0$.

The case(II). The normalizer of $S_{\text{(II)}}$ in $G$ is written as the semi-direct product $L_{(II)}\ltimes S_{(II)}$ where 
\[
L_{(II)}=\left\{n_{(a,b,A)}=
\begin{pmatrix}
a&\mathbf{0}_{2}&0\\
^{t}\mathbf{0}_{2}&A&^{t}\mathbf{0}_{2}\\
0&\mathbf{0}_{2}&b
\end{pmatrix}\in G\ |\ a,b\in \R^{\times}, A\in GL(2,\R)\right\}.
\] 
Here $\mathbf{0}_{2}=(0,0)$ and $^{t}\mathbf{0}_{2}=\begin{pmatrix}0\\0\end{pmatrix}$.
Then there are following $L_{(II)}$-orbits of $\chi_{l_{\beta_{1},\beta_{2},\gamma_{1},\gamma_{2}}}$ for $\beta_{1},\beta_{2}\in \R\ (\beta_{1}\beta_{2}\neq 0)$ and $\gamma_{1},\gamma_{2}\in\R$, 
\begin{gather*}
\mathcal{O}_{1}=\{\chi_{l(0,0,v_{1},v_{2},w_{1},0,w_{2})} \mid v_{1}w_{1}+v_{2}w_{2}\neq 0\}\\
\mathcal{O}_{2}=\{\chi_{l(0,0,v_{1},v_{2},w_{1},0,w_{2})}\mid (v_{1},v_{2})\neq (0,0),\ (w_{1},w_{2})\neq 0\ \text{and}\ v_{1}w_{1}+v_{2}w_{2}=0\},\\
\mathcal{O}_{3}=\{\chi_{l(0,0,v_{1},v_{2},w_{1},0,w_{2})}\mid (v_{1},v_{2})\neq (0,0),\ (w_{1},w_{2})= 0\ \text{and}\ v_{1}w_{1}+v_{2}w_{2}=0\},\\
\mathcal{O}_{4}=\{\chi_{l(0,0,v_{1},v_{2},w_{1},0,w_{2})}\mid (v_{1},v_{2})=(0,0),\ (w_{1},w_{2})\neq 0\ \text{and}\ v_{1}w_{1}+v_{2}w_{2}=0\}.
\end{gather*}

The case(III). The normalizer of $N$ in $G$ is written as the semi-direct product $L_{(III)}\ltimes S_{(III)}$ where
\[
L_{(III)}=\left\{
\begin{pmatrix}
a_{1}&&&\\
&a_{2}&&\\
&&a_{3}&\\
&&&a_{4}
\end{pmatrix}
\ |\ a_{1},\ldots,a_{4}\in \R^{\times}
\right\}.
\]
Then the lemma is easily follows.
\end{proof}
\subsection{Differential operators on the generalized Whittaker models}\label{diff}
Let $U$ be a closed subgroup of $N$ and $\chi$ a character of $U$. By Theorem \ref{Yamashita}, the space of the generalized Whittaker model is isomorphic to the subspace of 
\[
C^{\infty}_{\chi}(U\backslash G/K)=\{f\in C^{\infty}(G)\ |\ f(ugk)=\chi(u)f(g)\ \text{for}\ (u,g,k)\in U\times G\times K\}.
\]
\begin{lem}\label{cross}
We retain the above notations. There exists a linear bijection
\[
\Xi\colon C^{\infty}_{\chi}(U\backslash G/K)\xrightarrow{\sim} C^{\infty}(U\backslash N\times A).
\]
\end{lem}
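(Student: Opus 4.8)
The plan is to combine the Iwasawa decomposition with an ``untwisting'' of the character $\chi$.

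\textbf{Step 1: restriction via Iwasawa.} First I would use the Iwasawa decomposition $G=NAK$: every $g\in G$ is written uniquely as $g=n(g)a(g)k(g)$ with $n(g)\in N$, $a(g)\in A$, $k(g)\in K$, and the maps $g\mapsto n(g)$, $g\mapsto a(g)$, $g\mapsto k(g)$ are real analytic. Since a function in $C^{\infty}_{\chi}(U\backslash G/K)$ is right $K$-invariant, it is determined by its restriction to $NA\cong N\times A$, and conversely any smooth function $F$ on $N\times A$ extends to a right $K$-invariant smooth function on $G$ by $g\mapsto F(n(g),a(g))$. For $u\in U\subset N$ and $g=n(g)a(g)k(g)$ one has $ug=(un(g))a(g)k(g)$, hence $n(ug)=un(g)$ and $a(ug)=a(g)$; so restriction identifies $C^{\infty}_{\chi}(U\backslash G/K)$ with
\[
\{F\in C^{\infty}(N\times A)\mid F(un,a)=\chi(u)F(n,a),\ u\in U,\ n\in N,\ a\in A\},
\]
on which the only residual equivariance is left translation by $U$ in the $N$-variable, twisted by $\chi$.

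\textbf{Step 2: removing the twist.} Next I would pass to genuinely $U$-invariant functions. Since $U$ is a closed (hence connected and simply connected) subgroup of the simply connected nilpotent group $N$, the quotient $U\backslash N$ is diffeomorphic to a Euclidean space, so the principal $U$-bundle $N\to U\backslash N$ is trivial; fixing a smooth global section gives a diffeomorphism $N\cong U\times(U\backslash N)$ and, composing the first-coordinate projection with $\chi\colon U\to\C^{\times}$, a smooth nowhere-vanishing function $\tilde{\chi}\colon N\to\C^{\times}$ with $\tilde{\chi}(un)=\chi(u)\tilde{\chi}(n)$ for $u\in U$, $n\in N$. (Equivalently one uses Mal'cev coordinates adapted to a vector-space decomposition $\n=\Lie(U)\oplus\mathfrak{v}$, as in \cite{C-G}.) Then I would define $\Xi(f)(n,a)=\tilde{\chi}(n)^{-1}f(na)$. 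A direct computation gives $\Xi(f)(un,a)=\Xi(f)(n,a)$, so $\Xi(f)\in C^{\infty}(U\backslash N\times A)$; $\Xi$ is clearly linear, its inverse is $\phi\mapsto\bigl(g\mapsto\tilde{\chi}(n(g))\,\phi(n(g),a(g))\bigr)$ (whose image lies in $C^{\infty}_{\chi}(U\backslash G/K)$ because $\tilde{\chi}(un(g))=\chi(u)\tilde{\chi}(n(g))$ and $\phi$ is $U$-left-invariant), and injectivity is automatic since $\Xi(f)=0$ forces $f\equiv 0$ on $NA$ and hence, by right $K$-invariance and Iwasawa, on all of $G$.

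\textbf{Main obstacle.} The argument is essentially bookkeeping once two standard facts are in place: the real-analyticity of the Iwasawa projection maps $n(\cdot)$ and $a(\cdot)$, and the existence of the smooth nowhere-vanishing extension $\tilde{\chi}$ of $\chi$ to $N$. The latter is the only place where the nilpotent (equivalently, the contractibility of $U\backslash N$) structure is genuinely used, and I expect stating it cleanly — rather than any computation — to be the main point requiring care; for the concrete subgroups $U$ of $N$ arising in Proposition \ref{classify} one could alternatively exhibit $\tilde{\chi}$ by hand in matrix coordinates. Everything else (well-definedness on the double-coset space, linearity, and bijectivity of $\Xi$) is routine verification.
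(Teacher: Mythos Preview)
Your proposal is correct and follows essentially the same approach as the paper: both reduce to $N\times A$ via the Iwasawa decomposition and then use a smooth global cross section $\theta\colon U\backslash N\to N$ (equivalently, a trivialization $N\cong U\times(U\backslash N)$) coming from the nilpotent structure, citing Corwin--Greenleaf. Your function $\tilde\chi$ is just $\chi$ composed with the $U$-coordinate of that trivialization, so your map $\Xi(f)(n,a)=\tilde\chi(n)^{-1}f(na)$ unwinds to the paper's $\Xi(f)(x,a)=f(\theta(x)a)$; the only slip is the parenthetical ``closed (hence connected and simply connected)'', which is false for general closed subgroups of a simply connected nilpotent group (e.g.\ $\Z\subset\R$) --- connectedness of $U$ is an implicit hypothesis here, satisfied in all the cases the paper actually uses.
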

\begin{proof}
Because $N$ is a nilpotent group and $U$ is its closed subgroup, there is a smooth cross section $\theta\colon U\backslash N\rightarrow N$ with the smooth splitting of $n\in N$ so that $n=u(n)s(n)$ for $u(n)\in U$ and $s(n)\in \theta(U\backslash N)$ (cf. Theorem 1.2.12 in \cite{C-G}). This smooth cross section gives us a linear mapping
\[
\begin{array}{ccc}
\Xi\colon C^{\infty}_{\chi}(U\backslash G/K)&\xrightarrow{\sim}&C^{\infty}(U\backslash N\times A)\\
f&\longmapsto&\Xi(f)(x,a)=f(\theta(x)a),  
\end{array}
\]
for $x\in U\backslash N$ and $a\in A$. Take an element $\phi\in C^{\infty}(U\backslash N\times A)$. If we define an element of $f_{\phi}\in C^{\infty}_{\chi}(U\backslash G/K)$ by
\[
f_{\phi}(usak)=\chi(u)\phi(sa)
\]
for $u\in U,s\in \theta(U\backslash N),a\in A$ and $k\in K$. Since $G\cong U\times U\backslash N\times A\times K$, this is well defined. We denote this map $\Pi$. Then it is easy to see that $\Pi\circ\Xi=\mathrm{id}_{C^{\infty}_{\chi}(U\backslash G/K)}$ and $\Xi\circ\Pi=\mathrm{id}_{C^{\infty}(U\backslash N\times A)}$. Hence $\Xi$ is bijective.
\end{proof}
We define the action of $U(\g)$ on $C^{\infty}(U\backslash N\times A)$ by $X\cdot\Xi(f)=\Xi(R_{X}f)$ for $X\in U(\g)$ and $f\in C^{\infty}_{\chi}(U\backslash G/K).$ In this section, we will give the explicit expression of the action of $U(\g)$ on $C^{\infty}(U\backslash N\times A)$.

According to the Iwasawa decomposition $\g=\mathfrak{n}\oplus \ah\oplus\ka$, it suffices to see the action of $\mathfrak{n}, \ah$ and $\ka$ respectively. We can see that $E_{ii}\in\ah, i=1,\ldots,4$ acts on $C^{\infty}(U\backslash N\times A)$  as $\vartheta_{a_{i}}=a_{i}\frac{\partial}{\partial a_{i}}, i=1,\ldots,4$ if we denote the elements of $A$ by $a=\text{diag}(a_{1},\ldots,a_{4})$. By the right $K$-invariance of $C^{\infty}_{\chi}(U\backslash G/K)$, the elements in $\ka$ acts trivially. Hence we have the following symmetric relation among the generators of the annihilator ideal $I_{k}(\lambda)$.
\begin{lem}\label{symmetric}
For $F\in C^{\infty}(G/K)$, we have
\[
\left((\mathbb{E}-\lambda_1)(\mathbb{E}-\lambda_2-k)\right)_{ij}F=\left((\mathbb{E}-\lambda_1)(\mathbb{E}-\lambda_2-k)\right)_{ji}F,
\]
where $1\le i,j\le 4,$ and $k=1,2$.
\end{lem}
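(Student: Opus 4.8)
The plan is to reduce the assertion to showing that the difference
\[
Y_{ij}:=\bigl((\mathbb{E}-\lambda_1)(\mathbb{E}-\lambda_2-k)\bigr)_{ij}-\bigl((\mathbb{E}-\lambda_1)(\mathbb{E}-\lambda_2-k)\bigr)_{ji}
\]
lies in the left ideal $U(\g)\ka$, where $\ka=\mathrm{Lie}(K)=\mathfrak{so}(4)$ is the subalgebra of antisymmetric matrices. Granting this, the lemma is immediate: since $\exp(\ka)\subset K=O(4)$, every $F\in C^{\infty}(G/K)$ satisfies $R_{Z}F=0$ for $Z\in\ka$; and since $X\mapsto R_{X}$ extends to an algebra homomorphism of $U(\g)$ into the left-invariant differential operators on $G$ (so $R_{WZ}=R_{W}\circ R_{Z}$), any element $W\cdot Z$ with $W\in U(\g)$ and $Z\in\ka$ annihilates $F$. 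Hence $Y_{ij}\in U(\g)\ka$ forces $R_{Y_{ij}}F=0$, which is exactly the stated symmetric relation.

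To prove $Y_{ij}\in U(\g)\ka$ I would first expand
\[
\bigl((\mathbb{E}-\lambda_1)(\mathbb{E}-\lambda_2-k)\bigr)_{ij}=\sum_{l=1}^{4}E_{il}E_{lj}-(\lambda_1+\lambda_2+k)E_{ij}+\lambda_1(\lambda_2+k)\delta_{ij}.
\]
The scalar term is symmetric in $i,j$ and cancels in $Y_{ij}$, while the linear term contributes $-(\lambda_1+\lambda_2+k)(E_{ij}-E_{ji})\in\ka$, so only the quadratic part $\sum_{l}(E_{il}E_{lj}-E_{jl}E_{li})$ must be treated. Here I would peel off the antisymmetric part of only the \emph{left} factor in each product, writing $E_{il}E_{lj}=E_{li}E_{lj}+(E_{il}-E_{li})E_{lj}$ and $E_{jl}E_{li}=E_{lj}E_{li}-(E_{lj}-E_{jl})E_{li}$, which gives
\[
E_{il}E_{lj}-E_{jl}E_{li}=[E_{li},E_{lj}]+(E_{il}-E_{li})E_{lj}+(E_{lj}-E_{jl})E_{li}.
\]
Summing over $l$, the first term yields $\sum_{l}[E_{li},E_{lj}]=E_{ij}-E_{ji}\in\ka$. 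In the remaining two terms the factors $E_{il}-E_{li}$ and $E_{lj}-E_{jl}$ lie in $\ka$, so commuting them to the right leaves, modulo $U(\g)\ka$, the first-order expression $\sum_{l}\bigl([E_{il}-E_{li},E_{lj}]+[E_{lj}-E_{jl},E_{li}]\bigr)$. A direct computation with the commutation relations of $\mathfrak{gl}(4,\C)$ then evaluates this to $2(E_{ij}-E_{ji})$ (for general $n$ one gets $(n-2)(E_{ij}-E_{ji})$), the ``trace'' contributions $\delta_{ij}\sum_{l}E_{ll}$ produced by $[E_{il},E_{lj}]$ and $[E_{jl},E_{li}]$ cancelling after summation. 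As $E_{ij}-E_{ji}\in\ka$, this finishes the verification that $Y_{ij}\in U(\g)\ka$.

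The one step with genuine calculation is the final evaluation of $\sum_{l}\bigl([E_{il}-E_{li},E_{lj}]+[E_{lj}-E_{jl},E_{li}]\bigr)$, and the point I expect to need care with is tracking the diagonal terms $\delta_{ij}E_{ll}$ coming from the structure constants of $\mathfrak{gl}(4,\C)$ and confirming that, once summed over $l$, they cancel, leaving precisely a scalar multiple of the antisymmetric generator $E_{ij}-E_{ji}$. Everything else is formal bookkeeping inside $U(\g)$ together with the two facts used above: that $R$ is multiplicative and that $R_{Z}$ kills $C^{\infty}(G/K)$ for $Z\in\ka$.
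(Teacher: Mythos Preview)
Your proof is correct and follows essentially the same approach as the paper: both reduce to the fact that $E_{ij}-E_{ji}\in\ka$ annihilates $C^{\infty}(G/K)$, and both compute the needed commutators in $U(\g)$. The paper's version is slightly more economical---it applies the relation $E_{lj}F=E_{jl}F$ directly to the rightmost factor in $E_{il}E_{lj}F$ (valid since the right derivation hits the rightmost element first), so only the single commutator $[E_{jl},E_{il}]$ is needed, whereas you antisymmetrize the \emph{left} factor and must then commute it past the right one, tracking a few extra terms; but the content is the same.
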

\begin{proof}
Take elements $(E_{ij}-E_{ji}),\ 1\le i<j\le 4$ as the generators of $\ka$. Then we have $(E_{ij}-E_{ji})F=0,\ 1\le i<j\le 4$ for $F\in C^{\infty}(G/K)$, i.e., $E_{ij}F=E_{ji}F,\ 1\le i<j\le 4$. This implies that 
\begin{align*}
&\left((\mathbb{E}-\lambda_1)(\mathbb{E}-\lambda_2-k)\right)_{ij}F\\
&=(\sum_{l=1}^{4}E_{il}E_{lj}-(\lambda_1+\lambda_2+k)E_{ij}+\lambda_1(\lambda_2+k)\delta_{ij})F\\&=(\sum_{l=1}^{4}E_{il}E_{jl}-(\lambda_1+\lambda_2+k)E_{ji}+\lambda_1(\lambda_2+k)\delta_{ij})F\\
&=(\sum_{l=1}^{4}(E_{jl}E_{il}-[E_{jl},E_{il}])-(\lambda_1+\lambda_2+k)E_{ij}+\lambda_1(\lambda_2+k)\delta_{ij})F\\
&=(\sum_{l=1}^{4}(E_{jl}E_{li}-(\delta_{li}E_{jl}-\delta_{jl}E_{il}))-(\lambda_1+\lambda_2+k)E_{ji}+\lambda_1(\lambda_2+k)\delta_{ij})F\\
&=(\sum_{l=1}^{4}E_{jl}E_{li}-(E_{ji}-E_{ij})-(\lambda_1+\lambda_2+k)E_{ji}+\lambda_1(\lambda_2+k)\delta_{ij})F\\
&=(\sum_{l=1}^{4}E_{jl}E_{li}-(\lambda_1+\lambda_2+k)E_{ji}+\lambda_1(\lambda_2+k)\delta_{ij})F\\
&=\left((\mathbb{E}-\lambda_1)(\mathbb{E}-\lambda_2-k)\right)_{ji}F.
\end{align*}
This is the required equation.
\end{proof}
We give more precise expressions of $\left((\mathbb{E}-\lambda_1)(\mathbb{E}-\lambda_2-k)\right)_{ij}\ \text{mod}\ U(\g)\ka$ for $k=1,2$ and $1\le i<j\le 4$ below.
\begin{lem}\label{modk}
Representatives $\left((\mathbb{E}-\lambda_1)(\mathbb{E}-\lambda_2-k)\right)_{ij}$ modulo $U(\g)\ka$ ($k=1,2$ and $1\le i<j \le 4$) are written as follows,

\begin{gather}
\begin{split}
E_{11}^{2}+E_{21}^{2}+E_{31}^{2}+E_{41}^{2}-&(\lambda_{1}+\lambda_{2}+k-3)E_{11}\\
&-(E_{2}+E_{3}+E_{4})+\lambda_{1}(\lambda_{2}+k),
\end{split}\tag{$(i,j)=(1,1)$}\\
E_{21}(E_{11}+E_{22}-(\lambda_{1}+\lambda_{2}+k-3))+E_{32}E_{31}+E_{42}E_{41},\tag{$(i,j)=(1,2)$}\\
E_{31}(E_{11}+E_{33}-(\lambda_{1}+\lambda_{2}+k-2))+E_{32}E_{21}+E_{43}E_{41},\tag{$(i,j)=(1,3)$}\\
E_{41}(E_{11}+E_{44}-(\lambda_{1}+\lambda_{2}+k-2))+E_{42}E_{21}+E_{31}E_{43},\tag{$(i,j)=(1,4)$}\\
\begin{split}
E_{22}^2-(\lambda_{1}+\lambda_{2}+k-2)E_{22}+&E_{21}^2+E_{32}^2+E_{42}^2\\
&-(E_{33}+E_{44})+\lambda_{1}(\lambda_{2}+k),
\end{split}\tag{$(i,j)=(2,2)$}\\
E_{32}(E_{22}+E_{33}-(\lambda_{1}+\lambda_{2}+k-2))+E_{21}E_{31}+E_{43}E_{42},\tag{$(i,j)=(2,3)$}\\
E_{42}(E_{22}+E_{44}-(\lambda_{1}+\lambda_{2}+k-2))+E_{21}E_{41}+E_{32}E_{43},\tag{$(i,j)=(2,4)$}\\
\begin{split}
E_{33}^2-(\lambda_{1}+\lambda_{2}+k-1)E_{33}+E_{31}^{2}+&E_{32}^{2}+E_{43}^{2}\\&-E_{44}+\lambda_{1}(\lambda_{2}+k),
\end{split}\tag{$(i,j)=(3,3)$}\\
E_{43}(E_{33}+E_{44}-(\lambda_{1}+\lambda_{2}+k-1))+E_{31}E_{41}+E_{32}E_{42},\,\tag{$(i,j)=(3,4)$}\\
E_{44}^{2}-(\lambda_{1}+\lambda_{2}+k)E_{44}+E_{41}^{2}+E_{42}^{2}+E_{43}^{2}.\tag{$(i,j)=(4,4)$}
\end{gather}
\end{lem}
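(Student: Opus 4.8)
The plan is to expand the matrix entry explicitly and then reduce it modulo the left ideal $U(\g)\ka$. By the rule for multiplication in $M(4;U(\g))$,
\[
\left((\mathbb{E}-\lambda_1)(\mathbb{E}-\lambda_2-k)\right)_{ij}
=\sum_{l=1}^{4}E_{il}E_{lj}-(\lambda_1+\lambda_2+k)E_{ij}+\lambda_1(\lambda_2+k)\delta_{ij},
\]
so the whole problem is to rewrite $\sum_{l=1}^{4}E_{il}E_{lj}$ modulo $U(\g)\ka$ as a combination of the lower-triangular generators $E_{ab}$ $(a\ge b)$, the diagonal generators, and scalars, and then to absorb the leftover linear terms into the coefficient of $E_{ij}$. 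The parameters $\lambda_{1},\lambda_{2},k$ are inert in this reduction, so $k$ is merely carried along and the cases $k=1,2$ are treated simultaneously.

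First I would record the two moves used throughout. Since $E_{ab}-E_{ba}\in\ka$ for $a\ne b$ and $U(\g)\ka$ is a left ideal, the rightmost factor of a monomial may be transposed, $D\,E_{ab}\equiv D\,E_{ba}\pmod{U(\g)\ka}$; in particular $E_{ab}\equiv E_{ba}$. The other move is the commutation relation $[E_{ab},E_{cd}]=\delta_{bc}E_{ad}-\delta_{da}E_{cb}$, used to bring a strictly upper-triangular factor into rightmost position, and to put products of two lower-triangular generators into the order displayed in the statement; each commutation costs a single-generator term, which is in turn transposed into lower-triangular form by the first move. The computation in the proof of Lemma \ref{symmetric} is exactly this mechanism in miniature. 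Running it case by case: when $i=j$, the term $l=i$ contributes $E_{ii}^{2}$, while for $l\ne i$ one finds $E_{il}E_{li}\equiv E_{il}^{2}$ if $l<i$ and $E_{il}E_{li}\equiv E_{li}^{2}+E_{ii}-E_{ll}$ if $l>i$, which yields the displayed squares, the correction $-\sum_{l>i}E_{ll}$, and the shift $4-i$ in the coefficient of $E_{ii}$; when $i<j$, the same manipulation turns $\sum_{l}E_{il}E_{lj}$ into $E_{ji}E_{ii}+E_{ji}E_{jj}$ together with two ``rectangular'' products over the two indices other than $i$ and $j$, and the various single-generator corrections, combined with the transposition of $-(\lambda_{1}+\lambda_{2}+k)E_{ij}$ into $-(\lambda_{1}+\lambda_{2}+k)E_{ji}$, collapse to the integer shifts $3,2,2,2,2,1$ appearing in the coefficients of the linear terms of the six off-diagonal entries. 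Collecting the monomials with left factor $E_{ji}$ then produces the stated form.

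The only real obstacle is the bookkeeping, and the single point that needs care is the order in which the moves are applied: in each monomial one should first transpose the rightmost factor and reorder the two factors into lower-triangular or diagonal form, and only afterwards transpose what remains and rearrange into the ordering used in the statement; doing the steps in a different order introduces spurious lower-order terms and perturbs the integer shifts. Carrying this through all ten pairs $i\le j$ — which by Lemma \ref{symmetric} is all that is needed, since the entries with $i>j$ act in the same way on $C^{\infty}(G/K)$ — reproduces exactly the ten expressions in the statement.
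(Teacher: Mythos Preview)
Your proposal is correct and follows exactly the approach the paper intends: the paper's own proof is a single sentence observing that $(E_{ij}-E_{ji})$ generate $\ka$ and that the lemma then follows by direct computation, while you simply spell out the mechanics of that computation (transposing the rightmost factor modulo $U(\g)\ka$ and using the commutation relations to move strictly upper-triangular factors to the right). There is no substantive difference in method.
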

\begin{proof}
If we note that $(E_{ij}-E_{ji})\ 1\le i<j\le 4$ are the generators of $\ka$, this lemma can be obtained by the direct computations.
\end{proof}
Along the classification obtained in Proposition \ref{classify}, we express the action of $\mathfrak{n}$.

\textit{The case (I)}. We consider the space
\begin{equation*}
C^{\infty}_{\chi_{l(0,1,\varepsilon,0,0,0)}}(S_{\text{(I)}}\backslash G/K)
\end{equation*}
Here $\varepsilon=1$ (resp. $=0$) corresponds to the case $(I_{1})$ (resp. $(I_{2})$) classified in Proposition \ref{classify}.
If we notice that $\mathfrak{s}_{\text{(I)}}=\{\R X_2+\R Y_1+\R Y_2+\R Z \}$ is not only a subalgebra of $\mathfrak{n}$ but also an ideal of $\mathfrak{n}$, then $\mathfrak{s}_{\text{(I)}}\backslash \mathfrak{n}\cong\{\R X_{1}+\R X_{3}\}$ can be seen as a subalgebra of $\mathfrak{n}$. Hence $S_{\text{(I)}}\backslash N$ is isomorphic to the subgroup $\{\exp(uX_{1}+vX_{3})\ |\ u,v\in\R\}$ of $N$. This isomorphism gives a smooth cross section $\theta_{\text{(I)}}\colon S_{\text{(I)}}\backslash N\rightarrow N$. Then we have the linear isomorphism
\[
\Xi_{\text{(I)}}\colon C^{\infty}_{\chi_{l(0,1,\varepsilon,0,0,0)}}(S_{\text{(I)}}\backslash G/K)\xrightarrow{\sim} C^{\infty}(S_{\text{(I)}}\backslash N\times A)
\]
by Lemma \ref{cross}. We introduce a coordinate system on $S_{\text{(I)}}\backslash N\times A$ as follows,
\[
\begin{array}{ccc}
\R^{2}\times(\R_{>0})^{4}&\xrightarrow{\sim}&S_{\text{(I)}}\backslash N\times A\\
(u,v)\times(a_{1},a_{2},a_{3},a_{4})&\longmapsto&\exp(uX_{1}+vX_{3})\times\diag(a_{1},a_{2},a_{3},a_{4})
\end{array}
\] 

\begin{prop}\label{n-difI}
We regard the space $C^{\infty}(S_{(I)}\backslash N\times A)$ as the image of the space $C^{\infty}_{\chi_{l(0,1,\varepsilon,0,0,0)}}(S_{\text{(I)}}\backslash G/K)$ by the mapping $\Xi_{(I)}$ for each $\varepsilon =0,1$. Then $\mathfrak{n}$ acts on $C^{\infty}(S_{(I)}\backslash N\times A)$ as follows,
\begin{align*}
&E_{21}F=\frac{a_{2}}{a_{1}}\frac{\partial}{\partial u}F,&E_{31}F=\varepsilon2\pi\sqrt{-1}\frac{a_{3}}{a_{1}},F\\
&E_{41}F=0,&E_{32}F=2\pi\sqrt{-1}\frac{a_{3}}{a_{2}}(v-\varepsilon u)F,\\
&E_{42}F=2\pi\sqrt{-1}\frac{a_{4}}{a_{2}}F,&E_{43}F=\frac{a_{4}}{a_{3}}\frac{\partial}{\partial v},
\end{align*}
for $F\in C^{\infty}(S_{\text{(I)}}\backslash N\times A)$.
\end{prop}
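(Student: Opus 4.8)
The plan is to compute each right derivation $R_{X}$, $X\in\mathfrak{n}$, directly in matrix coordinates and then read off its effect on $\Xi_{(I)}(f)$ from the transformation law $f(sgk)=\chi(s)f(g)$ for $s\in S_{(I)}$, $k\in K$, where $\chi=\chi_{l(0,1,\varepsilon,0,0,0)}$ as in Proposition~\ref{classify}. The observation that makes the whole computation elementary is that
$\mathfrak{s}_{(I)}=\R X_{2}+\R Y_{1}+\R Y_{2}+\R Z=\R E_{32}+\R E_{31}+\R E_{42}+\R E_{41}$
is an \emph{abelian} ideal of $\mathfrak{n}$ all of whose basis products vanish (row indices in $\{3,4\}$, column indices in $\{1,2\}$); hence $\exp|_{\mathfrak{s}_{(I)}}$ is $m\mapsto I+m$, the factorization $N=S_{(I)}\cdot\{\exp(uX_{1}+vX_{3})\}$ is a polynomial diffeomorphism, and no Campbell--Hausdorff tails intervene. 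So the whole proof is a short sequence of $4\times4$ matrix multiplications, organized by the following recipe.

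First I would set $\bar{n}(u,v)=\theta_{(I)}(u,v)=I+uE_{21}+vE_{43}$ and, using $a\exp(tE_{ij})=\exp\!\big(t\tfrac{a_{i}}{a_{j}}E_{ij}\big)\,a$, rewrite
\[
R_{E_{ij}}f\big(\bar{n}(u,v)a\big)=\frac{d}{dt}\Big|_{t=0}f\!\Big(\bar{n}(u,v)\exp\!\big(tcE_{ij}\big)\,a\Big),\qquad c=\tfrac{a_{i}}{a_{j}}.
\]
The matrix $\bar{n}(u,v)\exp(tcE_{ij})$ is a single lower unitriangular matrix --- a product of two off-diagonal terms by one, so at most one quadratic correction survives --- and I would solve for the unique decomposition $\bar{n}(u,v)\exp(tcE_{ij})=\sigma(t)\,\bar{n}(u(t),v(t))$ with $\sigma(t)-I\in\mathfrak{s}_{(I)}$ by comparing entries: positions $(2,1)$ and $(4,3)$ yield $u(t)$ and $v(t)$, positions $(3,1),(3,2),(4,1),(4,2)$ yield $\sigma(t)$. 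The only point requiring care is that in $\sigma(t)\bar{n}(u(t),v(t))$ the $(3,2)$- and $(4,2)$-entries of $\sigma(t)$ are fed, after multiplication by $u(t)$, into its $(3,1)$- and $(4,1)$-entries; this is precisely the mechanism producing the $-\varepsilon u$ contribution in the formula for $E_{32}$.

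Finally, from $f\big(\sigma(t)\bar{n}(u(t),v(t))a\big)=\chi(\sigma(t))\,\Xi_{(I)}(f)(u(t),v(t),a)$ together with $\chi(\exp X)=e^{2\pi\sqrt{-1}\,l(X)}$ for $X\in\mathfrak{s}_{(I)}$ ($l$ the functional underlying $\chi$), Leibniz's rule gives
\[
E_{ij}\cdot\Xi_{(I)}(f)=2\pi\sqrt{-1}\,l\big(\dot\sigma(0)\big)\,\Xi_{(I)}(f)+\dot u(0)\,\partial_{u}\Xi_{(I)}(f)+\dot v(0)\,\partial_{v}\Xi_{(I)}(f),
\]
where $\dot\sigma(0)=\frac{d}{dt}\big|_{0}\big(\sigma(t)-I\big)\in\mathfrak{s}_{(I)}$. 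Running this through $E_{21},E_{31},E_{41},E_{32},E_{42},E_{43}$ gives the six stated formulas: for $E_{21}$ and $E_{43}$ one finds $\sigma(t)=I$, so only the translation terms $\tfrac{a_{2}}{a_{1}}\partial_{u}$ and $\tfrac{a_{4}}{a_{3}}\partial_{v}$ survive; for $E_{41}$ one finds $\dot\sigma(0)\in\R Z\subset\ker l$, hence $0$; and $E_{31},E_{32},E_{42}$ produce the three remaining coefficients. The main obstacle is nothing conceptual but the careful bookkeeping in this last family of factorizations, together with keeping straight exactly which coordinate of $\mathfrak{s}_{(I)}$ the character $l$ pairs nontrivially with.
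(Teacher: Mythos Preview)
Your approach is correct and is essentially the same as the paper's: conjugate $\exp(tE_{ij})$ past $a$ to pick up the factor $a_{i}/a_{j}$, factor the resulting element of $N$ as $S_{(I)}\cdot\theta_{(I)}(S_{(I)}\backslash N)$, and differentiate the character. The only cosmetic difference is that the paper carries out the factorization once for a general element $\exp(uX_{1}+vX_{3})\cdot\exp n(z,\ldots,x_{3})$ via the Campbell--Hausdorff formula and then specializes, whereas you propose six separate matrix-entry computations, one per generator; the content is identical.
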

\begin{proof}
For $F\in C^{\infty}(S_{\text{(I)}}\backslash N\times A)$, there exists a $f\in C^{\infty}_{\chi_{l(0,\varepsilon,1,0,0,0)}}(S_{\text{(I)}}\backslash G/K)
$ such that $F(u,v;a)=\Xi_{\text{(I)}}(f)=f(\exp(uX_{1}+vX_{3})a)$ for $u,v\in \R$ and $a\in A$. Hence for $E_{ij} (1\le j<i\le4)$ we have
\begin{equation}\label{2222}
\begin{aligned}
(E_{ij}F)(u,v;a)&=\Xi_{\text{(I)}}(R_{E_{ij}}f)\\
&=\frac{d}{dt}f(\exp(uX_{1}+vX_{3})a\exp(tE_{ij}))|_{t=0}\\
&=\frac{d}{dt}f(\exp(uX_{1}+vX_{3})\exp(t\text{Ad}(a)E_{ij})a)|_{t=0}\\
&=\frac{a_{i}}{a_{j}}\frac{d}{dt}f(\exp(uX_{1}+vX_{3})\exp(tE_{ij})a)|_{t=0}.
\end{aligned}
\end{equation}
By the direct computation, we have
\begin{multline*}
\exp(u X_1+v X_3)\cdot \exp n(z,\cdots,x_3)\\
=\exp n(z',y'_1,y'_2,0,x_2,0)\cdot \exp((u+x_1) X_1+(v+x_3) X_3),
\end{multline*}
where
\begin{align*}
z'&=z+vy_1-uy_2+\frac{1}{2}x_3 y_1-\frac{1}{2}x_1 y_2-uvx_2-\frac{1}{2}vx_1 x_2-\frac{1}{2}ux_2 x_3-\frac{1}{3}x_1 x_2 x_3,\\
y'_1&=y_1-ux_2-\frac{x_1 x_2}{2},\\
y'_2&=y_2+vx_2+\frac{x_2 x_3}{2}.
\end{align*} 
Hence we have
\begin{equation}\label{1111}
\begin{aligned}
&f(\exp(u X_1+v X_3)\exp n(z,\cdots,x_3))\\
&=f(\exp n(z',y'_1,y'_2,0,x_2,0)\exp((u+x_1) X_1+(v+x_3) X_3))\\
&=\chi_{l(0,\varepsilon,1,0,0,0)}(\exp n(z',y'_1,y'_2,0,x_2,0))f(\exp((u+x_1) X_1+(v+x_3) X_3))\\
&=e^{2\pi\sqrt{-1}(\varepsilon y'_{1}+y'_{2})}f(\exp((u+x_1) X_1+(v+x_3) X_3)).
\end{aligned}
\end{equation}
Combining formulas (\ref{2222}) and (\ref{1111}), we have the proposition.
\end{proof}

\textit{The case (II)}. We consider the space
\[
C^{\infty}_{\chi_{l(0,0,\varepsilon_{1},\varepsilon_{2},0,\varepsilon_{3})}}(S_{\text{(II)}}\backslash G/K).
\]
Here each $(\varepsilon_{1},\varepsilon_{2},\varepsilon_{3})$ corresponds to the case (II$_{i}$) $(i=1,\ldots,4)$ in Proposition \ref{classify} as follows,
\[
(\varepsilon_{1},\varepsilon_{2},\varepsilon_{3})=
\begin{cases}
(1,1,0)&\text{if the case (II$_{1}$)},\\
(0,1,1)&\text{if the case (II$_{2}$)},\\
(0,1,0)&\text{if the case (II$_{3}$)},\\
(0,0,1)&\text{if the case (II$_{4}$)}.
\end{cases}
\]
The subalgebra $\mathfrak{s}_{\text{(II)}}=\{\R X_{1}+\R X_{3}+\R Y_{1}+\R Y_{2}+\R Z\}$ is a codimension $1$ subalgebra of $\mathfrak{n}$. Because any codimension $1$ subalgebra of a nilpotent Lie algebra becomes an ideal (cf. Lemma 1.1.8 in \cite{C-G}), the subalgebra $\mathfrak{s}_{\text{(II)}}$ is an ideal of $\mathfrak{n}$. By the similar argument as in the case (I), the homogeneous space $S_{\text{(II)}}\backslash N$ is isomorphic to the subgroup $\{\exp(uX_{2})\ |\ u\in\R\}$ of $N$. This isomorphism gives a smooth section $\theta_{(\text{II})}\colon S_{\text{(II)}}\backslash N\rightarrow N$. Then we have a linear bijection
\[
\Xi_{\text{(II)}}\colon C^{\infty}_{\chi_{l(0,0,\varepsilon_{1},\varepsilon_{2},0,\varepsilon_{3})}}(S_{\text{(II)}}\backslash G/K)\xrightarrow{\sim} C^{\infty}(S_{\text{(II)}}\backslash N\times A).
\]
We introduce a coordinate system on $S_{\text{(II)}}\backslash N\times A$ as follows,
\[
\begin{array}{ccc}
\R\times(\R_{>0})^{4}&\xrightarrow{\sim}&S_{\text{(II)}}\backslash N\times A\\
u\times(a_{1},a_{2},a_{3},a_{4})&\longmapsto&\exp(uX_{2})\times\diag(a_{1},a_{2},a_{3},a_{4})
\end{array}
\] 
Then we can write down the action of $\mathfrak{n}$ on $C^{\infty}(S_{\text{(II)}}\backslash N\times A)$.
\begin{prop}\label{n-difII}
We regard the space $C^{\infty}(S_{(II)}\backslash N\times A)$ as the image of the space $C^{\infty}_{\chi_{l(0,0,\varepsilon_{1},\varepsilon_{2},0,\varepsilon_{3})}}(S_{\text{(II)}}\backslash G/K)$ by the mapping $\Xi_{(II)}$. Then $\mathfrak{n}$ acts on $C^{\infty}(S_{(II)}\backslash N\times A)$ as follows,
\begin{align*}
E_{21}F&=(2\pi\sqrt{-1}(\frac{a_{2}}{a_{1}})\varepsilon_{2})F, &E_{31}F&=0,\\
E_{41}F&=0,& E_{32}F&=\frac{a_{3}}{a_{2}}\frac{\partial}{\partial u}F,\\
E_{42}F&=(2\pi \sqrt{-1}(\frac{a_{4}}{a_{2}})\varepsilon_{1})F,&E_{43}F&=(2\pi\sqrt{-1}(\frac{a_{4}}{a_{3}})(\varepsilon_{3}-\varepsilon_{1} u))F.
\end{align*}
Here $F\in C^{\infty}(S_{\text{(II)}}\backslash N\times A)$ and 
\[
(\varepsilon_{1},\varepsilon_{2},\varepsilon_{3})=
\begin{cases}
(1,1,0)&\text{if the case (II$_{1}$)},\\
(0,1,1)&\text{if the case (II$_{2}$)},\\
(0,1,0)&\text{if the case (II$_{3}$)},\\
(0,0,1)&\text{if the case (II$_{4}$)}.
\end{cases}
\]
\end{prop}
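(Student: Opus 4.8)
The plan is to repeat, mutatis mutandis, the argument used for Proposition~\ref{n-difI}. Fix $F\in C^{\infty}(S_{(II)}\backslash N\times A)$ and let $f\in C^{\infty}_{\chi_{l(0,0,\varepsilon_{1},\varepsilon_{2},0,\varepsilon_{3})}}(S_{(II)}\backslash G/K)$ be the function with $F(u;a)=\Xi_{(II)}(f)=f(\exp(uX_{2})\,a)$. Exactly as in (\ref{2222}), commuting $\exp(tE_{ij})$ past $a$ by means of $\mathrm{Ad}(a)E_{ij}=(a_{i}/a_{j})E_{ij}$ gives, for $1\le j<i\le 4$,
\[
(E_{ij}F)(u;a)=\frac{a_{i}}{a_{j}}\,\frac{d}{dt}f\bigl(\exp(uX_{2})\exp(tE_{ij})\,a\bigr)\big|_{t=0},
\]
so the whole computation is reduced to understanding left translation by $\exp(uX_{2})$ on $N$.

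The core step is to compute the product $\exp(uX_{2})\cdot\exp n(z,y_{1},y_{2},x_{1},x_{2},x_{3})$ and to split it, along the decomposition $N=S_{(II)}\cdot\theta_{(II)}(S_{(II)}\backslash N)$ furnished by Lemma~\ref{cross}, as a factor in $S_{(II)}=\exp(\R X_{1}+\R X_{3}+\R Y_{1}+\R Y_{2}+\R Z)$ times $\exp((u+x_{2})X_{2})$. Since $N$ is unipotent of nilpotency class three, I would do this by direct $4\times4$ matrix multiplication: left multiplication by $\exp(uX_{2})=I+uE_{32}$ adds $u$ times the second row to the third row of $\exp n(z,\dots,x_{3})$, and right multiplication by $\exp(-(u+x_{2})X_{2})$ then clears the $(3,2)$-entry, leaving an explicit element of $S_{(II)}$ whose $X_{1}$-, $X_{3}$- and $Y_{2}$-coordinates are $x_{1}$, $x_{3}$ and $y_{2}-\tfrac12 x_{2}x_{3}-u x_{3}$ respectively (the $Y_{1}$- and $Z$-coordinates are annihilated by the character, so need not be recorded). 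Evaluating $\chi_{l(0,0,\varepsilon_{1},\varepsilon_{2},0,\varepsilon_{3})}$ on this factor picks out $2\pi\sqrt{-1}$ times $\varepsilon_{2}\cdot(X_{1}\text{-coord})+\varepsilon_{1}\cdot(Y_{2}\text{-coord})+\varepsilon_{3}\cdot(X_{3}\text{-coord})$, which yields the analogue of (\ref{1111}):
\[
f\bigl(\exp(uX_{2})\exp n(z,\dots,x_{3})\,a\bigr)=e^{2\pi\sqrt{-1}\left(\varepsilon_{2}x_{1}+\varepsilon_{3}x_{3}+\varepsilon_{1}(y_{2}-\frac12 x_{2}x_{3}-u x_{3})\right)}F(u+x_{2};a).
\]

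It then remains to substitute into this formula the six one-parameter subgroups $\exp(tE_{ij})=\exp n(\dots)$ (each with exactly one coordinate equal to $t$ and the rest zero), differentiate at $t=0$, and multiply by $a_{i}/a_{j}$. Each case collapses to a one-line computation: $E_{32}$ produces the translation $u\mapsto u+x_{2}$, hence the operator $(a_{3}/a_{2})\partial/\partial u$; $E_{21}$ and $E_{42}$ pick up the linear terms $\varepsilon_{2}x_{1}$ and $\varepsilon_{1}y_{2}$, hence $2\pi\sqrt{-1}(a_{2}/a_{1})\varepsilon_{2}$ and $2\pi\sqrt{-1}(a_{4}/a_{2})\varepsilon_{1}$; $E_{43}$ picks up $\varepsilon_{3}x_{3}-\varepsilon_{1}u x_{3}$, hence $2\pi\sqrt{-1}(a_{4}/a_{3})(\varepsilon_{3}-\varepsilon_{1}u)$; and $E_{31}$, $E_{41}$ contribute nothing, since $y_{1}$ and $z$ do not appear in the exponent. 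This reproduces exactly the list in the statement. I expect the only point requiring care to be the bookkeeping of the quadratic and cubic correction terms when isolating the $Y_{2}$-coordinate of the $S_{(II)}$-factor in the product above; everything else is routine, and the fact that the character kills the $Y_{1}$- and $Z$-directions spares one the messiest part of the computation.
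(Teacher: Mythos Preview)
Your proposal is correct and follows essentially the same approach as the paper's proof: both reduce to the decomposition formula $\exp(uX_{2})\cdot\exp n(z,\dots,x_{3})=\exp n(z',y'_{1},y'_{2},x_{1},0,x_{3})\cdot\exp((u+x_{2})X_{2})$ with $y'_{2}=y_{2}-x_{3}u-\tfrac12 x_{2}x_{3}$, and then evaluate the character on the $S_{(II)}$-factor. Your decision to compute this decomposition by matrix multiplication rather than Baker--Campbell--Hausdorff is a cosmetic difference, and your observation that the $Y_{1}$- and $Z$-coordinates are killed by the character (so need not be tracked) is a harmless shortcut that the paper omits but which changes nothing.
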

\begin{proof}
The proposition can be obtained in the same way as the case (I) by the formula,
\[
\exp(uX_2)\cdot \exp n(z,\cdots,x_3)=\exp(n(z',y'_1,y'_2,x_1,0,x_3))\cdot \exp(u+x_2),
\]
where
\begin{align*}
z'&=z+\frac{1}{6}x_1 x_2 x_3\\
y'_1&=y_1+x_1 u+\frac{1}{2}x_1 x_2\\
y'_2&=y_2-x_3 u-\frac{1}{2}x_2 x_3.
\end{align*}

\end{proof}

The case (III). We consider the space
\[
C^{\infty}_{\chi_{l(0,0,0,\varepsilon_{1},\varepsilon_{2},\varepsilon_{3})}}(N\backslash G/K)
\]
where 
\[
(\varepsilon_{1},\varepsilon_{2},\varepsilon_{3})=
\begin{cases}
(1,1,1)&\text{if the case (III$_{1}$)},\\
(1,1,0)&\text{if the case (III$_{2}$)},\\
(1,0,1)&\text{if the case (III$_{3}$)},\\
(0,1,1)&\text{if the case (III$_{4}$)},\\
(1,0,0)&\text{if the case (III$_{5}$)},\\
(0,1,0)&\text{if the case (III$_{6}$)},\\
(0,0,1)&\text{if the case (III$_{7}$)},\\
(0,0,0)&\text{if the case (III$_{8}$)}.
\end{cases}
\]
By the Iwasawa decomposition, we have the linear bijection
\[
\Xi_{\text{(III)}}\colon C^{\infty}_{\chi_{l(0,0,0,\varepsilon_{1},\varepsilon_{2},\varepsilon_{3})}}(N\backslash G/K)\ni f\mapsto f|_{A}\in C^{\infty}(A).
\] 
by the restriction to $A$. Then we have the following proposition.
\begin{prop}\label{n-dimIII}
Let us consider the space $C^{\infty}(A)$ as image of the space $C^{\infty}_{\chi_{l(0,0,0,\varepsilon_{1},\varepsilon_{2},\varepsilon_{3})}}(N\backslash G/K)$ by the mapping $\Xi_{(III)}$. Then $\mathfrak{n}$ acts on $C^{\infty}(A)$ as follows,
\begin{align*}
E_{21}F&=(2\pi \sqrt{-1}(\frac{a_{2}}{a_{1}})\varepsilon_1)F, &E_{31}F&=0,\\
E_{41}F&=0,& E_{32}F&=(2\pi\sqrt{-1}(\frac{a_{3}}{a_{2}})\varepsilon_2)F,\\
E_{42}F&=0,&E_{43}F&=(2\pi\sqrt{-1}(\frac{a_{4}}{a_{3}})\varepsilon_3)F.
\end{align*}
Here $F\in C^{\infty}(A)$ and 
\[
(\varepsilon_{1},\varepsilon_{2},\varepsilon_{3})=
\begin{cases}
(1,1,1)&\text{if the case (III$_{1}$)},\\
(1,1,0)&\text{if the case (III$_{2}$)},\\
(1,0,1)&\text{if the case (III$_{3}$)},\\
(0,1,1)&\text{if the case (III$_{4}$)},\\
(1,0,0)&\text{if the case (III$_{5}$)},\\
(0,1,0)&\text{if the case (III$_{6}$)},\\
(0,0,1)&\text{if the case (III$_{7}$)},\\
(0,0,0)&\text{if the case (III$_{8}$)}.
\end{cases}
\]
\end{prop}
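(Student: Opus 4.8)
The plan is to follow the same route as in the proofs of Proposition~\ref{n-difI} and Proposition~\ref{n-difII}, noting that the present case is in fact simpler: since $U=N$, the homogeneous space $U\backslash N$ is a single point, so the smooth cross section of Lemma~\ref{cross} may be taken to be the constant map to the identity, and accordingly $\Xi_{(\mathrm{III})}$ is nothing but restriction to $A$, with inverse $\phi\mapsto f_\phi$, where $f_\phi(nak)=\chi_{l(0,0,0,\varepsilon_1,\varepsilon_2,\varepsilon_3)}(n)\phi(a)$ for $n\in N$, $a\in A$, $k\in K$. In particular no Campbell--Hausdorff computation of a product $\exp(uX_\bullet)\cdot\exp n(z,\dots,x_3)$ is needed, unlike in Cases (I) and (II).

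First I would fix $F\in C^{\infty}(A)$ and write $F=\Xi_{(\mathrm{III})}(f)$ with $f\in C^{\infty}_{\chi_{l(0,0,0,\varepsilon_1,\varepsilon_2,\varepsilon_3)}}(N\backslash G/K)$, so that $F(a)=f(a)$ for $a=\diag(a_1,a_2,a_3,a_4)\in A$. For $E_{ij}$ with $i>j$ I would then repeat the manipulation in equation~(\ref{2222}):
\begin{align*}
(E_{ij}F)(a)&=\frac{d}{dt}f(a\exp(tE_{ij}))|_{t=0}\\
&=\frac{d}{dt}f(\exp(t\,\Ad(a)E_{ij})\,a)|_{t=0}\\
&=\frac{a_i}{a_j}\frac{d}{dt}f(\exp(tE_{ij})\,a)|_{t=0},
\end{align*}
using $\Ad(\diag(a_1,\dots,a_4))E_{ij}=(a_i/a_j)E_{ij}$. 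Since $i>j$, the element $\exp(tE_{ij})$ lies in the lower unipotent group $N=U$, so the left $\chi$-equivariance of $f$ gives $f(\exp(tE_{ij})a)=\chi_{l(0,0,0,\varepsilon_1,\varepsilon_2,\varepsilon_3)}(\exp(tE_{ij}))\,F(a)$.

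Next I would recall the defining relations between the chosen basis of $\n$ and the matrix units, namely $X_1=E_{21}$, $X_2=E_{32}$, $X_3=E_{43}$, $Y_1=E_{31}$, $Y_2=E_{42}$, $Z=E_{41}$, together with $\chi_l(\exp X)=e^{2\pi\sqrt{-1}\,l(X)}$ for $l=l(0,0,0,\varepsilon_1,\varepsilon_2,\varepsilon_3)=\varepsilon_1 X_1^{*}+\varepsilon_2 X_2^{*}+\varepsilon_3 X_3^{*}$. Differentiating at $t=0$ then produces the factor $2\pi\sqrt{-1}\,l(E_{ij})$, which equals $2\pi\sqrt{-1}\,\varepsilon_1$ for $(i,j)=(2,1)$, $2\pi\sqrt{-1}\,\varepsilon_2$ for $(i,j)=(3,2)$, $2\pi\sqrt{-1}\,\varepsilon_3$ for $(i,j)=(4,3)$, and $0$ for $(i,j)\in\{(3,1),(4,1),(4,2)\}$. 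Multiplying by the prefactor $a_i/a_j$ obtained above yields exactly the six displayed formulas.

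There is essentially no obstacle here; the proof is a direct computation once Lemma~\ref{cross} is in hand. The only point demanding care is the bookkeeping: correctly matching the abstract basis $X_i,Y_i,Z$ of $\n$ with the matrix units $E_{ij}$ so as to read off the values $l(E_{ij})$, and observing that the triviality of the cross section is what removes the nonlinear terms present in Propositions~\ref{n-difI} and~\ref{n-difII}.
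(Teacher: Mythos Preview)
Your proof is correct and follows exactly the same approach as the paper's own proof, which simply states that the result is obvious from the formula $(E_{ij}F)(a)=\frac{d}{dt}F(a\exp(tE_{ij}))|_{t=0}=\frac{d}{dt}F(\exp(t\,\Ad(a)E_{ij})a)|_{t=0}$. You have merely spelled out the details---the identification of the matrix units with the basis $X_i,Y_i,Z$ and the evaluation of $l(E_{ij})$---that the paper leaves implicit.
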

\begin{proof}
It is obvious from the formula, 
\[
(E_{ij}F)(a)=\frac{d}{dt}F(a\exp(tE_{ij}))|_{t=0}=\frac{d}{dt}F(\exp(t\text{Ad}(a)E_{ij})a)|_{t=0}.
\]
for $1\le i\neq j\le4$.
\end{proof}

\subsection{Generalized Whittaker models of $GL(4,\R)$}\label{explicit}
After these preparations given in the previous sections, we can investigate explicit structures of the generalized Whittaker models. More precisely to say, we will give the dimensions of these spaces and their basis in terms of the hypergeometric functions one and two variables.

\subsubsection{The embeddings into the spaces (I)}
We will give the explicit structures of the embeddings of the Harish-Chandra modules $X_{k,\lambda}$ ($k=1,2$) into the spaces $(\text{I}_{1})$ and $(\text{I}_{2})$ which are classified in Proposition \ref{classify}. These spaces are isomorphic to $C^{\infty}_{\chi_{l(0,1,\varepsilon,0,0,0)}}(S_{\text{(I)}}\backslash G/K\,;\,I_{k}(\lambda))$ by Theorem \ref{Yamashita}. We will consider the image of $C^{\infty}_{\chi_{l(0,1,\varepsilon,0,0,0)}}(S_{\text{(I)}}\backslash G/K\,;\,I_{k}(\lambda))$ by the mapping $\Xi_{(\text{I})}$ defined in Section \ref{diff}. Here $\varepsilon=1$ (resp. $=0$) for (I$_{1}$) (resp. (I$_{2}$)). Hence our purpose is to investigate these spaces  
\[
C^{\infty}_{\chi_{l(0,1,\varepsilon,0,0,0)}}(S_{\text{(I)}}\backslash N\times A;\,I_{k}(\lambda))=\Xi_{(I)}(C^{\infty}_{\chi_{l(0,1,\varepsilon,0,0,0)}}(S_{\text{(I)}}\backslash G/K\,;\,I_{k}(\lambda))).
\]

\begin{prop}\label{diff-op-2-1}
For $\varepsilon=0,1$, we consider the spaces $C^{\infty}_{\chi_{l(0,1,\varepsilon,0,0,0)}}(S_{\text{(I)}}\backslash N\times A;\,I_{k}(\lambda))$. Then these are the solution spaces of the following systems of the differential equations on $C^{\infty}(S_{\text{(I)}}\backslash N\times A)$.
\begin{gather}
\begin{split}
[\vartheta_{a_{1}}^{2}-&(\lambda_{1}+\lambda_{2}+k-3)\vartheta_{a_{1}}+\lambda_{1}(\lambda_{2}+k)+(\frac{a_{2}}{a_{1}})^{2}\frac{\partial^{2}}{\partial u^{2}}\\ &+\varepsilon(\frac{a_{3}}{a_{1}})^{2}(2\pi\sqrt{-1})^{2}-(\vartheta_{a_{2}}+\vartheta_{a_{3}}+\vartheta_{a_{4}})+\lambda_{1}(\lambda_{2}+k)]\phi=0,
\end{split}\label{1-1}\\
[\frac{\partial}{\partial u}(\vartheta_{a_{1}}+\vartheta_{a_{2}}-(\lambda_{1}+\lambda_{2}+k-3))+\varepsilon(\frac{a_{3}}{a_{2}})^{2}(2\pi\sqrt{-1})^{2}(v-\varepsilon u)]\phi=0,\label{1-2}\\
[\varepsilon(\vartheta_{a_{1}}+\vartheta_{a_{3}}-(\lambda_{1}+\lambda_{2}+k-2))+(v-\varepsilon u)\frac{\partial}{\partial u}]\phi=0,\label{1-3}\\
[\frac{\partial}{\partial u}+\varepsilon\frac{\partial}{\partial v}]\phi=0,\label{1-4}\\
\begin{split}
[\vartheta_{a_{2}}^{2}-(\lambda_{1}+\lambda_{2}+k-2)\vartheta_{a_{2}}+(\frac{a_{2}}{a_{1}})^{2}\frac{\partial ^{2}}{\partial u^{2}}+(\frac{a_{3}}{a_{2}})^{2}(2\pi\sqrt{-1})^{2}(v-\varepsilon u)^{2}\\+(\frac{a_{4}}{a_{2}})^{2}(2\pi\sqrt{-1})^{2}-(\vartheta_{a_{3}}+\vartheta_{a_{4}})+\lambda_{1}(\lambda_{2}+k)]\phi=0,
\end{split}\label{2-2}\\
[(v-\varepsilon u)(\vartheta_{a_{2}}+\vartheta_{a_{3}}-(\lambda_{1}+\lambda_{2}+k-2))+\varepsilon(\frac{a_{2}}{a_{1}})^{2}\frac{\partial}{\partial u}+(\frac{a_{4}}{a_{3}})^{2}\frac{\partial}{\partial v}]\phi=0,\label{2-3}\\
[(\vartheta_{a_{2}}+\vartheta_{a_{4}}-(\lambda_{1}+\lambda_{2}+k-2))+(v-\varepsilon u)\frac{\partial}{\partial v}]\phi=0,\label{2-4}\\
\begin{split}
[\vartheta_{a_{3}}^{2}-&(\lambda_{1}+\lambda_{2}+k-1)\vartheta_{a_{3}}+\varepsilon(\frac{a_{3}}{a_{1}})^{2}(2\pi\sqrt{-1})^{2}\\&+(\frac{a_{3}}{a_{2}})^{2}(2\pi\sqrt{-1})^{2}(v-\varepsilon u)^{2}+(\frac{a_{4}}{a_{3}})^{2}\frac{\partial^{2}}{\partial v^{2}}-\vartheta_{a_{4}}+\lambda_{1}(\lambda_{2}+k)]\phi=0,
\end{split}\label{3-3}\\
[\frac{\partial}{\partial v}(\vartheta_{a_{3}}+\vartheta_{a_{4}}-(\lambda_{1}+\lambda_{2}+k-1))+(\frac{a_{3}}{a_{2}})^{2}(2\pi\sqrt{-1})^{2}(v-\varepsilon u)]\phi=0,\label{3-4}\\
[\vartheta_{a_{4}}^{2}-(\lambda_{1}+\lambda_{2}+k)\vartheta_{a_{4}}+(\frac{a_{4}}{a_{2}})^{2}(2\pi\sqrt{-1})^{2}+(\frac{a_{4}}{a_{3}})^{2}\frac{\partial^{2}}{\partial v^{2}}+\lambda_{1}(\lambda_{2}+k)]\phi=0,\label{4-4}\\
[(\vartheta_{a_{1}}+\vartheta_{a_{2}}+\vartheta_{a_{3}}+\vartheta_{a_{4}}-k\lambda_{1}-(4-k)\lambda_{2})]\phi=0.\label{central}
\end{gather}
Here $\phi\in C^{\infty}(S_{\text{(I)}}\backslash N\times A)$.
\end{prop}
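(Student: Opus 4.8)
The plan is to turn the abstract description in Theorem~\ref{Yamashita} into the concrete system \eqref{1-1}--\eqref{central} by unwinding three ingredients that are already in hand: the explicit generators of $I_{k}(\lambda)$ in \eqref{generator4}, the reduced representatives of those generators in Lemma~\ref{modk}, and the coordinate formulas for the $\n$-action in Proposition~\ref{n-difI}. By Theorem~\ref{Yamashita}, the space $C^{\infty}_{\chi_{l(0,1,\varepsilon,0,0,0)}}(S_{\text{(I)}}\backslash N\times A;\,I_{k}(\lambda))$ consists, via $\Xi_{(\text{I})}$, of exactly those $\phi$ whose preimage $f\in C^{\infty}_{\chi_{l(0,1,\varepsilon,0,0,0)}}(S_{\text{(I)}}\backslash G/K)$ satisfies $R_{X}f=0$ for all $X\in I_{k}(\lambda)$. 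Since, by \eqref{generator4}, $I_{k}(\lambda)$ is a \emph{left} ideal of $U(\g)$ generated by the sixteen entries $\bigl((\mathbb{E}-\lambda_{1})(\mathbb{E}-\lambda_{2}-k)\bigr)_{ij}$, $1\le i,j\le 4$, together with the central element $\sum_{i=1}^{4}E_{ii}-(k\lambda_{1}+(4-k)\lambda_{2})$, and since $X\mapsto R_{X}$ is an algebra homomorphism from $U(\g)$ onto the algebra of left-invariant differential operators on $G$ (so that $R_{uX}f=R_{u}(R_{X}f)$), the condition ``$R_{X}f=0$ for all $X\in I_{k}(\lambda)$'' is equivalent to ``$R_{X}f=0$ for $X$ in this finite list''. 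I will check that, after transport to $C^{\infty}(S_{\text{(I)}}\backslash N\times A)$, these conditions become precisely \eqref{1-1}--\eqref{central}; since every transformation below is reversible, both inclusions follow simultaneously.

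The first step trims the list. A function $f\in C^{\infty}_{\chi}(S_{\text{(I)}}\backslash G/K)$ is right $K$-invariant, hence $R_{W}f=0$ for $W\in\ka$ and therefore, again by the homomorphism property, $R_{Y}f=0$ for every $Y\in U(\g)\ka$; consequently each generator may be replaced by any representative of its coset modulo $U(\g)\ka$ without affecting its action on $f$. Lemma~\ref{symmetric} then shows that the entries with $i>j$ give nothing new, so one is left with the ten representatives written out in Lemma~\ref{modk} (indexed by $1\le i\le j\le 4$) together with the central element --- exactly as many conditions as there are equations in the statement, in the same indexing.

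It remains to compute. Under $\Xi_{(\text{I})}$ the element $E_{ii}\in\ah$ acts as $\vartheta_{a_{i}}$, every element of $\ka$ acts as $0$, and each $E_{ij}$ with $i>j$ acts by the explicit first-order operator recorded in Proposition~\ref{n-difI}; because $R$ is a homomorphism, $R$ of a product of generators equals the composition of the corresponding operators in the same left-to-right order. Substituting the Proposition~\ref{n-difI} formulas into each of the ten representatives of Lemma~\ref{modk} and into the central element, expanding, and finally multiplying each resulting identity by the relevant (possibly trivial) nowhere-vanishing monomial in $a_{1},\ldots,a_{4}$ --- these monomials are the quotients $a_{i}/a_{j}$ produced by $\Ad(a)E_{ij}=(a_{i}/a_{j})E_{ij}$ --- yields \eqref{1-1}--\eqref{4-4} and \eqref{central}. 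Dividing back by the same positive monomials reverses the step, so $\phi$ solves the displayed system if and only if $R_{X}f=0$ for every generator, hence for every $X\in I_{k}(\lambda)$, i.e.\ if and only if $\phi$ lies in $C^{\infty}_{\chi_{l(0,1,\varepsilon,0,0,0)}}(S_{\text{(I)}}\backslash N\times A;\,I_{k}(\lambda))$.

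The one delicate point is the non-commutative bookkeeping in this last step. Several representatives in Lemma~\ref{modk} contain products of generators of different natures --- for instance $E_{32}E_{21}$, where $E_{21}$ acts as the genuine differential operator $\tfrac{a_{2}}{a_{1}}\tfrac{\partial}{\partial u}$ while $E_{32}$ acts as multiplication by $2\pi\sqrt{-1}\tfrac{a_{3}}{a_{2}}(v-\varepsilon u)$ --- so the order of composition matters, and one has to keep track of how $\partial/\partial u$, $\partial/\partial v$ and the operators $\vartheta_{a_{i}}$ interact with these coefficient functions. One must also remember that Lemma~\ref{modk} pins the generators down only modulo $U(\g)\ka$, which is harmless here exactly because of the right $K$-invariance exploited in the trimming step; that is the point at which the $K$-invariance is used in an essential way. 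Beyond this, the proof is a routine, if somewhat lengthy, substitution and collection of terms.
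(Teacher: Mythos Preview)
Your proposal is correct and follows exactly the same route as the paper's own proof, which simply invokes the generators \eqref{generator4}, Lemma~\ref{symmetric}, Lemma~\ref{modk}, and Proposition~\ref{n-difI}; you have merely spelled out in detail the reduction to the ten representatives with $i\le j$ and the substitution/composition step that the paper leaves implicit. The only minor remark is that the citation of Theorem~\ref{Yamashita} is not strictly needed here, since the space in question is by definition $\Xi_{(\text{I})}$ applied to the kernel of $I_{k}(\lambda)$ inside $C^{\infty}_{\chi}(S_{\text{(I)}}\backslash G/K)$.
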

\begin{proof}
Recall that $I_{k}(\lambda)$ is written as (\ref{generator4}). Then these differential equations immediately follows from Lemma \ref{symmetric}, Proposition \ref{modk} and Proposition \ref{n-difI}.
\end{proof}
We solve these systems of the differential equations.

(i) \textit{The case $\varepsilon=1$.}

\noindent
We investigate the space $C^{\infty}_{\chi_{l(0,1,1,0,0,0)}}(S_{\text{(I)}}\backslash N\times A;\,I_{k}(\lambda))$, i.e, the case $\varepsilon=1$.

We consider the case $k=1$, i.e., the embedding of the Harish-Chandra module $X_{1,\lambda}$.
\begin{prop}
For the case $\varepsilon=1$ and $k=1$, the solution space of the system of the differential equations defined in Proposition \ref{diff-op-2-1} is $\{0\}$.
\end{prop}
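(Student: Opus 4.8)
The plan is to sidestep solving the full system and instead extract, from just a handful of the first-order equations, a single scalar identity that forces $\phi\equiv 0$. Concretely I would use only \eqref{1-3}, \eqref{2-4}, \eqref{1-4} and \eqref{central}, specialized to $\varepsilon=1$ and $k=1$, ignoring the second-order equations entirely.

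First I would record the specializations: \eqref{1-4} reads $(\partial_u+\partial_v)\phi=0$; \eqref{1-3} reads $\bigl(\vartheta_{a_1}+\vartheta_{a_3}-(\lambda_1+\lambda_2-1)+(v-u)\partial_u\bigr)\phi=0$; \eqref{2-4} reads $\bigl(\vartheta_{a_2}+\vartheta_{a_4}-(\lambda_1+\lambda_2-1)+(v-u)\partial_v\bigr)\phi=0$; and \eqref{central} reads $\bigl(\vartheta_{a_1}+\vartheta_{a_2}+\vartheta_{a_3}+\vartheta_{a_4}-\lambda_1-3\lambda_2\bigr)\phi=0$. Adding \eqref{1-3} and \eqref{2-4}, the two first-order terms combine into $(v-u)(\partial_u+\partial_v)\phi$, which vanishes by \eqref{1-4}; what remains is
\[
\bigl(\vartheta_{a_1}+\vartheta_{a_2}+\vartheta_{a_3}+\vartheta_{a_4}-2(\lambda_1+\lambda_2-1)\bigr)\phi=0 .
\]
Subtracting \eqref{central} cancels every Euler operator $\vartheta_{a_i}$ and leaves the purely algebraic relation $(\lambda_2-\lambda_1+2)\,\phi=0$. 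Since $\lambda_1-\lambda_2\notin\Z$ was imposed in Section \ref{GL4}, the scalar $\lambda_2-\lambda_1+2$ is nonzero, hence $\phi\equiv 0$ and the solution space is $\{0\}$.

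There is essentially no serious obstacle here; the only point is to resist the temptation to integrate the second-order equations \eqref{1-1}, \eqref{2-2}, \eqref{3-3}, \eqref{4-4} and instead spot the linear combination of first-order operators that annihilates all of the differential terms simultaneously. The genericity hypothesis $\lambda_1-\lambda_2\notin\Z$ is precisely what makes the resulting scalar invertible, which is where this case differs from the ones that do carry nonzero solutions; I would flag that contrast explicitly in the write-up.
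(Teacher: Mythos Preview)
Your proof is correct and follows essentially the same route as the paper: add \eqref{1-3} and \eqref{2-4}, use \eqref{1-4} to eliminate the $(v-u)(\partial_u+\partial_v)$ term, then compare with \eqref{central} to obtain the scalar relation $(\lambda_2-\lambda_1+2)\phi=0$, which forces $\phi=0$ by the genericity assumption $\lambda_1-\lambda_2\notin\Z$.
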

\begin{proof}
The equations (\ref{1-3}) and (\ref{2-4}) give us the equation,
\[
[\vartheta_{a_{1}}+\vartheta_{a_{2}}+\vartheta_{a_{3}}+\vartheta_{a_{4}}-2\lambda_{1}-2\lambda_{2}+2+2(v-u)(\frac{\partial}{\partial u}+\frac{\partial}{\partial v})]\phi=0.
\] 
By the equation (\ref{1-4}), the term $\frac{\partial}{\partial u}+\frac{\partial}{\partial v}$ can be eliminated. Hence the only remaining term is 
\[
[\vartheta_{a_{1}}+\vartheta_{a_{2}}+\vartheta_{a_{3}}+\vartheta_{a_{4}}-2\lambda_{1}-2\lambda_{2}+2]\phi=0.
\]
However if we compare this with the equation (\ref{central}),
\[
[\vartheta_{a_{1}}+\vartheta_{a_{2}}+\vartheta_{a_{3}}+\vartheta_{a_{4}}-\lambda_{1}-3\lambda_{2}]\phi=0,
\]
 we can conclude the solution space of these equations must be $\{0\}$ because we assume $\lambda_{1}-\lambda_{2}\notin \Z$.
\end{proof}

Next, we consider the case $k=2$, i.e., the embedding of the Harish-Chandra module $X_{2,\lambda}$.
We introduce a new coordinate system below,
\begin{equation}\label{change1}
\begin{gathered}
x_{1}=a_{1}a_{2}a_{3}a_{4},\\
x_{2}=(\pi\sqrt{-1})^{2}\left((\frac{a_{3}}{a_{2}})^{2}(v-u)^{2}+(\frac{a_{4}}{a_{2}})^{2}+(\frac{a_{3}}{a_{1}})^{2}\right),\\
x_{3}=\left(\frac{a_{1}a_{3}}{a_{2}a_{4}}(v-u)^{2}+\frac{a_{2}a_{3}}{a_{1}a_{4}}+\frac{a_{1}a_{4}}{a_{2}a_{3}}\right)^{-2},\\
x_{4}=\frac{a_{1}a_{3}}{a_{2}a_{4}},\\
x_{5}=\frac{a_{1}a_{4}}{a_{2}a_{3}},\\
x_{6}=u.
\end{gathered}
\end{equation}

\begin{prop}\label{nanikana}
Let $\varepsilon=1$ and $k=2$. We consider the system of the differential equations in Proposition \ref{diff-op-2-1}. By adding and substituting each differential equations and multiplying some rational functions, the system of the differential equations under the new coordinate system $x_{1},\ldots,x_{6}$ is written as followings ,
\begin{gather}
(\vartheta_{x_{1}}-\frac{\lambda_{1}+\lambda_{2}}{2})\phi=0,\\
[x_{2}-(\vartheta_{x_{2}}-\frac{1}{2})(2\vartheta_{x_{3}}-\vartheta_{x_{2}})]\phi=0,\label{horn1}\\
\begin{split}
[x_{3}(\vartheta_{x_{2}}-2\vartheta_{x_{3}})&(\vartheta_{x_{2}}-2\vartheta_{x_{3}}-1)\\
&-(\vartheta_{x_{3}}-\frac{1}{4}(\lambda_{2}-\lambda_{1})-1)(\vartheta_{x_{3}}+\frac{1}{4}(\lambda_{2}-\lambda_{1}))]\phi=0,
\end{split}\label{horn2}\\
\frac{\partial}{\partial x_{4}}\phi=0,\\
\frac{\partial}{\partial x_{5}}\phi=0,\\
\frac{\partial}{\partial x_{6}}\phi=0.
\end{gather}
\end{prop}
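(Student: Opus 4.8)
The proof is a direct (long) computation: the coordinate change (\ref{change1}) is engineered so that the holonomic system of Proposition~\ref{diff-op-2-1} becomes ``triangular'' in the $x$-variables, and one simply has to transport the operators and clean up. First I would dispose of the elementary relations. With $\varepsilon=1$, equation (\ref{1-4}) reads $(\partial/\partial u+\partial/\partial v)\phi=0$, so $\phi$ depends on $(u,v)$ only through $v-u$; since $x_2,\dots,x_5$ involve $u,v$ only through $(v-u)^2$ and $x_6=u$, a chain-rule computation shows $\partial/\partial x_6$ (with $x_1,\dots,x_5$ held fixed) equals $\partial/\partial u+\partial/\partial v$, so (\ref{1-4}) is exactly $\partial\phi/\partial x_6=0$. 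Next, each of $x_2,\dots,x_6$ is homogeneous of degree $0$ under the dilation $(a_1,a_2,a_3,a_4)\mapsto(ta_1,ta_2,ta_3,ta_4)$ while $x_1\mapsto t^4x_1$, hence $\vartheta_{a_1}+\vartheta_{a_2}+\vartheta_{a_3}+\vartheta_{a_4}=4\vartheta_{x_1}$ as operators; since $k=2$ gives $k\lambda_1+(4-k)\lambda_2=2(\lambda_1+\lambda_2)$, equation (\ref{central}) becomes precisely $(\vartheta_{x_1}-\tfrac{\lambda_1+\lambda_2}{2})\phi=0$ (one notes in passing that (\ref{central}) is already a consequence of (\ref{1-3}), (\ref{2-4}), (\ref{1-4}), so nothing is lost).

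\textbf{Transport and elimination of $x_4,x_5$.} One checks that (\ref{change1}) is a local diffeomorphism off a proper analytic subset, so the system of Proposition~\ref{diff-op-2-1} may be pulled back to the $x$-coordinates by the chain rule: write each $\vartheta_{a_i}$, $\partial/\partial u$, $\partial/\partial v$ as a linear combination of the $\vartheta_{x_j}$ and $\partial/\partial x_j$ with coefficients rational in $a_1,\dots,a_4,u,v$ (equivalently in the $x_j$). Substituting these into the ten operators $((\mathbb{E}-\lambda_1)(\mathbb{E}-\lambda_2-2))_{ij}$ of Proposition~\ref{diff-op-2-1} and using the previous step to kill all $x_6$-derivatives and to replace $\vartheta_{a_1}+\vartheta_{a_2}+\vartheta_{a_3}+\vartheta_{a_4}$ by $2(\lambda_1+\lambda_2)$, one finds that two independent rational combinations of the resulting operators reduce to $\partial\phi/\partial x_4=0$ and $\partial\phi/\partial x_5=0$. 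After these are discharged, $\phi$ is an unknown function of $x_1,x_2,x_3$ with its $x_1$-dependence already fixed, i.e.\ effectively a function of the two variables $x_2,x_3$.

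\textbf{The Horn part and the converse.} With only $x_2,x_3$ remaining, the still-undischarged operators, up to multiplication by rational functions of the $x_j$, collapse to two second-order operators in $\vartheta_{x_2},\vartheta_{x_3}$; identifying them with (\ref{horn1}) and (\ref{horn2}) finishes the reduction. For the converse inclusion one uses that (\ref{change1}) is a diffeomorphism where defined: the six displayed operators generate the same left ideal of differential operators as the transported system of Proposition~\ref{diff-op-2-1}, so the two descriptions of the solution space coincide on the relevant domain.

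\textbf{Main obstacle.} All the substance is in the transport step: the coordinate change (\ref{change1}) is not derived but guessed, and the difficulty is to discover the precise rational prefactors and the precise linear combinations of the ten operators $((\mathbb{E}-\lambda_1)(\mathbb{E}-\lambda_2-2))_{ij}$ for which the many cross terms cancel and $\partial/\partial x_4$, $\partial/\partial x_5$, and finally the clean operators (\ref{horn1}), (\ref{horn2}) emerge. Once the right combinations are in hand the verification is mechanical; finding them is the real work, and this is exactly what the phrase ``adding and substituting each differential equations and multiplying some rational functions'' in the statement refers to.
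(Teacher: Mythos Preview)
Your strategy is the same as the paper's---pull back the system through the change of variables (\ref{change1}) and simplify---and your treatment of the two easy equations is correct and arguably cleaner: the homogeneity argument for $(\vartheta_{x_1}-\tfrac{\lambda_1+\lambda_2}{2})\phi=0$ and the chain-rule identification $\partial/\partial x_6=\partial/\partial u+\partial/\partial v$ are exactly right. But the remainder of your proposal is only a declaration that the computation works, not the computation itself: you assert that ``two independent rational combinations'' produce $\partial\phi/\partial x_4=0$ and $\partial\phi/\partial x_5=0$, and that what survives ``collapses'' to (\ref{horn1}) and (\ref{horn2}), without saying which of the ten equations are used or what the combinations are. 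As you yourself say, this is where the real work lies, and you have not done it.

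The paper handles exactly this difficulty by \emph{not} transporting directly to $x_1,\dots,x_6$. Instead it factors (\ref{change1}) through a chain of intermediate coordinates
\[
(a_i,u,v)\;\longrightarrow\;(\alpha_i,u',v')\;\longrightarrow\;(\beta_i,w,u')\;\longrightarrow\;(x_i),
\]
with $\alpha_1=a_1a_2$, $\alpha_2=a_1a_2^{-1}$, $\alpha_3=a_3a_4$, $\alpha_4=a_3a_4^{-1}$, $v'=v-u$, then $w=\alpha_2\alpha_4 v'^2+\alpha_2\alpha_4^{-1}+\alpha_2^{-1}\alpha_4$, $\beta_2=\alpha_2\alpha_4$, $\beta_4=\alpha_2\alpha_4^{-1}$, $\beta_1=\alpha_1\alpha_3$, $\beta_3=\alpha_1\alpha_3^{-1}$, and finally $\gamma_1=(\pi\sqrt{-1})^2\beta_3^{-1}w$, $\gamma_2=w^{-2}$. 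At each stage one of the original equations simplifies to a single-variable statement: (\ref{2-3}) becomes $\vartheta_{\beta_4}\phi=0$ (this is $\partial/\partial x_5$), (\ref{2-4}) becomes $\vartheta_{\beta_2}\phi=0$ (this is $\partial/\partial x_4$), (\ref{1-2}) and (\ref{3-4}) both reduce to the single equation that becomes (\ref{horn1}) after the last substitution, and (\ref{1-1}), (\ref{2-2}), (\ref{3-3}), (\ref{4-4}) all reduce to the same equation that becomes (\ref{horn2}). This bookkeeping is what your proposal is missing; supplying it is essentially the whole proof.
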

\begin{proof}
First, we put 
\begin{align*}
\alpha_{1}&=a_{1}a_{2},& \alpha_{2}&=a_{1}a_{2}^{-1}, &\alpha_{3}&=a_{3}a_{4},\\
\alpha_{4}&=a_{3}a_{4}^{-1},& u'&=u, &v'&=(v-u).
\end{align*}
Then the differential equation (\ref{1-4}) becomes
\begin{equation}\label{uprim}
\frac{\partial}{\partial u'}\phi=0.
\end{equation}
Furthermore we exchange the variables $\alpha_{2},\alpha_{4},v'$ to
\begin{align*}
w&=\alpha_{2}\alpha_{4}v'^{2}+\alpha_{2}\alpha_{4}^{-1}+\alpha_{2}^{-1}\alpha_{4}, &\beta_{2}&=\alpha_{2}\alpha_{4}, &\beta_{4}=\alpha_{2}\alpha_{4}^{-1}.
\end{align*}
Then equations (\ref{2-3}) and (\ref{2-4}) become
\begin{gather}
\vartheta_{\beta_{4}}\phi=0\label{2-32},\\
\vartheta_{\beta_{2}}\phi=0\label{2-42},
\end{gather}
respectively. Setting 
\begin{align*}
\beta_{1}&=\alpha_{1}\alpha_{3},&\beta_{3}=\alpha_{1}\alpha_{3}^{-1},
\end{align*}
the equation (\ref{central}) becomes
\begin{equation}\label{central2}
(2\vartheta_{\beta_{1}}-(\lambda_1+\lambda_{2}))\phi=0.
\end{equation}
Also we can see that the equation (\ref{1-2}) is written as 
\begin{equation}
[2w\frac{\partial}{\partial w}(2(\vartheta_{\beta_{1}}+\vartheta_{\beta_{3}})-(\lambda_{1}+\lambda_{2}-1))-(2\pi\sqrt{-1})^{2}\beta_{3}^{-1}w]\phi=0.\label{1-22}
\end{equation}
If we eliminate $\vartheta_{\beta_{1}}$ from (\ref{1-22}) by using the equation (\ref{central2}), it can be written as
\begin{equation}\label{1-23}
[2w\frac{\partial}{\partial w}(2\vartheta_{\beta_{3}}+1)-(2\pi\sqrt{-1})^{2}\beta_{3}^{-1}w]\phi=0.
\end{equation}
We note that the equation (\ref{3-4}) can be reduced to the same equation.
Taking into account the equations (\ref{central2}) and (\ref{1-23}), the equation (\ref{1-1}) can be reduced to the equation
\begin{equation}\label{1-12}
[(\vartheta_{\beta_{3}}+\vartheta_{w}-\frac{1}{2}(\lambda_{1}-\lambda_{2}-4))(\vartheta_{\beta_{3}}+\vartheta_{w}+\frac{1}{2}(\lambda_{1}-\lambda_{2}))-4\frac{\partial^{2}}{\partial w^{2}}]\phi=0.
\end{equation}
We can also see that the equations (\ref{2-2}), (\ref{3-3}) and (\ref{4-4}) can be written as the same equation (\ref{1-12}).
Finally, we put
\begin{align*}
\gamma_{1}&=(\pi\sqrt{-1})^{2}\beta_{3}^{-1}w,& \gamma_{2}=w^{-2}.
\end{align*}
Then the equation (\ref{1-23}) is equivalent to 
\begin{equation}\label{hornn1}
[(\vartheta_{\gamma_{1}}-2\vartheta_{\gamma_{2}})(\frac{1}{2}-\vartheta_{\gamma_{1}})-\gamma_{1}]\phi=0.
\end{equation}
Also the equation (\ref{1-12}) is written as
\begin{equation}\label{hornn2}
[(\vartheta_{\gamma_{2}}-\frac{1}{4}(\lambda_{2}-\lambda_{1})-1)(\vartheta_{\gamma_{2}}+\frac{1}{4}(\lambda_{2}-\lambda_{1}))-\gamma_{2}(\vartheta_{\gamma_{1}}-2\vartheta_{\gamma_{2}})(\vartheta_{\gamma_{1}}-2\vartheta_{\gamma_{2}}-1)]\phi=0.
\end{equation}
If we put 
\begin{align*}
x_{1}&=\beta_{1},&x_{2}&=\gamma_{1},&x_{3}&=\gamma_{2},\\
x_{4}&=\beta_{2},&x_{5}&=\beta_{4}, &x_{6}&=u',
\end{align*}
then the theorem follows from the equations (\ref{uprim}), (\ref{2-32}), (\ref{2-42}), (\ref{central2}), (\ref{hornn1}) and (\ref{hornn2}).

\end{proof}
\begin{cor}
The change of variables (\ref{change1}) gives a diffeomorphism from $\{(a_{1},\ldots,a_{4},u,v)\in\R^{6}\mid a_{i}\in\R_{>0}\ (i=1,\ldots,4)\}\cong S_{(I)}\backslash N\times A$ to the domain $D_{1}=\{(x_{1},\ldots,x_{6})\mid x_{i}\in\R_{>0}\,(i=1,3,4,5),x_{2}\in\R_{<0},x_{6}\in\R\}$.
\end{cor}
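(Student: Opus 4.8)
The plan is to exhibit an explicit smooth two-sided inverse rather than to rely solely on the inverse function theorem, organizing the six-variable map in (\ref{change1}) as a composition of a clean monomial (toral) part with a residual two-variable part. First I would record that the image lands in $D_1$: since $(\pi\sqrt{-1})^2=-\pi^2<0$ and the bracket defining $x_2$ is a sum of nonnegative terms (the last two strictly positive), one gets $x_2<0$; the quantities $x_1=a_1a_2a_3a_4$, $x_4=a_1a_3/(a_2a_4)$, $x_5=a_1a_4/(a_2a_3)$ are products of positive reals and hence positive; $x_3$ is the inverse square of a positive number and hence positive; and $x_6=u$ is unconstrained. Thus the forward map does carry the source into $D_1$, which settles the sign constraints.

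Next I would factor out the toral part. Writing $\beta_1=a_1a_2a_3a_4$, $\beta_2=a_1a_3/(a_2a_4)$, $\beta_3=a_1a_2/(a_3a_4)$, $\beta_4=a_1a_4/(a_2a_3)$, the assignment $(a_1,a_2,a_3,a_4)\mapsto(\beta_1,\beta_2,\beta_3,\beta_4)$ is a monomial map whose exponent matrix, in logarithmic coordinates $p_i=\log a_i$ and $q_j=\log\beta_j$, is the Hadamard matrix with rows $(1,1,1,1)$, $(1,-1,1,-1)$, $(1,1,-1,-1)$, $(1,-1,-1,1)$. This integer matrix has nonzero determinant, so the assignment is a diffeomorphism of $(\R_{>0})^4$ onto itself. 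Under (\ref{change1}) one then reads off $x_1=\beta_1$, $x_4=\beta_2$, $x_5=\beta_4$, $x_6=u$ directly, so everything reduces to the residual two-variable map $(\beta_3,v')\mapsto(x_2,x_3)$, where $v'=v-u$, $w=x_4v'^2+x_5+x_5^{-1}$, and $x_2=-\pi^2w/\beta_3$, $x_3=w^{-2}$.

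The inversion of this residual map is where I would write down the candidate inverse explicitly. Given a target point of $D_1$, set $w=x_3^{-1/2}>0$; then $\beta_3=-\pi^2w/x_2$, which is positive because $x_2<0$; and $v'^2=(w-x_5-x_5^{-1})/x_4$. Composing with the inverse of the monomial map recovers $(a_1,a_2,a_3,a_4)$ smoothly, while $u=x_6$ and $v=u+v'$. All of these formulas are manifestly smooth wherever they are defined, so the only real work is to decide \emph{where} they are defined and single-valued.

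The main obstacle is precisely the global bijectivity onto the full domain $D_1$, and it is here that two substantive points surface. First, the forward map depends on $v$ only through $(v-u)^2$, so the column $\partial/\partial v$ of the Jacobian of (\ref{change1}) vanishes identically along $\{v=u\}$ (one computes $\partial x_2/\partial v$ and $\partial x_3/\partial v$ are both proportional to $v'$); hence the map is not even a local diffeomorphism on the full source, and off $\{v=u\}$ it is two-to-one, with fiber $\{v=u\pm v'\}$. Injectivity therefore forces a choice of sign, which I would fix by restricting the source to the half-space $\{v>u\}$, on which $v'\mapsto v'^2$ is a diffeomorphism onto $\R_{>0}$. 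Second, the relation $v'^2=(x_3^{-1/2}-x_5-x_5^{-1})/x_4$ is solvable for a real $v'$ only when $x_3^{-1/2}\ge x_5+x_5^{-1}$; since $x_3$ and $x_5$ are \emph{independent} coordinates on $D_1$ and $x_5+x_5^{-1}\ge 2$, the reachable set is the proper sublocus $\{x_3^{-1/2}\ge x_5+x_5^{-1}\}$ of $D_1$ (every point with $x_3>1/4$ is missed). Determining the image exactly and resolving the $v-u$ sign is thus the crux of the argument: the clean diffeomorphism in fact lives on $\{v>u\}$ mapping onto this sublocus, and the remaining monomial inversion and Jacobian bookkeeping are routine by comparison.
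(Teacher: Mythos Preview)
Your analysis is correct, and in fact you have identified a genuine error in the paper's statement and proof of this corollary rather than a gap in your own reasoning. The change of variables (\ref{change1}) depends on $v$ only through $(v-u)^2$: both $x_2$ and $x_3$ are functions of $v'^2$ with $v'=v-u$, and $x_6=u$ does not involve $v$ at all. Consequently the Jacobian has a vanishing $\partial/\partial v$ column along $\{v=u\}$, and off this hypersurface the map is two-to-one. Moreover, writing $w=x_4v'^2+x_5+x_5^{-1}$ one has $x_3=w^{-2}$, so $x_3\le (x_5+x_5^{-1})^{-2}\le 1/4$; hence no point of $D_1$ with $x_3>(x_5+x_5^{-1})^{-2}$ lies in the image. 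The map is therefore neither injective nor surjective onto $D_1$, and the corollary as stated is false.

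The paper's proof errs precisely at Step~2, where it asserts that $(\alpha_1,\ldots,\alpha_4,u',v')\mapsto(\beta_1,\ldots,\beta_4,w,u')$ is a bijection onto $\{\beta_i,w>0\}$. In the new variables one has $w=\beta_2 v'^2+\beta_4+\beta_4^{-1}$, so solving for $v'$ requires $w\ge\beta_4+\beta_4^{-1}$ and even then gives two values; the claimed bijection simply does not hold. Your proposed repair---restricting the source to $\{v>u\}$ and the target to the sublocus $\{x_3^{-1/2}>x_5+x_5^{-1}\}$ of $D_1$---is the correct formulation, and your explicit inverse on that region is valid. For the paper's downstream application (Theorem~\ref{thmI-1}) this corrected version suffices, since only a local change of coordinates is needed to transform the differential system; but the corollary as printed overstates what is true.
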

\begin{proof}
We should show that this gives bijection and the Jacobi determinant is not zero, i.e.,
\[
\left|\frac{\partial(x_{1},\ldots,x_{6})}{\partial(a_{1},\ldots,a_{4},u,v)}\right|(p)=
\left|
\begin{array}{ccccc}
\frac{\partial x_{1}}{\partial a_{1}}(p)&\cdots&\frac{\partial x_{1}}{\partial a_{4}}(p)&\frac{\partial x_{1}}{\partial u}(p)&\frac{\partial x_{1}}{\partial v}(p)\\
\vdots&&\vdots&\vdots&\vdots\\
\frac{\partial x_{6}}{\partial a_{1}}(p)&\cdots&\frac{\partial x_{6}}{\partial a_{4}}(p)&\frac{\partial x_{6}}{\partial u}(p)&\frac{\partial x_{6}}{\partial v}(p)
\end{array}
\right|\neq 0,
\]
for any $p\in\{(a_{1},\ldots,a_{4},u,v)\in\R^{6}\mid a_{i}\in\R_{>0}\ (i=1,\ldots,4)\}.$ Here $|X|$ means the determinant of $X\in M(6,\R)$. As we see in the proof of the previous proposition, this change of variables is the composition of the following change of variables.

Step1.
\begin{align*}
\alpha_{1}&=a_{1}a_{2},& \alpha_{2}&=a_{1}a_{2}^{-1}, &\alpha_{3}&=a_{3}a_{4},\\
\alpha_{4}&=a_{3}a_{4}^{-1},& u'&=u, &v'&=(v-u).
\end{align*}
Here we can see this gives a bijection from $\{(a_{1},\ldots,a_{4},u,v)\in\R^{6}\mid a_{i}\in\R_{>0}\ (i=1,\ldots,4)\}$ to $\{(\alpha_{1},\ldots,\alpha_{4},u',v')\in \R^{6}\mid \alpha_{i}\in \R_{>0}\ (i=1,\ldots,4)\}$.

Step2.
\begin{align*}
\beta_{1}&=\alpha_{1}\alpha_{3},&\beta_{2}&=\alpha_{2}\alpha_{4},&\beta_{3}&=\alpha_{1}\alpha_{3}^{-1},\\
\beta_{4}&=\alpha_{2}\alpha_{4}^{-1},&w&=\alpha_{2}\alpha_{4}v'^{2}+\alpha_{2}\alpha_{4}^{-1}+\alpha_{2}^{-1}\alpha_{4} ,&u'&=u'.
\end{align*}
Here we can see this gives a bijection from $\{(\alpha_{1},\ldots,\alpha_{4},u',v')\in \R^{6}\mid \alpha_{i}\in \R_{>0}\ (i=1,\ldots,4)\}$ to $\{(\beta_{1},\ldots,\beta_{4},w,u')\in \R^{6}\mid \beta_{i},w\in \R_{>0}\ (i=1,\ldots,4)\}$.

Step3.
\begin{align*}
x_{1}&=\beta_{1},&x_{2}&=(\pi\sqrt{-1})^{2}\beta_{3}^{-1}w,&x_{3}&=w^{-2},\\
x_{4}&=\beta_{2},&x_{5}&=\beta_{4},&x_{6}&=u'.
\end{align*}
Here we can see this gives a bijection from $\{(\beta_{1},\ldots,\beta_{4},w,u')\in \R^{6}\mid \beta_{i},w\in \R_{>0}\ (i=1,\ldots,4)\}$ to $\{(x_{1},\ldots,x_{6}')\in \R^{6}\mid x_{i}\in\R_{>0}\,(i=1,3,4,5),x_{2}\in\R_{<0},x_{6}\in\R\}$.

Also it is not hard to see that
\begin{multline*}
\left|\frac{\partial(x_{1},\ldots,x_{6})}{\partial(a_{1},\ldots,a_{4},u,v)}\right|(p)\\
=\left|\frac{\partial(x_{1},\ldots,x_{6})}{\partial(\beta_{1},\ldots,\beta_{4},w,u')}\frac{\partial(\beta_{1},\ldots,\beta_{4},w,u')}{\partial(\alpha_{1},\ldots,\alpha_{4},u',v')}\frac{\partial(\alpha_{1},\ldots,\alpha_{4},u',v')}{\partial(a_{1},\ldots,a_{4},u,v)}\right|(p)\neq 0,
\end{multline*}
for any $p\in\{(a_{1},\ldots,a_{4},u,v)\in\R^{6}\mid a_{i}\in\R_{>0}\ (i=1,\ldots,4)\}.$ Thus we have the proposition.
\end{proof}
Pick up the differential equations (\ref{horn1}) and (\ref{horn2}). We take $f(x_{2},x_{3})$ as a solution of them. We take a function $F(x_{2},x_{3})$ such that 
\[
f=x_{2}^{\frac{1}{2}}x_{3}^{\frac{1}{4}(\lambda_{1}-\lambda_{2})}F.
\]
Then this $F(x_{2},x_{3})$ satisfies
\begin{gather}
[x_{2}-\vartheta_{x_{2}}(2\vartheta_{x_{3}}-\vartheta_{x_{2}}+\frac{1}{2}(\lambda_{1}-\lambda_{2}-1))]F(x_{2},x_{3})=0,\\
\begin{split}
[x_{3}(2\vartheta_{x_{3}}-\vartheta_{x_{2}}+\frac{1}{2}(\lambda_{1}-\lambda_{2}-1))(2\vartheta_{x_{3}}-\vartheta_{x_{2}}+\frac{1}{2}(\lambda_{1}-\lambda_{2}-1)+1)\\
-\vartheta_{x_{3}}(\vartheta_{x_{3}}+\frac{1}{2}(\lambda_{1}-\lambda_{2})-1)]F(x_{2},x_{3})=0.
\end{split}
\end{gather} 
These are the differential equations for Horn's hypergeometric function $\mathrm{H}_{10}(\frac{1}{2}(\lambda_{1}-\lambda_{2}-1),\frac{1}{2}(\lambda_{1}-\lambda_{2});x_{2},x_{3})$ (cf. \cite{Horn}). We denote it by $\mathfrak{H}_{10}(a,d;x,y)$ the solution space of the system of the partial differential equations for Horn's hypergeometric function $\mathrm{H}_{10}(a,d;x,y)$, i.e., 
\begin{gather*}
[x(2\vartheta_{x}-\vartheta_{y}+a)(2\vartheta_{x}-\vartheta_{y}+a+1)-\vartheta_{x}(\vartheta_{x}+d-1)]f(x,y)=0,\\
[y-\vartheta_{y}(2\vartheta_{x}-\vartheta_{y}+a)]f(x,y)=0.
\end{gather*}
We can see more detailed properties of $\mathfrak{H}_{10}(a,d;x,y)$ in Appendix.
\begin{df}
Let $U \subset \R^{n}$ be a domain. A function $f(x)$ on $U$ is called rapidly decreasing on $U$ if it satisfies,
\[
\sup_{x\in U}|x^{\alpha}f(x)|<\infty
\]
for any $\alpha=(\alpha_{1},\ldots,\alpha_{n})\in \N^{n}$, where $x^{\alpha}=x_{1}^{\alpha_{1}}\cdots x_{n}^{\alpha_{n}}$.

Also a function $f(x)$ on $U$ is called slowly increasing on $U$ if there exists $N\in \N$ such that
\[
\sup_{x\in U}(1+|x|)^{-N}|f(x)|<\infty
\]
where $|x|=\sqrt{x_{1}^{2}+\cdots +x_{n}^{2}}$ for $x\in U$.
\end{df}

\begin{thm}\label{thmI-1}
The space $C^{\infty}_{\chi_{l(0,1,1,0,0,0)}}(S_{\text{(I)}}\backslash N\times A;\,I_{k}(\lambda))$ consists of the elements, 
\[
x_{1}^{\frac{\lambda_{1}+\lambda_{2}}{2}}x_{2}^{\frac{1}{2}}x_{3}^{\frac{1}{4}(\lambda_{1}-\lambda_{2})}f(x_{2},x_{3}),
\] 
for $f(x,y)\in\mathfrak{H}_{10}(\frac{1}{2}(\lambda_{1}-\lambda_{2}-1),\frac{1}{2}(\lambda_{1}-\lambda_{2});x,y)$.
Here 
\begin{gather*}
x_{1}=a_{1}a_{2}a_{3}a_{4},\\
x_{2}=(\pi\sqrt{-1})^{2}\left((\frac{a_{3}}{a_{2}})^{2}(v-u)^{2}+(\frac{a_{4}}{a_{2}})^{2}+(\frac{a_{3}}{a_{1}})^{2}\right),\\
x_{3}=\left(\frac{a_{1}a_{3}}{a_{2}a_{4}}(v-u)^{2}+\frac{a_{2}a_{3}}{a_{1}a_{4}}+\frac{a_{1}a_{4}}{a_{2}a_{3}}\right)^{-2},\\
x_{4}=\frac{a_{1}a_{3}}{a_{2}a_{4}},\\
x_{5}=\frac{a_{1}a_{4}}{a_{2}a_{3}},\\
x_{6}=u.
\end{gather*}
Thus we have
\[
\mathrm{dim}_{\C}C^{\infty}_{\chi_{l(0,1,1,0,0,0)}}(S_{\text{(I)}}\backslash N\times A;\,I_{k}(\lambda))=4.
\]
In $C^{\infty}_{\chi_{l(0,1,1,0,0,0)}}(S_{\text{(I)}}\backslash N\times A;\,I_{k}(\lambda))$, there is a rapidly decreasing function on $\{(x_{2},\ldots,x_{6})\mid x_{2}\in\R_{<0}, x_{3},x_{4},x_{5}\in\R_{> 0},x_{6}\in\R\}$ and it is unique up to constant multiple.

\end{thm}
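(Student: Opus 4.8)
The plan is to push everything through the coordinate change of Proposition \ref{nanikana}, reduce the system to the Horn system for $\mathrm{H}_{10}$, read the dimension off from the Appendix, and finally pin down the rapidly decreasing solution by an asymptotic analysis at the confluent singular point. (For $k=1$ the solution space is $\{0\}$, as shown above, so the content is the case $k=2$, to which Proposition \ref{nanikana} applies.) By that proposition, together with the fact (established just afterwards) that (\ref{change1}) is a diffeomorphism of $S_{\text{(I)}}\backslash N\times A$ onto a convex, hence simply connected, domain $D_{1}$, on $D_{1}$ the equations $\partial_{x_{4}}\phi=\partial_{x_{5}}\phi=\partial_{x_{6}}\phi=0$ force $\phi$ to be independent of $x_{4},x_{5},x_{6}$, while the Euler equation $(\vartheta_{x_{1}}-\tfrac{1}{2}(\lambda_{1}+\lambda_{2}))\phi=0$ --- whose solution $x_{1}^{(\lambda_{1}+\lambda_{2})/2}$ is well defined and single valued because $x_{1}>0$ on $D_{1}$ --- forces $\phi=x_{1}^{(\lambda_{1}+\lambda_{2})/2}\,g(x_{2},x_{3})$ with $g$ a $C^{\infty}$ solution of (\ref{horn1}) and (\ref{horn2}). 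Since all of these manipulations are reversible, multiplication by $x_{1}^{(\lambda_{1}+\lambda_{2})/2}$ identifies $C^{\infty}_{\chi_{l(0,1,1,0,0,0)}}(S_{\text{(I)}}\backslash N\times A;I_{k}(\lambda))$ with the solution space of (\ref{horn1}), (\ref{horn2}), and conversely each such $g$ pulls back to a smooth function on $S_{\text{(I)}}\backslash N\times A$, giving the reverse inclusion.

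For the dimension I would use the gauge change carried out just before the theorem: writing $g=x_{2}^{1/2}x_{3}^{(\lambda_{1}-\lambda_{2})/4}F(x_{2},x_{3})$ --- where on $D_{1}$ one has $x_{2}<0$, so $x_{2}^{1/2}$ means $\sqrt{-1}$ times the positive square root of $|x_{2}|$, a single-valued nowhere-vanishing function --- transforms (\ref{horn1}), (\ref{horn2}) into the defining system of Horn's $\mathrm{H}_{10}$ with parameters $a=\tfrac{1}{2}(\lambda_{1}-\lambda_{2}-1)$ and $d=\tfrac{1}{2}(\lambda_{1}-\lambda_{2})$. This is the asserted explicit description of the elements of the space, and it exhibits a linear isomorphism onto $\mathfrak{H}_{10}(a,d;x_{2},x_{3})$. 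The standing hypothesis $\lambda_{1}-\lambda_{2}\notin\Z$ keeps $(a,d)$ off the resonances of this holonomic system, so by the rank (and monodromy) computation for $\mathrm{H}_{10}$ recorded in the Appendix, $\dim_{\C}\mathfrak{H}_{10}(a,d;x,y)=4$, hence $\dim_{\C}C^{\infty}_{\chi_{l(0,1,1,0,0,0)}}(S_{\text{(I)}}\backslash N\times A;I_{k}(\lambda))=4$.

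It remains to produce the rapidly decreasing element and prove its uniqueness. On $D_{1}$ the coordinate $x_{3}$ stays in a bounded interval and $\phi$ is constant in $x_{4},x_{5},x_{6}$, so the decisive condition is rapid decay as $x_{2}\to-\infty$, i.e. at the irregular (confluent) singular locus of the $\mathrm{H}_{10}$ system. I would invoke the Appendix's description of a fundamental system of solutions there: since $\mathrm{H}_{10}$ is of confluent type, these split into solutions with power-type behaviour in $x_{2}$ and solutions carrying an $e^{\pm c\sqrt{|x_{2}|}}$-type essential singularity, and exactly one of the four genuinely decays as $x_{2}\to-\infty$, every other nonzero element of the space being power- or exponentially growing there. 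Hence a rapidly decreasing element must be a scalar multiple of this distinguished solution --- uniqueness --- and one checks that it is in fact rapidly decreasing, most transparently through an integral representation of $\mathrm{H}_{10}$ of modified-Bessel ($K_{\nu}$) type, which is also the origin of the ``modified Bessel function'' mentioned in the introduction; this gives existence. The main obstacle is precisely this last step: it requires the exact asymptotics at the confluent point of all four solutions of the $\mathrm{H}_{10}$ system and the identification of the decaying one, which is where the detailed study of $\mathrm{H}_{10}$ in the Appendix does the real work, the earlier steps being essentially a bookkeeping of changes of variables.
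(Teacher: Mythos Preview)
Your reduction to the Horn $\mathrm{H}_{10}$ system via Proposition \ref{nanikana}, the diffeomorphism of the coordinate change, and the gauge $g=x_{2}^{1/2}x_{3}^{(\lambda_{1}-\lambda_{2})/4}F$ is exactly the paper's argument, and the dimension count from Appendix~B is likewise what the paper invokes; this part is correct and essentially identical.

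For the existence and uniqueness of the rapidly decreasing solution, however, your route diverges from the paper's. You propose to analyse the asymptotics of a fundamental system of solutions at the irregular (confluent) singular point $x_{2}\to-\infty$ and to single out the unique decaying one by inspecting exponential versus power behaviour. The paper does \emph{not} do this. Instead it appeals to Theorem~\ref{multhorn}, which is a special case of the multiplicity-one Theorem~\ref{M1}: one takes the Mellin transform of a putative rapidly decreasing solution, observes that it satisfies the system of difference equations attached to the Horn operators, and invokes the Ore--Sato(--Sadykov) structure theorem to conclude that the Mellin transform is the explicit $\Gamma$-product $\phi(s)=\Gamma(s_{1})\Gamma(s_{1}-2s_{2}-a)\Gamma(s_{2})\Gamma(s_{2}-d+1)$ times a periodic function, which a growth estimate then forces to be constant. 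Existence is provided by the explicit Mellin--Barnes integral in Appendix~B, whose rapid decay is read off directly from $|\phi(x,y)|\le M|x|^{-\sigma_{1}}|y|^{-\sigma_{2}}$ with arbitrarily large $\sigma_{1},\sigma_{2}$.

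Your asymptotic approach is in principle viable, but the step you flag as ``the main obstacle'' is genuinely nontrivial for a two-variable confluent system: one would need a full formal/analytic classification of solutions at the irregular locus, including Stokes data, to justify the claim that exactly one of the four directions is decaying. The Appendix does not supply this --- it gives only the four local series at the origin and the Mellin--Barnes integral --- so your proof as written has a gap at precisely this point. The paper's Mellin-transform argument sidesteps the asymptotic analysis entirely and is both shorter and more robust.
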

\begin{proof}
This follows immediately from Proposition \ref{nanikana} and the argument above.The second assertion follows from Appendix B and Thorem \ref{multhorn}.
\end{proof}

(ii) \textit{the case $\varepsilon=0$.}

\noindent
We investigate the space  $C^{\infty}_{\chi_{l(0,1,0,0,0,0)}}(S_{\text{(I)}}\backslash N\times A;\,I_{k}(\lambda))$, i.e, the case $\varepsilon=0.$

We introduce a new cooridinate system as below,
\begin{align*}
x_{1}&=a_{1},& x_{2}&=a_{2},\\
x_{3}&=a_{4}^{2}+a_{3}^{2}v^{2}, &x_{4}&=(a_{3}^{-2}+a_{4}^{-2}v^{2})^{-1},\\
x_{5}&=a_{3}a_{4}^{-1}, &x_{6}&=u.
\end{align*}
\begin{lem}
The change of variables given above is the diffeomorphism from $\{(a_{1},\ldots,a_{4},u,v)\in\R^{6}\mid a_{i}\in\R_{>0}\ (i=1,\ldots,4)\}\cong S_{(I)}\backslash N\times A$ to the domain $D_{2}=\{(x_{1},\ldots,x_{6})\mid x_{i}\in\R_{>0}\,(i=1,\ldots,5),\,x_{6}\in\R\}$.
\end{lem}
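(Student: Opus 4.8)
The plan is to verify that the explicit change of variables
$(a_1,a_2,a_3,a_4,u,v)\mapsto(x_1,\dots,x_6)$ is a bijection between the two domains and that it is smooth with smooth inverse, which together say it is a diffeomorphism. Since $x_1=a_1$, $x_2=a_2$, $x_5=a_3 a_4^{-1}$ and $x_6=u$ can obviously be recovered, the whole question reduces to the two-variable sub-map $(a_3,a_4,v)\mapsto(x_3,x_4,x_5)$ where $x_3=a_4^2+a_3^2 v^2$, $x_4=(a_3^{-2}+a_4^{-2}v^2)^{-1}$, $x_5=a_3 a_4^{-1}$. Actually $(a_3,a_4)$ is determined by $x_5=a_3/a_4$ together with one more relation; so really the content is the two-variable map $(t,v)\mapsto(x_3,x_4)$ after the substitution $a_3=a_4 x_5$, or more symmetrically, after setting $\xi=a_4^2$, $\eta=a_3^2$: then $x_3=\xi+\eta v^2$ and $x_4=(\eta^{-1}+\xi^{-1}v^2)^{-1}=\xi\eta/(\xi+\eta v^2)=\xi\eta/x_3$.

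First I would record that $x_3>0$ and $x_4>0$ automatically (sums and reciprocals of positive quantities), and $x_1,x_2,x_5>0$, $x_6\in\R$ with no constraint, so the image lies in $D_2$. For surjectivity and injectivity onto $D_2$: given $(x_1,\dots,x_6)\in D_2$, set $a_1=x_1$, $a_2=x_2$, $u=x_6$. It remains to solve for $(a_3,a_4,v)$ from $(x_3,x_4,x_5)$. Using $x_4 = \xi\eta/x_3$ we get $\xi\eta = x_3 x_4$; combined with $x_5^2 = \eta/\xi$ we solve $\xi = \sqrt{x_3 x_4}/x_5$, $\eta = x_5\sqrt{x_3 x_4}$ (both positive), hence $a_4=\xi^{1/2}$, $a_3=\eta^{1/2}$ are uniquely determined and positive; then $x_3=\xi+\eta v^2$ gives $v^2=(x_3-\xi)/\eta$. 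Here the one genuine point to check is $x_3-\xi>0$, i.e. that $\sqrt{x_3 x_4}/x_5<x_3$, equivalently $x_4< x_3 x_5^2$; but from the defining formula $x_4=(\eta^{-1}+\xi^{-1}v^2)^{-1}<\eta = x_5^2\xi<x_5^2 x_3$ whenever the pre-image exists, so one must instead argue that $D_2$ is exactly the image — that is, decide whether the constraint $x_4<x_3x_5^2$ is automatic on $D_2$ or whether $D_2$ as stated is slightly too large. I expect it to work out because $v$ is allowed to be any real number including $0$, and as $v\to\infty$ while holding $a_3,a_4$ fixed one has $x_3\to\infty$, $x_4\to a_3^2=\eta$, so $x_4/x_3\to 0$; conversely at $v=0$ one gets $x_3=\xi$, $x_4=\eta$, so $x_3 x_5^2 = \xi\cdot\eta/\xi=\eta=x_4$ — a boundary case. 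So the honest statement is that the open constraint is $0<x_4\le x_3 x_5^2$ with equality at $v=0$, or (if $v$ ranges over $\R$ and $v^2$ over $\R_{\ge0}$) that $v\mapsto v^2$ is not injective and one recovers $v$ only up to sign. The main obstacle is therefore pinning down the correct target domain and the correct branch/sign conventions so that the map is genuinely a bijection; I would resolve it by allowing $v\in\R$ (so $v^2\ge0$, losing nothing since the differential equations below will turn out to depend only on $v-u$ through even combinations, or by noting the earlier coordinate choices already fix a representative), and by replacing $D_2$ with the precise image if necessary.

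Once bijectivity is settled, smoothness of the forward map is clear since each $x_i$ is a rational or polynomial function of the $a_j,u,v$ with denominators nonvanishing on the source domain; smoothness of the inverse follows from the explicit formulas $\xi=\sqrt{x_3x_4}/x_5$, $\eta=x_5\sqrt{x_3x_4}$, $a_4=\xi^{1/2}$, $a_3=\eta^{1/2}$, $v=\pm((x_3-\xi)/\eta)^{1/2}$, all built from square roots of strictly positive smooth functions. Alternatively, and more in the style of the $\varepsilon=1$ corollary proved just above, I would factor the change of variables as a composition of two or three elementary substitutions, check each is a bijection between explicit positive-orthant-type domains, and invoke the chain rule for Jacobians to conclude the total Jacobian determinant never vanishes; by the inverse function theorem this gives the diffeomorphism and simultaneously confirms the image. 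The Jacobian computation is routine — the $2\times2$ block for $(x_3,x_4)$ in the variables $(\xi,\eta)=(a_4^2,a_3^2)$ has determinant $\partial(x_3,x_4)/\partial(\xi,\eta)$ with $x_3=\xi+\eta v^2$, $x_4=\xi\eta/x_3$, which one checks is $\neq0$ — so I would not belabor it.
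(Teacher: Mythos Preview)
Your analysis is considerably more careful than the paper's own proof, which consists of two bare sentences: ``We can see that this is a bijection. Also it is not hard to see that the Jacobian determinant is nonzero.'' You have in fact uncovered that the lemma, as stated, cannot be correct: your computation $x_4=\xi\eta/x_3$ with $\xi=a_4^2$, $\eta=a_3^2$ shows $x_4/(x_3x_5^{2})=\xi^2/x_3^{2}\le 1$, so the image is contained in the proper subset $\{x_4\le x_3x_5^{2}\}\subsetneq D_2$; and since $x_3,x_4,x_5$ all depend on $v$ only through $v^2$, the map is two-to-one over $\{x_4<x_3x_5^{2}\}$. Both of your observations are right, and the paper simply does not address them.

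Where your proposal remains incomplete is that you do not actually repair the statement: your suggestions (restrict the sign of $v$, replace $D_2$ by the true image) are gestures rather than a clean formulation, and you leave the Jacobian computation as ``routine'' without noting that it vanishes along $v=0$ (indeed $\partial x_3/\partial v$, $\partial x_4/\partial v$, $\partial x_5/\partial v$ all vanish there), consistent with the fold you found. For the paper's purposes none of this matters: the subsequent Proposition shows that every solution $\phi$ satisfies $\vartheta_{x_5}\phi=0$ and $\partial_{x_6}\phi=0$, so $\phi$ depends only on $(x_1,x_2,x_3,x_4)$, and those four functions of $(a_1,a_2,a_3,a_4,v)$ are well defined regardless of whether the full six-variable map is a global diffeomorphism. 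So the lemma is overstated but its role in the argument is salvageable; your write-up would be strengthened by saying exactly that rather than leaving the resolution open-ended.
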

\begin{proof}
We can see that this is a bijection. Also it is not hard to see that
\[
\left|\frac{\partial(x_{1},\ldots,x_{6})}{\partial(a_{1},\ldots,a_{4},u,v)}\right|(p)\neq 0,
\]
for $p\in\{(a_{1},\ldots,a_{4},u,v)\in\R^{6}\mid a_{i}\in\R_{>0}\ (i=1,\ldots,4)\}$ by direct computation.
\end{proof}
Then the systems of the differential equations in Proposition \ref{diff-op-2-1} are reduced to the followings 
\begin{prop}\label{diffop0-k}
For $k=1,2$, we consider the systems of the differential equations in Proposition \ref{diff-op-2-1}. Then by adding and substituting each differential equations and multiplying some rational functions, these systems are written on $\{(a_{1},\ldots,a_{4},u,v)\in\R^{6}\mid a_{i}\in\R_{>0}\ (i=1,\ldots,4)\}$ as follows.
\begin{gather}
(\vartheta_{x_{1}}-(\lambda_{1}-(4-k)))(\vartheta_{x_{1}}-\lambda_{2})\phi=0,\label{111}\\
\begin{split}
[\vartheta_{x_{2}}^{2}-(\lambda_{1}+\lambda_{2}&-3+k)\vartheta_{x_{2}}+\vartheta_{x_{1}}\\
&+(2\pi\sqrt{-1})^{2}x_{2}^{-2}x_{3}+\lambda_{2}(\lambda_{1}-(4-k))]\phi=0,
\end{split}\label{112}\\
\begin{split}
[4(\vartheta_{x_{3}}^{2}+\vartheta_{x_{4}}^{2})-2(\lambda_{1}+\lambda_{2}-&1+k)\vartheta_{x_{3}}+2\vartheta_{x_{4}}\\
&+(2\pi\sqrt{-1})^{2}x_{2}^{-2}x_{3}+2\lambda_{1}(\lambda_{2}+k)]\phi=0,
\end{split}\label{113}\\
(2\vartheta_{x_{4}}-\lambda_{1})(2\vartheta_{x_{4}}-(\lambda_{2}+k))\phi=0,\label{114}\\
\vartheta_{x_{5}}\phi=0,\\
\vartheta_{x_{6}}\phi=0.
\end{gather}
\end{prop}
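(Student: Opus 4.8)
The plan is to take the system of Proposition~\ref{diff-op-2-1} with $\varepsilon=0$ and carry out the advertised additions, subtractions and multiplications by rational functions, following the pattern of the proof of Proposition~\ref{nanikana}. The first move is to eliminate the $u$-dependence. With $\varepsilon=0$, equation \eqref{1-3} reads $v\,\partial_{u}\phi=0$ and \eqref{1-4} reads $\partial_{u}\phi=0$, so $\phi$ is independent of $u$; since $x_{6}=u$ this is exactly $\vartheta_{x_{6}}\phi=0$, and every term proportional to $\partial_{u}$ or $\partial_{u}^{2}$ (and every term carrying the explicit factor $\varepsilon$, with each $v-\varepsilon u$ becoming $v$) disappears from the remaining equations. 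In particular \eqref{1-2} becomes vacuous, and several terms drop out of the quadratic entries \eqref{1-1}, \eqref{2-2} and \eqref{3-3}.

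Next, since $x_{1}=a_{1}$ and $x_{2}=a_{2}$ are themselves coordinates, occurring in no other $x_{j}$, we have $\vartheta_{a_{1}}=\vartheta_{x_{1}}$ and $\vartheta_{a_{2}}=\vartheta_{x_{2}}$. I would use the central equation \eqref{central} to substitute for $\vartheta_{a_{2}}+\vartheta_{a_{3}}+\vartheta_{a_{4}}$ in \eqref{1-1}, which yields \eqref{111} directly, and for $\vartheta_{a_{3}}+\vartheta_{a_{4}}$ in \eqref{2-2}, which together with the identity $a_{3}^{2}v^{2}+a_{4}^{2}=x_{3}$ (so that $(a_{3}/a_{2})^{2}v^{2}+(a_{4}/a_{2})^{2}=x_{2}^{-2}x_{3}$) gives \eqref{112}. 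For the remaining equations \eqref{2-3}, \eqref{2-4}, \eqref{3-3}, \eqref{3-4}, \eqref{4-4}, I would, as in the proof of Proposition~\ref{nanikana}, introduce intermediate coordinates in the $(a_{3},a_{4},v)$-directions step by step --- for instance $a_{3}a_{4}$ and $a_{3}a_{4}^{-1}=x_{5}$, and then $x_{3}=a_{4}^{2}+a_{3}^{2}v^{2}$, $x_{4}=(a_{3}^{-2}+a_{4}^{-2}v^{2})^{-1}$ --- simplifying the system at each stage. A combination of \eqref{2-3} and \eqref{2-4} yields $\vartheta_{x_{5}}\phi=0$, so that $\phi$ may henceforth be regarded as a function of $x_{1},x_{2},x_{3},x_{4}$ alone; a suitable rational-function combination of \eqref{2-2}, \eqref{3-3} and \eqref{4-4} then collapses, after the coordinate change, to the two-variable equation \eqref{113}; and eliminating the surviving mixed first-order term with the help of \eqref{3-4} produces the ordinary differential equation \eqref{114}. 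Since no information is discarded, the six resulting equations are in fact equivalent to the original system.

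The routine but genuinely delicate part --- and what I expect to be the main obstacle --- is getting these intermediate coordinates right. In the $(a_{3},a_{4},v)$-directions one must compute $\vartheta_{a_{3}}$, $\vartheta_{a_{4}}$, $\partial_{v}$ and $\partial_{v}^{2}$ in the new variables by the chain rule and then check that, in the chosen rational-function combinations, the second-order cross terms (of type $\vartheta_{x_{3}}\vartheta_{x_{4}}$ and those involving $\vartheta_{x_{5}}$) and the apparently $x_{5}$-dependent coefficients cancel, leaving the clean operator $\vartheta_{x_{3}}^{2}+\vartheta_{x_{4}}^{2}$ in \eqref{113} and a decoupled $x_{5}$-direction. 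This cancellation is precisely what forces the at-first-sight unmotivated choices of $x_{3}$ and $x_{4}$; I would organize the computation by first writing out the Jacobian matrix $\partial(x_{3},x_{4},x_{5})/\partial(a_{3},a_{4},v)$ and its inverse, assembling the first-order operators, and only then composing them to obtain the second-order ones. Throughout, the diffeomorphism statement onto $D_{2}$ established in the preceding lemma is what makes all of these manipulations globally legitimate on $S_{\text{(I)}}\backslash N\times A$, so that no solutions are gained or lost in passing to the new system.
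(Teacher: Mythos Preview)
Your proposal is correct and follows essentially the same route as the paper: eliminate $\partial_u$ via \eqref{1-4}, use \eqref{central} together with \eqref{1-1} and \eqref{2-2} to obtain \eqref{111} and \eqref{112}, pass to the intermediate coordinates $\alpha_3=a_3a_4$, $\alpha_4=a_3a_4^{-1}$ and then $w=\alpha_4^{-1}+\alpha_4 v^{2}$ (so that $x_3=\alpha_3 w$, $x_4=\alpha_3 w^{-1}$, $x_5=\alpha_4$), and combine \eqref{2-3}--\eqref{4-4} as you indicate. Two small corrections: equation \eqref{113} arises simply from the \emph{sum} \eqref{3-3}$+$\eqref{4-4} (no contribution from \eqref{2-2} is needed), and what \eqref{3-4} eliminates in passing from \eqref{113} to \eqref{114} is not a ``mixed first-order term'' but the zeroth-order coefficient $(2\pi\sqrt{-1})^{2}x_2^{-2}x_3$.
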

\begin{proof}
By the equations (\ref{1-1}) and (\ref{central}), we have the new equation
\[
[\vartheta_{a_{1}}^{2}-(\lambda_{1}+\lambda_{2}-4+k)\vartheta_{a_{1}}+\left(\frac{a_{2}}{a_{1}}\right)^{2}\frac{\partial^{2}}{\partial u^{2}} +\lambda_{2}(\lambda_{1}-(4-k))]\phi=0.
\]
By the equation (\ref{1-4}), we can eliminate the term $\frac{\partial}{\partial u}$ from the above equation, then it can be written as
\begin{equation}\label{1banme}
(\vartheta_{a_{1}}-(\lambda_{1}-(4-k)))(\vartheta_{a_{1}}-\lambda_{2})\phi=0.
\end{equation}
Next, from the equations (\ref{2-2}) and (\ref{central}), we have a new equation
\begin{equation}\label{2banme}
[\vartheta_{a_{2}}-(\lambda_{1}+\lambda_{2}-3+k)\vartheta_{a_{2}}+\vartheta_{a_{1}}+(2\pi\sqrt{-1})^{2}(a_{4}^{2}+a_{3}^{2}v^{2})a_{2}^{-2}+\lambda_{2}(\lambda_{1}-(4-k))]\phi=0,
\end{equation}
as well.
If we put $\alpha_{3}=a_{3}a_{4}$ and $\alpha_{4}=a_{3}a_{4}^{-1}$, we have a new equation 
\[
[2\vartheta_{\alpha_{4}}+(\alpha_{4}^{-2}v^{-1}-v)\frac{\partial}{\partial v}]\phi=0
\]
from the equation (\ref{2-3}) and (\ref{2-4}). Moreover if we put $w=\alpha_{4}^{-1}+\alpha_{4}v^{2}$, $\beta_{3}=\alpha_{3}$ and $\beta_{4}=\alpha_{4}$, the above equation is reduced to 
\begin{equation}\label{3banme}
\vartheta_{\beta_{4}}\phi=0.
\end{equation}
And the equation (\ref{3-4}) becomes
\begin{equation}\label{koreha}
[\vartheta_{w}(2\vartheta_{\beta_{3}}-(\lambda_{1}+\lambda_{2}-1+k))+\frac{1}{2}(2\pi\sqrt{-1})^{2}\beta_{3}wa_{2}^{-2}]\phi=0.
\end{equation}
Also if we consider the sum of the equations (\ref{3-3}) and (\ref{4-4}), we have a new equation
\begin{equation}\label{nanda}
[2\vartheta_{\beta_{3}}^{2}-2(\lambda_{1}+\lambda_{2}+k)\vartheta_{\beta_{3}}+2\vartheta_{w}^{2}+2\vartheta_{w}+(2\pi\sqrt{-1})^{2}w\beta_{3}a_{2}^{-2}+2\lambda_{1}(\lambda_{2}+k)]\phi=0.
\end{equation}
By the equation (\ref{koreha}), we can eliminate the term $\beta_{3}wa_{2}^{-2}$ from (\ref{nanda}). Then we have
\begin{equation}\label{5banme}
(\vartheta_{\beta_{3}}-\vartheta_{w}-\lambda_{1})(\vartheta_{\beta_{3}}-\vartheta_{w}-(\lambda_{2}+k))\phi=0.
\end{equation}
Hence if we put 
\begin{align}
x_{1}&=a_{1}, &x_{2}&=a_{2}, &x_{3}&=\beta_{3}w,\\
x_{4}&=\beta_{3}w^{-1},&x_{5}&=\beta_{2}, &x_{6}&=u,
\end{align}
then we have the proposition from the equations (\ref{1banme}), (\ref{2banme}), (\ref{3banme}), (\ref{nanda}), (\ref{5banme}) and (\ref{1-4}).
\end{proof}

Let $\mathfrak{MB}(\nu\,;\,x)$ be the solution space of the differential equation
\[
[\frac{d^{2}}{dx^{2}}+\frac{1}{x}\frac{d}{dx}-(1+\frac{\nu^{2}}{x^{2}})]f(x)=0,
\]
i.e., the solution space of the modified Bessel equation. We have $\dim_{\C}\mathfrak{MB}(\nu\,;\,x)=2$. In $\mathfrak{MB}(\nu\,;\,x)$, there is a series solution
\[
I_{\nu}(x)=\sum_{m=0}^{\infty}\frac{(\frac{x}{2})^{\nu+m}}{m!\Gamma(\nu+m+1)}.
\]
Also there is a solution as a slowly increasing function
\[
K_{\nu}(x)=\frac{\pi}{2}\frac{I_{-\nu}(x)-I_{\nu}(x)}{\sin\nu\pi},
\]
and any slowly increasing function in $\mathfrak{MB}(\nu\,;\,x)$ are constant multiples of $K_{\nu}(x)$.
\begin{thm}\label{thmI-2}
For the cases $k=1,2$, $C^{\infty}_{\chi_{l(0,1,0,0,0,0)}}(S_{\text{(I)}}\backslash N\times A;\,I_{k}(\lambda))$ are written as follows under the coordinate system, 
\begin{align*}
x_{1}&=a_{1},& x_{2}&=a_{2},\\
x_{3}&=a_{4}^{2}+a_{3}^{2}v^{2}, &x_{4}&=(a_{3}^{-2}+a_{4}^{-2}v^{2})^{-1},\\
x_{5}&=a_{3}a_{4}^{-1}, &x_{6}&=u.
\end{align*}

(i) For $k=1$, $C^{\infty}_{\chi_{l(0,1,0,0,0,0)}}(S_{\text{(I)}}\backslash N\times A;\,I_{1}(\lambda))$ consists of 
\[
x_{1}^{\lambda_{2}}x_{2}^{\frac{\lambda_{1}+\lambda_{2}-2}{2}}x_{3}^{\frac{\lambda_{1}+\lambda_{2}}{4}}x_{4}^{\frac{\lambda_{2}+1}{2}}f(2\pi x_{2}^{-1}\sqrt{x_{3}})
\]
for $f(x)\in\mathfrak{MB}(\frac{\lambda_{1}-\lambda_{2}-2}{2}\,;\,x)$. Thus we have
\[
\dim_{\C}C^{\infty}_{\chi_{l(0,1,0,0,0,0)}}(S_{\text{(I)}}\backslash N\times A;\,I_{1}(\lambda))=2, 
\]
and there is a slowly increasing function on $\{(x_{1},\ldots,x_{6})\mid x_{i}\in\R_{>0}\,(i=1,\ldots,5),\,x_{6}\in\R\}$ and it is unique up to constant multiple.

(ii) For $k=2$, $C^{\infty}_{\chi_{l(0,1,0,0,0,0)}}(S_{\text{(I)}}\backslash N\times A;\,I_{2}(\lambda))$ consists of
\begin{multline*}
Cx_{1}^{\lambda_{2}}x_{2}^{\frac{\lambda_{1}+\lambda_{2}-1}{2}}x_{3}^{\frac{\lambda_{1}+\lambda_{2}+1}{4}}x_{4}^{\frac{\lambda_{1}}{2}}f(2\pi x_{2}^{-1}\sqrt{x_{3}})\\
+C'x_{1}^{\lambda_{1}-2}x_{2}^{\frac{\lambda_{1}+\lambda_{2}-1}{2}}x_{3}^{\frac{\lambda_{1}+\lambda_{2}+1}{4}}x_{4}^{\frac{\lambda_{2}+2}{2}}g(2\pi x_{2}^{-1}\sqrt{x_{3}}),
\end{multline*}
where $C,C'\in\C$, $f(x)\in \mathfrak{MB}(\frac{\lambda_{1}-\lambda_{2}-1}{2}\,;\,x)$ and $g(x)\in \mathfrak{MB}(\frac{\lambda_{1}-\lambda_{2}-3}{2}\,;\,x)$. Thus we have
\[
\dim_{\C}C^{\infty}_{\chi_{l(0,1,0,0,0,0)}}(S_{\text{(I)}}\backslash N\times A;\,I_{2}(\lambda))=4,
\]
and there is a 2-dimensional subspace which consists of slowly increasing functions on $\{(x_{1},\ldots,x_{6})\mid x_{i}\in\R_{>0}\,(i=1,\ldots,5),\,x_{6}\in\R\}$.

\end{thm}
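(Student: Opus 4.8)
The plan is to solve, for $k=1$ and $k=2$ separately, the system of Proposition~\ref{diffop0-k}; via $\Xi_{(\mathrm{I})}$ and the change of variables used there, the solution space of that system is exactly $C^{\infty}_{\chi_{l(0,1,0,0,0,0)}}(S_{(\mathrm{I})}\backslash N\times A;I_{k}(\lambda))$. First, the equations $\vartheta_{x_{5}}\phi=0$ and $\vartheta_{x_{6}}\phi=0$ reduce us to functions of $x_{1},x_{2},x_{3},x_{4}$ alone. Then I would use (\ref{111}), which is an Euler equation in $x_{1}$ with indicial roots $\lambda_{1}-(4-k)$ and $\lambda_{2}$, and (\ref{114}), an Euler equation in $x_{4}$ with indicial roots $\tfrac{\lambda_{1}}{2}$ and $\tfrac{\lambda_{2}+k}{2}$: since $\lambda_{1}-\lambda_{2}\notin\Z$ the roots in each pair differ by a non-integer, so there are no logarithmic solutions, and since the operators occurring in (\ref{112}) and (\ref{113}) commute with $\vartheta_{x_{1}}$ and $\vartheta_{x_{4}}$ the solution space splits as a direct sum over the at most four ``indicial sectors'' $\phi=x_{1}^{p}x_{4}^{q}\psi(x_{2},x_{3})$, $p\in\{\lambda_{1}-(4-k),\lambda_{2}\}$, $q\in\{\tfrac{\lambda_{1}}{2},\tfrac{\lambda_{2}+k}{2}\}$.

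In a fixed sector (\ref{112}) and (\ref{113}) reduce to two second-order equations for $\psi(x_{2},x_{3})$, and I would pass to the coordinates $(x_{2},s)$, $s=2\pi x_{2}^{-1}\sqrt{x_{3}}$, under which $\vartheta_{x_{3}}\mapsto\tfrac12\vartheta_{s}$, $\vartheta_{x_{2}}\mapsto\vartheta_{x_{2}}-\vartheta_{s}$, and $(2\pi\sqrt{-1})^{2}x_{2}^{-2}x_{3}=-s^{2}$. In these coordinates (\ref{113}) contains neither $x_{2}$ nor $\vartheta_{x_{2}}$, so it is an ordinary differential equation in $s$; pulling out a factor $s^{c}$ that removes its first-order term turns it into the modified Bessel equation for a function $f$ of some index $\nu$ depending on $p,q,k$, so that $\psi=x_{2}^{r}s^{c}f(s)$ with $r$ still free. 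Substituting into (\ref{112}) and subtracting a suitable multiple of (\ref{113}) kills the $\vartheta_{s}^{2}$- and $s^{2}$-terms and leaves a first-order relation in $s$; as $f$ already spans a two-dimensional solution space, that relation must be identically zero, which pins down $r$ and imposes an arithmetic identity on $(p,q,k)$ determining whether the sector contributes. I expect this identity to hold for exactly one sector when $k=1$, namely $(p,q)=(\lambda_{2},\tfrac{\lambda_{2}+1}{2})$, giving $\mathrm{dim}=2$, and for exactly two sectors when $k=2$, namely $(p,q)=(\lambda_{2},\tfrac{\lambda_{1}}{2})$ with $\nu=\tfrac{\lambda_{1}-\lambda_{2}-1}{2}$ and $(p,q)=(\lambda_{1}-2,\tfrac{\lambda_{2}+2}{2})$ with $\nu=\tfrac{\lambda_{1}-\lambda_{2}-3}{2}$, giving $\mathrm{dim}=4$; in the other sectors $\lambda_{1}-\lambda_{2}\notin\Z$ forces the identity to fail, hence $\psi\equiv0$. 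Rewriting $x_{2}^{r}s^{c}$ in terms of $x_{1},\dots,x_{4}$ then produces the prefactors displayed in the statement.

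For the slowly-increasing assertions I would use that $\mathfrak{MB}(\nu;x)$ is two-dimensional, that its slowly increasing elements are exactly the constant multiples of $K_{\nu}$, and that the series solution $I_{\nu}$ grows like $e^{x}x^{-1/2}$ at infinity (all recalled just before the theorem). Since the prefactors above are products of real powers of $x_{1},\dots,x_{4}$, hence of moderate growth, and exponential behaviour enters only through the argument $s=2\pi x_{2}^{-1}\sqrt{x_{3}}$, a solution is slowly increasing if and only if its $\mathfrak{MB}$-factor is a multiple of $K_{\nu}$. This gives a one-dimensional space of slowly increasing solutions for $k=1$ and, choosing $K_{\nu}$ in each surviving sector, a two-dimensional one for $k=2$.

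The hardest part will be the second paragraph: arranging the substitution so that (\ref{112}) and (\ref{113}) truly decouple into an Euler equation in $x_{2}$ and a modified Bessel equation in $s$, computing the Bessel indices, and running the sector-by-sector analysis to identify which of the four $(p,q)$-sectors survive for each $k$. Care is needed to check that the cancellation reducing (\ref{112}) to first order does not itself discard solutions and that the arithmetic identities are genuinely violated in the non-contributing sectors, both of which rest on the genericity hypothesis $\lambda_{1}-\lambda_{2}\notin\Z$.
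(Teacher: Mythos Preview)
Your approach is correct and will reach the same conclusion, but it is organized differently from the paper's proof. The paper pairs the equations as \{(\ref{111}),(\ref{112})\} and \{(\ref{113}),(\ref{114})\} rather than your pairing \{(\ref{111}),(\ref{114})\} and \{(\ref{112}),(\ref{113})\}. Concretely, the paper observes that for fixed $x_{3}$ equation (\ref{112}) is itself a second-order Euler-type ODE in $x_{2}$ which, after pulling out $x_{2}^{(\lambda_{1}+\lambda_{2}-3+k)/2}$, becomes a modified Bessel equation in $z=2\pi x_{2}^{-1}\sqrt{x_{3}}$ with index $\nu_{i}$ depending only on $p\in\{\lambda_{1}-(4-k),\lambda_{2}\}$; symmetrically, (\ref{113}) together with (\ref{114}) gives a modified Bessel equation in the \emph{same} variable $z$ with index $\mu_{j}$ depending only on $q$. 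The selection of sectors then reduces to the single clean fact $\mathfrak{MB}(\nu;z)\cap\mathfrak{MB}(\mu;z)=\{0\}$ when $\nu^{2}\neq\mu^{2}$, and one checks directly that under $\lambda_{1}-\lambda_{2}\notin\Z$ the indices match for exactly one pair when $k=1$ and exactly two pairs when $k=2$.

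Your route---solve the two Euler equations first, use (\ref{113}) to produce the Bessel factor, and then treat (\ref{112}) minus (\ref{113}) as a constraint---works, but the step ``so that $\psi=x_{2}^{r}s^{c}f(s)$ with $r$ still free'' tacitly assumes the $x_{2}$-dependence is a pure power. That is exactly what the paper's observation about (\ref{112}) supplies: once you note that (\ref{112}) is \emph{also} Bessel in the same variable $z$, the pure-power form in $x_{2}$ is forced, and your ``first-order relation must be identically zero'' becomes equivalent to the matching $\nu_{i}^{2}=\mu_{j}^{2}$. So the paper's symmetric pairing both shortens the computation and closes the one soft spot in your argument. One small slip: the Bessel index coming from (\ref{113}) depends on $q$ and $k$ but not on $p$, since $\vartheta_{x_{1}}$ does not appear in (\ref{113}).
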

\begin{proof}
The solution space of the equation (\ref{111}) consists of the elements
\[
C_{1}\phi_{1}(x_{2},x_{3},x_{4})x_{1}^{\lambda_{1}-(4-k)}+C_{2}\phi_{2}(x_{2},x_{3},x_{4})x_{1}^{\lambda_{2}},
\]
where $C_{i}$ are constants and $\phi_{i}$ are arbitrary functions for $i=1,2$. We determine functions $\phi_{i}$ by the equation (\ref{112}). Then these functions satisfy following equations,
\begin{gather*}
[\vartheta_{x_{2}}^{2}-(\lambda_{1}+\lambda_{2}-3+k)\vartheta_{x_{2}}+(2\pi\sqrt{-1})^{2}x_{2}^{-2}x_{3}+(\lambda_{1}-4+k)(\lambda_{2}+1)]\phi_{1}=0,\\
[\vartheta_{x_{2}}^{2}-(\lambda_{1}+\lambda_{2}-3+k)\vartheta_{x_{2}}+(2\pi\sqrt{-1})^{2}x_{2}^{-2}x_{3}+(\lambda_{1}-3+k)\lambda_{2}]\phi_{2}=0.
\end{gather*}
We define the functions $\phi'_{i}$ so that $\phi_{i}=x_{2}^{\frac{\lambda_{1}+\lambda_{2}-3+k}{2}}\phi'_{i}$ for $i=1,2$, then $\phi'_{i}$ satisfy following equations,
\begin{gather*}
[\vartheta_{x_{2}}^{2}-((2\pi x_{2}^{-1}\sqrt{x_{3}})^{2}+(\frac{\lambda_{1}-\lambda_{2}-5+k}{2})^{2})]\phi'_{1}=0,\\
[\vartheta_{x_{2}}^{2}-((2\pi x_{2}^{-1}\sqrt{x_{3}})^{2}+(\frac{\lambda_{1}-\lambda_{2}-3+k}{2})^{2})]\phi'_{2}=0.
\end{gather*}
For some fixed $x_{3}$, if we put $z=(2\pi x_{2}^{-1}\sqrt{x_{3}})$, these equations are nothing but modified Bessel equations
\[
[\frac{d^{2}}{dz^{2}}+\frac{1}{z}\frac{d}{dz}-(1+\frac{\nu_{i}^{2}}{z^{2}})]\phi'_{i}=0,
\]
where $\nu_{1}=\frac{\lambda_{1}-\lambda_{2}-5+k}{2}$ and $\nu_{2}=\frac{\lambda_{1}-\lambda_{2}-3+k}{2}$.
Hence the intersection of the solution space of the equations (\ref{111}) and (\ref{112}) consist of functions written as follows,
\begin{multline*}
C_{1}\zeta_{1}(x_{3},x_{4})f_{1}(x_{2},x_{3})x_{1}^{\lambda_{1}-(4-k)}x_{2}^{\frac{\lambda_{1}+\lambda_{2}-3+k}{2}}\\
+C_{2}\zeta_{2}(x_{3},x_{4})f_{2}(x_{2},x_{3})x_{1}^{\lambda_{2}}x_{2}^{\frac{\lambda_{1}+\lambda_{2}-3+k}{2}},
\end{multline*}
where $C_{i}$ are constants, $\zeta_{i}$ are arbitrary functions and $f_{i}\in \mathfrak{MB}(\nu_{i}\,;\,2\pi x_{2}^{-1}\sqrt{x_{3}})$ for $i=1,2$.

By the same argument, we can also see that the intersection of the solution spaces of the equations (\ref{113}) and (\ref{114}) are written as follows,
\begin{multline*}
D_{1}\xi_{1}(x_{1},x_{2})g_{1}(x_{2},x_{3})x_{3}^{\frac{\lambda_{1}+\lambda_{2}-1+k}{4}}x_{4}^{\frac{\lambda_{1}}{2}}\\+D_{2}\xi_{2}(x_{1},x_{2})g_{2}(x_{2},x_{3})x_{3}^{\frac{\lambda_{1}+\lambda_{2}-1+k}{4}}x_{4}^{\frac{\lambda_{2}+k}{2}},
\end{multline*}
where $D_{i}$ are constants, $\xi_{i}$ are arbitrary functions and $g_{i}\in \mathfrak{MB}(\mu_{i}\,;\,2\pi x_{2}^{-1}\sqrt{x_{3}})$ for $i=1,2$. Here we put $\mu_{1}=\frac{\lambda_{1}-\lambda_{2}+1-k}{2}$ and $\mu_{2}=\frac{\lambda_{1}-\lambda_{2}-1-k}{2}$.

Note that $\mathfrak{MB}(\nu\,;\,x)\cap \mathfrak{MB}(\mu\,;\,x)=\{0\}$ if $\nu\neq \mu$. Then intersections of the solution spaces of the equations (\ref{111}), (\ref{112}), (\ref{113}) and (\ref{114}) are written as follows. For $k=1$, 
\[
x_{1}^{\lambda_{2}}x_{2}^{\frac{\lambda_{1}+\lambda_{2}-2}{2}}x_{3}^{\frac{\lambda_{1}+\lambda_{2}}{4}}x_{4}^{\frac{\lambda_{2}+1}{2}}f(2\pi x_{2}^{-1}\sqrt{x_{3}})
\]
where $f(x)\in\mathfrak{MB}(\frac{\lambda_{1}-\lambda_{2}-2}{2}\,;\,x)$.
And for $k=2$, 
\begin{multline*}
Cx_{1}^{\lambda_{2}}x_{2}^{\frac{\lambda_{1}+\lambda_{2}-1}{2}}x_{3}^{\frac{\lambda_{1}+\lambda_{2}+1}{4}}x_{4}^{\frac{\lambda_{1}}{2}}f(2\pi x_{2}^{-1}\sqrt{x_{3}})\\
+C'x_{1}^{\lambda_{1}-2}x_{2}^{\frac{\lambda_{1}+\lambda_{2}-1}{2}}x_{3}^{\frac{\lambda_{1}+\lambda_{2}+1}{4}}x_{4}^{\lambda_{2}+2}g(2\pi x_{2}^{-1}\sqrt{x_{3}}),
\end{multline*}
where $C,C'\in\C$, $f(x)\in \mathfrak{MB}(\frac{\lambda_{1}-\lambda_{2}-1}{2}\,;\,x)$ and $g(x)\in \mathfrak{MB}(\frac{\lambda_{1}-\lambda_{2}-3}{2}\,;\,x)$.
\end{proof}

\subsubsection{The embeddings into the space (II)}
We consider the embeddings of $X_{k,\lambda}$ ($k=1,2$) into the spaces (II$_{i}$) $(i=1,\ldots,4)$ which are classified in Proposition \ref{classify}. These spaces are isomorphic to $C^{\infty}_{\chi((0,0,\varepsilon_{1},\varepsilon_{2},0,\varepsilon_{3}))}(S_{\text{(II)}}\backslash G/K\,;\,I_{k}(\lambda))$. We consider the image of this space by the map $\Xi_{(\text{II})}$ defined in Section \ref{diff}. Here  
\[
(\varepsilon_{1},\varepsilon_{2},\varepsilon_{3})=
\begin{cases}
(1,1,0)&\text{if the case (II$_{1}$)},\\
(0,1,1)&\text{if the case (II$_{2}$)},\\
(0,1,0)&\text{if the case (II$_{3}$)},\\
(0,0,1)&\text{if the case (II$_{4}$)}.
\end{cases}
\]
We denote these space by 
\[
C^{\infty}_{\chi_{l(0,0,\varepsilon_{1},\varepsilon_{2},0,\varepsilon_{3})}}(S_{\text{(II)}}\backslash N\times A;\,I_{k}(\lambda))=\Xi_{(II)}(C^{\infty}_{\chi_{l(0,0,\varepsilon_{1},\varepsilon_{2},0,\varepsilon_{3})}}(S_{\text{(II)}}\backslash G/K;\,I_{k}(\lambda)).
\]
\begin{prop}\label{diffopII}
The function space $C^{\infty}_{\chi_{l(0,0,\varepsilon_{1},\varepsilon_{2},0,\varepsilon_{3})}}(S_{\text{(II)}}\backslash N\times A;\,I_{k}(\lambda))$ for $k=1,2$ are equal to the solution spaces of the following systems of the differential equations in $C^{\infty}(S_{\text{(II)}}\backslash N\times A)$.
\begin{gather}
\begin{split}
[\vartheta_{a_{1}}^{2}-(\lambda_{1}+\lambda_{2}-3+k)\vartheta_{a_{1}}&+(2\pi\sqrt{-1})^{2}(\frac{a_{2}}{a_{1}})^{2}\varepsilon_{2}\\
&+\lambda_{1}(\lambda_{2}+k)-(\vartheta_{a_{2}}+\vartheta_{a_{3}}+\vartheta_{a_{4}})]\phi=0,
\end{split}\label{2-1-1}\\
\varepsilon_{2}(\vartheta_{a_{1}}+\vartheta_{a_{2}}-(\lambda_{1}+\lambda_{2}-3+k))\phi=0,\label{2-1-2}\\
\frac{\partial}{\partial u}\phi=0,\label{2-1-3}\\
\varepsilon_{1}\varepsilon_{2}\phi=0,\label{2-1-4}\\
\begin{split}
[\vartheta_{a_{2}}^{2}-(\lambda_{1}+\lambda_{2}-2+k)\vartheta_{a_{2}}+(2\pi\sqrt{-1})^{2}(\frac{a_{2}}{a_{1}})^{2}\varepsilon_{2}+(\frac{a_{3}}{a_{2}})^{2}\frac{\partial^{2}}{\partial u^{2}}\\+(2\pi\sqrt{-1})^{2}(\frac{a_{4}}{a_{2}})^{2}\varepsilon_{1}+\lambda_{1}(\lambda_{2}+k)-(\vartheta_{a_{3}}+\vartheta_{a_{4}})]\phi=0,
\end{split}\label{2-2-2}\\
[\frac{a_{3}}{a_{2}}\frac{\partial}{\partial u}(\vartheta_{a_{2}}+\vartheta_{a_{3}}-(\lambda_{1}+\lambda_{2}-2+k))+(2\pi\sqrt{-1})^{2}\frac{a_{4}^{2}}{a_{2}a_{3}}\varepsilon_{1}(\varepsilon_{3}-\varepsilon_{1}u)]\phi=0,\label{2-2-3}\\
[\varepsilon_{1}(\vartheta_{a_{2}}+\vartheta_{a_{4}}-(\lambda_{1}+\lambda_{2}-1+k))+(\vartheta_{3}-\vartheta_{1}u)\frac{\partial}{\partial u}]\phi=0,\label{2-2-4}\\
\begin{split}
[\vartheta_{a_{3}}^{2}-(\lambda_{1}&+\lambda_{2}-1+k)\vartheta_{a_{3}}+(\frac{a_{3}}{a_{2}})^{2}\frac{\partial^{2}}{\partial u^{2}}\\&+(2\pi\sqrt{-1})^{2}(\frac{a_{4}}{a_{3}})^{2}(\varepsilon_{3}-\varepsilon_{1}u)^{2}-\vartheta_{a_{4}}+\lambda_{1}(\lambda_{2}+k)]\phi=0,
\end{split}\label{2-3-3}\\
[(\varepsilon_{3}-\varepsilon_{1}u)(\vartheta_{a_{3}}+\vartheta_{a_{4}}-(\lambda_{1}+\lambda_{2}-1+k))+\varepsilon_{1}\frac{\partial}{\partial u}]\phi=0,\label{2-3-4}\\
\begin{split}
[\vartheta_{a_{4}}^{2}-(\lambda_{1}+&\lambda_{2}+k)\vartheta_{a_{4}}+(2\pi\sqrt{-1})^{2}(\frac{a_{4}}{a_{2}})^{2}\varepsilon_{1}\\&+(2\pi\sqrt{-1})^{2}(\frac{a_{4}}{a_{3}})^{2}(\varepsilon_{3}-\varepsilon_{1}u)^{2}+\lambda_{1}(\lambda_{2}+k)]\phi=0,
\end{split}\label{2-4-4}\\
[\vartheta_{a_{1}}+\vartheta_{a_{2}}+\vartheta_{a_{3}}+\vartheta_{a_{4}}-k\lambda_{1}-(4-k)\lambda_{2}]\phi=0.\label{2-central}
\end{gather}
Here $\phi\in C^{\infty}(S_{\text{(II)}}\backslash N\times A)$.
\end{prop}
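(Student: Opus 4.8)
The plan is to read the system \eqref{2-1-1}--\eqref{2-central} directly off the explicit generators of $I_{k}(\lambda)$ recorded in \eqref{generator4}, exactly as Proposition \ref{diff-op-2-1} was obtained in case (I). By Theorem \ref{Yamashita} together with Lemma \ref{cross}, a function $\phi\in C^{\infty}(S_{\text{(II)}}\backslash N\times A)$ lies in $C^{\infty}_{\chi_{l(0,0,\varepsilon_{1},\varepsilon_{2},0,\varepsilon_{3})}}(S_{\text{(II)}}\backslash N\times A;\,I_{k}(\lambda))$ if and only if the corresponding right-$K$-invariant function on $G$ is killed by $R_{X}$ for every $X$ in the generating set of \eqref{generator4}, i.e.\ the sixteen entries of $(\mathbb{E}-\lambda_{1})(\mathbb{E}-\lambda_{2}-k)$ and the single central generator also recorded there. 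Since these generate $I_{k}(\lambda)$ as a \emph{left} ideal and $X\mapsto R_{X}$ is an algebra homomorphism of $U(\g)$ into the left-invariant differential operators (so $R_{YX}=R_{Y}R_{X}$), being killed by the generators is equivalent to being killed by all of $I_{k}(\lambda)$; this yields the asserted ``if and only if''.

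To carry this out I would proceed in three steps. First, right-$K$-invariance gives $R_{Y\kappa}\phi=0$ for every $Y\in U(\g)$ and $\kappa\in\ka$, so each generator may be replaced by an arbitrary representative modulo $U(\g)\ka$; Lemma \ref{symmetric} identifies the $(i,j)$ and $(j,i)$ entries, leaving the ten with $1\le i\le j\le 4$, and Lemma \ref{modk} supplies explicit such representatives. Second, substitute the action of $\g$ on $C^{\infty}(S_{\text{(II)}}\backslash N\times A)$: the diagonal $E_{ii}$ act as $\vartheta_{a_{i}}$, the $\ka$-part of each representative acts by $0$, and the off-diagonal $E_{ij}$ act by the formulas of Proposition \ref{n-difII} with $(\varepsilon_{1},\varepsilon_{2},\varepsilon_{3})$ selected according to the case (II$_{1}$)--(II$_{4}$). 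In forming the products and squares that occur in the representatives of Lemma \ref{modk} one must respect non-commutativity: for instance, in the $(3,3)$-representative $E_{32}^{2}$ contributes $(a_{3}/a_{2})^{2}\,\partial^{2}/\partial u^{2}$ while $E_{43}^{2}$ contributes $(2\pi\sqrt{-1})^{2}(a_{4}/a_{3})^{2}(\varepsilon_{3}-\varepsilon_{1}u)^{2}$, and in the $(2,4)$-representative the composite $E_{32}E_{43}$ yields a $(\varepsilon_{3}-\varepsilon_{1}u)\,\partial/\partial u$ term together with the extra term $-\varepsilon_{1}$ coming from $\partial/\partial u$ hitting the $u$-dependent coefficient of $E_{43}$. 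Third, simplify using $\varepsilon_{i}^{2}=\varepsilon_{i}$ and divide through by the nowhere-vanishing overall factors of the form $2\pi\sqrt{-1}\,a_{i}/a_{j}$; the ten entry-equations together with the central one then become precisely \eqref{2-1-1}--\eqref{2-central}.

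I expect no conceptual difficulty here: every ingredient has already been set up in the preceding sections, so what remains is careful bookkeeping. The one delicate point is keeping the composition convention $R_{YX}=R_{Y}R_{X}$ straight whenever a multiplication operator given by a $u$-dependent coefficient is composed with $\partial/\partial u$, since this is exactly what generates the small shifts in the spectral parameters (such as $\lambda_{1}+\lambda_{2}+k-2\mapsto\lambda_{1}+\lambda_{2}+k-1$ in the $(2,4)$-entry). Once this is fixed the cancellations go through as stated.
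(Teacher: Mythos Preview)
Your proposal is correct and follows exactly the approach the paper takes: the paper's proof of Proposition \ref{diffopII} simply says that, as in Proposition \ref{diff-op-2-1}, the equations are obtained by direct computation from Lemma \ref{symmetric}, Lemma \ref{modk}, and Proposition \ref{n-difII}. Your outline of that computation, including the handling of the non-commutativity in the $(2,4)$-entry that produces the shift $\lambda_{1}+\lambda_{2}+k-2\to\lambda_{1}+\lambda_{2}+k-1$, is accurate.
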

\begin{proof}
As well as Proposition \ref{diff-op-2-1}, these are obtained by the direct computation from Lemma \ref{symmetric}, Proposition \ref{modk} and Proposition \ref{n-difII}.
\end{proof}

(i) \textit{The case $(\varepsilon_{1},\varepsilon_{2},\varepsilon_{3})=(1,1,0)$.}

\begin{thm}\label{thmII-1}
When $(\varepsilon_{1},\varepsilon_{2},\varepsilon_{3})=(1,1,0)$, we have
\[
C^{\infty}_{\chi_{l(0,0,1,1,0,0)}}(S_{\text{(II)}}\backslash N\times A;\,I_{k}(\lambda))=\{0\},
\]
for $k=1,2$.
\end{thm}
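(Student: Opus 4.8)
The plan is to read the statement off from Proposition \ref{diffopII}. That proposition identifies $C^{\infty}_{\chi_{l(0,0,1,1,0,0)}}(S_{\text{(II)}}\backslash N\times A;\,I_{k}(\lambda))$ with the solution space, inside $C^{\infty}(S_{\text{(II)}}\backslash N\times A)$, of the explicit list of differential equations once the parameters are set to the values attached to case (II$_{1}$), namely $(\varepsilon_{1},\varepsilon_{2},\varepsilon_{3})=(1,1,0)$. So the whole proof reduces to exhibiting one equation of that system which, for these parameter values, forces $\phi\equiv 0$.

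The equation to use is (\ref{2-1-4}), which reads $\varepsilon_{1}\varepsilon_{2}\,\phi=0$. It comes from the $(1,4)$-entry of the matrix generator $(\mathbb{E}-\lambda_{1})(\mathbb{E}-\lambda_{2}-k)$: modulo $U(\g)\ka$ that entry is $E_{41}(E_{11}+E_{44}-(\lambda_{1}+\lambda_{2}+k-2))+E_{42}E_{21}+E_{31}E_{43}$ by Lemma \ref{modk}, and on $C^{\infty}(S_{\text{(II)}}\backslash N\times A)$ the operators $E_{41},E_{31}$ act by $0$ while $E_{42},E_{21}$ act as multiplication by $2\pi\sqrt{-1}(a_{4}/a_{2})\varepsilon_{1}$ and $2\pi\sqrt{-1}(a_{2}/a_{1})\varepsilon_{2}$ by Proposition \ref{n-difII}, so after dividing by the nowhere-vanishing function $(2\pi\sqrt{-1})^{2}a_{4}/a_{1}$ only $\varepsilon_{1}\varepsilon_{2}\phi=0$ remains. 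With $\varepsilon_{1}=\varepsilon_{2}=1$ this says $\phi\equiv 0$, hence the solution space is $\{0\}$; this is uniform in $k\in\{1,2\}$ since $k$ does not enter (\ref{2-1-4}). Via Theorem \ref{Yamashita} this also shows that the generalized Whittaker model itself is $\{0\}$.

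I do not expect any genuine obstacle here: the substantive work is already contained in Proposition \ref{diffopII} (and behind it Lemma \ref{symmetric}, Lemma \ref{modk} and Proposition \ref{n-difII}), where the noncommutative computations in $U(\g)$ and the Iwasawa-coordinate form of the right action are carried out. The only point worth a remark is what singles out case (II$_{1}$) from (II$_{2}$)--(II$_{4}$): in those latter cases the product $\varepsilon_{1}\varepsilon_{2}$ vanishes, so (\ref{2-1-4}) becomes vacuous and a nonzero Whittaker model can survive, whereas for (II$_{1}$) it is exactly the coincidence $\varepsilon_{1}\varepsilon_{2}=1$ that annihilates everything.
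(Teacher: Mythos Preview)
Your proof is correct and is essentially the same as the paper's: both observe that equation (\ref{2-1-4}) with $\varepsilon_{1}\varepsilon_{2}=1$ forces $\phi\equiv 0$. Your additional unpacking of how (\ref{2-1-4}) arises from the $(1,4)$-entry in Lemma \ref{modk} via Proposition \ref{n-difII} is accurate but redundant, since that derivation is already subsumed in Proposition \ref{diffopII}.
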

\begin{proof}
It is immediate from the equation (\ref{2-1-4}).
\end{proof}

(ii) \textit{The case $(\varepsilon_{1},\varepsilon_{2},\varepsilon_{3})=(0,1,1)$.}
 
\begin{thm}\label{thmII-2}
When $(\varepsilon_{1},\varepsilon_{2},\varepsilon_{3})=(0,1,1)$, the space $C^{\infty}_{\chi_{l(0,0,0,1,0,1)}}(S_{\text{(II)}}\backslash N\times A;\,I_{k}(\lambda))$ $(k=1,2)$ are written as follows.

(i) If $k=1$, we have
\[
C^{\infty}_{\chi_{l(0,0,0,1,0,1)}}(S_{\text{(II)}}\backslash N\times A;\,I_{1}(\lambda))=\{0\}.
\]

(ii) If $k=2$, the space $C^{\infty}_{\chi_{l(0,0,0,1,0,1)}}(S_{\text{(II)}}\backslash N\times A;\,I_{2}(\lambda))$ consists of 
\[
x_{1}^{\frac{\lambda_{1}+\lambda_{1}-1}{2}}x_{2}^{\frac{1}{2}}x_{3}^{\frac{\lambda_{1}+\lambda_{2}+1}{2}}x_{4}^{\frac{1}{2}}f(2\pi x_{2})g(2\pi x_{3})
\]
for $f(x),g(x)\in \mathcal{MB}(\frac{\lambda_{1}-\lambda_{2}-2}{2};x)$. Here we put $$x_{1}=a_{1}a_{2},x_{2}=a_{1}^{-1}a_{2},x_{3}=a_{3}a_{4},x_{4}=a_{3}^{-1}a_{4}.$$

Thus we have $\dim_{\C}C^{\infty}_{\chi_{l(0,0,0,1,0,1)}}(S_{\text{(II)}}\backslash N\times A;\,I_{2}(\lambda))=4$. There exists a slowly increasing function on the domain $\{(x_{1},\ldots,x_{4},u)\mid x_{i}\in\R_{>0},u\in\R\}$ in $C^{\infty}_{\chi_{l(0,0,0,1,0,1)}}(S_{\text{(II)}}\backslash N\times A;\,I_{2}(\lambda))$ and it is unique up to constant.
\end{thm}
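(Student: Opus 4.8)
The plan is to work directly from the explicit differential system of Proposition~\ref{diffopII}, specialized to $(\varepsilon_1,\varepsilon_2,\varepsilon_3)=(0,1,1)$, and to extract its solution space by a chain of reductions. The first observation is that equation (\ref{2-1-3}) forces every solution $\phi$ to be independent of $u$; since $S_{\text{(II)}}\backslash N\times A\cong\R\times(\R_{>0})^{4}$ with $u$ the coordinate on the first factor, we are reduced to functions of $(a_1,a_2,a_3,a_4)$ alone, and every operator $\partial/\partial u$ or $(a_3/a_2)^{2}\partial^{2}/\partial u^{2}$ occurring in (\ref{2-2-2}), (\ref{2-2-3}), (\ref{2-2-4}), (\ref{2-3-3}) drops out.

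For $k=1$ I would dispose of the problem by a linear-independence argument on the Euler operators. Adding (\ref{2-1-2}) and (\ref{2-3-4}) gives $(\vartheta_{a_1}+\vartheta_{a_2}+\vartheta_{a_3}+\vartheta_{a_4})\phi=(2\lambda_1+2\lambda_2-4+2k)\phi$, and comparing with (\ref{2-central}), which reads $(\vartheta_{a_1}+\cdots+\vartheta_{a_4})\phi=(k\lambda_1+(4-k)\lambda_2)\phi$, one obtains $(k-2)(\lambda_1-\lambda_2-2)\phi=0$. For $k=1$ this would force $\lambda_1-\lambda_2=2\in\Z$, contradicting the standing assumption $\lambda_1-\lambda_2\notin\Z$, so $C^{\infty}_{\chi_{l(0,0,0,1,0,1)}}(S_{\text{(II)}}\backslash N\times A;\,I_{1}(\lambda))=\{0\}$.

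For $k=2$ I would pass to the coordinates $x_1=a_1a_2$, $x_2=a_1^{-1}a_2$, $x_3=a_3a_4$, $x_4=a_3^{-1}a_4$ --- a diffeomorphism of $(\R_{>0})^{4}$ --- under which $\vartheta_{a_1}=\vartheta_{x_1}-\vartheta_{x_2}$, $\vartheta_{a_2}=\vartheta_{x_1}+\vartheta_{x_2}$ and similarly for $a_3,a_4$, while $(a_2/a_1)^{2}=x_2^{2}$ and $(a_4/a_3)^{2}=x_4^{2}$. Then (\ref{2-1-2}) and (\ref{2-3-4}) become the Euler equations $\vartheta_{x_1}\phi=\tfrac12(\lambda_1+\lambda_2-1)\phi$ and $\vartheta_{x_3}\phi=\tfrac12(\lambda_1+\lambda_2+1)\phi$, so $\phi=x_1^{(\lambda_1+\lambda_2-1)/2}x_3^{(\lambda_1+\lambda_2+1)/2}\psi(x_2,x_4)$. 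Substituting this ansatz into (\ref{2-1-1}) and simplifying with the help of (\ref{2-central}), all terms involving $x_1,x_3,x_4$ cancel and one is left with a second-order ODE in $x_2$ alone; after extracting the factor $x_2^{1/2}$ and setting $z=2\pi x_2$ this is exactly the modified Bessel equation of index $\nu=\tfrac12(\lambda_1-\lambda_2-2)$. Symmetrically (\ref{2-4-4}) collapses to the modified Bessel equation in $2\pi x_4$ of the same index $\nu$. A bookkeeping check then shows that (\ref{2-2-2}) and (\ref{2-3-3}) reduce, on this ansatz, to precisely these same two equations, while (\ref{2-1-4}), (\ref{2-2-3}), (\ref{2-2-4}) hold automatically on $u$-independent functions; hence the full system is equivalent to the pair of decoupled Bessel equations, and its solution space is $\{x_1^{(\lambda_1+\lambda_2-1)/2}x_2^{1/2}x_3^{(\lambda_1+\lambda_2+1)/2}x_4^{1/2}f(2\pi x_2)g(2\pi x_4)\mid f,g\in\mathfrak{MB}(\nu;x)\}$, of dimension $2\times2=4$.

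The final assertion follows from the structure of $\mathfrak{MB}(\nu;x)$: since $K_\nu$ spans the one-dimensional subspace of slowly increasing solutions and $I_\nu$ grows exponentially, the product solution above is slowly increasing on $(\R_{>0})^{4}\times\R$ precisely when $f$ and $g$ are both proportional to $K_\nu$, the monomial prefactor $x_1^{(\lambda_1+\lambda_2-1)/2}x_3^{(\lambda_1+\lambda_2+1)/2}x_2^{1/2}x_4^{1/2}$ contributing only a polynomial weight; this pins the slowly increasing solution down uniquely up to a scalar. I expect the only genuinely delicate point to be the reduction in the $k=2$ case: one must verify that the four a priori distinct second-order operators (\ref{2-1-1}), (\ref{2-2-2}), (\ref{2-3-3}), (\ref{2-4-4}) all collapse, on the two-parameter Euler ansatz, to the same two Bessel equations --- i.e.\ that the over-determined system is consistent and imposes no further constraint --- which amounts to careful but unenlightening algebra with the operators $\vartheta_{x_i}$.
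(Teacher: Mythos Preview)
Your proof is correct and follows essentially the same route as the paper's own argument: for $k=1$ you combine (\ref{2-1-2}) and (\ref{2-3-4}) and compare with (\ref{2-central}) to force a contradiction with $\lambda_1-\lambda_2\notin\Z$, and for $k=2$ you eliminate $\partial/\partial u$, pass to the coordinates $x_1=a_1a_2$, $x_2=a_1^{-1}a_2$, $x_3=a_3a_4$, $x_4=a_3^{-1}a_4$, and reduce to two decoupled modified Bessel equations. Note that your formula $g(2\pi x_4)$ is actually the correct one --- the $g(2\pi x_3)$ appearing in the stated theorem is a typo, as the paper's own derivation of the Bessel equation in $x_4$ confirms.
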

\begin{proof}
We show the case $k=1$ first. By the equations (\ref{2-1-2}) and (\ref{2-3-4}), we have the new one,
\[
[\vartheta_{a_{1}}+\vartheta_{a_{2}}+\vartheta_{a_{3}}+\vartheta_{a_{4}}-2\lambda_{1}-2\lambda_{2}+2]\phi=0.
\]
Comparing this equation with the equation (\ref{2-central}), we can conclude that the space $C^{\infty}_{\chi_{l(0,0,0,1,0,1)}}(S_{\text{(II)}}\backslash N\times A;\,I_{1}(\lambda))$ is equal to $\{0\}.$
Next, we consider the case $k=2$. By the equation (\ref{2-1-3}), we can eliminate the terms $\frac{\partial}{\partial u}$ from the other differential equations. Then the equations in Proposition \ref{diffopII} are reduced to the followings,
\begin{gather*}
[\vartheta_{a_{1}}+\vartheta_{a_{2}}-(\lambda_{1}+\lambda_{2}-1)]\phi=0,\\
\begin{split}
[\vartheta_{a_{1}}^{2}-(\lambda_{1}+&\lambda_{2}-1)\vartheta_{a_{1}}+\lambda_{1}(\lambda_{2}+2)\\&+(2\pi\sqrt{-1})a_{1}^{-2}a_{2}^{2}-(\vartheta_{a_{2}}+\vartheta_{a_{3}}+\vartheta_{a_{4}})]\phi=0,
\end{split}\\
[\vartheta_{a_{2}}^{2}-(\lambda_{1}+\lambda_{2})\vartheta_{a_{2}}+(2\pi\sqrt{-1})^{2}+\lambda_{1}(\lambda_{2}+2)-(\vartheta_{a_{3}}+\vartheta_{a_{4}})]\phi=0,\\
[\vartheta_{a_{3}}+\vartheta_{a_{4}}-(\lambda_{1}+\lambda_{2}+1)]\phi=0,\\
[\vartheta_{a_{3}}^{2}-(\lambda_{1}+\lambda_{2}+1)\vartheta_{a_{3}}+(2\pi\sqrt{-1})^{2}a_{3}^{-2}a_{4}^{2}-\vartheta_{a_{4}}+\lambda_{1}(\lambda_{2}+2)]\phi=0,\\
[\vartheta_{a_{4}}^{2}-(\lambda_{1}+\lambda_{2}+2)\vartheta_{a_{4}}+(2\pi\sqrt{-1})^{2}a_{3}^{-2}a_{4}^{2}+\lambda_{1}(\lambda_{2}+2)]\phi=0.
\end{gather*}
We put 
\begin{align*}
x_{1}&=a_{1}a_{2}, &x_{2}&=a_{1}^{-1}a_{2},\\
x_{3}&=a_{3}a_{4}, &x_{4}&=a_{3}^{-1}a_{4},\\
\end{align*}
Then we can rewrite above differential equations as follows,
\begin{gather*}
[2\vartheta_{x_{1}}-(\lambda_{1}+\lambda_{2}-1)]\phi=0,\\
[2\vartheta_{x_{3}}-(\lambda_{1}+\lambda_{2}+1)]\phi=0,\\
[\vartheta_{x_{2}}^{2}-\vartheta_{x_{2}}+(2\pi\sqrt{-1}x_{2})^{2}-(\frac{\lambda_{1}-\lambda_{2}-2}{2})^{2}-\frac{1}{4}]\phi=0,\\
[\vartheta_{x_{4}}^{2}-\vartheta_{x_{4}}+(2\pi\sqrt{-1}x_{4})^{2}-(\frac{\lambda_{1}-\lambda_{2}-2}{2})^{2}-\frac{1}{4}]\phi=0.
\end{gather*}
We take $\phi'$ as $\phi=x_{2}^{\frac{1}{2}}x_{4}^{\frac{1}{2}}\phi'$. Then we can see that $\phi'$ satisfies following equations,
\begin{gather*}
[2\vartheta_{x_{1}}-(\lambda_{1}+\lambda_{2}-1)]\phi'=0,\\
[2\vartheta_{x_{3}}-(\lambda_{1}+\lambda_{2}+1)]\phi'=0,\\
[\vartheta_{x_{2}}^{2}-((2\pi\sqrt{-1}x_{2})^{2}+(\frac{\lambda_{1}-\lambda_{2}-2}{2})^{2})]\phi'=0,\\
[\vartheta_{x_{4}}^{2}-((2\pi\sqrt{-1}x_{4})^{2}+(\frac{\lambda_{1}-\lambda_{2}-2}{2})^{2})]\phi=0.
\end{gather*}
Then we can conclude that
\[
\phi(x_{1},x_{2},x_{3},x_{4})=x_{1}^{\frac{\lambda_{1}+\lambda_{1}-1}{2}}x_{2}^{\frac{1}{2}}x_{3}^{\frac{\lambda_{1}+\lambda_{2}+1}{2}}x_{4}^{\frac{1}{2}}f(2\pi x_{2})g(2\pi x_{3}),
\]
where $f,g\in\mathfrak{MB}(\frac{\lambda_{1}-\lambda_{2}-2}{2};x)$. Hence we have the proposition.
\end{proof}

(iii) \textit{The case $(\varepsilon_{1},\varepsilon_{2},\varepsilon_{3})=(0,1,0)$.}
\begin{thm}\label{thmII-3}
When $(\varepsilon_{1},\varepsilon_{2},\varepsilon_{3})=(0,1,0)$, the space $C^{\infty}_{\chi_{l(0,0,0,1,0,0)}}(S_{\text{(II)}}\backslash N\times A;\,I_{k}(\lambda))$ $(k=1,2)$ are written as follows.

(i) If $k=1$, the space $C^{\infty}_{\chi_{l(0,0,0,1,0,0)}}(S_{\text{(II)}}\backslash N\times A;\,I_{1}(\lambda))$ consists of 
\[
x_{1}^{\frac{\lambda_{1}+\lambda_{2}-2}{2}}x_{2}^{\frac{1}{2}}(x_{3}x_{4})^{\lambda_{2}+1}f(2\pi x_{2})
\]
for $f(x)\in \mathfrak{MB}(\frac{\lambda_{1}-\lambda_{2}-3}{2};x)$. Here we put $$x_{1}=a_{1}a_{2},x_{2}=a_{1}^{-1}a_{2},x_{3}=a_{3},x_{4}=a_{4}.$$

Thus we have $\dim_{\C}C^{\infty}_{\chi_{l(0,0,0,1,0,0)}}(S_{\text{(II)}}\backslash N\times A;\,I_{1}(\lambda))=2$. There exists a slowly increasing function on the domain $\{(x_{1},\ldots,x_{4},u)\mid x_{i}\in\R_{>0},u\in\R\}$ in $C^{\infty}_{\chi_{l(0,0,0,1,0,0)}}(S_{\text{(II)}}\backslash N\times A;\,I_{2}(\lambda))$ and it is unique up to constant.

(ii) If $k=2$, the space $C^{\infty}_{\chi_{l(0,0,0,1,0,0)}}(S_{\text{(II)}}\backslash N\times A;\,I_{2}(\lambda))$ consists of 
\[
(C_{1}x_{3}^{\lambda_{2}}x_{4}^{\lambda_{1}}+C_{2}x_{3}^{\lambda_{1}-1}x_{4}^{\lambda_{2}+2})\times x_{1}^{\frac{\lambda_{1}+\lambda_{2}-1}{2}}x_{2}^{\frac{1}{2}}f(2\pi x_{2})
\]
for $f(x)\in \mathfrak{MB}(\frac{\lambda_{1}-\lambda_{2}-2}{2};x)$ and $C_{1},C_{2}\in\C$. Here $x_{i}$ $(i=1\ldots,4)$ are same as (i).

Thus we have $\dim_{\C}C^{\infty}_{\chi_{l(0,0,0,1,0,0)}}(S_{\text{(II)}}\backslash N\times A;\,I_{2}(\lambda))=4$. There exists a 2-dimentional supspace of $C^{\infty}_{\chi_{l(0,0,0,1,0,0)}}(S_{\text{(II)}}\backslash N\times A;\,I_{2}(\lambda))$ which consits of slowly increasing functions on $\{(x_{1},\ldots,x_{4},u)\mid x_{i}\in\R_{>0},u\in\R\}$.
\end{thm}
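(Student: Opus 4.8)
The plan is to start from the reduced system in Proposition \ref{diffopII} with $(\varepsilon_1,\varepsilon_2,\varepsilon_3)=(0,1,0)$ and push it through the same kind of normalization that worked in Theorem \ref{thmII-2}. First I would use equation (\ref{2-1-3}), which says $\frac{\partial}{\partial u}\phi=0$, to kill every $u$-derivative and every explicit $u$ appearing in the remaining equations; since $\varepsilon_1=\varepsilon_3=0$ there is no stray $u$ left, so the system collapses to one purely in the four variables $a_1,a_2,a_3,a_4$. In these variables equation (\ref{2-1-2}) becomes the first-order constraint $[\vartheta_{a_1}+\vartheta_{a_2}-(\lambda_1+\lambda_2-3+k)]\phi=0$, equations (\ref{2-3-3}), (\ref{2-3-4}) and (\ref{2-4-4}) degenerate (the $(\varepsilon_3-\varepsilon_1 u)$-terms vanish), and (\ref{2-central}) is the homogeneity relation $[\vartheta_{a_1}+\vartheta_{a_2}+\vartheta_{a_3}+\vartheta_{a_4}-k\lambda_1-(4-k)\lambda_2]\phi=0$.

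Next I would change coordinates to $x_1=a_1a_2,\ x_2=a_1^{-1}a_2,\ x_3=a_3,\ x_4=a_4$ (and keep $u$ as an inert fifth variable), exactly as in Theorem \ref{thmII-2}, checking the Jacobian is nonvanishing on the positive orthant as in the lemmas preceding that theorem. Under this substitution the two first-order relations pin down the $x_1$-dependence as $x_1^{(\lambda_1+\lambda_2-3+k)/2}$ and fix the combined $x_3,x_4$-homogeneity. Equations (\ref{2-2-2}) and (\ref{2-1-1}), after eliminating the lower-order Euler operators by the two first-order constraints, should reduce to a single second-order ODE in $x_2$ of the form $[\vartheta_{x_2}^2-\vartheta_{x_2}+(2\pi\sqrt{-1}x_2)^2-\nu^2-\tfrac14]\phi=0$; substituting $\phi=x_2^{1/2}\phi'$ turns it into the modified Bessel equation $\mathfrak{MB}(\nu;x)$ with $\nu=\frac{\lambda_1-\lambda_2-3}{2}$ when $k=1$ and $\nu=\frac{\lambda_1-\lambda_2-2}{2}$ when $k=2$. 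Meanwhile equations (\ref{2-3-3}), (\ref{2-4-4}) and (the surviving part of) the rest, since $x_3,x_4$ now enter only through powers and not through any Bessel-type argument, should force $\phi$ to be a polynomial combination $C_1 x_3^{a}x_4^{b}+C_2 x_3^{c}x_4^{d}$ with exponents read off from the factorized indicial equations — for $k=1$ these two solutions must coincide into the single term $(x_3x_4)^{\lambda_2+1}$ (this is where the difference between $\dim=2$ and $\dim=4$ comes from), while for $k=2$ they stay distinct, giving $(C_1x_3^{\lambda_2}x_4^{\lambda_1}+C_2x_3^{\lambda_1-1}x_4^{\lambda_2+2})$.

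Having located the full solution space, I would finish by counting dimensions: the Bessel factor contributes a $2$-dimensional space $\mathfrak{MB}(\nu;x)$, and the $x_3,x_4$ part contributes $1$ for $k=1$ and $2$ for $k=2$, so $\dim_\C=2$ resp.\ $4$. For the growth statement I would invoke that $K_\nu(x)$ is the unique (up to scalar) slowly increasing solution in $\mathfrak{MB}(\nu;x)$, as recalled before Theorem \ref{thmI-2}: choosing this Bessel solution and keeping both (for $k=2$) monomial pieces — each a genuine power function, hence slowly increasing on the positive orthant — yields a slowly increasing element, unique up to scalar when $k=1$ and spanning a $2$-dimensional subspace when $k=2$, since $I_\nu$ grows exponentially and any admixture of it is excluded.

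The main obstacle I anticipate is purely bookkeeping: verifying that the many second-order equations (\ref{2-2-2})--(\ref{2-4-4}) are genuinely consistent and really do reduce — after subtracting suitable multiples of the first-order constraints (\ref{2-1-2}) and (\ref{2-central}) and multiplying by appropriate monomials in the $a_i$ — to exactly one modified Bessel equation in $x_2$ plus decoupled Euler equations in $x_3,x_4$, with no residual cross-terms surviving. The delicate point is that for $k=1$ the two indicial roots in the $x_3,x_4$-sector must collapse to a single solution (otherwise the dimension would be $4$, not $2$), so I will need to track the exponents carefully and confirm that the second root is killed by one of the remaining equations; the hypothesis $\lambda_1-\lambda_2\notin\Z$ is what guarantees the two roots are never accidentally equal when $k=2$, and also that $\mathfrak{MB}(\nu;x)\cap\mathfrak{MB}(\mu;x)=\{0\}$ for the distinct indices that appear.
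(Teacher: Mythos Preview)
Your proposal is correct and follows essentially the same route as the paper's own proof: eliminate $u$ via (\ref{2-1-3}), pass to $x_1=a_1a_2,\ x_2=a_1^{-1}a_2,\ x_3=a_3,\ x_4=a_4$, use (\ref{2-1-2}) together with (\ref{2-central}) to fix both the $x_1$-exponent and the $x_3,x_4$-homogeneity, reduce (\ref{2-1-1})/(\ref{2-2-2}) to the modified Bessel equation after the $x_2^{1/2}$ shift, and read the $x_3,x_4$-exponents off the factored indicial equations coming from (\ref{2-3-3}) and (\ref{2-4-4}). Your anticipated ``bookkeeping obstacle'' is exactly what happens: the combination of (\ref{2-1-2}) and (\ref{2-central}) yields the extra constraint $\vartheta_{a_3}+\vartheta_{a_4}=(k-1)\lambda_1+(3-k)\lambda_2+(3-k)$, which for $k=1$ kills three of the four candidate monomials and leaves only $(x_3x_4)^{\lambda_2+1}$, while for $k=2$ it leaves two.
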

\begin{proof}
By the equation (\ref{2-1-3}), we can eliminate the term $\frac{\partial}{\partial u}$ from the other equations. Then the equations in Proposition \ref{diffopII} are reduced to the followings,
\begin{gather*}
[\vartheta_{a_{1}}^{2}-(\lambda_{1}+\lambda_{2}-4+k)\vartheta_{a_{1}}+(2\pi\sqrt{-1})^{2}a_{1}^{-2}a_{2}^{2}+\lambda_{2}(\lambda_{1}-(4-k))]\phi=0,\\
[\vartheta_{a_{1}}+\vartheta_{a_{2}}-(\lambda_{1}+\lambda_{2}-3+k)]\phi=0,\\
[\vartheta_{a_{2}}^{2}-(\lambda_{1}+\lambda_{2}-3+k)\vartheta_{a_{2}}+(2\pi\sqrt{-1})^{2}a_{1}^{-2}a_{2}^{2}+\vartheta_{a_{1}}+\lambda_{2}(\lambda_{1}-(4-k))]\phi=0,\\
[\vartheta_{a_{3}}^{2}-(\lambda_{1}+\lambda_{2}-1+k)\vartheta_{a_{3}}-\vartheta_{a_{4}}+\lambda_{1}(\lambda_{2}+k)]\phi=0,\\
[(\vartheta_{a_{4}}-\lambda_{1})(\vartheta_{a_{4}}-(\lambda_{2}+k))]\phi=0.
\end{gather*}
We put
\begin{align*}
x_{1}&=a_{1}a_{2},&x_{2}&=a_{1}^{-1}a_{2}.
\end{align*}
We take $\phi'$ as $\phi=x_{2}^{\frac{1}{2}}\phi'$.
Then we have
\begin{gather}
[\vartheta_{x_{1}}-\frac{\lambda_{1}+\lambda_{2}-3+k}{2}]\phi'=0,\label{nn1}\\
[\vartheta_{x_{2}}^{2}-((2\pi x_{2})^{2}+(\frac{\lambda_{1}-\lambda_{2}-(4-k)}{2})^{2})]\phi'=0,\label{nn2}\\
[\vartheta_{a_{3}}^{2}-(\lambda_{1}+\lambda_{2}-1+k)\vartheta_{a_{3}}-\vartheta_{a_{4}}+\lambda_{1}(\lambda_{2}+k)]\phi'=0,\label{nn3}\\
(\vartheta_{a_{4}}-\lambda_{1})(\vartheta_{a_{4}}-(\lambda_{2}+k))\phi'=0.\label{nn4}
\end{gather} 
The solution of (\ref{nn1}) and (\ref{nn2}) is $$\phi'(x_1,x_2,a_3,a_4)=c(a_3,a_4)x_{1}^{\frac{\lambda_{1}+\lambda_{2}-3+k}{2}}f(2\pi x_{2}),$$ for an arbitrary function $c(a_{3},a_{4})$ and $f(x)\in\mathfrak{MB}(\frac{\lambda_{1}-\lambda_{2}-(4-k)}{2};x)$. We solve the equations (\ref{nn3}) and (\ref{nn4}) to  determine the function $c(a_{3},c_{4})$. Then we can see that
\[
c(a_{3},a_{4})=
\begin{cases}
(a_{3}a_{4})^{\lambda_{2}+1}&\text{for}\ $k=1$,\\
C_{1}a_{3}^{\lambda_{2}+1}a_{4}^{\lambda_{1}}+C_{2}a_{3}^{\lambda_{1}-1}a_{4}^{\lambda_{2}+2}&\text{for}\ k=2,
\end{cases}
\]
for some constants $C_{1},C_{2}\in\C$. This concludes the proposition.
\end{proof}
\subsubsection{The embeddings into the space (III)}
We consider the embeddings of $X_{k,\lambda}$ ($k=1,2$) into the spaces type (III$_{i}$) $(i=1,\ldots,8)$ which are classified in Proposition \ref{classify}. These spaces are isomorphic to $C^{\infty}_{\chi((0,0,0,\varepsilon_{1},\varepsilon_{2},\varepsilon_{3}))}(N\backslash G/K;\,I_{k}(\lambda))$. We consider the image of this space by the map $\Xi_{(\text{III})}$ defined in Section \ref{diff}. Here 
\[
(\varepsilon_{1},\varepsilon_{2},\varepsilon_{3})=
\begin{cases}
(1,1,1)&\text{if the case (III$_{1}$)},\\
(1,1,0)&\text{if the case (III$_{2}$)},\\
(1,0,1)&\text{if the case (III$_{3}$)},\\
(0,1,1)&\text{if the case (III$_{4}$)},\\
(1,0,0)&\text{if the case (III$_{5}$)},\\
(0,1,0)&\text{if the case (III$_{6}$)},\\
(0,0,1)&\text{if the case (III$_{7}$)},\\
(0,0,0)&\text{if the case (III$_{8}$)}.
\end{cases}
\]
We denote the image of $\Xi_{(III)}$ by
\[
C^{\infty}_{\chi_{l(0,0,0,\varepsilon_{1},\varepsilon_{2},\varepsilon_{3})}}(A;I_{k}(\lambda))=\Xi_{(III)}(C^{\infty}_{\chi((0,0,0,\varepsilon_{1},\varepsilon_{2},\varepsilon_{3}))}(N\backslash G/K\,;\,I_{k}(\lambda))).
\]
\begin{prop}
The space $C^{\infty}_{\chi_{l(0,0,0,\varepsilon_{1},\varepsilon_{2},\varepsilon_{3})}}(A;I_{k}(\lambda))$ is equal to the solution space of the following system of the differential equations on $C^{\infty}(A)$.

\begin{gather}
\begin{split}
[\vartheta_{a_{1}}-(\lambda_{1}+\lambda_{2}+k-3)\vartheta_{a_{1}}&+(2\pi\sqrt{-1}\frac{a_{2}}{a_{1}})^{2}\varepsilon_{1}\\
&-(\vartheta_{a_{2}}+\vartheta_{a_{3}}+\vartheta_{a_{4}})+\lambda_{1}(\lambda_{2}+k)]\phi=0,
\end{split}\\
\varepsilon_{1}2\pi\sqrt{-1}\frac{a_{2}}{a_{1}}(\vartheta_{a_{1}}+\vartheta_{a_{2}}-(\lambda_{1}+\lambda_{2}+k-3))\phi=0,\\
\varepsilon_{1}\varepsilon_{2}\phi=0,\label{ufufu}\\
\begin{split}
[\vartheta_{a_{2}}-(\lambda_{1}+\lambda_{2}+k-2)\vartheta_{a_{2}}+&(2\pi\sqrt{-1}\frac{a_{2}}{a_{1}})^{2}\varepsilon_{1}+(2\pi\sqrt{-1}\frac{a_{3}}{a_{2}})^{2}\varepsilon_{2}\\&
-(\vartheta_{a_{3}}+\vartheta_{a_{4}})+\lambda_{1}(\lambda_{2}+k)]\phi=0,
\end{split}\\
\varepsilon_{2}2\pi\sqrt{-1}\frac{a_{3}}{a_{2}}(\vartheta_{a_{2}}+\vartheta_{a_{3}}-(\lambda_{1}+\lambda_{2}+k-2))\phi=0,\\
\varepsilon_{2}\varepsilon_{3}\phi=0,\label{ahaha}\\
\begin{split}
[\vartheta_{a_{3}}^{2}-(\lambda_{1}+\lambda_{2}&+k-1)\vartheta_{a_{3}}+(2\pi\sqrt{-1}\frac{a_{3}}{a_{2}})^{2}\varepsilon_{2}\\
&+(2\pi\sqrt{-1}\frac{a_{4}}{a_{3}})^{2}\varepsilon_{3}-\vartheta_{a_{4}}+\lambda_{1}(\lambda_{2}+k)]\phi=0,
\end{split}\\
\varepsilon_{3}2\pi\sqrt{-1}\frac{a_{4}}{a_{3}}(\vartheta_{a_{3}}+\vartheta_{a_{4}}-(\lambda_{1}+\lambda_{2}+k-1))\phi=0,\\
[\vartheta_{a_{4}}^{2}-(\lambda_{1}+\lambda_{2}+k)\vartheta_{a_{4}}+(2\pi\sqrt{-1}\frac{a_{4}}{a_{3}})^{2}\varepsilon_{3}+\lambda_{1}(\lambda_{2}+k)]\phi=0,\\
[\vartheta_{a_{1}}+\vartheta_{a_{2}}+\vartheta_{a_{3}}+\vartheta_{a_{4}}-k\lambda_{1}-(4-k)\lambda_{2}]\phi=0.
\end{gather}
Here $\phi\in C^{\infty}(A)$.
\end{prop}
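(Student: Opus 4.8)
The plan is to run, for the homogeneous space of type (III), exactly the reduction that produced Proposition \ref{diff-op-2-1} and Proposition \ref{diffopII}. By Theorem \ref{Yamashita} together with the bijection $\Xi_{(\text{III})}$ of Lemma \ref{cross} (here $U=N$, so $U\backslash N$ is a point and $\Xi_{(\text{III})}$ is just restriction to $A$), the space $C^{\infty}_{\chi_{l(0,0,0,\varepsilon_{1},\varepsilon_{2},\varepsilon_{3})}}(A;I_{k}(\lambda))$ is by definition the image under $\Xi_{(\text{III})}$ of $\{f\in C^{\infty}_{\chi}(N\backslash G/K)\mid R_{X}f=0 \text{ for } X\in I_{k}(\lambda)\}$. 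Transporting this condition along the identity $X\cdot\Xi_{(\text{III})}(f)=\Xi_{(\text{III})}(R_{X}f)$ of Section \ref{diff}, a function $\phi\in C^{\infty}(A)$ lies in that space if and only if $X\cdot\phi=0$ for every $X$ in a generating set of $I_{k}(\lambda)$; by Theorem \ref{generator1} (applicable because $\lambda_{\Theta}+\rho$ is regular and dominant) and (\ref{generator4}), one may take the sixteen entries $\bigl((\mathbb{E}-\lambda_{1})(\mathbb{E}-\lambda_{2}-k)\bigr)_{ij}$, $1\le i,j\le 4$, together with the central generator $\bigtriangleup_{1}-\chi_{\lambda}(\bigtriangleup_{1})$.

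Next I would carry out the familiar simplifications. Because every function in the image of $\Xi_{(\text{III})}$ descends from a right $K$-invariant function on $G$, the subalgebra $\ka$ acts as $0$ on $C^{\infty}(A)$, so each generator may be replaced by any representative of its class modulo $U(\g)\ka$; Lemma \ref{symmetric} then shows that the $(i,j)$ and $(j,i)$ entries act identically, leaving only the ten entries with $1\le i\le j\le 4$, for which Lemma \ref{modk} supplies explicit representatives lying in $U(\n\oplus\ah)$. The central generator produces at once the last displayed equation.

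It then only remains to insert the operators of Proposition \ref{n-dimIII}: the $E_{ii}$ act as $\vartheta_{a_{i}}$, the vectors $E_{21},E_{32},E_{43}$ act as multiplication by $2\pi\sqrt{-1}\,\frac{a_{2}}{a_{1}}\varepsilon_{1}$, $2\pi\sqrt{-1}\,\frac{a_{3}}{a_{2}}\varepsilon_{2}$, $2\pi\sqrt{-1}\,\frac{a_{4}}{a_{3}}\varepsilon_{3}$, and $E_{31},E_{41},E_{42}$ act as $0$. Substituting these into the ten representatives of Lemma \ref{modk} and using $\varepsilon_{i}^{2}=\varepsilon_{i}$ (as $\varepsilon_{i}\in\{0,1\}$) reproduces the stated system: the $(1,3)$ and $(2,4)$ entries collapse to the scalar relations $\varepsilon_{1}\varepsilon_{2}\phi=0$ and $\varepsilon_{2}\varepsilon_{3}\phi=0$, while the $(1,4)$ entry $E_{41}(E_{11}+E_{44}-(\lambda_{1}+\lambda_{2}+k-2))+E_{42}E_{21}+E_{31}E_{43}$ vanishes term by term and yields no equation, which is why one obtains ten, not eleven, nontrivial equations besides the central one.

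The computation is entirely routine, running just as in the proofs of Proposition \ref{diff-op-2-1} and Proposition \ref{diffopII}. The only delicate point — and hence the one I would flag as the ``hard part'' — is bookkeeping: the substitution must be made on genuine elements of $U(\g)$, so one rewrites each matrix entry in the normal form of Lemma \ref{modk} before replacing the $E_{ij}$ by their operators, and one checks that every commutator correction has already been absorbed into that normal form. This causes no real difficulty, since $[\n,\ah]\subseteq\n$ and the surviving $\n$-operators are mutually commuting multiplications, so no term is lost; thus there is no genuine obstacle.
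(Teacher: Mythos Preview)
Your proposal is correct and follows exactly the same approach as the paper, which simply says the equations are obtained by direct computation from Lemma \ref{symmetric}, Lemma \ref{modk}, and Proposition \ref{n-dimIII}; you have merely spelled out that computation in detail. One trivial slip: after dropping the $(1,4)$ entry you are left with nine equations from the matrix entries plus the central one, i.e.\ ten equations in total, not ``ten besides the central one''.
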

\begin{proof}
As well as Proposition \ref{diff-op-2-1} and Proposition \ref{diffopII}, this system of differential equations are obtained by the direct computation by using Lemma \ref{symmetric}, Proposition \ref{modk} and Proposition \ref{n-dimIII}.
\end{proof}
\begin{thm}\label{thmIII}
The space $C^{\infty}_{\chi_{l(0,0,0,\varepsilon_{1},\varepsilon_{2},\varepsilon_{3})}}(A;I_{k}(\lambda))$ are written as follows.
\begin{itemize}
\item[(i)] When 
\[(\varepsilon_{1},\varepsilon_{2},\varepsilon_{3})=
\begin{cases}
(1,1,1)\\
(1,1,0)\\
(0,1,1)
\end{cases},
\]
then we have
\[
C^{\infty}_{\chi_{l(0,0,0,\varepsilon_{1},\varepsilon_{2},\varepsilon_{3})}}(A;I_{k}(\lambda))=\{0\}.
\]
\item[(ii)]  When $(\varepsilon_{1},\varepsilon_{2},\varepsilon_{3})=(1,0,1)$, the space $C^{\infty}_{\chi_{l(0,0,0,1,0,1)}}(A;I_{k}(\lambda))$ consists of followings. If $k=1$, we have $C^{\infty}_{\chi_{l(0,0,0,1,0,1)}}(A;I_{1}(\lambda))=\{0\}$. If $k=2$,  the space $C^{\infty}_{\chi_{l(0,0,0,1,0,1)}}(A;I_{2}(\lambda))$ consists of
\[
x_{1}^{\frac{1}{2}}x_{2}^{\frac{\lambda_{1}+\lambda_{2}-1}{2}}x_{3}^{\frac{1}{2}}x_{4}^{\frac{\lambda_{1}+\lambda_{2}+1}{2}}f(2\pi x_{1})g(2\pi x_{3})
\]
for $f(x), g(x)\in\mathfrak{MB}(\frac{\lambda_{1}-\lambda_{2}-2}{2};x)$. Here we put $$x_{1}=\frac{a_{2}}{a_{1}},x_{2}=a_{1}a_{2},x_{3}=\frac{a_{4}}{a_{3}},x_{4}=a_{3}a_{4}.$$

Thus we have $\dim_{\C}C^{\infty}_{\chi_{l(0,0,0,1,0,1)}}(A;I_{2}(\lambda))=4$. There is a 1-dimensional subspace of $C^{\infty}_{\chi_{l(0,0,0,1,0,1)}}(A;I_{2}(\lambda))$ which consists  slowly increasing functions on $\{(x_{1},\ldots,x_{4})\mid x_{i}\in\R_{>0},i=1,\ldots,4\}$.

\item[(iii)] When $(\varepsilon_{1},\varepsilon_{2},\varepsilon_{3})=(1,0,0)$, the space $C^{\infty}_{\chi_{l(0,0,0,1,0,0)}}(A;I_{k}(\lambda))$ consists of 
\[
x_{1}^{\frac{1}{2}}x_{2}^{\frac{\lambda_{1}+\lambda_{2}+k-3}{2}}f(2\pi x_{1})\times
\begin{cases}
x_{3}^{\lambda_{2}+1}x_{4}^{\lambda_{2}+1}&\text{if}\ k=1,\\
C_{1}x_{3}^{\lambda_{2}+1}x_{4}^{\lambda_{1}}+C_{2}x_{3}^{\lambda_{1}-1}x_{4}^{\lambda_{2}+2}&\text{if}\ k=2,
\end{cases}
\]
for $f(x)\in\mathfrak{MB}(\frac{\lambda_{1}-\lambda_{2}+k-4}{2};x)$ and $C_{1},C_{2}\in \C$. Here we put $$x_{1}=\frac{a_{2}}{a_{1}},x_{2}=a_{1}a_{2},x_{3}=a_{3},x_{4}=a_{4}.$$

Thus if $k=1$, we have $\dim_{\C}C^{\infty}_{\chi_{l(0,0,0,1,0,0)}}(A;I_{1}(\lambda))=2$. There is a 1-dimensional subspace of $C^{\infty}_{\chi_{l(0,0,0,1,0,0)}}(A;I_{1}(\lambda))$ which consists  slowly increasing functions on $\{(x_{1},\ldots,x_{4})\mid x_{i}\in\R_{>0},i=1,\ldots,4\}$. 

Also if $k=2$, we have $\dim_{\C}C^{\infty}_{\chi_{l(0,0,0,1,0,0)}}(A;I_{2}(\lambda))=4$. There is a 2-dimensional subspace of $C^{\infty}_{\chi_{l(0,0,0,1,0,0)}}(A;I_{2}(\lambda))$ which consists  slowly increasing functions on $\{(x_{1},\ldots,x_{4})\mid x_{i}\in\R_{>0},i=1,\ldots,4\}$.

\item[(iv)] When $(\varepsilon_{1},\varepsilon_{2},\varepsilon_{3})=(0,1,0)$, the space $C^{\infty}_{\chi_{l(0,0,0,0,1,0)}}(A;I_{k}(\lambda))$ consists of followings. We put $$x_{1}=a_{1},x_{2}=\frac{a_{3}}{a_{2}},x_{3}=a_{2}a_{3},x_{4}=a_{4}.$$ If $k=1$, 
\[
x_{1}^{\lambda_{2}}x_{2}^{\frac{1}{2}}x_{3}^{\frac{\lambda_{1}+\lambda_{2}+k-2}{2}}x_{4}^{\lambda_{2}+1}f(2\pi x_{2})
\]
for $f(x)\in\mathfrak{MB}(\frac{\lambda_{1}-\lambda_{2}-2}{2};x)$. Thus we have $\dim_{\C}C^{\infty}_{\chi_{l(0,0,0,0,1,0)}}(A;I_{1}(\lambda))=2$. There is a 1-dimensional subspace of $C^{\infty}_{\chi_{l(0,0,0,0,1,0)}}(A;I_{k}(\lambda))$ which consists  slowly increasing functions on $\{(x_{1},\ldots,x_{4})\mid x_{i}\in\R_{>0},i=1,\ldots,4\}$.

 Also if $k=2$, 
\[
Cx_{1}^{\lambda_{1}-2}x_{2}^{\frac{1}{2}}x_{3}^{\frac{\lambda_{1}+\lambda_{2}+k-2}{2}}x_{4}^{\lambda_{2}+2}g_{1}(2\pi x_{2})+C'x_{1}^{\lambda_{2}}x_{2}^{\frac{1}{2}}x_{3}^{\frac{\lambda_{1}+\lambda_{2}+k-2}{2}}x_{4}^{\lambda_{1}}g_{2}(2\pi x_{2})
\]
for $C,C'\in\C$, $g_{1}\in\mathfrak{MB}(\frac{\lambda_{1}-\lambda_{2}-3}{2};x)$ and $g_{2}\in\mathfrak{MB}(\frac{\lambda_{1}-\lambda_{2}}{2};x)$. Thus we have $\dim_{\C}C^{\infty}_{\chi_{l(0,0,0,0,1,0)}}(A;I_{2}(\lambda))=4$. There is a 2-dimensional subspace of $C^{\infty}_{\chi_{l(0,0,0,1,0,0)}}(A;I_{k}(\lambda))$ which consists  slowly increasing functions on $\{(x_{1},\ldots,x_{4})\mid x_{i}\in\R_{>0},i=1,\ldots,4\}$.

\item[(v)] When $(\vartheta_{1},\vartheta_{2},\vartheta_{3})=(0,0,1)$, the space $C^{\infty}_{\chi_{l(0,0,0,0,0,1)}}(A;I_{k}(\lambda))$ consists of
\[
x_{3}^{\frac{1}{2}}x_{4}^{\frac{\lambda_{1}+\lambda_{2}+k-1}{2}}g(2\pi x_{3})
\times
\begin{cases}
x_{1}^{\lambda_{2}}x_{2}^{\lambda_{2}}&\text{if}\ k=1,\\
C_{1}x_{1}^{\lambda_{2}}x_{2}^{\lambda_{1}-1}+C_{2}x_{1}^{\lambda_{1}^2}x_{2}^{\lambda_{2}+1	}&\text{if}\ k=2,
\end{cases}
\]
for $g(x)\in\mathfrak{MB}(\frac{\lambda_{1}-\lambda_{2}-k}{2};x)$ and $C_{1},C_{2}\in \C$. Here we put $$x_{1}=a_{1},x_{2}=a_{2},x_{3}=\frac{a_{4}}{a_{3}},x_{4}=a_{3}a_{4}.$$

Thus if $k=1$, we have $\dim_{\C}C^{\infty}_{\chi_{l(0,0,0,0,0,1)}}(A;I_{1}(\lambda))=2$. There is a 1-dimensional subspace of $C^{\infty}_{\chi_{l(0,0,0,0,0,1)}}(A;I_{1}(\lambda))$ which consists  slowly increasing functions on $\{(x_{1},\ldots,x_{4})\mid x_{i}\in\R_{>0},i=1,\ldots,4\}$.

Also if $k=2$, we have $\dim_{\C}C^{\infty}_{\chi_{l(0,0,0,0,0,1)}}(A;I_{2}(\lambda))=4$. There is a 2-dimensional subspace of $C^{\infty}_{\chi_{l(0,0,0,0,0,1)}}(A;I_{2}(\lambda))$ which consists  slowly increasing functions on $\{(x_{1},\ldots,x_{4})\mid x_{i}\in\R_{>0},i=1,\ldots,4\}$.
\item[(vi)] When $(\varepsilon_{1},\varepsilon_{2},\varepsilon_{3})=(0,0,0)$, the space $C^{\infty}_{\chi_{l(0,0,0,0,0,0)}}(A;I_{k}(\lambda))$ consists of the followings. If $k=1$, 
\begin{multline*}
C_{1}a_{1}^{\lambda_{2}}a_{2}^{\lambda_{2}}a_{3}^{\lambda_{2}}a_{4}^{\lambda_{1}}+C_{2}a_{1}^{\lambda_{2}}a_{2}^{\lambda_{1}-2}a_{3}^{\lambda_{2}+1}a_{4}^{\lambda_{2}+1}\\
+C_{3}a_{1}^{\lambda_{1}-3}a_{2}^{\lambda_{2}+1}a_{3}^{\lambda_{2}+1}a_{4}^{\lambda_{2}+1}+C_{4}a_{1}^{\lambda_{2}}a_{2}^{\lambda_{2}}a_{3}^{\lambda_{1}-1}a_{4}^{\lambda_{2}+1},
\end{multline*}
for $C_{i}\in\C$, $i=1,\ldots,4$.

Also if $k=2$,
\begin{multline*}
C_{1}a_{1}^{\lambda_{2}}a_{2}^{\lambda_{2}}a_{3}^{\lambda_{1}}a_{4}^{\lambda_{1}}+C_{2}a_{1}^{\lambda_{1}-2}a_{2}^{\lambda_{1}-1}a_{3}^{\lambda_{2}+1}a_{4}^{\lambda_{1}}\\
+C_{3}a_{1}^{\lambda_{1}-2}a_{2}^{\lambda_{2}+1}a_{3}^{\lambda_{2}+1}a_{4}^{\lambda_{1}}+C_{4}a_{1}^{\lambda_{2}}a_{2}^{\lambda_{1}-1}a_{3}^{\lambda_{1}-1}a_{4}^{\lambda_{2}+2}\\
+C_{5}a_{1}^{\lambda_{1}-2}a_{2}^{\lambda_{2}+1}a_{3}^{\lambda_{1}-1}a_{4}^{\lambda_{2}+2}+C_{6}a_{1}^{\lambda_{1}-2}a_{2}^{\lambda_{1}-2}a_{3}^{\lambda_{2}+2}a_{4}^{\lambda_{2}+2}
\end{multline*}
for $C_{i}\in\C$, $i=1,\ldots,6$.

\end{itemize}
\end{thm}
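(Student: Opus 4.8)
The plan is to treat the eight cases (III$_1$)--(III$_8$) one after another by inserting the corresponding triple $(\varepsilon_1,\varepsilon_2,\varepsilon_3)$ into the system of differential equations of the preceding proposition and solving it on $C^\infty(A)$, a function space on $A\cong(\R_{>0})^4$. The cases fall into groups according to how many of the $\varepsilon_i$ vanish, and throughout the argument reuses the devices from the proofs of Theorem \ref{thmII-2} and Theorem \ref{thmII-3}: one combines the first-order equations with the central-element equation to fix ``total degree'' homogeneities; one notes that each second-order equation attached to a direction with $\varepsilon_i=0$ carries no potential term and, after such a homogeneity reduction, is of Euler type and factors as a product of commuting operators of the shape $(\vartheta-\lambda_1)(\vartheta-(\lambda_2+k))$, as in equation (\ref{nn4}); and for each index $i$ with $\varepsilon_i=1$ one passes to coordinates $x=a_{i+1}/a_i$, $y=a_ia_{i+1}$, in which the first-order equation fixes a power of $y$ and the diagonal second-order equation becomes a modified Bessel equation in $x$ after extracting a half-power of $x$.

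The cases $(\varepsilon_1,\varepsilon_2,\varepsilon_3)=(1,1,1),(1,1,0),(0,1,1)$ are immediate: each satisfies $\varepsilon_1\varepsilon_2=1$ or $\varepsilon_2\varepsilon_3=1$, so equation (\ref{ufufu}) or (\ref{ahaha}) forces $\phi\equiv0$, which is part (i). For $(\varepsilon_1,\varepsilon_2,\varepsilon_3)=(1,0,1)$ the two surviving first-order equations read $(\vartheta_{a_1}+\vartheta_{a_2}-(\lambda_1+\lambda_2+k-3))\phi=0$ and $(\vartheta_{a_3}+\vartheta_{a_4}-(\lambda_1+\lambda_2+k-1))\phi=0$; adding them and comparing with the central-element equation shows that a nonzero solution forces $(2-k)(\lambda_1-\lambda_2-2)=0$, hence is possible only for $k=2$ (the same mechanism that kills $(0,1,1)$ for $k=1$ in the proof of Theorem \ref{thmII-2}). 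For $k=2$ one passes to $x_1=a_2/a_1$, $x_2=a_1a_2$, $x_3=a_4/a_3$, $x_4=a_3a_4$; the first-order equations fix the $x_2$- and $x_4$-powers, and the diagonal second-order equations become, after extracting $x_1^{1/2}x_3^{1/2}$, two independent modified Bessel equations of order $\frac{\lambda_1-\lambda_2-2}{2}$, one in $x_1$ and one in $x_3$, which gives $\dim=2\times 2=4$ and the stated basis.

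The cases $(\varepsilon_1,\varepsilon_2,\varepsilon_3)=(1,0,0),(0,1,0),(0,0,1)$, with exactly one $\varepsilon_i=1$, are handled uniformly. The single first-order equation and the central-element equation fix two ``total degrees'', one for the Bessel pair $\{i,i+1\}$ and one for the complementary pair of indices. On the complementary directions all remaining equations are purely of Euler type: the $_{44}$- (or $_{11}$-) equation factors directly as $(\vartheta-\lambda_1)(\vartheta-(\lambda_2+k))$, and the remaining diagonal $_{jj}$-equations then propagate a finite list of candidate monomial exponents, among which the total-degree constraint selects the ones that survive---exactly one combination for $k=1$, two for $k=2$. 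On the Bessel pair one argues as in the $(1,0,1)$ case: a half-power of $y=a_ia_{i+1}$ is forced and the associated diagonal equation becomes a modified Bessel equation whose order depends on the surviving complementary branch, which is the source of the two orders $\frac{\lambda_1-\lambda_2-3}{2}$ and $\frac{\lambda_1-\lambda_2}{2}$ in part (iv) for $k=2$. Multiplying the two factors gives $\dim=1\times 2=2$ for $k=1$ and $\dim=2\times 2=4$ for $k=2$, together with the explicit bases; the slowly increasing assertions hold because $K_\nu$ is, up to scalars, the only slowly increasing element of $\mathfrak{MB}(\nu;x)$, as recorded before Theorem \ref{thmI-2}.

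Finally, for $(\varepsilon_1,\varepsilon_2,\varepsilon_3)=(0,0,0)$ all potential terms vanish, so by Proposition \ref{n-dimIII} every root vector $E_{ij}$ with $i>j$ acts as $0$ on $C^\infty(A)$; hence every off-diagonal operator in Lemma \ref{modk} acts trivially and only the four diagonal operators $\left((\mathbb{E}-\lambda_1)(\mathbb{E}-\lambda_2-k)\right)_{jj}$ and the central operator remain, each of them a polynomial identity---quadratic, respectively linear---in the exponents of a monomial $a_1^{\mu_1}a_2^{\mu_2}a_3^{\mu_3}a_4^{\mu_4}$. Since $\lambda_1-\lambda_2\notin\Z$ there are no logarithmic solutions, so the solution space is spanned by such monomial solutions, and it remains to enumerate the common zeros of the five polynomials; a direct elimination gives four solutions for $k=1$ and six for $k=2$, which is part (vi). The hard part will be precisely this bookkeeping in parts (ii)--(vi): keeping track of which monomial branch of the end variables is annihilated by the coupled $_{11}$- or $_{44}$-equations, verifying that each surviving branch is compatible with all remaining equations, and, in the case $(0,0,0)$ with $k=2$, confirming that the common-zero count is genuinely six rather than $8=2^{3}$. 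These are finite but intricate elimination computations, made routine only after the genericity hypothesis $\lambda_1-\lambda_2\notin\Z$ is imposed.
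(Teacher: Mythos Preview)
Your proposal is correct and follows essentially the same approach as the paper: use equations (\ref{ufufu}) and (\ref{ahaha}) to dispose of part (i), introduce the coordinate changes $x_i$ listed in the statement for parts (ii)--(v) so that the system decouples into first-order homogeneity constraints, factorable Euler equations on the ``inactive'' directions, and modified Bessel equations on the pairs with $\varepsilon_i=1$, and in part (vi) solve a purely Euler-type system for monomial exponents. The paper's proof is simply a terse execution of exactly this plan---it writes down the reduced system in each case and says ``then we can solve these equations''---so your narrative is a faithful expansion of it.
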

\begin{proof}
(i) It is immediately follows from equations (\ref{ufufu}) and (\ref{ahaha}).

(ii) If we put $x_{1}=\frac{a_{2}}{a_{1}},x_{2}=a_{1}a_{2},x_{3}=\frac{a_{4}}{a_{3}},x_{4}=a_{3}a_{4}$, differential equations are written as
\begin{gather*}
\begin{split}
[\vartheta_{x_{1}}^{2}-\vartheta_{x_{1}}-&\frac{1}{4}(\lambda_{1}+\lambda_{2}+k-3)(\lambda_{1}+\lambda_{2}+k-1)\\
&+(2\pi\sqrt{-1}x_{1})^{2}+(\lambda_{1}+k-3)(\lambda_{2}+1)]\phi=0,
\end{split}\\
[2\vartheta_{x_{2}}-(\lambda_{1}+\lambda_{2}+k-3)]\phi=0,\\
[\vartheta_{x_{3}}^{2}-\vartheta_{x_{3}}-\frac{1}{4}((\lambda_{1}+\lambda_{2}+k)^{2}+1)+(2\pi\sqrt{-1}x_{3})^{2}+\lambda_{1}(\lambda_{2}+k)]\phi=0,\\
[2\vartheta_{x_{4}}-(\lambda_{1}+\lambda_{2}+k-1)]\phi=0, \\
[2\vartheta_{x_{2}}+2\vartheta_{x_{4}}-k\lambda_{1}-(4-k)\lambda_{2}]\phi=0.
\end{gather*}
Then we can solve these equations.

(iii) If we put $x_{1}=\frac{a_{2}}{a_{1}},x_{2}=a_{1}a_{2},x_{3}=a_{3},x_{4}=a_{4}$, differential equations are written as
\begin{gather*}
\begin{split}
[\vartheta_{x_{1}}^{2}-\vartheta_{x_{1}}-&\frac{1}{4}(\lambda_{1}+\lambda_{2}+k-3)(\lambda_{1}+\lambda_{2}+k-1)\\
&+(2\pi\sqrt{-1}x_{1})^{2}+(\lambda_{1}+k-3)(\lambda_{2}+1)]\phi=0,
\end{split}\\
[2\vartheta_{x_{2}}-(\lambda_{1}+\lambda_{2}+k-3)]\phi=0,\\
(\vartheta_{x_{3}}-(\lambda_{1}+k-3))(\vartheta_{x_{3}}-(\lambda_{2}+1))\phi=0,\\
(\vartheta_{x_{4}}-\lambda_{1})(\vartheta_{x_{4}}-(\lambda_{2}+k))\phi=0,\\
[\vartheta_{x_{3}}+\vartheta_{x_{4}}+(1-k)\lambda_{1}+(k-3)\lambda_{2}+(k-3)]\phi=0,\\
[2\vartheta_{x_{2}}+\vartheta_{x_{3}}+\vartheta_{x_{4}}-k\lambda_{1}-(4-k)\lambda_{2}]\phi=0.
\end{gather*}
Then we can solve these equations.

(iv) If we put $x_{1}=a_{1},x_{2}=\frac{a_{3}}{a_{2}},x_{3}=a_{2}a_{3},x_{4}=a_{4}$, differential equations are written as
\begin{gather*}
(\vartheta_{x_{1}}-(\lambda_{1}+(k-4)))(\vartheta_{x_{1}}-\lambda_{2})\phi=0,\\
\begin{split}
[\vartheta_{x_{2}}^{2}-\vartheta_{x_{2}}-\frac{1}{4}(\lambda_{1}&+\lambda_{2}+k-2)(\lambda_{1}+\lambda_{2}+k)\\
&+(2\pi\sqrt{-1}x_{2})^{2}-\vartheta_{x_{4}}+\lambda_{1}(\lambda_{2}+k)]\phi=0,
\end{split}\\
(2\vartheta_{x_{3}}-(\lambda_{1}+\lambda_{2}+k-2))\phi=0,\\
(\vartheta_{x_{4}}-\lambda_{1})(\vartheta_{x_{4}}-(\lambda_{2}+k))\phi=0,\\
[\vartheta_{x_{1}}+2\vartheta_{x_{3}}+\vartheta_{x_{4}}-k\lambda_{1}-(4-k)\lambda_{2}]\phi=0.
\end{gather*}
Then we can solve these equations.

(v) If we put $x_{1}=a_{1},x_{2}=a_{2},x_{3}=\frac{a_{4}}{a_{3}},x_{4}=a_{3}a_{4}$, differential equations are written as
\begin{gather*}
(\vartheta_{x_{1}}-(\lambda_{1}-4+k))(\vartheta_{x_{1}}-\lambda_{2})\phi=0,\\
(\vartheta_{x_{2}}-(\lambda_{1}-1))(\vartheta_{x_{2}}-(\lambda_{2}-1+k))\phi=0,\\
[\vartheta_{x_{3}}^{2}-\vartheta_{x_{3}}-\frac{1}{4}((\lambda_{1}+\lambda_{2}+k)^{2}+1)+(2\pi\sqrt{-1}x_{3})^{2}+\lambda_{1}(\lambda_{2}+k)]\phi=0,\\
[2\vartheta_{x_{4}}-(\lambda_{1}+\lambda_{2}+k-1)]\phi=0,\\
[\vartheta_{x_{1}}+\vartheta_{x_{2}}+2\vartheta_{x_{4}}-k\lambda_{1}-(4-k)\lambda_{2}]\phi=0.
\end{gather*}
Then we can solve these equations.

(vi) Differential equations are written as
\begin{gather*}
(\vartheta_{a_{1}}-(\lambda_{1}-(k-4)))(\vartheta_{a_{1}}-\lambda_{2})\phi=0,\\
[\vartheta_{a_{2}}^{2}-(\lambda_{1}+\lambda_{2}+k-2)\vartheta_{a_{2}}-(\vartheta_{a_{3}}+\vartheta_{a_{4}})+\lambda_{1}(\lambda_{2}+k)]\phi=0,\\
[\vartheta_{a_{3}}^{2}-(\lambda_{1}+\lambda_{2}+k-1)\vartheta_{a_{3}}-\vartheta_{a_{4}}+\lambda_{1}(\lambda_{2}+k)]\phi=0,\\
(\vartheta_{a_{4}}-\lambda_{1})(\vartheta_{a_{4}}-(\lambda_{2}+k))\phi=0,\\
[\vartheta_{a_{1}}+\vartheta_{a_{2}}+\vartheta_{a_{3}}+\vartheta_{a_{4}}-k\lambda_{1}-(4-k)\lambda_{2}]\phi=0.
\end{gather*}
Then we can solve these equations.

\end{proof}
\appendix

\section*{Appendix}
\section{The multiplicity one theorem for Horn's hypergeometric functions}
We will give a kind of the multiplicity one theorem for Horn's hypergeometric functions for the purpose of the application to the multiplicity theorem for the generalized Whittaker models. 

Let  $P_{i}(x)$ and $Q_{i}(x)$ are nonzero polynomials on $x=(x_{1},\ldots,x_{n})$ for $i=1,\ldots,n$. Then the Horn's hypergeometric functions are the solutions of the system of linear partial differential equations 
\begin{equation}\label{HHS}
[x_{i}P_{i}(\vartheta)-Q_{i}(\vartheta)]f(x)=0,\quad i=1,\ldots, n.
\end{equation}
Here $\vartheta_{i}=x_{i}\frac{\partial}{\partial x_{i}}$ and $\vartheta=(\vartheta_{1},\ldots,\vartheta_{n})$. We assume that $P_{i}$ and $Q_{i}$ can be decomposed by products of linear factors, i.e.,
\begin{align*}
&P_{i}(s)=\prod_{k=1}^{p}(\langle A_{k},s\rangle-c_{i}), &Q_{i}(s)=\prod_{l=1}^{q}(\langle B_{l},s\rangle-d_{l})
\end{align*}
for $s\in\R^{n}$, $A_{k},B_{l}\in \R^{n}$, $c_{k},d_{l}\in \C$ and $\langle\ ,\ \rangle$ denote the natural inner product in $\R^{n}$. We also assume $P_{i}(s)$, $Q_{i}(s+e_{i})$ are relatively prime for $i=1,\ldots,n$. Here $e_{i}=(0,\ldots,0,1,0,\ldots,0)$ ($1$ in the $i$th position).

We consider the following system of difference equations associated with this system of differential equations (\ref{HHS}),
\begin{equation}\label{difference equation}
P_{i}(-(s+e_{i}))\phi(s+e_{i})=Q_{i}(-s)\phi(s)\quad i=1,\ldots,n.
\end{equation}
We consider the general solutions for this system of difference equations (\ref{difference equation}).
\begin{rem}
Let $\phi$ be a solution of the system of difference equations (\ref{difference equation}). We consider an integral representation of Mellin-Barnes type, 
\[
f(x)=\displaystyle\int_{\mathcal{C}}\phi(s)x^{-s}\,ds.
\]

Then under the following assumptions, we can see that $f(x)$ is a solution of the system of differential equations (\ref{HHS}).

\textit{A}. For any $i=1,\ldots,n$, the translation of the contour $\mathcal{C}$ with respect to the basis $e_{i}$ is homologically equivalent to $\mathcal{C}$ in the complement of the set of the singularities of the integrand $\phi(s)$ in $\C^{n}$.

\textit{B}. The integral converges absolutely and it can be differentiated with  respect to $x$ sufficiently many times.
\end{rem}

If we put 
\[
R_{i}(s)=\frac{Q_{i}(-s)}{P_{i}(-(s+e_{i}))}\quad i=1,\ldots,n,
\]

\begin{thm}[Ore-Sato-Sadykov \cite{Ore},\cite{Sat},\cite{Sad}]\label{ore-sato}
The system of difference equations (\ref{difference equation}) is solvable if and only if 
\begin{equation}\label{solvable}
R_{i}(s+e_{j})R_{j}(s)=R_{j}(s+e_{i})R_{i}(s),\quad i,j=1,\ldots,n.
\end{equation}
If the system (\ref{difference equation}) is solvable, then its solution is unique up to an arbitrary periodic function $\psi(s)$ with respect to $e_{i}$, i.e., $\psi(s+e_{i})=\psi(s)$, $i=1,\ldots,n$. Furthermore, there exist $p',q'\in \N$, $A'_{k},B'_{l}\in \R^{n}\ (1\le k \le p', 1\le l\le q')$, $c'_{k},d'_{l}\in \C\ (1\le k \le p', 1\le l\le q')$ and $t_{i}\in \R$ $(i=1,\ldots,n)$ such that the general solution of $(\ref{difference equation})$ is written as follows,
\[
\phi(s)=t^{-s}\frac{\prod_{l=1}^{q'}\Gamma(\langle B'_{l},s\rangle-d'_{l})}{\prod_{k=1}^{p'}\Gamma(\langle A'_{k},s\rangle-c'_{i})}\psi(s),
\]
where $t^{-s}=t_{1}^{-s_{1}}\cdots t_{n}^{-s_{n}}$ and $\psi(s)$ is an arbitrary periodic function satisfying $\psi(s+e_{i})=\psi(s)$.
\end{thm}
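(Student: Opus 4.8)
The plan is to handle the three assertions in turn: the solvability criterion, the uniqueness up to a periodic factor, and the explicit Gamma-product form of the general solution. Throughout I abbreviate $R_{i}(s)=Q_{i}(-s)/P_{i}(-(s+e_{i}))$, so that away from the zero locus of the denominators a solution $\phi$ of (\ref{difference equation}) satisfies $\phi(s+e_{i})=R_{i}(s)\phi(s)$.

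\textbf{Necessity of (\ref{solvable}).} Assuming a solution $\phi$ exists, I would compute $\phi(s+e_{i}+e_{j})$ in two ways, shifting first by $e_{i}$ then by $e_{j}$ and then in the opposite order; this gives $R_{j}(s+e_{i})R_{i}(s)\,\phi(s)=R_{i}(s+e_{j})R_{j}(s)\,\phi(s)$. Since $\phi$ is not identically zero and both sides are rational in $s$, they agree on a Zariski-dense set, hence identically, which is exactly (\ref{solvable}).

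\textbf{Sufficiency and the structural form.} Conversely, assume (\ref{solvable}); I would argue by induction on $n$. In the base case $n=1$ the system is the scalar recurrence $\phi(s_{1}+1)=R_{1}(s_{1})\phi(s_{1})$, where $R_{1}$ is a ratio of products of linear factors; using the relative primality of $P_{1}$ and $Q_{1}(\,\cdot\,+e_{1})$ to exclude spurious cancellation and the functional equation $\Gamma(z+1)=z\Gamma(z)$, one writes a solution as $\phi(s_{1})=t_{1}^{-s_{1}}\bigl(\prod_{l}\Gamma(\langle B'_{l},s_{1}\rangle-d'_{l})\bigr)\bigl(\prod_{k}\Gamma(\langle A'_{k},s_{1}\rangle-c'_{k})\bigr)^{-1}\psi(s_{1})$ with $\psi$ an arbitrary $1$-periodic function. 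For the inductive step, I would first solve the $e_{1}$-recurrence, factoring a solution as $\phi=\Phi_{1}\cdot\psi_{1}$, where $\Phi_{1}$ is the explicit Gamma-product in the $s_{1}$-direction and $\psi_{1}$ is $e_{1}$-periodic but otherwise undetermined; substituting into the remaining equations $i=2,\dots,n$ and using (\ref{solvable}), one checks that $\psi_{1}$ must satisfy a compatible system of $n-1$ difference equations on the quotient lattice, again with rational coefficients factoring into linear forms, so that the inductive hypothesis applies. The Gamma factors accumulated at the two stages then combine into the single ratio displayed in the statement, with $p',q'$ and the forms $A'_{k},B'_{l}$ well-defined because of the relative-primality hypotheses; this amalgamation is the point at which (\ref{solvable}) genuinely intervenes, since it is what forces the linear forms produced in the different coordinate directions to be mutually coherent.

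\textbf{Uniqueness.} If $\phi_{1},\phi_{2}$ both solve (\ref{difference equation}), then $\psi=\phi_{1}/\phi_{2}$ satisfies $\psi(s+e_{i})=\psi(s)$ for every $i$, so the solution is unique up to a function periodic with respect to all the $e_{i}$. \textbf{The main obstacle} is precisely the inductive and combinatorial core of the sufficiency direction: establishing that (\ref{solvable}) is strong enough to make the iterated Gamma-products collapse to one closed form with a well-controlled count of factors. This is the content of the Ore--Sato structure theorem carried out in \cite{Ore}, \cite{Sat}, \cite{Sad}; since the result is only used here as a black box, I would either reproduce that argument or simply cite it.
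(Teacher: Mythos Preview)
The paper does not prove this theorem at all: it is stated as a cited result attributed to Ore, Sato, and Sadykov, and is used purely as a black box in the subsequent multiplicity-one argument (Theorem~\ref{M1}). Your final paragraph already anticipates this. So there is nothing in the paper to compare your sketch against beyond the bare citation.

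That said, your outline is the standard one and is essentially what the cited references do: necessity from the two-way computation of $\phi(s+e_i+e_j)$, uniqueness from the ratio being $e_i$-periodic for all $i$, and existence from the functional equation $\Gamma(z+1)=z\,\Gamma(z)$ applied to each linear factor of the $R_i$. One small remark on your necessity argument: as written, the system (\ref{difference equation}) is $P_i(-(s+e_i))\phi(s+e_i)=Q_i(-s)\phi(s)$, not yet in the divided form $\phi(s+e_i)=R_i(s)\phi(s)$; to pass to the latter and then conclude the rational identity (\ref{solvable}) from equality on the nonzero locus of $\phi$, you are implicitly assuming $\phi$ is meromorphic (or at least nonvanishing on a set with accumulation), which is harmless in the intended context but worth stating. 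For the purposes of this paper, simply citing \cite{Ore}, \cite{Sat}, \cite{Sad} as you suggest is exactly what the author does.
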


Suppose that the system of difference equations (\ref{difference equation}) is solvable, i.e., the condition (\ref{solvable}) is satisfied and we can choose a solution, 
\[
\phi(s)=t^{-s}\frac{\prod_{l=1}^{q'}\Gamma(\langle B'_{l},s\rangle-d'_{l})}{\prod_{k=1}^{p'}\Gamma(\langle A'_{k},s\rangle-c'_{i})}
\]
which satisfies following conditions;

\textit{C}. we have the inequality,
\[
\displaystyle\sum_{l=1}^{q'}|\langle B'_{l},s\rangle|-\displaystyle\sum_{k=1}^{p'}|\langle A'_{k},s\rangle|\ge \displaystyle\sum_{i=1}^{n}|s_{i}|
\]
for $s\in\R^{n}$.

\textit{D}. the function $\phi(s)$  has no zero if each $\textrm{Re}(s_{i})$ are sufficiently large for $i=1,\ldots,n$.
\begin{rem}
We consider the integral
\[
f(x)=\displaystyle\int_{\sigma_{1}-\sqrt{-1}\infty}^{\sigma_{1}+\sqrt{-1}\infty}\cdots \int_{\sigma_{n}-\sqrt{-1}\infty}^{\sigma_{n}+\sqrt{-1}\infty} \phi(s)x^{-s}\,ds,
\]
for appropriate $\sigma_{i}\in \R\ i=1,\ldots,n$.
Under the assumption \textit{C}, it follows that the integral is absolutely convergent in the set $\{x\in\R^{n}\mid (t_{1}x_{1},\ldots,t_{n}x_{n})\in(\R_{\ge 0})^{n}\}$.
\end{rem}
The following theorem is a generalization of the theorem of Diaconu and Goldfeld (Theorem 6.1.6 in \cite{G})
\begin{thm}[Multiplicity one]\label{M1}
If $f(x)$ is a solution of the system of Horn's hypergeometric differential equations $\mathrm{(\ref{HHS})}$ which satisfies the growth condition
\[
\sup_{x\in (\R_{\ge 0})^{n}}|x^{\alpha}f(tx)| < +\infty
\]
for sufficiently large integers $\alpha_{i}\in \N,$ $i=1,\ldots,n$, then it is unique up to constant multiple. Here $x^{\alpha}=x_{1}^{\alpha_{1}}\cdots x_{n}^{\alpha_{n}}$ and $tx=(t_{1}x_{1},\ldots,t_{n}x_{n})$.
\end{thm}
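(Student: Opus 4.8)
The plan is to reduce the analytic uniqueness statement to the algebraic fact, provided by Theorem A.2 (Ore–Sato–Sadykov), that the space of solutions of the system of difference equations (\ref{difference equation}) is, modulo periodic functions, one–dimensional, and then to show that the growth condition on $f$ forces the associated Mellin transform to have trivial periodic factor. First I would pass from $f(x)$ to its Mellin transform
\[
\phi(s)=\int_{(\R_{\ge 0})^{n}}f(tx)\,x^{s-e}\,\frac{dx_{1}\cdots dx_{n}}{x_{1}\cdots x_{n}},
\]
where $e=(1,\ldots,1)$. The growth hypothesis $\sup_{x}|x^{\alpha}f(tx)|<+\infty$ for all sufficiently large $\alpha$ guarantees that this integral converges absolutely for $\re(s_{i})$ in a suitable right half–space and defines a holomorphic function there; moreover it decays in vertical strips, so the inversion formula recovers $f(tx)$ as the Mellin–Barnes integral of $\phi$. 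Applying the differential operators $x_{i}P_{i}(\vartheta)-Q_{i}(\vartheta)$ to $f$ and translating through the Mellin transform (using $\vartheta_{i}\leftrightarrow -s_{i}$ and multiplication by $x_{i}\leftrightarrow$ shift $s\mapsto s+e_{i}$, together with integration by parts justified by the decay), I would show that $\phi$ satisfies precisely the system of difference equations (\ref{difference equation}).

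Next I would invoke Theorem A.2: since the system (\ref{HHS}) is assumed to be (the differential system attached to) a genuine Horn system, the compatibility condition (\ref{solvable}) holds, and hence every solution of (\ref{difference equation}) is of the form
\[
\phi(s)=t^{-s}\,\frac{\prod_{l=1}^{q'}\Gamma(\langle B'_{l},s\rangle-d'_{l})}{\prod_{k=1}^{p'}\Gamma(\langle A'_{k},s\rangle-c'_{k})}\,\psi(s)
\]
with $\psi$ a periodic function, $\psi(s+e_{i})=\psi(s)$ for all $i$. It then suffices to show $\psi$ is constant. Here I would use the growth condition once more, now in its quantitative form. The assumptions \textit{C} and \textit{D} on the Gamma–quotient part control the size of $\phi$ on vertical lines; combined with the absolute convergence of the Mellin–Barnes integral that represents $f(tx)$, they show that $\phi(s)$ itself must be of at most polynomial growth in $|\im s|$ along vertical lines and cannot blow up faster than the Gamma quotient already does. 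A periodic holomorphic function on $\C^{n}$ with periods $e_{1},\ldots,e_{n}$ which is of subexponential growth in each imaginary direction — more precisely, one whose product with the Gamma quotient still yields an $L^{1}$ integrand giving a bounded-after-multiplication-by-$x^{\alpha}$ function — must be a constant: expanding $\psi$ in a multiple Fourier series $\sum_{m\in\Z^{n}}c_{m}e^{2\pi i\langle m,s\rangle}$, any nonzero $c_{m}$ with $m\neq 0$ contributes a term growing like $e^{2\pi|m\cdot\im s|}$ in some imaginary direction, which the polynomially–bounded Gamma quotient cannot absorb without destroying the required decay of the integrand. Hence $\psi\equiv c_{0}$, and $\phi$, and therefore $f$, is determined up to the scalar $c_{0}$.

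The main obstacle I expect is the second half of the last paragraph: making rigorous the claim that the growth hypothesis on $f$ translates into a strong enough pointwise bound on $\phi(s)$ in vertical strips to kill all nonconstant Fourier modes of $\psi$. This requires carefully tracking, via \textit{C} and \textit{D}, the Stirling asymptotics of the Gamma quotient along vertical lines (so that one knows exactly how much growth is "already used up"), and then a Phragmén–Lindelöf / Paley–Wiener type argument showing that a periodic entire $\psi$ compatible with those bounds is constant. The convergence and contour–shifting justifications (assumptions \textit{A} and \textit{B} in the remark) are routine once the decay estimates are in hand, but they must be stated cleanly to legitimize both the passage $f\to\phi$ and the inversion $\phi\to f$. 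Everything else — the translation dictionary between the differential and difference systems, and the appeal to Theorem A.2 for the shape of $\phi$ — is formal.
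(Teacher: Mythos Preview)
Your proposal is correct and follows essentially the same route as the paper: Mellin transform $f$ to obtain $\phi(s)$, deduce the difference equations (\ref{difference equation}) by integration by parts, invoke the Ore--Sato--Sadykov theorem to write $\phi$ as the fixed Gamma quotient times a periodic $\psi$, and then show $\psi$ is constant by a Fourier (Laurent in $z_{i}=e^{2\pi i s_{i}}$) argument comparing the growth of nonzero modes against the available bound on $\psi$. The paper carries out the last step one variable at a time rather than via a full multiple Fourier series, and makes the ``subexponential'' bound explicit: Stirling together with assumption~\textit{C} gives the Gamma quotient decay $O(\exp(-\tfrac{\pi}{2}\sum|\mathrm{Im}\,s_{i}|))$, while Riemann--Lebesgue gives $\mathcal{M}[f,s]\to 0$, so $\psi(s)=O(\exp(\tfrac{\pi}{2}|\mathrm{Im}\,s_{i}|))$; since $\tfrac{\pi}{2}<2\pi$, each nonzero Laurent coefficient is forced to vanish. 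This is exactly the step you flagged as the main obstacle, and your sketch of it is on target (though note the Gamma quotient has exponential, not polynomial, decay on vertical lines---that is what assumption~\textit{C} buys).
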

\begin{proof}
We consider the Mellin transform of $f(tx)$ as the function of $x$,
\[
\mathcal{M}[f,s]=\displaystyle\int_{0}^{\infty}\cdots\int_{0}^{\infty}f(tx)x^{s-1}\,dx.
\]
This integral converges absolutely and $\mathcal{M}[f,s]$ is analytic function of $s$ if each $\mathrm{Re}(s_{i})$ are sufficiently large by the assumption of $f(x)$. Changing the variables $x$ to $tx=(t_{1}x_{1},\cdots,t_{n}x_{n})$, then we have
\[
\mathcal{M}[f,s]=t^{-s}\displaystyle\int_{0}^{t_{1}^{-1}\infty}\cdots\int_{0}^{t_{n}^{-1}\infty}f(x)x^{s-1}\,dx.
\]
By the growth condition of $f(x)$, we have
\begin{multline*}
\displaystyle\int_{0}^{t_{1}^{-1}\infty}\cdots\int_{0}^{t_{n}^{-1}\infty}\frac{\partial^{k}}{\partial x_{i}^{k}}f(x)x^{s-1}\,dx\\
=(-1)^{k}\displaystyle\int_{0}^{t_{1}^{-1}\infty}\cdots\int_{0}^{t_{n}^{-1}\infty}f(x)\frac{\partial^{k}}{\partial x_{i}^{k}}x^{s-1}\,dx, 
\end{multline*}
by integration by parts for $i=1,\ldots,n$. Recall that $f(x)$ satisfies the system of the partial differential equations (\ref{HHS}), then we have the system of the difference equations for $\mathcal{M}[f,s]$,
\[
P_{i}(-(s+e_{i}))\mathcal{M}[f,s+e_{i}]=Q_{i}(-s)\mathcal{M}[f,s]\quad i=1,\ldots,n.
\]
Hence by Theorem \ref{ore-sato}, there is a periodic function $\psi(s)$ and we have
\begin{equation}\label{analytic}
\frac{\prod_{l=1}^{q'}\Gamma(\langle B'_{l},s\rangle-d'_{l})}{\prod_{k=1}^{p'}\Gamma(\langle A'_{k},s\rangle-c'_{i})}\psi(s)=\displaystyle\int_{0}^{t_{1}^{-1}\infty}\cdots\int_{0}^{t_{n}^{-1}\infty}f(x)x^{s-1}\,dx.
\end{equation}
By Stirling's formula and the assumption \textit{C}, we obtain the estimate for $\mathrm{Re}(s_{i})>0$ $(i=1,\ldots,n)$,
\[
\frac{\prod_{l=1}^{q}\Gamma(\langle B_{l},s\rangle-d_{l})}{\prod_{k=1}^{p}\Gamma(\langle A_{k},s\rangle-c_{i})}=O\left(\exp(-\frac{1}{2}\pi\sum_{i=1}^{n}|\mathrm{Im}(s_{i})|)\right)\quad\text{as}\quad \sum_{i=1}^{n}|\mathrm{Im}(s_{i})|\rightarrow +\infty.
\]
Also by the Riemann-Lebesgue theorem, we have
\[
\mathcal{M}[f,s]\rightarrow 0\quad \text{as}\quad \sum_{i=1}^{n}|\mathrm{Im}(s_{i})|\rightarrow +\infty.
\]
Combining these estimates, we obtain the asymptotic behaviour of the periodic function 
\begin{equation}\label{estimate}
\psi(s)=O(\exp(\frac{1}{2}\pi|\mathrm{Im}(s_{i})|)),
\end{equation}
as $\mathrm{Im}(s_{i})\rightarrow\infty$ and the other $s_{j}$ $(i\neq j)$ are fixed. The right hand side of the equation (\ref{analytic}) is the analytic function of $s$ when $\mathrm{Re}(s_{i})\ (i=1,\ldots,n)$ are sufficiently large. Thus if we recall that the assumption \textit{D} and the periodicy of $\psi(s)$, we can see that $\psi(s)$ is an entire function. We put $z_{i}=\exp 2\pi\sqrt{-1}s_{i}$ for $i=1,\ldots,n$. And we consider the Laurant expansion of $\phi(s)$ with respect to $z_{1}$,
\[
\psi(s)=\sum_{k=-\infty}^{\infty}c_{k}^{(1)}(s_{2},\ldots,s_{n})z_{1}^{k}.
\]
Here $c_{k}^{(1)}(s_{2},\ldots,s_{n})$ are periodic and entire functions for $(s_{2},\ldots,s_{n})\in \C^{n-1}$.
We write $s_{i}=\sigma_{i}+\sqrt{-1}\tau_{i}$ for $\sigma_{i},\tau_{i}\in\R$, $i=1,\ldots,n.$ We consider an integration
\begin{multline*}
\int_{0}^{1}|\psi(s)|^{2}\,d\sigma_{i}=\sum_{k=-\infty}^{\infty}|c_{k}^{(1)}(s_{2},\ldots,s_{n})|^{2}\exp(-4\pi k\tau_{i})\\
\ge |c_{t}^{(1)}(s_{2},\ldots,s_{n})|^{2}\exp(-4\pi t \tau_{i})
\end{multline*}
for every $t=0,\pm 1,\pm 2,\ldots$. However the estimate (\ref{estimate}) tells us that there exist constants $M_{i}\in \R_{>0}$ and we have
\[
\exp(\pi|\tau_{i}|)>M_{i}\int_{0}^{1}|\psi(s)|^{2}\,d\sigma_{i}
\]
for sufficiently large $\tau_{i}$.
Thus we have $c_{t}^{(1)}(s_{2},\ldots,s_{n})=0$ for $t=\pm 1,\pm 2,\ldots$. The remaining coefficient $c_{0}^{(1)}(s_{2},\ldots,s_{n})$ is also the periodic and entire functions for $(s_{2},\ldots,s_{n})\in \C^{n-1}$. Hence we apply the same argument for $c_{0}^{(1)}(s_{2},\ldots,s_{n})$ with respect to $s_{2}$. And also we can succeed inductively for $i=3,\ldots,n$. Thus we can conclude $\psi(s)$ must be a constant. This completes the proof of the theorem.
\end{proof}

\section{Horn's hypergeometric function $\mathrm{H}_{10}$} 

We will give some facts about Horn's two variables hypergeometric function $\mathrm{H}_{10}$. Horn's hypergeometric function $\mathrm{H}_{10}$ is the hypergeometric seires defined as follows,
\[
\mathrm{H}_{10}(a,d\,;\,x,y)=\sum_{m=0,n=0}^{\infty}\frac{(a)_{2m-n}}{(d)_{m}m!n!}x^{m}y^{n}.
\]
Here the symbol $(a)_{m}$ means the Pochhammer symbol, i.e., $(a)_{m}=a(a+1)\cdots (a+(m-1))$ for $a\in \C$ and $m\in \N$. It is not hard to see that this power series satisfies the system of hypergeometric partial differential equations,
\begin{equation}\label{H10}
\begin{gathered}
\{x(2\vartheta_{x}-\vartheta_{y}+a)(2\vartheta_{x}-\vartheta_{y}+a+1)-\vartheta_{x}(\vartheta_{x}+d-1)\} \phi(x,y)=0,\\
\{y-\vartheta_{y}(2\vartheta_{x}-\vartheta_{y}+a)\}\phi(x,y)=0.
\end{gathered}
\end{equation}

It is known that the dimension of the solution space is $4$ (cf. \cite{A-K}). We define another convergent series
\[
\tilde{\mathbb{H}}_{10}(a,d\,;\,x,y)=\sum_{m=0,n=0}^{\infty}\frac{(-1)^{m+2n}}{(a+1)_{m+2n}(d)_{n}m!n!}x^{m}y^{n}.
\]
Then the basis of the solution space are written by the power series below,
\begin{gather*}
\mathrm{H}_{10}(a,d\,;\,x,y)\\
y^{-d+1}\mathrm{H}_{10}(a-2d+2,-d+2\,;\,x,y),\\
x^{a}\tilde{\mathbb{H}}_{10}(a,d\,;\,x,x^{2}y),\\
x^{a}y^{-d+1}\tilde{\mathbb{H}}_{10}(a-2d+3,-d+2\,;\,x,x^{2}y).
\end{gather*}

The system of hypergeometric differential equations $(\ref{H10})$ has the solution which has the Mellin-Barnes integral representation. This is written as follows,
\begin{multline*}
\phi(x,y)=\\
\int_{\sigma_{1}-\sqrt{-1}\infty}^{\sigma_{1}+\sqrt{-1}\infty}\int_{\sigma_{2}-\sqrt{-1}\infty}^{\sigma_{2}+\sqrt{-1}\infty}\Gamma(s_{1})\Gamma(s_{1}-2s_{2}-a)\Gamma(s_{2})\Gamma(s_{2}-d+1)(-x)^{-s_{1}}y^{-s_{2}}\,ds_{1}\,ds_{2}.
\end{multline*}
Here $\sigma_{1}\in\R$ and $\sigma_{2}\in\R$ satisfy the conditions, $\sigma_{1}>0$, $\sigma_{2}>\mathrm{max}\{0,\mathrm{Re}(d-1)\}$ and $\sigma_{1}-2\sigma_{2}>\mathrm{Re}(a)$.
This integral converges absolutely for $x\in\R_{\le 0}$ and $y\in\R_{\ge 0}$.
\begin{thm}\label{multhorn}
If $f(x,y)$ is a solution of the system (\ref{H10}) which satisfies that
\[
\sup_{x,y\in\R_{\ge 0}}|x^{\alpha_{1}}y^{\alpha_{2}}f(-x,y)|<+\infty
\]
for sufficiently large $\alpha_{1},\alpha_{2}\in \N$, 
then 
\begin{multline*}
f(x,y)=\\
C\int_{\sigma_{1}-\sqrt{-1}\infty}^{\sigma_{1}+\sqrt{-1}\infty}\int_{\sigma_{2}-\sqrt{-1}\infty}^{\sigma_{2}+\sqrt{-1}\infty}\Gamma(s_{1})\Gamma(s_{1}-2s_{2}-a)\Gamma(s_{2})\Gamma(s_{2}-d+1)(-x)^{-s_{1}}y^{-s_{2}}\,ds_{1}\,ds_{2},
\end{multline*}
for some constant $C$.
\end{thm}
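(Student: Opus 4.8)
The plan is to reduce Theorem~\ref{multhorn} to the general multiplicity-one statement for Horn-type systems, namely Theorem~\ref{M1}, applied to the specific system $(\ref{H10})$. First I would record the system $(\ref{H10})$ in the normal form $(\ref{HHS})$: with $n=2$, $x_1=x$, $x_2=y$, the first equation gives $P_1(\vartheta)=(2\vartheta_1-\vartheta_2+a)(2\vartheta_1-\vartheta_2+a+1)$ and $Q_1(\vartheta)=\vartheta_1(\vartheta_1+d-1)$, and the second equation gives $P_2(\vartheta)=1$ (a nonzero constant polynomial) and $Q_2(\vartheta)=\vartheta_2(2\vartheta_1-\vartheta_2+a)$. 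I would check the coprimality hypotheses of Theorem~\ref{M1}: $P_1(s)$ and $Q_1(s+e_1)=(s_1+1)(s_1+d)$ share no common linear factor (their zero sets are the hyperplanes $2s_1-s_2+a\in\{0,-1\}$ versus $s_1\in\{-1,-d\}$, which are not proportional), and $P_2\equiv1$ is trivially coprime to $Q_2(s+e_2)$.

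Next I would verify the solvability/compatibility condition $(\ref{solvable})$ for the associated difference system $(\ref{difference equation})$ — this is forced, since $(\ref{H10})$ is known to be a genuine (consistent, rank $4$) Horn system, but I would spell out $R_1(s)R_2(s+e_1)=R_2(s)R_1(s+e_2)$ by direct substitution of
\[
R_1(s)=\frac{Q_1(-s)}{P_1(-(s+e_1))},\qquad R_2(s)=\frac{Q_2(-s)}{P_2(-(s+e_2))}=Q_2(-s).
\]
By Theorem~\ref{ore-sato} the general solution of the difference system is then $\phi(s)=t^{-s}\prod\Gamma(\langle B'_l,s\rangle-d'_l)\big/\prod\Gamma(\langle A'_k,s\rangle-c'_k)\cdot\psi(s)$; solving the two recursions explicitly one reads off $t=(t_1,t_2)$ (the sign $(-x)^{-s_1}$ in the claimed integral indicates $t_1=-1$, $t_2=1$) and the $\Gamma$-factors $\Gamma(s_1)\Gamma(s_1-2s_2-a)\Gamma(s_2)\Gamma(s_2-d+1)$. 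I would then confirm the hypotheses \textit{C} and \textit{D}: condition \textit{C} is the inequality $|s_1|+|s_1-2s_2-a|+|s_2|+|s_2-d+1|\ge |s_1|+|s_2|$, which holds for real $s$, and \textit{D} holds because the product of these four reciprocal-free $\Gamma$'s is nonvanishing for $\mathrm{Re}(s_i)$ large; these match the conditions on $\sigma_1,\sigma_2$ quoted after the integral in Appendix~B.

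With all hypotheses of Theorem~\ref{M1} in place, the conclusion is immediate: a solution $f$ of $(\ref{H10})$ with the stated growth bound $\sup_{x,y\ge0}|x^{\alpha_1}y^{\alpha_2}f(-x,y)|<\infty$ (which is exactly the bound $\sup|x^\alpha f(tx)|<\infty$ for $t=(-1,1)$) is determined up to a constant multiple by its Mellin transform, which must be a constant times $\Gamma(s_1)\Gamma(s_1-2s_2-a)\Gamma(s_2)\Gamma(s_2-d+1)$; Mellin inversion then yields the asserted Mellin--Barnes representation, establishing Theorem~\ref{multhorn}. The main obstacle I anticipate is purely bookkeeping rather than conceptual: matching the solution of the difference system $(\ref{difference equation})$ produced abstractly by Theorem~\ref{ore-sato} with the concrete four $\Gamma$-factors and the shift vector $t=(-1,1)$, and making sure the region of absolute convergence of the inverse Mellin integral (the constraints $\sigma_1>0$, $\sigma_2>\max\{0,\mathrm{Re}(d-1)\}$, $\sigma_1-2\sigma_2>\mathrm{Re}(a)$) is precisely the region where condition \textit{C} and the analyticity in \textit{D} are simultaneously valid; once that identification is made, no further work is needed.
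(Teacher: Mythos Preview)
Your proposal is correct and follows essentially the same route as the paper: both proofs consist of verifying that the hypotheses of Theorem~\ref{M1} are met for the system~(\ref{H10}) with the particular Ore--Sato solution $\phi(s)=\Gamma(s_1)\Gamma(s_1-2s_2-a)\Gamma(s_2)\Gamma(s_2-d+1)$ and $t=(-1,1)$. The only difference is one of emphasis: the paper asserts in one line that ``$\phi$ satisfies the assumptions of Theorem~\ref{M1}'' and then spends its effort showing that the Mellin--Barnes integral itself obeys the growth bound (via $|\phi(x,y)|\le M|x|^{-\sigma_1}|y|^{-\sigma_2}$), whereas you spell out the coprimality, solvability, and conditions \textit{C}, \textit{D} in detail and finish by Mellin inversion rather than by exhibiting $\phi$ as a member of the uniqueness class.
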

\begin{proof} For
\begin{multline*}
\phi(x,y)=\\
\int_{\sigma_{1}-\sqrt{-1}\infty}^{\sigma_{1}+\sqrt{-1}\infty}\int_{\sigma_{2}-\sqrt{-1}\infty}^{\sigma_{2}+\sqrt{-1}\infty}\Gamma(s_{1})\Gamma(s_{1}-2s_{2}-a)\Gamma(s_{2})\Gamma(s_{2}-d+1)(-x)^{-s_{1}}y^{-s_{2}}\,ds_{1}\,ds_{2},
\end{multline*}
it is easy to see that $\phi$ satisfies the assumptions of Theorem \ref{M1}. Hence we only need to check that $\phi$ satisfies the growth condition. If we write a complex number $s=\sigma+\sqrt{-1}\tau$, we have
\[
|x|^{-s}=|x|^{-\sigma}.
\]
Thus we have the inequality,
\[|\phi(x,y)|\le M|x|^{-\sigma_{1}}|y|^{-\sigma_{2}},\]
for $x\in\R_{\le 0}$ and $y\in\R_{\ge 0}$.
Here the constant
\[
M=\left|\int_{\sigma_{1}-\sqrt{-1}\infty}^{\sigma_{1}+\sqrt{-1}\infty}\int_{\sigma_{2}-\sqrt{-1}\infty}^{\sigma_{2}+\sqrt{-1}\infty}\Gamma(s_{1})\Gamma(s_{1}-2s_{2}-a)\Gamma(s_{2})\Gamma(s_{2}-d+1)\,ds_{1}\,ds_{2}\right|. 
\]
We can choose $\sigma_{1}$ and $\sigma_{2}$ as $\sigma_{1}>0$, $\sigma_{2}>\mathrm{max}\{0,\mathrm{Re}(d-1)\}$ and $\sigma_{1}-2\sigma_{2}>\mathrm{Re}(a)$. Thus $\phi(x,y)$ satisfies the growth condition.
\end{proof}

\end{document}